\definecolor{dbluecolor}{rgb}{0,0,0.6}
\definecolor{dredcolor}{rgb}{.5,0.0,0.0}
\definecolor{dgreencolor}{rgb}{0,0.4,0}
\definecolor{blue}{rgb}{.02, .02, .908}
\newcommand{\dgreen}{\color{dgreencolor}\bf}
\def\red#1{\textcolor{red}{#1}}
\def\blue#1{\textcolor{blue}{#1}}
\def\magenta#1{\textcolor{magenta}{#1}}
\newif\ifcomments
\newcommand{\djp}[1]{\magenta{DP: #1}}
\newcommand{\wdj}[1]{\red{DJ: #1}}
\newcommand{\cgm}[1]{\blue{CGM: #1}}
\newcommand{\djp}[1]{}
\newcommand{\wdj}[1]{}
\newcommand{\cgm}[1]{}
\newif\ifpngs
\newcommand{\drawpng}[2]{\includegraphics[#1]{#2}}
\newcommand{\drawpng}[2]{}
\def\qqq{\mathbb{Q}}
\def\ccc{\mathbb{C}}
\def\zzz{\mathbb{Z}}
\def\trace{{\rm trace}\, }
\newcommand{\SAGE}{{\dgreen {\sf Sage}}\xspace}
\newcommand{\sage}{\SAGE}
\newtheorem{theorem}{Theorem}
\newtheorem{corollary}[theorem]{Corollary}
\newtheorem{analog}[theorem]{Analog}
\newtheorem{problem}[theorem]{Problem}
\newtheorem{conjecture}[theorem]{Conjecture}
\newtheorem{lemma}[theorem]{Lemma}
\newtheorem{proposition}[theorem]{Proposition}
\newtheorem{definition}[theorem]{Definition}
\newtheorem{example}[theorem]{Example}
\newtheorem{question}{Question}
\newtheorem{remark}[theorem]{Remark}
\newcommand{\supp}{{\rm supp}}
\begin{document}

\author{Charles Celerier, David Joyner\thanks{USNA, Mathematics Department;
email: wdj@usna.edu }, Caroline Melles, 
\\
David Phillips, Steven Walsh}

\title{Explorations of edge-weighted Cayley graphs and $p$-ary bent functions}

\maketitle

\begin{abstract}
  Let $f:GF(p)^n \rightarrow GF(p)$. 
When $p=2$, Bernasconi et al
have shown that there is a correspondence between certain 
properties of $f$ (eg, if it is bent) 
and properties of its associated Cayley graph.
Analogously, but much earlier, Dillon showed that $f$ is bent
if and only if the ``level curves'' of $f$ had certain
combinatorial properties (again, only when $p=2$).
The attempt is to investigate an analogous theory when $p>2$
using the (apparently new) combinatorial concept of a 
weighted partial difference set. More precisely,
we try to investigate which graph-theoretical properties of 
$\Gamma_f$ can be characterized in terms of function-theoretic 
properties of $f$, 
and which function-theoretic properties of $f$ correspond
to combinatorial properties of the set of ``level curves'' $f^{-1}(a)$
($a\in GF(p)$). While the natural generalizations of the
Bernasconi correspondence and Dillon correspondence are not
true in general, using extensive computations, we are able to
determine a classification in small cases:
$(p,n)\in \{(3,2),(3,3),(5,2)\}$. Our main conjecture is
Conjecture \ref{conjecture:main}.
\end{abstract}

\tableofcontents

\section{Introduction}

Fix $n\geq 1$ and let $V=GF(p)^n$, where $p$ is a prime.

\begin{definition} ({\bf Walsh(-Hadamard) transform})
{\rm
The {\it Walsh(-Hadamard) transform} of a function 
$f:GF(p)^n \rightarrow GF(p)$ is 
a complex-valued function on $V$ that can be defined as

\begin{equation}
\label{eqn:WT}
W_f(u) =
\sum_{x \in V}
\zeta^{f(x)- \langle u,x\rangle},
\end{equation}
where $\zeta = e^{2\pi i/p}$. 
}
\end{definition}
\index{Walsh(-Hadamard) transform}
\index{Walsh transform}

We call $f$ {\it bent} if

\[
|W_f(u)|=p^{n/2},
\]
for all $u\in V$. 
The class of $p$-ary bent functions are ``maximally non-linear'' in 
some sense, and can be used to generate pseudo-random sequences rather
easily.
\index{bent}

Some properties of the Walsh transform:

\begin{enumerate}
\item
The Walsh coefficients satisfy {\it Parseval's equation}

\[
\sum_{u \in V}
|W_f(u)|^2 = p^{2n}.
\]
\index{Parseval's equation}

\item
If $\sigma =\sigma_k:\qqq(\zeta)\to \qqq(\zeta)$ 
is defined by sending $\zeta \mapsto \zeta^k$ then
$W_f(u)^\sigma = W_{kf}(ku)$.

\end{enumerate}

If $f:V\to GF(p)$ then we let $f_\ccc:V\to \ccc$ be the function whose
values are those of $f$ but regarded as integers
(i.e., we select the congruence class residue
representative in the interval $\{0,\dots,p-1\}$).

\begin{definition} ({\bf Fourier transform})
{\rm
When $f$ is complex-valued, we define the
analogous {\it Fourier transform} of the function $f$
as

\begin{equation}
\label{eqn:FT}
\hat{f}(y) = f^\wedge(y) = \sum_{x\in V} f_\ccc(x)\zeta^{-\langle x,y\rangle}.
\end{equation}
}
\end{definition}
\index{Fourier transform}

Note

\[
\hat{f}(0)=\sum_{x\in V} f_\ccc(x),
\]
and note

\[
W_f(y)=(\zeta^f)^\wedge (y).
\]
We say $f$ is {\it even}, if $f(-x)=f(x)$ for all $x\in GF(p)^n$.
It is not hard to see that if $f$ is even then the Fourier transform of
$f$ is real-valued. (However, this is not necessarily true of the
Walsh transform.)

\begin{example}
{\rm 
It turns out that there are a total of $3^4=81$ even functions 
$f:GF(3)^2\to GF(3)$ with $f(0)=0$, of which exactly $18$ are bent.
Section \ref{sec:bent-gf3**2} discusses this in more detail.
}
\end{example}

\begin{example}
{\rm 
It turns out that there are a total of $3^{13}=1594323$ even functions 
$f:GF(3)^3\to GF(3)$ with $f(0)=0$, of which exactly $2340$ are bent.
Section \ref{sec:bent-gf3**3} discusses this in more detail.
}
\end{example}

\begin{example}
{\rm 
It turns out that there are a total of $5^{12}=244140625$ even functions 
$f:GF(5)^2\to GF(5)$ with $f(0)=0$, of which exactly $1420$ are bent.
Section \ref{sec:bent-gf5**2} discusses this in more detail.
}
\end{example}

\begin{definition} ({\bf Hadamard matrix})
{\rm
We call an $N\times N$ $\{0,1\}$-matrix $M$ a {\it Hadamard matrix} if

\[
M\cdot {M}^t=NI_N, 
\]
where $I_N$ is the $N\times N$ identity matrix. 
}
\end{definition}
\index{Hadamard matrix}

\begin{remark}
There is a concept similar to the notion of bent, called ``CAZAC.''
We shall clarify their connection in this remark.

Constant amplitude zero autocorrelation (CAZAC) functions have
been studied intensively since the 1990's \cite{BD}. 
We quote the following characterization, which is due to J. Benedetto
and S. Datta \cite{BD}: 

\vskip .1in
\noindent
Theorem: Given a sequence $x : \zzz/N\zzz \to \ccc$, 
and let $C_x$ be a circulant matrix with first row 
$x = (x[0],x[1],\dots ,x[N-1])$. Then $x$ is a CAZAC
sequence if and only if $C_x$ is a Hadamard matrix.

\vskip .1in
This is due to the fact that the definition of CAZAC functions
uses the Fourier transform on $\zzz/N\zzz$. The corresponding definition of
bent functions $f$ uses the Fourier transform 
(of $\zeta^f$)  on $GF(p)^n$, $N=p^n$. The analogous Hadamard 
matrix for a bent function $f$ is not circulant but ``block
circulant.''

\end{remark}
\index{CAZAC functions}

In the Boolean case, there is a nice simple relationship between the
Fourier transform and the Walsh-Hadamard transform. 
In equation (\ref{eqn:WTvFT}) below, we shall try to connect these two
transforms, (\ref{eqn:WT}) and (\ref{eqn:FT}),  in the $GF(p)$ case as well.
In this context, it is worth noting that
it is possible (see Proposition \ref{prop:bent}) to characterize 
a bent function in terms of the Fourier transform of
its derivative.

Suppose $f:GF(p)^n\to GF(p)$ is bent. 

\begin{definition} ({\bf regular})
{\rm
Suppose $f$ is bent. We say $f$ is
{\it regular} if and only if $W_f(u)/p^{n/2}$ is a $p$th root 
of unity for all $u\in V$.
}
\end{definition}
\index{bent!regular}
\index{bent!weakly regular}
\index{bent!weakly regular dual}

If $f$ is regular then there is a function $f^*:GF(p)^n\to GF(p)$, called the
{\it dual} (or {\it regular dual}) of $f$, such that
$W_f(u)=\zeta^{f^*(u)}p^{n/2}$,
for all $u\in V$.
We call $f$ {\it weakly regular}\footnote{If $\mu$ is fixed
and we want to be more precise, we call this {\it $\mu$-regular}.},
if there is a function $f^*:GF(p)^n\to GF(p)$, called the
{\it dual} (or {\it $\mu$-regular dual}) of $f$, such that
$W_f(u)=\mu\zeta^{f^*(u)}p^{n/2}$,
for some constant $\mu\in \ccc$ with absolute value $1$.

\begin{proposition}
\label{remark:KSW}
\label{prop:KSW}
(Kumar, Scholtz, Welch)
 If $f$ is bent then
there are functions $f_*:GF(p)^n\to \zzz$ and $f^*:GF(p)^n\to GF(p)$ such that

\[
W_f(u)p^{-n/2} = 
\left\{
\begin{array}{rl}
(-1)^{f_*(u)} \zeta^{f^*(u)},& {\rm if}\ n\ {\rm is\ even,\ or\ }n\ {\rm is \ odd\ and}\ p\equiv 1\pmod 4,\\
i^{f_*(u)}\zeta^{f^*(u)},& {\rm if}\ n\ {\rm is \ odd\ and}\ p\equiv 3\pmod 4.\\
\end{array}
\right.
\]
\end{proposition}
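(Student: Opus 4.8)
The plan is to exploit the fact that $W_f(u)$ is an algebraic integer in the cyclotomic field $\qqq(\zeta)$ and that bentness forces \emph{every} Galois conjugate of $W_f(u)$ to have the same absolute value $p^{n/2}$. A Kronecker-type argument will then pin $W_f(u)p^{-n/2}$ down to a root of unity, and the precise shape of that root of unity is dictated by which cyclotomic field is needed to realize $p^{n/2}$.

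First I would note that by the defining sum (\ref{eqn:WT}), $W_f(u)=\sum_{x\in V}\zeta^{f(x)-\langle u,x\rangle}$ is a $\zzz$-linear combination of powers of $\zeta=e^{2\pi i/p}$, so $\theta:=W_f(u)\in\zzz[\zeta]$ is an algebraic integer. Bentness gives $\theta\,\overline{\theta}=|W_f(u)|^2=p^n$, and I set $\beta:=\theta\,p^{-n/2}$, so $|\beta|=1$; the goal is to identify $\beta$. Next I would show that every conjugate of $\theta$ over $\qqq$ has absolute value $p^{n/2}$: since $\mathrm{Gal}(\qqq(\zeta)/\qqq)$ is abelian, each automorphism $\sigma_k\colon\zeta\mapsto\zeta^k$ commutes with complex conjugation $\sigma_{-1}$, whence $|\theta^{\sigma_k}|^2=(\theta\,\overline{\theta})^{\sigma_k}=(p^n)^{\sigma_k}=p^n$. (Concretely $\theta^{\sigma_k}=W_{kf}(ku)$ by property 2, reflecting that $kf$ is again bent.) I would then run an ideal-factorization argument. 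Writing $\lambda=1-\zeta$, the prime $p$ is totally ramified in $\zzz[\zeta]$ with $(p)=(\lambda)^{p-1}$, and $(\lambda)$ is fixed by complex conjugation. From
\[
(\theta)\,(\overline{\theta}) = (p^n) = (\lambda)^{n(p-1)}
\]
and the fact that $(\lambda)$ is the \emph{only} prime above $p$, any prime dividing $(\theta)$ must equal $(\lambda)$; comparing exponents yields $(\theta)=(\lambda)^{n(p-1)/2}$.

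I would then split into the cases according to which field contains $p^{n/2}$. When $n$ is even, or $n$ is odd and $p\equiv 1\pmod 4$, we have $p^{n/2}\in\qqq(\zeta)$ (for the second subcase because the quadratic Gauss sum puts $\sqrt{p}=\sqrt{p^{*}}$ into $\qqq(\zeta)$ when $p^{*}=p$). Here $(p^{n/2})=(\lambda)^{n(p-1)/2}=(\theta)$, so $\beta$ is a \emph{unit} in the ring of integers, all of whose conjugates have absolute value $1$; by Kronecker's theorem $\beta$ is a root of unity in $\qqq(\zeta)=\qqq(\zeta_p)$, and since $p$ is odd these are exactly $\pm\zeta^{j}$, giving $\beta=(-1)^{f_*(u)}\zeta^{f^*(u)}$. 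When $n$ is odd and $p\equiv 3\pmod 4$, the only quadratic subfield of $\qqq(\zeta_p)$ is the imaginary $\qqq(\sqrt{-p})$, so $\sqrt{p}\notin\qqq(\zeta)$; I would pass to $K=\qqq(\zeta,i)=\qqq(\zeta_{4p})$, in which $\sqrt{p}=i\sqrt{-p}$ lives. There $p$ is again totally ramified with a unique prime $\mathfrak{P}$ above it and $(\lambda)\mathcal{O}_K=\mathfrak{P}$, so the identical exponent count gives $(\theta)=\mathfrak{P}^{n(p-1)/2}=(p^{n/2})$, making $\beta$ a unit; Kronecker then forces $\beta$ to be a root of unity in $\qqq(\zeta_{4p})$, which are precisely the elements $i^{a}\zeta^{b}$, so $\beta=i^{f_*(u)}\zeta^{f^*(u)}$.

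The hard part will be the ideal-theoretic step that upgrades $\beta$ from a number of absolute value $1$ to an algebraic \emph{unit}: this requires knowing that $p$ stays totally ramified with a single prime above it in $\qqq(\zeta_{4p})$, that $\sqrt{p}$ generates the correct power of that prime, and that $\sqrt{p^{*}}$ indeed lies in $\qqq(\zeta_p)$. Once unitality is secured, invoking Kronecker's theorem (a unit all of whose conjugates have absolute value $1$ is a root of unity) and reading off the roots of unity in the relevant cyclotomic field is routine, and the case split on the parity of $n$ and the residue of $p$ modulo $4$ matches the two displayed alternatives exactly.
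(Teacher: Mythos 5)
The paper never proves this proposition: it is quoted from Kumar--Scholtz--Welch \cite{KSW}, with the displayed form attributed to Helleseth--Kholosha \cite{HK3}, so there is no internal argument to compare against. Your proposal supplies the missing proof, and it is correct; indeed it is the standard route in the literature on generalized bent functions. The chain of ideas all checks out: $\theta=W_f(u)\in\zzz[\zeta]$ is an algebraic integer with $\theta\overline{\theta}=p^n$; because ${\rm Gal}(\qqq(\zeta)/\qqq)$ is abelian, complex conjugation commutes with every $\sigma_k$, so all conjugates of $\theta$ have absolute value $p^{n/2}$; the unique prime $(\lambda)=(1-\zeta)$ above $p$ is fixed by conjugation, so unique factorization applied to $(\theta)(\overline{\theta})=(\lambda)^{n(p-1)}$ forces $(\theta)=(\lambda)^{n(p-1)/2}$; identifying this with the ideal generated by $p^{n/2}$ (using the Gauss-sum fact that $\sqrt{p}\in\qqq(\zeta_p)$ exactly when $p\equiv 1\pmod 4$, and adjoining $i$ otherwise) makes $\beta=\theta p^{-n/2}$ a unit all of whose conjugates lie on the unit circle; Kronecker's theorem then makes $\beta$ a root of unity, and the roots of unity of $\qqq(\zeta_p)$ (resp.\ $\qqq(\zeta_{4p})$) are precisely $\pm\zeta^j$ (resp.\ $i^a\zeta^b$), which is exactly the stated case split.

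One slip, though a harmless one: when $p\equiv 3\pmod 4$, the prime $p$ is \emph{not} totally ramified in $K=\qqq(\zeta_{4p})$. Its ramification index there is $p-1$ and its residue degree is $2$ (the order of $p$ modulo $4$), so $efg=2(p-1)$ gives $g=1$. What is true --- and what your exponent count actually uses --- is that there is a unique prime $\mathfrak{P}$ above $p$, with $(\lambda)\mathcal{O}_K=\mathfrak{P}$ (i.e., $(\lambda)$ is inert in the quadratic step $\qqq(\zeta_{4p})/\qqq(\zeta_p)$). Then $(p)\mathcal{O}_K=\mathfrak{P}^{p-1}$, $(\sqrt{p}\,)=\mathfrak{P}^{(p-1)/2}$, and the identity $(\theta)=\mathfrak{P}^{n(p-1)/2}=(p^{n/2})$ goes through unchanged; even under the erroneous ``totally ramified'' reading, both sides would scale by the same factor, so the conclusion that $\beta$ is a unit survives either way. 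With that wording corrected, the proof is complete.
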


The above  result is known (thanks to Kumar, Scholtz, Welch \cite{KSW})
but the form above is due to Helleseth and Kholosha \cite{HK3}
(although we made a minor correction to their statement).
Also, note \cite{KSW} Property 8 
established a more general fact than the statement above.

\begin{corollary} If $f$ is bent and $W_f(0)$ is rational
(i.e., belongs to $\qqq$) then $n$ must be even.
\end{corollary}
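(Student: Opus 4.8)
The plan is to deduce the parity of $n$ directly from the bent condition, using essentially nothing beyond the fact that the modulus of a rational number is rational. Putting $u=0$ in (\ref{eqn:WT}) and invoking bentness, we obtain the single numerical identity
\[
p^{n/2} = |W_f(0)|.
\]

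First I would use the hypothesis that $W_f(0)\in\qqq$. Such a value is in particular real, so $|W_f(0)|$ is the ordinary absolute value of a rational number and is therefore itself a nonnegative rational. Feeding this into the displayed identity yields $p^{n/2}\in\qqq$. Next I would record the elementary number-theoretic fact that, for $p$ prime, $p^{n/2}$ is rational if and only if $n$ is even: its square is the integer $p^n$, so a rational $p^{n/2}$ would be a rational number with integral square, hence an integer (as $\zzz$ is integrally closed in $\qqq$), forcing $p^n$ to be a perfect square and thus $n$ to be even. This completes the argument.

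The only real content is the first step, namely realizing that the rationality assumption on $W_f(0)$ is exactly what transfers rationality onto the modulus $p^{n/2}$; after that the conclusion is immediate, and no case analysis on $p \bmod 4$ is required. For completeness I would note an alternative route through Proposition \ref{prop:KSW}: since $W_f(0)=\sum_{x\in V}\zeta^{f(x)}$ is a sum of roots of unity it is an algebraic integer, so a rational value must lie in $\zzz$, and then $W_f(0)^2=p^n$ must again be a perfect square. This reaches the same place but by a slightly longer path.
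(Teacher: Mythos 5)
Your proof is correct. Note that the paper itself supplies no written proof: the statement is placed immediately after Proposition \ref{prop:KSW} and is clearly intended to be read off from that normal form, i.e., from the fact that for $n$ odd one has $W_f(0)=\mu\,p^{n/2}$ with $\mu$ a root of unity (a power of $-1$ or of $i$ times a $p$-th root of unity), whence rationality of $W_f(0)$ again forces $p^{n/2}\in\qqq$, a contradiction. Your route is a genuine simplification of this: you never invoke Kumar--Scholtz--Welch at all, only the defining property $|W_f(0)|=p^{n/2}$ together with the two elementary facts that the absolute value of a rational number is rational and that a rational number whose square is $p^n$ must be an integer, forcing $n$ even. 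What the KSW route buys is finer information about the unimodular factor $W_f(0)/p^{n/2}$ (useful elsewhere in the paper, e.g., for regularity questions), but for this corollary that information is superfluous, and your argument has the advantage of being self-contained rather than resting on a result quoted from the literature. One small quibble: your closing ``alternative route'' is labeled as going through Proposition \ref{prop:KSW}, but what you actually use there is the algebraic-integer property of $W_f(0)=\sum_{x\in V}\zeta^{f(x)}$, not the KSW normal form; the label is misleading even though the argument itself is sound.
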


The condition $W_f(0)\in \qqq$ arises in Lemma \ref{lemma:sig} below,
so this corollary shall be useful later. 

Suppose $f:V=GF(p)^n\to GF(p)$ is bent.
In this case, for each $u\in V$, the quotient $W_f(u)/p^{n/2}$ is
an element of the cyclotomic field $\qqq (\zeta)$ having absolute value $1$.

Below we give a simple necessary and sufficient conditions to determine if
$f$ is regular. The next three lemmas are well-known but included 
for the reader's convenience.

\begin{lemma}
\label{lemma:weaklyregular}
Suppose $f:V\to GF(p)$ is bent.
The following are equivalent.

\begin{itemize}
\item
$f$ is weakly regular.

\item
$W_f(u)/W_f(0)$ is a $p$-th root of unity for all $u\in V$.

\end{itemize}

\end{lemma}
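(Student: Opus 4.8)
The plan is to establish the two implications separately; the common ingredient is that bentness forces $W_f(0)\neq 0$, since $|W_f(0)|=p^{n/2}>0$. In particular the quotient $W_f(u)/W_f(0)$ is defined for every $u\in V$, and $\mu:=W_f(0)/p^{n/2}$ is a complex number of absolute value $1$. This $\mu$ is the candidate for the constant appearing in the definition of weak regularity.

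First I would prove that weak regularity implies the root-of-unity condition. By definition there is a function $f^*:V\to GF(p)$ and a constant $\mu_0\in\ccc$ with $|\mu_0|=1$ such that $W_f(u)=\mu_0\zeta^{f^*(u)}p^{n/2}$ for all $u$. Specializing to $u=0$ gives $W_f(0)=\mu_0\zeta^{f^*(0)}p^{n/2}$, and forming the ratio causes the factors $\mu_0$ and $p^{n/2}$ to cancel, leaving $W_f(u)/W_f(0)=\zeta^{f^*(u)-f^*(0)}$. Since the exponent $f^*(u)-f^*(0)$ lies in $GF(p)$, this quotient is a $p$-th root of unity, as required.

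For the converse, I would assume $W_f(u)/W_f(0)$ is a $p$-th root of unity for every $u\in V$. Because the $p$-th roots of unity are exactly the distinct powers $\zeta^0,\dots,\zeta^{p-1}$, each such quotient equals $\zeta^{a}$ for a unique $a\in\{0,\dots,p-1\}$; setting $f^*(u):=a$ defines a function $f^*:V\to GF(p)$ with $W_f(u)/W_f(0)=\zeta^{f^*(u)}$. Multiplying by $W_f(0)=\mu\,p^{n/2}$ then yields $W_f(u)=\mu\,\zeta^{f^*(u)}p^{n/2}$ with $|\mu|=1$, which is precisely the assertion that $f$ is weakly regular.

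I do not expect a genuine obstacle in this argument; it is essentially a normalization. The only points needing care are that the exponent $f^*(u)$ be well defined as an element of $GF(p)$, which follows from the distinctness of the $p$-th roots of unity, and that a single constant $\mu$ works for all $u$ simultaneously, which holds because $\mu$ depends only on $W_f(0)$ and not on $u$.
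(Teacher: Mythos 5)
Your proposal is correct and follows essentially the same argument as the paper: the forward direction cancels $\mu$ and $p^{n/2}$ in the ratio to get $\zeta^{f^*(u)-f^*(0)}$, and the converse defines $f^*(u)$ from the exponent of the quotient and takes $\mu = W_f(0)/p^{n/2}$. Your added checks (bentness gives $W_f(0)\neq 0$ and $|\mu|=1$) are worthwhile details the paper leaves implicit.
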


\begin{proof}
If $f$ is weakly regular with $\mu$-regular dual $f^*$, 
then $W_f(u)/W_f(0)=\zeta^{f^*(u) - f^*(0)}$, for each 
$u \in V$.
 
Conversely, if  $W_f(u)/W_f(0)$ is of the form $\zeta^{i_u}$, 
for some integer $i_u$ (for $u\in V$), then let 
$f^*(u)$ be $i_u \pmod p$ and let
$\mu=W_f(0)/(p^{n/2})$.  Then $f^*(u)$ is a $\mu$-regular dual of $f$. 
\end{proof}

\begin{lemma}
Suppose $f:V\to GF(p)$ is bent and weakly regular.
The following are equivalent.

\begin{itemize}
\item
$f$ is regular.

\item
$W_f(0)/p^{n/2}$ is a $p$-th root of unity.

\end{itemize}

\end{lemma}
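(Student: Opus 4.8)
The plan is to dispatch the two directions separately, with the forward implication being essentially immediate from the definition of \emph{regular}, and the reverse implication reducing to the multiplicativity of the group of $p$-th roots of unity together with the preceding lemma.

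First I would handle the direction ``$f$ regular $\Rightarrow W_f(0)/p^{n/2}$ is a $p$-th root of unity.'' By the definition of regular, $W_f(u)/p^{n/2}$ is a $p$-th root of unity for \emph{every} $u\in V$; specializing to $u=0$ gives the claim with no further work.

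For the reverse direction, I would invoke Lemma~\ref{lemma:weaklyregular}: since $f$ is bent and weakly regular, $W_f(u)/W_f(0)$ is a $p$-th root of unity for all $u\in V$. The key algebraic observation is the factorization
\[
\frac{W_f(u)}{p^{n/2}} = \frac{W_f(u)}{W_f(0)}\cdot\frac{W_f(0)}{p^{n/2}},
\]
which expresses each normalized Walsh coefficient as a product of two quantities. Under the hypothesis that $W_f(0)/p^{n/2}$ is a $p$-th root of unity, both factors on the right are $p$-th roots of unity, and since the $p$-th roots of unity form a group under multiplication, the left-hand side is again a $p$-th root of unity for every $u$. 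This is precisely the statement that $f$ is regular.

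I do not anticipate a genuine obstacle here: the content is entirely carried by Lemma~\ref{lemma:weaklyregular} and the elementary fact that the $p$-th roots of unity are closed under multiplication. The only point requiring a moment's care is that $W_f(0)\neq 0$, so that the quotient $W_f(u)/W_f(0)$ is well defined; but this is guaranteed by bentness, since $|W_f(0)| = p^{n/2}\neq 0$.
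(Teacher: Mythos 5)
Your proof is correct. It differs from the paper's argument mainly in packaging: the paper's proof never invokes Lemma~\ref{lemma:weaklyregular}; instead it works directly from the definition of weak regularity, writing $W_f(u)=\mu\zeta^{f^*(u)}p^{n/2}$, using the hypothesis $W_f(0)/p^{n/2}=\zeta^i$ to solve for $\mu=\zeta^{i-f^*(0)}$, and then explicitly exhibiting a regular dual $g(u)=f^*(u)-f^*(0)+i$ with $W_f(u)=\zeta^{g(u)}p^{n/2}$. You instead delegate the ``weakly regular'' input to Lemma~\ref{lemma:weaklyregular} and finish with the factorization $W_f(u)/p^{n/2}=\bigl(W_f(u)/W_f(0)\bigr)\cdot\bigl(W_f(0)/p^{n/2}\bigr)$ together with closure of the $p$-th roots of unity under multiplication; since the paper defines \emph{regular} precisely as the condition that $W_f(u)/p^{n/2}$ is a $p$-th root of unity for all $u$, this closes the argument with no need to name a dual. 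The two proofs encode the same multiplicative identity, but yours is more modular and shorter, while the paper's buys an explicit formula for the regular dual, which is occasionally useful in its own right. Your remark that bentness guarantees $W_f(0)\neq 0$ (so the quotient is defined) is exactly the right point of care, and is implicitly used in the paper's proof as well.
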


\begin{proof}
One direction is clear.  Suppose that $f$ is a weakly regular bent 
function with $\mu$-regular dual $f^*$ and suppose that
$W_f(0)/(p^{n/2}) =\zeta^i$.  Note that
$W_f(0)=\mu \zeta^{f*(0)} p^{n/2} = \zeta^ip^{n/2}$ so that 
$\mu =\zeta^{i - f*(0)}$.
Let $g(u)=f^*(u) - f^*(0) + i$ (where we are treating 
$i$ as an element of $GF(p)$). 
Then 

\[W_f(u)=\mu \zeta^{f*(u)} p^{n/2} 
=\zeta^{i - f^*(0)}  \zeta^{g(u)+f^*(0)-i} p^n/2 = \zeta^{g(u)} p^{n/2},
\]
so $f$ is regular.
\end{proof}

\begin{lemma} Suppose that $f$ is bent and weakly regular, 
with $\mu$-regular dual $f^*$. Then $f^*$ is bent and weakly 
regular, with $\mu^{-1}$-regular dual $f^{**}$ given 
by $f^{**}(x)=f(-x)$.  If $f$ is also even, then $f^*$ is even and $f^{**}=f$.

\end{lemma}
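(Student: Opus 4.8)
The plan is to compute the Walsh transform of $f^*$ directly, starting from the defining relation $W_f(u) = \mu \zeta^{f^*(u)} p^{n/2}$ of the $\mu$-regular dual. Solving this for $\zeta^{f^*(u)}$ gives $\zeta^{f^*(u)} = \mu^{-1} p^{-n/2} W_f(u) = \mu^{-1} p^{-n/2} \sum_{x \in V} \zeta^{f(x) - \langle u, x\rangle}$, which expresses $f^*$ in terms of $f$. The entire lemma should then fall out of a single character-sum calculation using the orthogonality relation
\[
\sum_{u \in V} \zeta^{-\langle u, v\rangle} = \begin{cases} p^n, & v = 0,\\ 0, & v \neq 0. \end{cases}
\]

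First I would substitute the expression above into $W_{f^*}(y) = \sum_{u \in V} \zeta^{f^*(u) - \langle y, u\rangle}$ and interchange the order of summation over $u$ and $x$. Using bilinearity and symmetry of the inner product, the $u$-dependence collects into an inner sum $\sum_{u \in V} \zeta^{-\langle u, x + y\rangle}$, which by orthogonality vanishes unless $x = -y$, where it equals $p^n$. This collapses the outer sum to the single term at $x = -y$ and yields the closed form $W_{f^*}(y) = \mu^{-1} \zeta^{f(-y)} p^{n/2}$.

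From this formula all three assertions follow. Taking absolute values gives $|W_{f^*}(y)| = p^{n/2}$, so $f^*$ is bent; reading the formula as $W_{f^*}(y) = \mu^{-1} \zeta^{f^{**}(y)} p^{n/2}$ with $f^{**}(y) = f(-y)$ shows that $f^*$ is weakly regular with $\mu^{-1}$-regular dual $f^{**}(x) = f(-x)$. For the even case, I would note that substituting $x \mapsto -x$ in the defining sum for $W_f$ gives $W_f(-u) = \sum_{x} \zeta^{f(-x) - \langle u, x\rangle}$, so if $f$ is even then $W_f(-u) = W_f(u)$; comparing $\mu \zeta^{f^*(-u)} p^{n/2}$ with $\mu \zeta^{f^*(u)} p^{n/2}$ then forces $f^*(-u) = f^*(u)$, so $f^*$ is even, and $f^{**}(x) = f(-x) = f(x)$ gives $f^{**} = f$. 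I do not anticipate a real obstacle; the only delicate point is the bookkeeping in the interchange of summation and tracking that the surviving term occurs at $x = -y$ (rather than $x = y$), which is dictated by the sign conventions in the Walsh transform.
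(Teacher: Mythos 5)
Your proposal is correct and follows essentially the same route as the paper's proof: substitute $\zeta^{f^*(u)} = \mu^{-1}p^{-n/2}W_f(u)$ into the Walsh transform of $f^*$, expand $W_f$ as a character sum, interchange the order of summation, and collapse the double sum via the orthogonality relation $\sum_{u\in V}\zeta^{-\langle u,v\rangle}=p^n\,[v=0]$ to obtain $W_{f^*}(y)=\mu^{-1}\zeta^{f(-y)}p^{n/2}$. The evenness argument (showing $W_f(-u)=W_f(u)$ via the substitution $x\mapsto -x$ and then matching exponents of $\zeta$, which is valid since $f^*$ takes values in $GF(p)$) is also the same as the paper's.
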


\begin{proof}  Suppose that $f$ is bent and weakly regular with 
$\mu$-regular dual $f^*$.  Then 
\[ W_f(u)=\mu \zeta^{f^*(u)} p^{\frac n 2}
\]
for all $u$ in $V$.  The Walsh transform of $f^*$ is given by 
\begin{equation}
\begin{aligned}
W_{f^*}(u) 
&= \sum_{y \in V} \zeta^{f^*(y)}\zeta^{-\langle u,y \rangle} \\
&= \sum_{y \in V} \mu^{-1}p^{-\frac n 2} W_f(y) \zeta^{-\langle u,y \rangle}\\
&= \mu^{-1}p^{-\frac n 2}\sum_{y \in V}\sum_{x \in V} \zeta^{f(x)}
     \zeta^{-\langle y,x \rangle}\zeta^{-\langle u,y \rangle}\\
&= \mu^{-1}p^{-\frac n 2}\sum_{y \in V}\sum_{x \in V} \zeta^{f(x )}
     \zeta^{-\langle y,x+u \rangle}\\
&= \mu^{-1}p^{-\frac n 2}\sum_{w \in V} \zeta^{f(w-u)} \sum_{y \in V} 
     \zeta^{-\langle y,w \rangle}.
\end{aligned}
\label{eqn:Walshdual}
\end{equation}
Next we note that
\[
\sum_{y \in V}\zeta^{-\langle y,w\rangle} = 
\begin{cases} p^n & \text{if }w=0\\
 0 & \text{if }w \neq 0
\end{cases}
\]
since, if $y =(y_1, y_2, \ldots ,y_n)$ and $w=(w_1,w_2,\ldots , w_n)$, 
we have 
\begin{equation*}
\begin{aligned}
\sum_{y \in V}\zeta^{-\langle y,w\rangle} 
&= \sum_{y_1 \in GF(p)}\sum_{y_2 \in GF(p)}\ldots \sum_{y_n \in GF(p)}
     \zeta^{- y_1 w_1}\zeta^{- y_2 w_2}\ldots \zeta^{- y_n w_n}\\
&= \prod_{i=1}^n \left( \sum_{y \in GF(p)} \zeta^{-y w_i} \right)
\end{aligned}
\end{equation*}
and, if $w_i \neq 0$, 
\[ 
\sum_{y \in GF(p)} \zeta^{-y w_i} 
= \zeta^0 + \zeta^1 + \ldots + \zeta^{p-1} = 0.
\]
Therefore equation \ref{eqn:Walshdual} reduces to
\begin{equation*}
\begin{aligned}
W_{f^*}(u) 
&= \mu^{-1} p^{-\frac n 2}  \zeta^{f(-u)} p^n\\
&=\mu^{-1} \zeta^{f(-u)}p^{\frac n 2}.
\end{aligned}
\end{equation*}
It follows that $f^*$ is bent with $\mu^{-1}$-regular 
dual $f^{**}$ given by $f^{**}(x) = f(-x)$ and that if $f$ is even, 
$f^{**} = f$.

Furthermore, if $f$ is even, 
\begin{equation*}
\begin{aligned}
\zeta^{f^*(-u)}
&= \mu^{-1} p^{-\frac n 2} W_f(-u)&&\\
&=\mu^{-1} p^{-\frac n 2} \sum_{x \in V} \zeta^{f(x)} \zeta^{-\langle -u,x \rangle}&&\\
&=\mu^{-1} p^{-\frac n 2} \sum_{w \in V} \zeta^{f(-w)} \zeta^{-\langle u,w \rangle}&&\\
&=\mu^{-1} p^{-\frac n 2} \sum_{w \in V} \zeta^{f(w)} \zeta^{-\langle u,w \rangle}&&\text{since $f$ is even}\\
&=\mu^{-1} p^{-\frac n 2}W_f(u)&&\\
&=\zeta^{f^*(u)}.&&
\end{aligned}
\end{equation*}
Since $f^*$ takes values in $GF(p)$, it follows 
that $f^*(-u)=f^*(u)$ for all $u$ in $V$, so $f^*$ is even.
\end{proof}

\section{Partial difference sets}

Dillon's thesis \cite{D} was one of the first publications
to discuss the relationship between bent functions and 
combinatorial structures, such as difference sets. His work
concentrated on the Boolean case. Consider functions

\[
f:GF(p)^n\to GF(p),
\]
where $p$ is a prime, $n>1$ is an integer. In Dillon's work, it was proven that the
``level curve'' $f^{-1}(1)$ gives rise to a difference set
in $GF(2)^n$. 

\begin{definition} ({\bf difference set})
\label{defn:DS}
{\rm
Let $G$ be a finite abelian multiplicative group of order $v$, and let 
$D$ be a subset of $G$ with order $k$. 
$D$ is a {\it $(v,k,\lambda)$-difference set} (DS) if the list 
of differences $d_1 d_2^{-1}, d_1, d_2 \in D$, represents every non-identity 
element in $G$ exactly $\lambda$ times.
}
\end{definition}

A {\it Hadamard difference set} is one whose parameters are of the form
$(4m^2,2m^2-m,m^2-m)$, for some $m>1$. It is, in addition,
{\it elementary} if $G$ is an elementary abelian $2$-group
(i.e., isomorphic to $(\zzz/2\zzz)^n$).
\index{difference set}
\index{difference set!Hadamard}

Let $D^{-1}= \{d^{-1}\ |\ d \in D \}$.

\begin{lemma}
Let $G$ be a finite abelian multiplicative group of order $v$ and let 
$D$ be a subset of $G$ with order $k$, such that $D=D^{-1}$. 
If $(G,D)$ is an $(v,k,\lambda ,\lambda)$-partial difference set
then it is also a $(v,k,\lambda)$-difference set.
\end{lemma}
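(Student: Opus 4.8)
The plan is to unwind both definitions and observe that the equality $\lambda=\mu$ collapses the two-valued counting condition of a partial difference set into the single-valued condition defining an ordinary difference set. The content is entirely definitional, so no serious machinery is required.

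First I would recall the defining property of a $(v,k,\lambda,\mu)$-partial difference set: writing a nonidentity element $g\in G$ as $g=d_1 d_2^{-1}$ with $d_1,d_2\in D$, the number of such representations equals $\lambda$ when $g\in D$ and $\mu$ when $g\in G\setminus D$. Specializing to the case $\mu=\lambda$, this says that \emph{every} nonidentity element of $G$ --- whether or not it lies in $D$ --- is represented exactly $\lambda$ times by the list of differences $d_1 d_2^{-1}$, $d_1,d_2\in D$. But that is verbatim the requirement in Definition \ref{defn:DS} for $(G,D)$ to be a $(v,k,\lambda)$-difference set, so the conclusion follows at once.

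Alternatively, and perhaps more cleanly, I would pass to the group ring $\zzz[G]$, identifying $D$ with the formal sum $\sum_{d\in D} d$ and writing $D^{(-1)}=\sum_{d\in D} d^{-1}$. The partial difference set condition becomes the single group-ring equation $D D^{(-1)} = k\,e + \lambda D + \mu(G - e - D)$, where $e$ is the identity and $G$ abbreviates $\sum_{g\in G} g$. Substituting $\mu=\lambda$ makes the two $D$-terms cancel, yielding $D D^{(-1)} = (k-\lambda)e + \lambda G$, which is exactly the group-ring characterization of a $(v,k,\lambda)$-difference set. The hypothesis $D=D^{-1}$ gives $D^{(-1)}=D$, so the same identity may be written $D^2 = (k-\lambda)e+\lambda G$; but it is really the equality $\lambda=\mu$ that does the essential work.

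There is no genuine obstacle here beyond bookkeeping at the identity element. The only point requiring care is to confirm that the coefficient of $e$ in $D D^{(-1)}$ is $k$ (coming from the $k$ pairs with $d_1=d_2$), and then to check that the $\lambda e$ subtracted by the term $\lambda(G-e-D)$ produces the net constant $k-\lambda$ matching the difference-set equation; one should also verify that this reconciliation is insensitive to whether or not $e\in D$. Once this is checked, the two conditions are literally identical, and the implication is immediate.
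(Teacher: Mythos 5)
Your proof is correct, but it takes a genuinely different route from the paper. The paper does not argue from the definitions at all: its proof is a one-line appeal to character theory, citing Theorem 1, Theorem 2, and (the proof of) Proposition 1 of Polhill \cite{Po}. You instead work directly with the counting definitions (Definition \ref{defn:DS} and Definition \ref{defn:PDS}): since $G\setminus\{1\}$ is the disjoint union of the non-identity elements of $D$ and of $G\setminus D$, setting $\mu=\lambda$ collapses the two counting conditions into the single difference-set condition, and your group-ring reformulation $DD^{(-1)}=(k-\lambda)e+\lambda G$ is the same observation in algebraic dress. Your bookkeeping at the identity is also sound: the coefficient of $e$ in $DD^{(-1)}$ is $k$, and the potential discrepancy when $e\in D$ carries the factor $(\lambda-\mu)$, which vanishes in the case at hand. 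What your approach buys is that it is elementary and self-contained, and it exposes the fact that, under this paper's conventions, the hypothesis $D=D^{-1}$ is not even needed for the implication (it matters only for other formulations of the PDS condition, e.g.\ via characters or for the converse direction); what the paper's approach buys is robustness to differing conventions in the literature, since the character-value characterization of a PDS in \cite{Po} does not depend on how one words the counting condition, at the cost of invoking machinery that is overkill for this statement.
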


\begin{proof}
This follows from character theory: combine Theorem 1 and Theorem 2,
and (the proof of) Proposition 1 in Polhill \cite{Po}.
\end{proof}

\begin{theorem}
(Dillon Correspondence, \cite{D}, Theorem 6.2.10, page 78)
\label{thrm:DC}
The function $f:GF(2)^n\to GF(2)$ is bent if and only if 
$f^{-1}(1)$ is an elementary Hadamard difference set of
$GF(2)^n$.
\end{theorem}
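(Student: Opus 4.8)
The plan is to recast both conditions---``$f$ is bent'' and ``$f^{-1}(1)$ is a difference set''---as a single statement about the magnitudes of the additive character sums of $D = f^{-1}(1)$ over $G = GF(2)^n \cong (\zzz/2\zzz)^n$, and then to reconcile the parameters. Write $k = |D|$ and, for $u \in V$, let $\chi_u$ be the character $\chi_u(x) = (-1)^{\langle u,x\rangle}$, so that $\chi_u(D) = \sum_{x \in D}(-1)^{\langle u,x\rangle}$ and $\{\chi_u\}_{u \in V}$ is exactly the character group of $G$. First I would record the elementary link between the Walsh transform and these sums: splitting the sum defining $W_f(u)$ (with $\zeta=-1$) over the fibers $f^{-1}(0)$ and $f^{-1}(1)$, and using that $\sum_x (-1)^{\langle u,x\rangle}$ equals $2^n$ when $u=0$ and $0$ otherwise, one obtains
\[
W_f(u) = -2\,\chi_u(D) \quad (u \neq 0), \qquad W_f(0) = 2^n - 2k .
\]
Thus for $u \neq 0$ the Walsh spectrum of $f$ and the character spectrum of $D$ agree up to the factor $-2$.

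Second, I would invoke (or reprove by Fourier inversion on $G$) the standard character-theoretic criterion for difference sets: a $k$-subset $D$ of an abelian group of order $v$ is a $(v,k,\lambda)$-difference set, with $\lambda = k(k-1)/(v-1)$ forced, if and only if $|\chi(D)|^2 = k-\lambda$ for every nontrivial character $\chi$. The forward implication is immediate from applying $\chi$ to the group-ring identity $D D^{(-1)} = (k-\lambda) + \lambda G$, and the converse follows by writing the difference-count $T(g) = \#\{(d_1,d_2)\in D\times D : d_1 - d_2 = g\}$ as the inverse Fourier transform of $\chi \mapsto |\chi(D)|^2$ and checking that $T(g)$ is then constant for $g \neq 0$. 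Here $D^{(-1)} = D$ since every element of $G$ has order $2$.

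Third, I would combine the two ingredients in each direction. For the forward direction, bentness gives $|W_f(u)| = 2^{n/2}$ for all $u$, hence $|\chi_u(D)|^2 = 2^{n-2}$ for every $u \neq 0$, and the criterion makes $D$ a difference set. The relation $W_f(0) = 2^n - 2k$ is rational, so the Corollary above forces $n$ even; setting $m = 2^{n/2-1}$ and $k = 2^{n-1} - 2^{n/2-1}$ gives $(v,k,\lambda) = (4m^2, 2m^2-m, m^2-m)$, an elementary Hadamard difference set. For the converse, the Hadamard parameters give $k - \lambda = m^2 = 2^{n-2}$ and $v = 2^n = 4m^2$, so that $|W_f(u)| = 2\,|\chi_u(D)| = 2^{n/2}$ for $u \neq 0$ while $W_f(0) = 2^n - 2k = 2m = 2^{n/2}$; hence $|W_f(u)| = 2^{n/2}$ for all $u$ and $f$ is bent.

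I expect the main obstacle to be twofold. The analytic heart is the converse half of the difference-set criterion in the second step---recovering the constancy of $T(g)$ on $g \neq 0$ from the constancy of $|\chi(D)|^2$---which is where the genuine content lies and which I would either cite or supply via the short Fourier-inversion computation. The more delicate bookkeeping point is the sign of $W_f(0)$: a priori $W_f(0) = 2^n - 2k$ could equal $+2^{n/2}$ rather than $-2^{n/2}$, in which case $D$ is instead the complementary difference set with parameters $(4m^2, 2m^2+m, m^2+m)$. I would handle this by observing that the complement of an elementary Hadamard difference set is again one (equivalently, replacing $m$ by $-m$, or passing between $f$ and $f+1$), so that the stated equivalence holds within the convention implicit in the Hadamard parameter family.
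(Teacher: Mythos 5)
Your proof is correct, but there is nothing in the paper to compare it against: the paper does not prove this theorem at all, quoting it instead from Dillon's thesis (\cite{D}, Theorem 6.2.10, page 78). Your route---splitting $W_f(u)$ over the fibers of $f$ to get $W_f(u)=-2\chi_u(D)$ for $u\neq 0$ and $W_f(0)=2^n-2k$, then invoking the character-sum criterion that a $k$-subset $D$ of an abelian group of order $v$ is a $(v,k,\lambda)$-difference set if and only if $|\chi(D)|^2$ is constant on nontrivial characters $\chi$---is the standard character-theoretic argument by which this equivalence is established in the literature, so in substance you are reconstructing the classical proof rather than finding an alternative to one in this paper. Two points merit the care you gave them. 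First, the converse half of the difference-set criterion (Fourier inversion recovering constancy of the difference counts $T(g)$ for $g\neq 0$, with $\lambda=k(k-1)/(v-1)$ forced) is where the real content sits; it should be written out or cited precisely, as you say. Second, the sign of $W_f(0)=\pm 2^{n/2}$ is a genuine issue relative to this paper's conventions: the paper defines Hadamard parameters as $(4m^2,2m^2-m,m^2-m)$ with $m>1$, so a bent $f$ with $|f^{-1}(1)|=2^{n-1}+2^{n/2-1}$ produces the complementary family $(4m^2,2m^2+m,m^2+m)$, which fits the stated form only after replacing $m$ by $-m$ or passing to the complement (equivalently from $f$ to $f+1$). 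Your resolution is the standard one and is needed for the theorem as stated to be literally true under the paper's definition; without it the forward direction would fail for half of the bent functions.
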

\index{Dillon Correspondence}

Two (naive) analogs of this are formalized below in 
Analog \ref{analog:DC-PDS}
and
Analog \ref{analog:DC-AS}.

\subsection{Weighted partial difference sets}

In this paper, we consider the ``level curves''
$f^{-1}(a) \subset GF(p)^n$ ($a\in GF(p)$, $a\not=0$)
and investigate the combinatorial structure of these sets,
especially when $f$ is bent.

\begin{definition} ({\bf PDS})
\label{defn:PDS}
{\rm
Let $G$ be a finite abelian multiplicative group of order $v$, and let 
$D$ be a subset of $G$ with order $k$. 
$D$ is a {\it $(v,k,\lambda ,\mu)$-partial difference set} (PDS) if the list 
of differences $d_1 d_2^{-1}, d_1, d_2 \in D$, represents every non-identity 
element in $D$ exactly $\lambda$ times and every non-identity element in 
$G \setminus D$ exactly $\mu$ times.
}
\end{definition}
\index{PDS}
\index{partial difference set (PDS)}

This notion can be characterized algebraically in terms of the group ring
$\ccc [G]$.

\begin{lemma}
With the notation as in the defintion above,
$(G,D)$ forms a $(v,k,\lambda ,\mu)$-PDS if and only if 
(\ref{eqn:schur-ring2}) holds.

\end{lemma}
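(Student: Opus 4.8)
The plan is to recast the purely combinatorial counting in Definition \ref{defn:PDS} as an identity in the group ring $\ccc[G]$, and for this the first step is to fix the standard identification: each subset $S\subseteq G$ is identified with the element $\sum_{s\in S} s$ of $\ccc[G]$, so that in particular $D$, the set $D^{-1}$, and $G$ itself all become elements of $\ccc[G]$. With this convention the whole argument hinges on a single observation, which I would isolate first: for any $g\in G$, the coefficient of $g$ in the product
\[
D\cdot D^{-1} = \sum_{d_1,d_2\in D} d_1 d_2^{-1}
\]
is exactly the number of ordered pairs $(d_1,d_2)\in D\times D$ with $d_1 d_2^{-1}=g$, which is precisely the number of times $g$ occurs in the list of differences appearing in Definition \ref{defn:PDS}.

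Given this, the proof reduces to comparing coefficients term by term against the two sides of (\ref{eqn:schur-ring2}). Letting $e$ denote the identity of $G$, the coefficient of $e$ in $D\cdot D^{-1}$ is always $k$, since $d_1 d_2^{-1}=e$ forces $d_1=d_2$ and there are $|D|=k$ such pairs. For a non-identity $g$ the combinatorial condition says that this coefficient equals $\lambda$ when $g\in D$ and equals $\mu$ when $g\in G\setminus D$. Collecting these contributions and using the standing assumption $e\notin D$, the PDS property is equivalent to the single group ring identity
\[
D\cdot D^{-1} = k\,e + \lambda\,D + \mu\,(G - e - D),
\]
which is exactly (\ref{eqn:schur-ring2}) after the right-hand side is regrouped. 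Since two elements of $\ccc[G]$ are equal if and only if all of their coefficients agree, this equivalence of coefficient data delivers both implications of the lemma simultaneously.

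There is no genuinely hard analytic step here; the argument is essentially bookkeeping, and I expect the only point requiring care to be the treatment of the identity element. Because Definition \ref{defn:PDS} constrains only the \emph{non-identity} differences, I must separate out the $g=e$ contribution (which produces the $k\,e$ term) from the $\lambda$- and $\mu$-weighted sums, and in particular ensure the $\mu$-weighted part ranges over $G\setminus(D\cup\{e\})$ rather than all of $G\setminus D$; mishandling this is the one way the coefficient matching could go wrong. I would also remark that the stated equivalence does not use the symmetry hypothesis $D=D^{-1}$: that assumption is only needed afterwards, when one wants $D^{-1}=D$ so that $D\cdot D^{-1}=D^2$ and the identity connects back to the difference-set lemma proved above.
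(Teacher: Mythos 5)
Your coefficient-matching argument is sound as far as it goes, but it proves an identity for $D\cdot D^{-1}$, whereas equation (\ref{eqn:schur-ring2}) is an identity for $D\cdot D$, and your closing remark --- that the equivalence ``does not use the symmetry hypothesis $D=D^{-1}$'' --- is exactly where the proof breaks. What you have actually shown is that $(G,D)$ is a $(v,k,\lambda,\mu)$-PDS (with $1\notin D$, an assumption which is indeed needed for (\ref{eqn:schur-ring2}) to make sense, though it is not in Definition \ref{defn:PDS}) if and only if
\[
D\cdot D^{-1} \;=\; k\cdot 1 + \lambda\, D + \mu\, D', \qquad D'=G\setminus(D\cup\{1\}),
\]
and the right-hand side does regroup to $(k-\mu)\cdot 1+(\lambda-\mu)D+\mu G$. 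But this identity is not (\ref{eqn:schur-ring2}) unless $D=D^{-1}$, and that hypothesis cannot be waved away, because Definition \ref{defn:PDS} does not force it. Concretely, take $G=\zzz/7\zzz$ (written additively) and $D=\{1,2,4\}$: every nonzero element of $G$ occurs exactly once among the differences $d_1-d_2$, so $D$ is a $(7,3,1,1)$-PDS, yet $-D=\{3,5,6\}\neq D$, and the coefficient of the identity in $D\cdot D$ (the multiset of sums $d_1+d_2$) is $|D\cap(-D)|=0\neq 3=k$. So the ``only if'' direction of the lemma, read with your dismissal of symmetry, is false; note that Polhill's result cited in the paper gives $D=D^{-1}$ automatically only when $\lambda\neq\mu$.

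The repair is precisely the step you set aside. In the ``only if'' direction one must assume (or separately establish) $D=D^{-1}$ --- compare the paper's own weighted analogue, which is phrased as ``$D=D^{-1}$ and (\ref{eqn:schur-ring4}) holds'' --- after which $D\cdot D=D\cdot D^{-1}$ and your coefficient computation finishes the proof. In the ``if'' direction, symmetry is not an extra hypothesis but a consequence you should extract before anything else: comparing coefficients of the identity in (\ref{eqn:schur-ring2}) gives $|D\cap D^{-1}|=k$, hence $D\subseteq D^{-1}$ and so $D=D^{-1}$ by cardinality; only then does $D\cdot D=D\cdot D^{-1}$ allow you to read off the $\lambda$ and $\mu$ counts from the remaining coefficients. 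With these two adjustments your argument is complete; as written, the identification of your identity with (\ref{eqn:schur-ring2}) is a genuine gap, not bookkeeping.
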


The well-known proof is omitted.

\begin{example}
{\rm
Consider the finite field

\[
GF(9) = GF(3)[x]/(x^2+1) = \{0,1,2,x,x+1,x+2,2x,2x+1,2x+2\},
\]
written additively.
The set of non-zero quadratic residues is given by

\[
D = \{1,2,x,2x\}.
\]
One can show that $D$ is a PDS with parameters

\[
v = 9,\ \ 
k=4,\ \
\lambda=1,
\ \ \mu=2. 
\]
}
\end{example}

We shall return to this example (with more details) below,
in Example \ref{example:17}.

\begin{definition} ({\bf Latin square type PDS})
{\rm
Let $(G,D)$ be a PDS. We say it is of {\it Latin square type} (resp.,
{\it negative Latin square type}) if there
exist $N>0$ and $R>0$ (resp., $N<0$ and $R<0$) such that 

\[
(v,k,\lambda,\mu)=(N^2,R(N-1), N+R^2-3R, R^2-R).
\]
}
\end{definition}
\index{PDS!Latin square type}

The example above is of Latin square type ($N=3$ and $R=2$)
and of negative Latin square type ($N=-3$ and $R=-1$).

Let $G$ be a finite abelian multiplicative group and let $D$ 
be a subset of $G$. Decompose $D$ into a union of 
disjoint subsets

\begin{equation}
\label{eqn:disjoint-union}
D = D_1\cup \dots \cup D_r,
\end{equation}
and assume $1\notin D$. Let $k_{i}=|D_i|$.

\begin{definition} ({\bf weighted PDS})
\label{defn:wtPDS}
{\rm
Let $W$ be a weight set of size $r$, and
$v\in \zzz$, $k\in \zzz^{|W|}$,
$\lambda\in \zzz^{|W^3|}$, and $\mu\in \zzz^{|W^2|}$.
We say $D$ is a {\it weighted $(v,k,\lambda,\mu)$-PDS},
if the following properties hold

\begin{itemize}
\item
The list of ``differences'' 

\[
D_iD_j^{-1} = \{d_1d_2^{-1}\ |\ d_1\in D_i, d_2\in D_j\},
\]
represents every non-identity element of $D_\ell$ 
exactly $\lambda_{i,j,\ell}$ times and every non-identity element
of $G\setminus D$ exactly $\mu_{i,j}$ times ($1\leq i,j,\ell\leq r$).
\item
For each $i$ there is a $j$ such that $D_i^{-1}=D_j$
(and if $D_i^{-1}=D_i$ for all $i$ then we say the
weighted PDS is {\it symmetric}).
\end{itemize}
}
\end{definition}
\index{PDS!weighted}

This notion can be characterized algebraically in terms of the group ring
$\ccc [G]$.

\begin{lemma}
With the notation as in the definition above,
$(G,D_1,\dots,D_s)$ forms a symmetric weighted 
$(v,k,\lambda ,\mu)$-PDS if and only if 
$D=D^{-1}$ and (\ref{eqn:schur-ring4}) holds.

\end{lemma}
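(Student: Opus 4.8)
The plan is to prove the equivalence by the standard group-ring coefficient-comparison argument, adapted to keep track of the decomposition $D = D_1 \cup \dots \cup D_s$. First I would adopt the usual convention of identifying each subset $A \subseteq G$ with the element $\sum_{a \in A} a$ of $\ccc[G]$, writing $G$ also for $\sum_{g \in G} g$ and $1$ for the group identity, and $A^{-1}$ for $\sum_{a \in A} a^{-1}$. The single fact driving everything is that, for subsets $A, B \subseteq G$, the coefficient of a group element $g$ in the product $A\,B^{-1} = \sum_{a \in A,\, b \in B} ab^{-1}$ equals the number of ordered pairs $(a,b) \in A \times B$ with $ab^{-1} = g$ --- that is, the number of times $g$ occurs in the difference list $A B^{-1}$ of Definition \ref{defn:wtPDS}.

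With this dictionary in hand, the heart of the proof is to read off the coefficients of each product $D_i D_j^{-1}$ and match them against the three cases in Definition \ref{defn:wtPDS}. I expect (\ref{eqn:schur-ring4}) to take the form, for all $1 \le i,j \le s$,
\[
D_i D_j^{-1} = \delta_{i,j}\, k_i \cdot 1 \; + \; \sum_{\ell=1}^{s} \lambda_{i,j,\ell}\, D_\ell \; + \; \mu_{i,j}\,(G - D - 1),
\]
with $\delta_{i,j}$ the Kronecker delta. The coefficient of the identity on the left counts pairs $(d_1,d_2) \in D_i \times D_j$ with $d_1 = d_2$; since the $D_i$ are pairwise disjoint this is $k_i$ when $i=j$ and $0$ otherwise, matching $\delta_{i,j}k_i$. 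For a non-identity $g \in D_\ell$ the coefficient is $\lambda_{i,j,\ell}$ by definition, and for a non-identity $g \in G \setminus D$ it is $\mu_{i,j}$; since $1 \notin D$, the element $G - D - 1$ is precisely $\sum_{g \in G\setminus D,\ g \neq 1} g$, so these coefficients match the right-hand side as well. Hence the difference-counting bullet of Definition \ref{defn:wtPDS} holds for every $i,j$ if and only if the displayed identity holds for every $i,j$, which is exactly (\ref{eqn:schur-ring4}).

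It then remains to line up the symmetry bullet of the definition with the hypothesis $D = D^{-1}$, and this is where I expect the only real subtlety to lie. In the forward direction a symmetric weighted PDS has $D_i^{-1} = D_i$ for every $i$, so certainly $D = D^{-1}$, and the difference-counting property gives (\ref{eqn:schur-ring4}) by the paragraph above. In the reverse direction the group-ring identity already delivers a weighted PDS, and one must recover from $D = D^{-1}$ the structural statement that each $D_i^{-1}$ is again one of the pieces of the decomposition. The main obstacle is thus not the coefficient bookkeeping but making precise exactly how (\ref{eqn:schur-ring4}) is set up relative to the symmetry assumption: if it is written using the products $D_i D_j^{-1}$ then the equivalence is immediate as above, whereas if it is written using $D_i D_j$ (as is natural once $D_j^{-1} = D_j$) one must additionally invoke $D = D^{-1}$, together with disjointness of the pieces, to justify replacing $D_j$ by $D_j^{-1}$ in each product. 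Everything else reduces to the direct translation between difference counts and group-ring coefficients established in the first two paragraphs, which is why the well-known proof of the unweighted analogue carries over essentially verbatim.
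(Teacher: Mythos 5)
Your group-ring dictionary and the forward direction are fine (and this is surely the argument the authors had in mind when they wrote ``the straightforward proof is omitted''), but the reverse direction has a genuine gap, located exactly at the point you flag as ``the only real subtlety'' and then resolve incorrectly. A \emph{symmetric} weighted PDS requires $D_i^{-1}=D_i$ for every individual $i$, and this piecewise symmetry does not follow from $D=D^{-1}$ together with the group-ring identities, under \emph{either} of the two readings of (\ref{eqn:schur-ring4}) you discuss. Your claim that $D=D^{-1}$ ``together with disjointness of the pieces'' justifies replacing $D_j$ by $D_j^{-1}$ is false: $D=D^{-1}$ only says that inversion permutes the set $D$, not that it fixes each piece. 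Concretely, let $G$ be cyclic of order $7$ with generator $g$, and put $D_1=\{g,g^2,g^4\}$, $D_2=\{g^3,g^5,g^6\}$, $D=D_1\cup D_2$. Then $1\notin D$ and $D=D^{-1}$, but $D_1^{-1}=D_2\neq D_1$. One computes in $\ccc[G]$ that $D_1\cdot D_1=D_1+2D_2$, $D_2\cdot D_2=2D_1+D_2$, and $D_1\cdot D_2=3\cdot 1+D_1+D_2$, so every product has the shape (\ref{eqn:schur-ring4}) with integer coefficients (here $G\setminus(D\cup\{1\})$ is empty, so the $\mu$-terms are vacuous; Definition \ref{defn:wtPDS} does not exclude this). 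Likewise every difference list $D_iD_j^{-1}$ is constant on $D_1$ and on $D_2$ with identity coefficient $\delta_{ij}k_i$, so your displayed identity holds as well, and $(G,D_1,D_2)$ is a genuine weighted PDS --- but not a symmetric one. Thus under both of your readings the ``if'' direction fails, and no amount of the coefficient bookkeeping you describe can repair it; your parenthetical goal of showing ``each $D_i^{-1}$ is again one of the pieces'' is also weaker than what symmetry demands.

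What closes the gap is a piece of information your argument never exploits: the coefficient of the identity. In $\ccc[G]$ the coefficient of $1$ in $D_i\cdot D_i$ equals $|D_i\cap D_i^{-1}|$, so if (\ref{eqn:schur-ring4}) is read with the identity coefficient pinned down as $\alpha_{ii}=k_i$ (as it necessarily is for a genuine symmetric weighted PDS, and as the paper records later via $p_{ij}^0=\delta_{ij}k_i$ in the proof of Proposition \ref{prop:wtPDS2AS}), then $|D_i\cap D_i^{-1}|=|D_i|$ forces $D_i^{-1}=D_i$ for every $i$. That is the only place symmetry can come from; once it is in hand, your translation between group-ring coefficients and difference counts finishes both directions, and the hypothesis $D=D^{-1}$ becomes redundant. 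In the counterexample above the identity coefficient of $D_1\cdot D_1$ is $0\neq k_1=3$, which is precisely what this corrected reading detects.
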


The straightforward proof is omitted.

\begin{remark}
If $D = D_1\cup \dots \cup D_r$ 
is a symmetric weighted PDS then
$\mu_{i,j}=\mu_{j,i}$ and $\lambda_{i,j,\ell}=\lambda_{j,i,\ell}$.
\end{remark}

How does the above notion of a weighted PDS relate to the 
usual notion of a PDS?

\begin{lemma}
\label{lemma:unweightedPDS}
Let $(G,D)$, where $D = D_1\cup \dots \cup D_r$ 
(disjoint union) is as in (\ref{eqn:disjoint-union}),
be a symmetric weighted PDS, with
parameters $(v,(k_i),(\lambda_{i,j,\ell}), (\mu_{i,j}))$.
If
\[
\sum_{i,j} \lambda_{i,j,\ell}
\]
does not depend on $\ell$, $1\leq \ell\leq r$,
then $D$ is also an unweighted PDS with
parameters $(v,k,\lambda,\mu)$ where 

\[
k = \sum_{i} k_{i},\ \ 
\lambda = \sum_{i,j} \lambda_{i,j,\ell},\ \ \ \
\mu = \sum_{i,j} \mu_{i,j}.
\]
\end{lemma}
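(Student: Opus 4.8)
The plan is to verify directly that $(G,D)$ satisfies Definition \ref{defn:PDS} with the claimed parameters, working with the multiset of differences rather than with the group-ring identity (\ref{eqn:schur-ring2}); an equivalent argument could be phrased entirely inside $\ccc[G]$, but the raw counting is cleaner here. First I would record the size count: since $D = D_1 \cup \dots \cup D_r$ is a \emph{disjoint} union, we immediately get $|D| = \sum_i |D_i| = \sum_i k_i = k$, while $|G| = v$ by hypothesis.

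The key structural observation is that the disjointness of the decomposition of $D$ lifts to a partition of the set of ordered pairs: every pair $(d_1, d_2) \in D \times D$ lies in exactly one block $D_i \times D_j$. Consequently the multiset of differences $\{\, d_1 d_2^{-1} : d_1, d_2 \in D \,\}$ is the multiset-union, over all $1 \le i,j \le r$, of the difference multisets $D_i D_j^{-1}$ appearing in Definition \ref{defn:wtPDS}. Hence for any fixed $g \in G$, the number of representations of $g$ as a difference $d_1 d_2^{-1}$ with $d_1, d_2 \in D$ equals $\sum_{i,j}$ of the number of representations of $g$ coming from $D_i D_j^{-1}$.

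Now I would split into the two cases required by Definition \ref{defn:PDS}. If $g$ is a non-identity element of $D$, then by disjointness $g$ lies in a \emph{unique} block $D_\ell$; the weighted PDS hypothesis says $g$ is represented exactly $\lambda_{i,j,\ell}$ times in $D_i D_j^{-1}$, so $g$ is represented $\sum_{i,j}\lambda_{i,j,\ell}$ times in total. This is precisely where the hypothesis enters: since $\sum_{i,j}\lambda_{i,j,\ell}$ is assumed independent of $\ell$, this common value is exactly the claimed $\lambda$, so \emph{every} non-identity element of $D$ is represented the same number $\lambda$ of times. If instead $g$ is a non-identity element of $G \setminus D$, the weighted PDS hypothesis gives $\mu_{i,j}$ representations from each $D_i D_j^{-1}$, for a total of $\sum_{i,j}\mu_{i,j} = \mu$, independent of the choice of $g$.

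These two facts, together with $|D|=k$ and $|G|=v$, are exactly the defining conditions of a $(v,k,\lambda,\mu)$-PDS, completing the argument. I do not anticipate a serious obstacle; the only point requiring genuine care is the bookkeeping in the structural observation. One must check that the $\lambda$-count attached to a $g \in D$ really uses the single index $\ell$ with $g \in D_\ell$ (guaranteed by disjointness), so that the $\ell$-independence hypothesis is precisely what is needed to collapse the family $\{\sum_{i,j}\lambda_{i,j,\ell}\}_\ell$ into a single well-defined $\lambda$. Note that the symmetry hypothesis $D = D^{-1}$ is available but is not actually invoked in this implication.
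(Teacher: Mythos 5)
Your proposal is correct and follows essentially the same route as the paper's proof: both count, for each fixed non-identity $g$, its representations as differences by decomposing the multiset $DD^{-1}$ into the blocks $D_iD_j^{-1}$ (via the partition of $D\times D$), then sum $\lambda_{i,j,\ell}$ over $i,j$ when $g\in D_\ell$ (invoking the $\ell$-independence hypothesis) and sum $\mu_{i,j}$ when $g\in G\setminus D$. Your added remark that the symmetry hypothesis $D=D^{-1}$ is never used is accurate, but otherwise the two arguments coincide.
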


\begin{remark}
Case 1 in Proposition \ref{prop:gf33bent} does not
satisfy this hypothesis.
\end{remark}

\begin{proof}
The claim is that $(G,D)$ is a PDS with parameters
$(v,k,\lambda,\mu)$.
Since $v=|G|$ and 

\[
k = |D|=|D_1|+\dots +|D_r|=k_1+\dots+k_r,
\]
we need only verify the claim regarding 
$\lambda$ and $\mu$.

Does each element $d$ of $D$ occur the same number of times 
in the list $DD^{-1}$? Suppose $d\in D_\ell$, where $1\leq \ell\leq r$. 
By hypothesis, $d$ occurs in $D_i-D_j$ exactly $\lambda_{i,j,\ell}$ 
times. Since $DD^{-1}$ is the concatenation of the $D_iD_j^{-1}$,
for $1\leq i,j\leq r$, $d$ occurs in $D\cdot D^{-1}$ exactly

\[
\sum_{i,j} \lambda_{i,j,\ell}
\]
times. By hypothesis, this does not depend on $\ell$, so
the claim regarding $\lambda$ has been verified.

Does each non-zero element $d$ of $G\setminus D$ occur the same number of times 
in the list $D\cdot D^{-1}$? By hypothesis, $d$ occurs in $D_iD_j^{-1}$ 
exactly $\mu_{i,j}$ times. Since $D\cdot D^{-1}$ is the concatenation 
of the $D_iD_j^{-1}$, for $1\leq i,j\leq r$, $d$ occurs in $D\cdot D^{-1}$ exactly

\[
\sum_{i,j} \mu_{i,j}
\]
times. This verifies the claim regarding $\mu$ and completes
the proof of the lemma.

\end{proof}

\begin{example}
\label{example:17}
{\rm
Consider the finite field

\[
GF(9) = GF(3)[x]/(x^2+1) = \{0,1,2,x,x+1,x+2,2x,2x+1,2x+2\},
\]
written multiplicatively
The set of non-zero quadratic residues is given by

\[
D = \{1,2,x,2x\}.
\]
Let $D_1=\{1,2\}$ and $D_2=\{x,2x\}$.

Translating the multiplicative notation to the additive notation, we find

\[
D_1D_1^{-1} = [d_1-d_2\ |\ d_1\in D_1, d_2\in D_1]
= [0,0,1,2],
\]
\[
D_1D_2^{-1} = [d_1-d_2\ |\ d_1\in D_1, d_2\in D_2]
= [x+1,x+2,2x+1,2x+2],
\]
\[
D_2D_1^{-1} = [d_1-d_2\ |\ d_1\in D_2, d_2\in D_1]
= [x+1,x+2,2x+1,2x+2],
\]
\[
D_2D_2^{-1} = [d_1-d_2\ |\ d_1\in D_2, d_2\in D_2]
= [0,0,x,2x].
\]
Therefore, this describes a weighted PDS with parameters

\[
k_{1,1}=2,\ k_{2,2}=2,\ k_{1,2}=k_{2,1}=0,
\]
\[
\lambda_{1,1,1}=1,\ \lambda_{1,1,2}=0,\ 
\lambda_{1,2,1}=0,\ \lambda_{1,2,2}=0,\ 
\]
\[
\lambda_{2,1,1}=0,\ \lambda_{2,1,2}=0,\ 
\lambda_{2,2,1}=0,\ \lambda_{2,2,2}=1,
\]
and
\[
\mu_{1,1}=0,\ \mu_{1,2}=1,\ 
\mu_{2,1}=1,\ \mu_{2,2}=0. 
\]
}
\end{example}

As we will see, there a weighted analog of the correspondence between
PDSs and SRGs.

We have the following generalization of Theorem \ref{thrm:5}.

\begin{theorem}
Let $G$ be an abelian multiplicative group and let
$D\subset G$ be a subset such that $1\notin D$ and
with disjoint decomposition $D = D_1\cup D_2\cup \dots \cup D_r$.
The following are equivalent:

\begin{itemize}
\item[(a)]
$(G,D)$ is a symmetric weighted partial difference set having parameters
$(v,k,\lambda,\mu)$, where $v=|G|$, $k=\{k_i\}$ with
$k_i = |D_i|$, $\lambda = \{\lambda_{i,j,\ell}\}$, and
$\mu = \{\mu_{i,j}\}$.

\item[(b)]
$\Gamma(G,D)$ is a strongly regular edge-weighted (undirected) graph with parameters
$(v,k,\lambda,\mu)$ as in (a).
\end{itemize}

\end{theorem}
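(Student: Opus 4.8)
The plan is to prove the equivalence by translating both conditions into identical algebraic statements in the group ring $\ccc[G]$, using the characterization already quoted in the excerpt. Recall that the unweighted PDS--SRG correspondence (Theorem \ref{thrm:5}, invoked above) hinges on the group-ring identity $(\ref{eqn:schur-ring2})$, and that the symmetric weighted PDS is characterized by $D=D^{-1}$ together with $(\ref{eqn:schur-ring4})$. So the natural strategy is to show that the defining combinatorial data of a strongly regular edge-weighted graph $\Gamma(G,D)$ is precisely encoded by $(\ref{eqn:schur-ring4})$, just as in the unweighted case. First I would write down explicitly what $\Gamma(G,D)$ means: the vertex set is $G$, and two vertices $g,h$ are joined by an edge weighted $i$ (or labeled by the index $i$ of the part $D_i$) exactly when $g h^{-1}\in D_i$. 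The condition $D=D^{-1}$ (equivalently, for each $i$ there is $j$ with $D_i^{-1}=D_j$) is exactly what makes this edge-weighting well-defined as an \emph{undirected} weighted graph, since it guarantees the weight attached to $gh^{-1}$ is consistent with that attached to $hg^{-1}$.

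Next I would unpack the meaning of ``strongly regular edge-weighted'' in the weighted setting. The key observation is that the entries $\lambda_{i,j,\ell}$ and $\mu_{i,j}$ count, for a fixed ordered pair of adjacent (resp. non-adjacent) vertices, the number of common neighbors joined to the two endpoints by edges of prescribed weights. Concretely, fix two vertices $g,h$ with $gh^{-1}=d$. A common neighbor $w$ contributes an edge of weight-index $i$ to $g$ and weight-index $j$ to $h$ precisely when $gw^{-1}\in D_i$ and $hw^{-1}\in D_j$. Counting such $w$ is, after the substitution $w\mapsto g w^{-1}$, exactly counting representations of $d=gh^{-1}$ as a product $d_1 d_2^{-1}$ with $d_1\in D_i$, $d_2\in D_j$ --- which is the multiplicity appearing in the list $D_i D_j^{-1}$. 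Thus the graph is strongly regular with the stated weighted parameters if and only if that multiplicity depends only on which part $D_\ell$ (or which complement block) contains $d$, and this is precisely the PDS condition of Definition \ref{defn:wtPDS}.

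Having set up this dictionary, the proof of (a)$\Leftrightarrow$(b) becomes a direct bookkeeping argument: I would verify that the number of common neighbors with prescribed edge-weights at an adjacent pair equals $\lambda_{i,j,\ell}$ and at a non-adjacent pair equals $\mu_{i,j}$, giving both implications simultaneously. Equivalently, and more cleanly, I would note that both (a) and (b) are characterized by the single group-ring equation $(\ref{eqn:schur-ring4})$ --- the products $D_i D_j^{-1}$ in $\ccc[G]$ expand into $\sum_\ell \lambda_{i,j,\ell} D_\ell$ plus a $\mu_{i,j}$-fold multiple of $G\setminus(D\cup\{1\})$ plus diagonal terms from the identity --- and then appeal to the preceding lemma characterizing the weighted PDS via that identity. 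The main obstacle I anticipate is not any deep calculation but rather pinning down the correct \emph{definition} of a strongly regular edge-weighted graph so that its parameters line up index-for-index with $\lambda_{i,j,\ell}$ and $\mu_{i,j}$; the off-diagonal parts $D_i D_j^{-1}$ with $i\neq j$ force one to track ordered pairs of weights on the two incident edges, and one must use the symmetry relations $\mu_{i,j}=\mu_{j,i}$, $\lambda_{i,j,\ell}=\lambda_{j,i,\ell}$ from the earlier remark to confirm the weighting is genuinely undirected. Once that definition is fixed, the equivalence follows by the same character-theoretic / group-ring translation that underlies the unweighted Theorem \ref{thrm:5}.
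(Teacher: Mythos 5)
Your proposal is correct and takes essentially the same approach as the paper: the paper's proof is precisely your ``dictionary'' counting argument, identifying common neighbors $x\in N(g_1,i)\cap N(g_2,j)$ with representations $g_1g_2^{-1}=d_1d_2^{-1}$, $d_1\in D_i$, $d_2\in D_j$, so the weighted SRG parameters coincide with the weighted PDS multiplicities (with the symmetry condition $D_i=D_i^{-1}$ making the graph undirected, and the reverse implication obtained by reversing the same count). Your alternative packaging via the group-ring identity (\ref{eqn:schur-ring4}) is consistent with the paper's surrounding lemmas but is not the route the paper's proof actually takes.
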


\begin{proof}
Let $D' = G\setminus D-\{1\}$.

((a) $\implies$ (b))
Suppose $(G,D)$ is a weighted partial difference set satisfying
$D_i=D_i^{-1}$, for all $i$. The graph $\Gamma=\Gamma(G,D)$ 
has $v=|G|$ vertices, by definition.
Each vertex $g$ of $\Gamma$ has $k_i$ neighbors of weight $i$, 
namely, $dg$ where $d\in D_i$.
(We say two vertices are ``neighbors having edge-weight $0$'' 
if they are not connected by an edge in the unweighted graph.) 
Let $g_1,g_2$ be distinct vertices in $\Gamma$.
Let $x$ be a vertex which is a neighbor  of each:
$x\in N(g_1,i)\cap N(g_2,j)$. By definition, $x=d_1g_1=d_2g_2$, for
some $d_1\in D_i$, $d_2\in D_j$. Therefore, $d_1^{-1}d_2 = g_1g_2^{-1}$.
If $g_1g_2^{-1}\in D_\ell$, for some $\ell\not=0$, then there are 
$\lambda_{i,j,\ell}$ solutions, by definition of a weighted PDS.
If $g_1g_2^{-1}\in D'$ then there are 
$\mu_{i,j}$ solutions, by definition of a weighted PDS.

((b) $\implies$ (a))
Note $f$ even implies the symmetric condition of a weighted PDS.
For the remainder of the proof, note the reasoning above is 
reversible. Details are left to the reader.
\end{proof}

Let $f$ be a $GF(p)$-valued function on $V$. The {\it Cayley graph of $f$} is 
defined to be the edge-weighted digraph

\begin{equation}
\label{eqn:wtdCayleyGraph}
  \Gamma_f = (GF(p)^n, E_f ), 
\end{equation}
whose vertex set is $V=V(\Gamma_f)=GF(p)^n$ and the set of edges is defined by 

\[
            E_f =\{(u,v) \in GF(p)^n\ |\ f(u-v)\not= 0\},
\]
where the edge $(u,v)\in E_f$ has weight $f(u-v)$.
However, if $f$ is even then we can (and do) regard $\Gamma_f$ as a 
weighted (undirected) graph. 
\index{Cayley graph!$p$-ary function}

\begin{theorem}
Let $f:GF(p)^{n} \rightarrow GF(p)$ be an even function such that $f(0) = 0.$ 
Let $D_i = f^{-1}(i)$, for $i=1,2,\dots,p-1$, and $D_0=\{0\}$. 
Let $D_p = GF(p)^n\setminus D_0 \cup D_1 \cup \cdots \cup D_{p-1}.$ 
If $(GF(p)^n,D_0,\dots,D_p)$ is a weighted partial 
difference set, where $G = GF(p)^{n}$, then the associated 
(strongly regular) graph is the (edge-weighted) Cayley graph of $f$.
\end{theorem}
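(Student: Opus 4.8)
The plan is to prove this essentially by unwinding the two definitions and checking that the edge-weighted graph produced by the previous theorem is, edge for edge, the Cayley graph $\Gamma_f$. Both objects have the same vertex set $GF(p)^n$, so everything reduces to comparing their weighted edge sets. The key preliminary observation I would record is that, since $GF(p)^n$ is the disjoint union of the level sets $f^{-1}(0), f^{-1}(1), \ldots, f^{-1}(p-1)$ and $f(0) = 0$, the classes $D_1, \ldots, D_{p-1}, D_p$ partition $GF(p)^n \setminus \{0\}$, where $D_i = f^{-1}(i)$ for $1 \le i \le p-1$ and $D_p = f^{-1}(0) \setminus \{0\}$, while $D_0 = \{0\}$ contributes nothing to differences between distinct vertices.

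Next I would invoke the construction in the preceding theorem: in the associated strongly regular edge-weighted graph $\Gamma = \Gamma(G, D_0, \ldots, D_p)$, writing $G = GF(p)^n$ additively, a vertex $g$ is joined to $g + d$ with edge-weight $i$ exactly when $d \in D_i$. So, for any pair of distinct vertices $u, v$, I would put $d = u - v \neq 0$ and split into cases according to the class of $d$. If $d \in D_i$ with $1 \le i \le p-1$, then $f(u - v) = f(d) = i \neq 0$, so $\Gamma_f$ carries the edge $\{u, v\}$ with weight $i$, matching the weight-$i$ edge of $\Gamma$. If $d \in D_p$, then $f(d) = 0$, so $\Gamma_f$ has no edge, and correspondingly $d$ belongs to the zero-weight complement class of $\Gamma$, which is also a non-edge under the convention that edge-weight $0$ denotes non-adjacency. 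These cases are exhaustive, so the two weighted edge sets coincide. The evenness hypothesis enters here to make both objects well-defined and undirected: $f(-x) = f(x)$ gives $f(u - v) = f(v - u)$, so the weight of $\{u, v\}$ is independent of orientation, and equivalently each $D_i = D_i^{-1}$, which is exactly the symmetry condition required of the weighted PDS.

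The step I expect to be the main obstacle is the bookkeeping for the two degenerate classes $D_0$ and $D_p$. One must be careful that the index $p$ labelling $D_p$ is to be read modulo $p$, so that the class $D_p$ is identified with weight $0$, i.e. with non-adjacency, consistent with $f$ vanishing on $D_p$; and that $D_0 = \{0\}$, being the identity, never arises as a difference of distinct vertices and so plays no role in the edge comparison. Once this identification of the vanishing level set with the non-edges of $\Gamma$ is made explicit, the rest is a routine matching of definitions.
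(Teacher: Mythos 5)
Your proposal is correct and takes essentially the same approach as the paper: both arguments reduce to checking that the weight each construction assigns to a pair $(u,v)$ is determined by the class $D_k$ containing $u-v$, which by definition of the level sets equals $f(u-v)$. The only cosmetic difference is that the paper phrases this via adjacency matrices, using translation invariance (the ``circulant'' structure) to compare just the top rows, whereas you compare all pairs directly; your explicit treatment of the degenerate classes $D_0$ (never a difference of distinct vertices) and $D_p$ (identified with weight $0$, i.e.\ non-adjacency) makes precise the bookkeeping the paper leaves implicit.
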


\begin{remark}
\label{remark:level-curves}
Roughly speaking, this theorem says that ``if the level curves of $f$
form a weighted PDS then the (edge-weighted) Cayley graph 
corresponding to $f$ agrees with the (edge-weighted) strongly 
regular graph associated to the weighted PDS.''
\end{remark}

\begin{proof}
The adjacency matrix $A$ for the Cayley graph of $f$ is defined by 
$A_{ij} = f(j-i)$ for $i,j \in GF(p)^{n}$. So the top row of A is defined by $A_{0j} = f(j)$. 
The adjacency matrix for any Cayley graph can be determined from its top row, 
since it is a circulant matrix. Therefore, it is enough to show that that the 
adjacency matrix of the weighted partial difference set has the same top row 
as $A$. Let $B$ be the adjacency matrix of the weighted partial difference set. 
Then $B$ is defined by $B_{ij} = k$ if $j-i \in D_{k}$ 
(for all $i,j \in GF(p)^{n}$ and $k \in GF(p)$). 
The top row of $B$ is defined by 
$B_{0j} = k$ if $j \in D_{k}$. But if $j \in D_{k}$, then $f(j) = k$, 
so $B_{0j} = f(j)$. The top rows of $A$ and $B$ are equivalent, so 
$A = B$. Therefore, the strongly regular graph associated with the 
weighted partial difference set $(G,D)$ is the Cayley graph of $f$. 
\end{proof}
\index{adjacency matrix}

\begin{conjecture} (Walsh)
If $f:GF(p)^n\to GF(p)$, $p>2$ is weakly regular and bent and 
corresponds to a weighted SRG (via Analog \ref{analog:BC-SRG})
then $\mu_{ii}=0$, $1\leq i\leq s$, for the associated weighted PDS.
\end{conjecture}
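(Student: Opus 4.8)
The plan is to recast the vanishing of the $\mu_{ii}$ as a statement about level sets and then to evaluate it with character sums. Since $f$ is even, each level set $D_i=f^{-1}(i)$ satisfies $D_i=-D_i$, and by the definition of the weighted PDS the parameter $\mu_{ii}$ is exactly the number of $x$ with $f(x)=f(x+g)=i$ for any fixed $g\in f^{-1}(0)\setminus\{0\}$. Writing $k_j=|f^{-1}(j)|$, summing this count over all $g\in f^{-1}(0)$ (the term $g=0$ contributing $k_i$) gives
\[
N_i:=\bigl|\{(x,y):f(x)=f(y)=i,\ f(x-y)=0\}\bigr|=k_i+(k_0-1)\,\mu_{ii}.
\]
Thus the conjecture is equivalent to $N_i=k_i$ for $1\le i\le p-1$; that is, two distinct points of the same nonzero level set never differ by a point of the zero level set. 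First I would establish this reformulation, since it removes all reference to the graph and isolates a single integer $N_i$ to compute.

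Next I would compute $N_i$ by Fourier analysis on $V=GF(p)^n$. Put $S_j(u)=\sum_{x:f(x)=j}\zeta^{\langle u,x\rangle}$; because $f$ is even each $S_j$ is real and even in $u$. Expanding the three indicator functions in $N_i$ and using $\sum_u\zeta^{\langle u,z\rangle}=p^n[z=0]$ yields the clean identity
\[
N_i=\frac{1}{p^n}\sum_{u\in V}S_0(u)\,S_i(u)^2 .
\]
The discrete Fourier transform over $GF(p)$ gives $S_j(u)=\tfrac1p\sum_{c\in GF(p)}\zeta^{-cj}W_{cf}(u)$, so substituting and summing first over $u$ (again by orthogonality) reduces everything to the triple-correlation sums
\[
\Theta(a,b,c)=\sum_{x,y\in V}\zeta^{a f(x)+b f(y)+c f(x+y)},\qquad N_i=\frac{1}{p^3}\sum_{a,b,c\in GF(p)}\zeta^{-(b+c)i}\,\Theta(a,b,c).
\]
All terms in which at least one of $a,b,c$ vanishes factor through the single sums $W_{cf}(0)$ and are therefore determined by the value distribution $(k_j)$ of $f$; the heart of the matter is the subsum over $a,b,c$ all nonzero.

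Here I would bring in the two hypotheses. Bentness and weak regularity force $|W_{cf}(u)|=p^{n/2}$ with $W_{cf}(u)=\mu^{\sigma_c}\zeta^{(cf)^*(u)}p^{n/2}$, obtained from the Kumar--Scholtz--Welch normal form together with the scaling identity $W_f(u)^{\sigma_c}=W_{cf}(cu)$; this pins down the magnitudes, the value distribution, and the lower-order ($abc=0$) part of $N_i$. The weighted-PDS hypothesis supplies rigidity: it says precisely that $S_i(u)^2=(k_i-\mu_{ii})+\sum_{\ell}\lambda_{ii\ell}S_\ell(u)+\mu_{ii}S_0(u)$ for every $u$, so the tuples $(S_0(u),\dots,S_{p-1}(u))$ lie in the spectrum of a small commutative (Schur) algebra and take only a few distinct values, bounded by the dimension of that algebra. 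The plan is to combine this finiteness with the modulus constraint $|W_{cf}(u)|=p^{n/2}$ and the balanced-derivative property of bent functions (for $g\ne0$ the map $x\mapsto f(x+g)-f(x)$ is balanced, so $\sum_{j}|\{x:f(x)=f(x+g)=j\}|=p^{n-1}$) to enumerate the admissible tuples and to verify $N_i=k_i$ in each case.

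The main obstacle is the genuine triple-correlation term $\sum_{a,b,c\ne 0}\zeta^{-(b+c)i}\Theta(a,b,c)$: this is a third-order invariant of $f$, not determined by the Walsh spectrum alone, so bentness by itself cannot evaluate it, and this is precisely the gap that keeps the statement conjectural. For quadratic (hence weakly regular) $f$ one can complete the square and evaluate every $\Theta(a,b,c)$ as a product of Gauss sums; I expect the argument to close cleanly there and to reproduce the computed cases $(p,n)\in\{(3,2),(3,3),(5,2)\}$. The step I expect to be hard is evaluating $\Theta(a,b,c)$ for all weakly regular bent $f$ --- presumably by using the association-scheme structure forced by the weighted-PDS hypothesis to constrain the triple correlations beyond what the spectrum alone provides.
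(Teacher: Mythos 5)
This statement is a \emph{conjecture} in the paper: no proof is given. The paper's only support for it is computational evidence in the small cases $(p,n)\in\{(3,2),(3,3),(5,2)\}$, plus the remark that weak regularity cannot be dropped (the function $x_0x_1+x_2^2$ on $GF(3)^3$ is even, bent, not weakly regular, satisfies Analog \ref{analog:BC-SRG}, and has $\mu_{(1,1)}=6\neq 0$). Your submission is likewise not a proof, and you say so yourself. The preliminary reductions are correct: the reformulation $\mu_{ii}=0\iff N_i=k_i$ (using $N_i=k_i+(k_0-1)\mu_{ii}$), the identity $N_i=\tfrac{1}{p^n}\sum_{u}S_0(u)S_i(u)^2$, and the passage to the sums $\Theta(a,b,c)$ are all legitimate. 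But they are a change of variables, not progress: the entire content of the conjecture sits in the subsum over $a,b,c$ all nonzero, which is a genuinely cubic invariant of $f$ that the Walsh spectrum --- hence bentness plus weak regularity --- does not determine, and your plan offers no mechanism for controlling it beyond the hope that the Schur-ring rigidity can be ``enumerated.'' Naming that term as the gap is honest, but it means no part of the conjecture has actually been established.

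A concrete warning about the one step you do expect to ``close cleanly.'' Your Gauss-sum strategy for quadratic $f$ is blind to the parity of $n$, yet parity is essential: the paper's own data (Proposition \ref{prop:gf33bent}, Case 1) exhibits the quadratic, weakly regular bent function $x_0^2+x_1^2+x_2^2$ on $GF(3)^3$ whose level curves do form a weighted PDS, with $\mu_{22}=p_{22}^3=6\neq 0$. This does not falsify the conjecture only because Analog \ref{analog:BC-SRG} --- through which the hypothesis ``corresponds to a weighted SRG'' is phrased --- assumes $n$ even. Consequently, any completed argument, even for quadratics, must invoke the evenness of $n$ in an essential way (it will enter through the parity-dependence of the Gauss sums, equivalently through the two cases of the Kumar--Scholtz--Welch normal form in Proposition \ref{prop:KSW}); a parity-blind evaluation of $\Theta(a,b,c)$ that concluded $N_i=k_i$ for every quadratic weakly regular bent $f$ would prove too much and contradict this example. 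Your sketch never identifies where that hypothesis is used, which is a second genuine gap in addition to the unevaluated triple correlation.
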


\begin{remark}
If you drop the hypothesis that $f$ be weakly regular
then the conjecture is false.
\end{remark}

\subsection{Association schemes}

The following definition is standard, but we give
\cite{PTFL} as a reference.

\begin{definition} ({\bf association scheme})
{\rm
Let $S$ be a finite set and let $R_0, R_1, \dots, R_s$ denote 
binary relations on $S$ (subsets of $S\times S$).
The {\it dual} of a relation $R$ is the set

\[
R^* = \{(x,y)\in S\times S\ |\ (y,x)\in R\}.
\]
Assume $R_0=\Delta_S= \{ (x,x)\in S\times S\ |\ x\in S\}$.
We say $(S,R_0,R_1,\dots,R_s)$ is a 
{\it $s$-class association scheme on $S$} if
the following properties hold.

\begin{itemize}
\item
We have a disjoint union

\[
S\times S = R_0\cup R_1\cup \dots \cup R_s,
\]
with $R_i\cap R_j=\emptyset$ for all $i\not= j$.

\item
For each $i$ there is a $j$ such that $R_i^*=R_j$
(and if $R_i^*=R_i$ for all $i$ then we say the
association scheme is {\it symmetric}).

\item
For all $i,j$ and all $(x,y)\in S\times S$, define

\[
p_{ij}(x,y) = |\{z\in S\ |\ (x,z)\in R_i, (z,y)\in R_j\}|.
\]
For eack $k$, and for all $x,y\in R_k$, the integer $p_{ij}(x,y)$ is
a constant, denoted $p_{ij}^k$.
\end{itemize}
These constants $p_{ij}^k$ are called the {\it intersection numbers}
or {\it parameters} or {\it structure constants} of
the association scheme.
}
\end{definition}
\index{association scheme}
\index{association scheme!parameters}
\index{association scheme!symmetric}
\index{association scheme!structure constants}
\index{association scheme!intersection numbers}
\index{relation!dual}
\index{relation!symmetric}
\index{intersection numbers}

Next, we recall (see Herman \cite{He}) the matrix-theoretic 
version of this definition.

\begin{definition} ({\bf adjacency ring})
{\rm
Let $S$ be a finite abelian multiplicative group of order $m$. 
Let $(S,R_0,\dots, R_s)$ denote a tuple consisting of $S$ with relations $R_i$ for which we have a disjoint union

\[
S\times S = R_0\cup R_1\cup \dots \cup R_s,
\]
with $R_i\cap R_j=\emptyset$ for all $i\not= j$.
Let $A_i\in Mat_{m\times m}(\zzz)$ denote the adjacency matrix of $R_i$, $i=0,1,\dots ,s$.

We say that the subring of $\zzz [Mat_{m\times m}(\zzz)]$ is an {\it adjacency ring}
(also called the Bose-Mesner algebra) provided
the set of adjacency matrices satisfying the following five properties:

\begin{itemize}
\item
for each integer $i\in [0,\dots ,s]$, $A_i$ is a $(0, 1)$-matrix,
\item
$\sum_{i=0}^s A_i  = J$ (the all $1$'s matrix),
\item
for each integer $i\in [0\dots ,s]$, $^tA_i=A_j$, for some  integer $j\in [0,s]$,
\item
there is a subset $J \subset G$ such that $\sum_{j\in J} A_j = I$, and
\item
there is a set of non-negative integers 
$\{p_{ij}^k \ |\ i,j,k \in [0,\dots, s]\}$ such that 
equation (\ref{eqn:AiAj}) holds
for all such $i,j$.
\end{itemize}
}
\end{definition}
\index{adjacency ring}
\index{Bose-Mesner algebra}

Regarding the Dillon correspondence, we have the following 
combinatorial analogs (which may or may not be true in general).
\index{Dillon correspondence}

\begin{analog}
\label{analog:DC-PDS}
If $f$ is an even bent function then the 
tuple 

\[
(GF(p)^n,D_0,D_{1}, D_{2}, \cdots ,D_{p-1},D_p)
\]
defines a weighted partial difference set.
\end{analog}

We reformulate this in an essentially equivalent way using the
language of association schemes.

\begin{analog}
\label{analog:DC-AS}
Let $f$ be as above and let $R_0, R_1, \dots, R_p$ denote 
binary relations on $GF(p)^n$ given by

\[
R_i = \{(x,y)\in GF(p)^n\times GF(p)^n\ |\ f(x,y) = i \},\ \ \ 
0\leq i \leq p.
\]
If $f$ is an even bent function then $(GF(p)^n,R_0,R_1,\dots,R_p)$ 
is a $p$-class association scheme.
\end{analog}
\index{Dillon correspondence!analogs}

It is well-known that a PDS $(G,D)$ is naturally associated to 
a $2$-class association scheme, namely $(G,R_0,R_1,R_2)$ where

\[
R_0 = \Delta_G,
\]
\[
R_1 = \{(g,h)\ |\ gh^{-1}\in D\},
\]
\[
R_2 = \{(g,h)\ |\ gh^{-1}\notin D,\ g\not= h\}.
\]
To verify this, let consider the ``Schur ring.''

For the following definition, we identify
any subset $S$ of $G$ with the formal sum of 
its elements in $\ccc[G]$.

\begin{definition} ({\bf Schur ring})
{\rm
Let $G$ be a finite abelian group and let 
$C_0, C_1,\dots, C_s$ denote finite subsets with the following properties.

\begin{itemize}
\item
$C_0=\{1\}$ is the singleton containing the identity.

\item
We have a disjoint union

\[
G = C_0\cup C_1\cup \dots \cup C_s,
\]
with $C_i\cap C_j=\emptyset$ for all $i\not= j$.

\item
for each $i$ there is a $j$ such that
$C_i^{-1}=C_j$ (and if $C_i^{-1}=C_i$ for all $i$ then we say the
Schur ring is {\it symmetric}).

\item
for all $i,j$, we have

\[
C_i\cdot C_j = \sum_{k=0}^s \rho_{ij}^k C_k,
\]
for some integers $\rho_{ij}^k$.
\end{itemize}
The subalgebra of $\ccc[G]$ generated by $C_0, C_1,\dots, C_s$
is called a {\it Schur ring} over $G$.
}
\end{definition}
\index{Schur ring}

In the cases we are dealing with, the Schur ring is commutative, so 
$\rho_{ij}^k=\rho_{ji}^k$, for all $i,j,k$.

If $(G,C_0,\dots, C_s)$ is a Schur ring then

\[
R_i = \{(g,h)\in G\times G\ |\ gh^{-1}\in C_i\},
\]
for $0\leq i,j\leq s$, gives rise to its corresponding association scheme.

\begin{remark}
Weighted PDSs in the notation of Definition \ref{defn:wtPDS}
naturally correspond to association schemes of class $r+1$.
For a precise version of this, see Proposition 
\ref{prop:wtPDS2AS} below.

\end{remark}

\begin{example}
{\rm
For an example of a Schur ring, we return to the PDS, $(G,D)$. Let

\[
D' = G\setminus (D\cup \{1\}).
\]
We have the well-known intersection

\begin{equation}
\label{eqn:schur-ring2}
\begin{array}{rl}
D\cdot D &= (k-\mu)\cdot I+(\lambda -\mu)\cdot D  + \mu \cdot G\\
 & = k\cdot I +\lambda \cdot D + \mu \cdot D'\ ,\\
\end{array}
\end{equation}

and
\begin{equation}
\label{eqn:schur-ring3}
\begin{array}{rl}
D\cdot D' &= (-k+\mu)\cdot 1 + (-1-\lambda +\mu)\cdot D  + (k-\mu )\cdot G\\
 & =  = 0\cdot I +(k-1-\lambda) \cdot D + (k-\mu) \cdot D'\ .\\
\end{array}
\end{equation}
Provided $k\geq {\rm max}(\mu,\lambda+1)$, $|G|\geq {\rm max}(k+1,2k-\mu+2)$, 
with these, one can verify that a PDS naturally 
yields an associated Schur ring, generated by $D$, 
$D'$, and $D_0=\{1\}$ in $\ccc[G]$, and a $2$-class association scheme.

Using (\ref{eqn:schur-ring3}), one can verify 
that $D'$ is $(v,k^\prime,\lambda^\prime, \mu^\prime )$-PDS with 
$(D')^{-1}=D'$ and $1 \notin D'$, where 
\begin{equation}
\label{eqn:primepar}
\begin{array}{rl}
k^\prime &= v - k - 1\\
\lambda^\prime&=v - 2k - 2 + \mu, \rm {and}\\
\mu^\prime&=v-2k+\lambda.
\end{array}
\end{equation}.  

We include a proof here for convenience.  

\begin{proof} 
We will show that $D^\prime$ is a 
$(v,k^\prime, \lambda^\prime, \mu^\prime)$-partial 
difference set.
The first of these three equations is immediate, from the 
definition of $D^\prime$. The fact that $D^\prime=(D^\prime)^{-1}$ 
also follows immediately the hypotheses.

By the definition of $D$, and because $D^{-1}=D$, we have 
\begin{equation}
D \cdot D = k{1}+\lambda D + \mu D^\prime.
\end{equation}
To find $D \cdot D^\prime$, we note that 
\[
\begin{array}{rl}
kG &=D \cdot G \\
&=D \cdot (\{1\} + D + D^\prime)\\
&=D + D \cdot D + D \cdot D^\prime
\end{array}
\]
so that 
\begin{equation}
D \cdot D^\prime = 
(k - \lambda - 1)D+(k - \mu) D^\prime.
\end{equation}

Similarly, we note that 
\[
\begin{array}{rl}
 k^\prime G &=G \cdot D^\prime  \\
&= ((\{1\} + D + D^\prime)\cdot D^\prime  \\
&=D^\prime + D \cdot D^\prime + D^\prime \cdot D^\prime
\end{array}
\]
so that 
\begin{equation}
\label{eqn:Dprime}
\begin{array}{rl}
D^\prime \cdot D^\prime &= 
k^\prime \{1\} +(k^\prime -k+\lambda+1)D+
(k^\prime -k-1-+ \mu) D^\prime\\
&=k^\prime \{1\} +(v -2k+\lambda)D+
(v -2k- 2+ \mu) D^\prime.
\end{array}
\end{equation}

Equation \ref{eqn:Dprime} shows that $D^\prime$ is a 
$(v,k^\prime, \lambda^\prime, \mu^\prime)$-partial 
difference set, with $\lambda^\prime$ and $\mu^\prime$ 
as in Equation \ref{eqn:primepar}.

\end{proof}

It can be shown that $\mu^\prime=k^\prime\left(1 - \frac \mu k\right).$

}
\end{example}

With the identities in the above example, one can verify 
that a PDS naturally yields an associated Schur ring
and a $2$-class association scheme.

We will now state a more general proposition concerning weighted partial difference sets.

\begin{proposition}
{\rm
Let $G$ be a finite abelian group. Let $D_0, \cdots, D_r \subseteq G$ such that $D_i \cap D_j = \emptyset$ if $i \neq j$, and 
\begin{itemize}
\item
$G$ is the disjoint union $D_0 \cup \cdots \cup D_r$
\item
for each $i$ there is a $j$ such that $D_i^{-1} = D_j$, and
\item
$D_i \cdot D_j = \sum\limits_{k=0}^r p_{ij}^k D_k$ for some positive integer $p_{ij}^k$.
\end{itemize}
Then the matrices $P_k = (p_{ij}^k)_{0 \leq i,j \leq l}$ satisfy the following properties:
\begin{itemize}
\item
$P_0$ is a diagonal matrix with entries $|D_0|, \cdots, |D_r|$
\item
For each $k$, the $j$th column of $P_k$ has sum $|D_j|$ ($j = 0, \cdots, l$). Likewise, the $i$th row of $P_k$ has sum $|D_i|$ ($i = 0, \cdots, l$).
\end{itemize}
}
\end{proposition}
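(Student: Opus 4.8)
The plan is to carry out every computation inside the group ring $\ccc[G]$, identifying each $D_k$ with the formal sum $\sum_{d\in D_k} d$ of its elements (I take $l=r$ throughout). The decisive structural fact is that, because $D_0,\dots,D_r$ are pairwise disjoint with union $G$, every group element lies in exactly one $D_k$; hence $D_0,\dots,D_r$ are linearly independent over $\ccc$. Consequently, whenever two $\ccc$-linear combinations $\sum_k a_k D_k$ and $\sum_k b_k D_k$ coincide, we may conclude $a_k=b_k$ for every $k$. I would state and justify this ``compare coefficients'' principle first, since each subsequent step reduces to an application of it.

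For the row and column sums I would multiply by the full group element $G=\sum_{k=0}^r D_k$ and exploit the absorption identity $G\cdot g = g\cdot G = G$, valid for any single $g\in G$ (left or right translation merely permutes the summands of $G$). Extending by linearity gives $D_i\cdot G = |D_i|\,G$ and $G\cdot D_j = |D_j|\,G$. On the other hand, expanding through the structure equations,
\begin{align*}
D_i\cdot G &=\sum_{j=0}^r D_iD_j=\sum_{k=0}^r\Bigl(\sum_{j=0}^r p_{ij}^k\Bigr)D_k,\\
G\cdot D_j &=\sum_{i=0}^r D_iD_j=\sum_{k=0}^r\Bigl(\sum_{i=0}^r p_{ij}^k\Bigr)D_k.
\end{align*}
Equating these with $|D_i|\,G=\sum_k|D_i|D_k$ and $|D_j|\,G=\sum_k|D_j|D_k$ respectively, and comparing coefficients of each $D_k$, yields $\sum_j p_{ij}^k=|D_i|$ (the $i$th row sum of $P_k$) and $\sum_i p_{ij}^k=|D_j|$ (the $j$th column sum of $P_k$), which is exactly the second assertion.

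For the first assertion I would adopt the convention $D_0=\{1\}$ inherited from the Schur-ring setup and read off $p_{ij}^0$ as the multiplicity of the identity in $D_iD_j$. Since $1=d_1d_2$ forces $d_2=d_1^{-1}$, that multiplicity equals $|\{d_1\in D_i : d_1^{-1}\in D_j\}|=|D_i\cap D_j^{-1}|$. Invoking the duality hypothesis, write $D_j^{-1}=D_{j'}$; disjointness then collapses $|D_i\cap D_{j'}|$ to $|D_i|$ when $i=j'$ and to $0$ otherwise. Thus $p_{ij}^0=|D_i|$ precisely when $D_i^{-1}=D_j$ and vanishes elsewhere, so $P_0$ is the generalized permutation matrix encoding the duality involution on indices, weighted by $|D_i|$. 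In the symmetric case $D_i^{-1}=D_i$ (the situation relevant to even $f$) this involution is trivial, and $P_0$ becomes genuinely diagonal with diagonal entries $|D_0|,\dots,|D_r|$, as claimed.

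The step I expect to be most delicate is not the algebra but pinning down the exact hypotheses: the diagonal form of $P_0$ requires both $D_0=\{1\}$ and the symmetry $D_i^{-1}=D_i$, whereas the bare hypotheses as stated only deliver a generalized permutation matrix supported on the positions $(i,i^*)$. I would therefore be explicit that the ``diagonal'' conclusion is the symmetric specialization, presenting the $(i,i^*)$-supported form in general. Everything else is a mechanical consequence of the two multiplication identities $D_i\,G=|D_i|\,G$ and $G\,D_j=|D_j|\,G$ combined with the linear independence of $D_0,\dots,D_r$.
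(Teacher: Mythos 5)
Your proof is correct, and for the row- and column-sum claims it is essentially the paper's own argument: the paper likewise sums the structure equations over one index, uses $G \cdot D_j = |D_j|\, G$ (resp. $D_i \cdot G = |D_i|\, G$), and compares coefficients of the $D_k$, with the disjointness of the $D_k$ playing exactly the role of your linear-independence principle. Where you genuinely go beyond the paper is the first bullet: the paper's proof never addresses the claim that $P_0$ is diagonal, while you supply the missing argument (counting occurrences of the identity in $D_iD_j$ via $p_{ij}^0 = |D_i \cap D_j^{-1}|$) and, more importantly, pin down the hypotheses it tacitly requires. As you observe, the proposition as literally stated assumes neither $D_0=\{1\}$ nor $D_i^{-1}=D_i$, and without them the conclusion fails: for instance in $G=\zzz/3\zzz$ written multiplicatively with generator $g$, taking $D_0=\{1\}$, $D_1=\{g\}$, $D_2=\{g^2\}$ gives $D_1D_2=D_0$, hence $p_{12}^0=1$, so $P_0$ is the (weighted) permutation matrix of the inversion involution $i\mapsto i^*$ rather than a diagonal matrix; it is diagonal precisely in the symmetric case, which is the situation the paper actually works in (even $f$). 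So your write-up is both correct and more complete than the paper's: the same mechanism for the sums, plus a proof of the $P_0$ statement together with the correction of hypotheses that makes it true.
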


\begin{proof}
{\rm
We begin by taking the sum
\begin{center}
$D_i \cdot D_j = \sum\limits_{k=0}^r p_{ij}^k D_k$
\end{center} 
over all $i, 0 \leq i \leq l$.
\begin{center}
$G \cdot D_j = \sum\limits_{k=0}^r (\sum\limits_{i=0}^r p_{ij}^k) D_k$
\end{center}
We know that $G \cdot D_j = |D_j| \cdot G$, and all the $D_k$ are disjoint. As an identity in the Schur ring, each element of G must occur $|D_j|$ times on each side of this equation. Therefore, 
\begin{center}
$|D_j| = \sum\limits_{i=0}^r p_{ij}^k$.
\end{center}
So the sum of the elements in the $j$th row of $P_k$ is $|D_j|$ for each $j$ and $k$. The analogous claim for the row sums is proven similarly.
}
\end{proof}

\section{Cayley graphs}

Let $(G,D)$ be a PDS.

\begin{definition} ({\bf Cayley graph})
\label{defn:CG}
{\rm
The {\it Cayley graph $\Gamma=\Gamma(G,D)$ associated to the PDS} $(G,D)$ 
is a graph constructed as follows: from a subset $D$ of $G$, let the 
vertices of the graph be the elements of the group $G$. 
Two vertices $g_1$ and $g_2$ are 
connected by a directed edge if $g_2 = d g_1$ for some $d \in D$. 
}
\end{definition}
\index{Cayley graph!PDS}

If $D$ is a partial difference set such that $\lambda \not= \mu$, 
then $D = D^{-1}$ (Proposition 1 in \cite{Po}). 
Thus, if $g_2 = d g_1$, then $g_1 = d^{-1} g_2$, so the 
Cayley graph $\Gamma(G,D)$ is an undirected graph.

\begin{definition} ({\bf SRG})
\label{defn:SRG}
{\rm
A connected graph $\Gamma=(V,E)$ is a 
{\it $(v,k,\lambda ,\mu)$-strongly regular graph} if:

\begin{itemize}
\item
$\Gamma$ has $v$ vertices such that each vertex is connected to $k$ other vertices
\item
Distinct vertices $g_1$ and $g_2$ share edges with either $\lambda$ or 
$\mu$ common vertices ,
depending on whether they are neighbors or not.
\end{itemize}
}
\end{definition}
\index{strongly regular graph}
\index{SRG}

The {\it neighborhood} of a vertex $g$ in a graph $\Gamma$ is the set 

\[
N(g) = \{ g' \in V\ | \ (g,g')\ {\rm is\ an\ edge\ in}\ \Gamma \}.
\]

The following result is well-known, but the proof is included for convenience.

\begin{theorem}
\label{thrm:5}
\label{thrm:PDSiffSRG}
Let $G$ be an abelian multiplicative group and let $D \subseteq G$ be a subset such that 
$1 \not\in D$. $D$ is a $(v,k,\lambda ,\mu)$-PDS such that $D = D^{-1}$ if and only if the 
associated (undirected) Cayley graph $\Gamma(G,D)$ is a $(v,k,\lambda ,\mu)$-strongly 
regular graph.
\end{theorem}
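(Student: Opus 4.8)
The plan is to reduce both properties to a single counting identity comparing, on one hand, the number of common neighbors of a pair of vertices in $\Gamma(G,D)$ and, on the other, the number of representations of a fixed group element as a ``difference'' $d_1 d_2^{-1}$ with $d_1,d_2 \in D$. First I would record the elementary features of the Cayley graph of Definition \ref{defn:CG} that hold under the standing hypotheses $1 \notin D$ and $D = D^{-1}$: the graph has $v = |G|$ vertices; since $g \mapsto dg$ is a bijection, each vertex $g$ has exactly $k = |D|$ neighbors, namely $\{dg : d \in D\}$, so $\Gamma(G,D)$ is $k$-regular; the condition $1 \notin D$ rules out loops; and $D = D^{-1}$ makes adjacency symmetric, so the graph is genuinely undirected. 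In the reverse direction, symmetry of the edge relation $g_2 g_1^{-1} \in D$ is exactly the statement $D = D^{-1}$, so taking the Cayley graph to be undirected forces $D = D^{-1}$; alternatively one may invoke Proposition 1 of \cite{Po} when $\lambda \neq \mu$.

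The key step is the bijection. Fix distinct vertices $g_1, g_2$ and set $h = g_2 g_1^{-1} \neq 1$. A common neighbor $x$ satisfies $x g_1^{-1} \in D$ and $x g_2^{-1} \in D$; writing $d_1 = x g_1^{-1}$ and $d_2 = x g_2^{-1}$ and using that $G$ is abelian gives $d_1 d_2^{-1} = g_2 g_1^{-1} = h$. Conversely, any pair $(d_1, d_2) \in D \times D$ with $d_1 d_2^{-1} = h$ yields the vertex $x = d_1 g_1 = d_2 g_2$, and one checks $x g_1^{-1} = d_1$ and $x g_2^{-1} = d_2$. Thus $x \mapsto (x g_1^{-1}, x g_2^{-1})$ is a bijection from the common neighbors of $g_1, g_2$ onto the set of pairs $(d_1, d_2) \in D\times D$ with $d_1 d_2^{-1} = h$; in particular the number of common neighbors of $g_1$ and $g_2$ equals the number of times $h$ occurs in the difference list of Definition \ref{defn:PDS}.

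With this in hand both implications follow by matching the two parametrizations. For ($\Rightarrow$), assume $D$ is a $(v,k,\lambda,\mu)$-PDS. The graph is $k$-regular on $v$ vertices as above, and for distinct $g_1, g_2$ the bijection shows the number of common neighbors equals the multiplicity of $h = g_2 g_1^{-1}$ in the difference list, which is $\lambda$ when $h \in D$ (i.e.\ $g_1, g_2$ are adjacent) and $\mu$ when $h \in G \setminus D$ (i.e.\ non-adjacent); this is precisely Definition \ref{defn:SRG}. For ($\Leftarrow$), assume $\Gamma(G,D)$ is a $(v,k,\lambda,\mu)$-SRG, so $D = D^{-1}$ as noted. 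Given any non-identity $h$, choose any $g_1$ and put $g_2 = h g_1$; then the number of representations $h = d_1 d_2^{-1}$ with $d_i \in D$ equals the number of common neighbors of $g_1, g_2$, which is $\lambda$ or $\mu$ according as $h \in D$ or $h \notin D$. Since this representation count depends only on $h$ and not on the choice of $g_1$ (translation invariance of differences), every element of $D$ has $\lambda$ representations and every non-identity element of $G \setminus D$ has $\mu$, i.e.\ $D$ is a $(v,k,\lambda,\mu)$-PDS.

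I expect the main obstacle to be purely bookkeeping: verifying that the bijection respects multiplicities and carrying the inverses correctly through the abelian manipulations, together with the point (needed for $\Leftarrow$) that the difference-multiplicity of $h$ is independent of the base vertex $g_1$, which is where vertex-transitivity of the Cayley graph enters implicitly. A secondary subtlety is the word ``connected'' in Definition \ref{defn:SRG}: the counting argument yields the regularity and the common-neighbor conditions directly, and connectivity (automatic once $\mu > 0$) should be handled as a separate minor remark rather than folded into the main computation.
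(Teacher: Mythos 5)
Your proposal is correct and follows essentially the same route as the paper's proof: both directions rest on the correspondence $x \leftrightarrow (d_1,d_2)$ between common neighbors of $g_1,g_2$ and ordered pairs in $D\times D$ with $d_1 d_2^{-1} = g_2 g_1^{-1}$, together with the observation that undirectedness forces $D = D^{-1}$. Your explicit bijection and the remark about connectivity in Definition \ref{defn:SRG} are just slightly more careful packagings of the same argument.
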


\begin{proof} 
Suppose $D$ is a $(v,k,\lambda ,\mu)$-PDS such that $D = D^{-1}$. Then $\Gamma(G,D)$ has 
$v$ vertices. $D$ has $k$ elements, and each vertex $g$ of $\Gamma(G,D)$ has neighbors $d g$, 
$d \in D$. Therefore, $\Gamma(G,D)$ is regular, degree $k$. Let $g_1$ and $g_2$ be distinct
 vertices in $\Gamma(G,D)$. Let $x$ be a vertex that is a common neighbor of $g_1$ and $g_2$, i.e. 
$x \in N(g_1) \cap N(g_2)$. Then $x = d_1 g_1 = d_2 g_2$ for some $d_1, d_2 \in D$, 
which implies that $d_1 d_2^{-1} = g_1^{-1} g_2$. If $g_1^{-1} g_2 \in D$, then there are exactly 
$\lambda$ ordered pairs $(d_1,d_2)$ that satisfy the previous equation 
(by Definition \ref{defn:PDS}). 
If $g_1^{-1} g_2 \notin D$, then $g_1^{-1} g_2 \in G \setminus D$, so there are exactly $\mu$ 
ordered pairs $(d_1,d_2)$ that satisfy the equation. If $g_1^{-1} g_2 \in D$, then $g_2 = d g_1$ 
for some $d \in D$, so $g_1$ and $g_2$ are adjacent. By a similar argument, if 
$g_1^{-1} g_2 \in G \setminus D$, then $g_1$ and $g_2$ are not adjacent. So $\Gamma(G,D)$ is a 
$(v,k,\lambda ,\mu)$-strongly regular graph. 

Conversely, suppose $\Gamma(G,D)$ is a $(v,k,\lambda ,\mu)$-strongly regular 
graph. If $\Gamma(G,D)$ is 
undirected, then for vertices $g_1$ and $g_2$, there is an edge from $g_1$ to $g_2$ if and 
only if there is an edge from $g_2$ to $g_1$. By definition, $g_1$ and $g_2$ are 
connected by an edge if and only if $g_1 = d g_2$, $d \in D$. This means that 
$g_1 = d_1 g_2$ if and only if 
$g_2 = d_2 g_1$, for some $d_1, d_2 \in D$. This implies that $d_2 = d_1^{-1}$, so 
$D = D^{-1}$. Since $\Gamma(G,D)$ is $(v,k,\lambda ,\mu)$-strongly regular, it is 
$k$-regular, so the order of $D$ is $k$. Let $x$ be a vertex in $\Gamma(G,D)$ such that 
$x \in N(g_1) \cap N(g_2)$. Then $x = d_1 g_1 = d_2 g_2$ for some $d_1, d_2 \in D$, 
which implies that $d_1 d_2^{-1} = g_1^{-1} g_2$. If $g_1$ and $g_2$ are adjacent, 
then $g_1^{-1} g_2 \in D$, so there are exactly $\lambda$ ordered pairs $(d_1,d_2)$ 
that satisfy the previous equation. If $g_1$ and $g_2$ are not adjacent, then 
$g_1^{-1} g_2 \in G \setminus D$, so there are exactly $\mu$ ordered pairs $(d_1,d_2)$ 
that satisfy the equation. Therefore, $D$ is a $(v,k,\lambda ,\mu)$-PDS and $D = D^{-1}$. 
\end{proof}

For any graph $\Gamma=(V,E)$, let dist$:V\times V\to \zzz\cup \{\infty\}$ 
denote the distance function. In other words, for any $v_1,v_2\in V$,
dist$(v_1,v_2)$ is the length of the shortest path from $v_1$ to 
$v_2$ (if it exists) and $\infty$ (if it does not). The diameter of
$\Gamma$, denoted diam$(\Gamma)$, is the maximum value 
(possibly $\infty$) of this distance function.

\begin{definition}
{\rm
Let $\Gamma=(V,E)$ be a graph, let
dist$:V\times V \to \zzz$ denote the distance function,
and let $G={\rm Aut}(\Gamma)$ denote the automorphism group.
For any $v\in V$, and any $k\geq 0$, let

\[
\Gamma_k(v)=\{u\in V\ |\ {\rm dist}(u,v)=k\}.
\]
For any subset $S\subset V$ and any $u\in V$, let
$N_u(S)$ denote the subset of $s\in S$ which are
a neighbor of $u$, i.e., let

\[
N_u(S) = S\cap \Gamma_1(u).
\]
We say a graph is {\it distance transitive} if,
for any $k\geq 0$, and any $(u_1,v_1)\in V\times V$,
$(u_2,v_2)\in V\times V$ with dist$(u_2,v_2)=k$,
there is a $g\in G$ such that $g(u_2)=v_2$
and $g(u_1)=v_1$.

We say a graph is {\it distance regular} if for
for any $k\geq 0$ and any $(v_1,v_2)\in V\times V$
with dist$(v_1,v_2)=k$, the numbers

\[
a_k=|N_{v_1}(\Gamma_k(v_2)|,
\]
\[
b_k=|N_{v_1}(\Gamma_{k+1}(v_2)|,
\]
\[
c_k=|N_{v_1}(\Gamma_{k-1}(v_2)|,
\]
are independent of $v_1,v_2$.
}
\end{definition}
\index{distance transitive}
\index{distance regular}

\begin{remark}
The following ``conjecture'' is false:
If $f:GF(p)^n\to GF(p)$ is any even bent function then 
the (unweighted) Cayley graph of $f$ is distance transitive.
In fact, this fails when $p=2$ for any bent function 
of $4$ variables having support of size $6$.
Indeed, in this case the Cayley graph of $f$ is isomorphic to
the Shrikhande graph (with strongly regular parameters
$(16,6,2,2)$), which is not a distance-transitive
(see \cite{BrCN}, pp 104-105, 136).
\end{remark}

\begin{proposition}
{\rm
If $f:GF(p)^n\to GF(p)$ is any even function then 
each connected component of the (unweighted) Cayley graph of 
$f$ is distance regular.
}
\end{proposition}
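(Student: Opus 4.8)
The plan is to exploit the fact that $\Gamma_f$ is a Cayley graph, hence vertex-transitive, and to reduce every intersection number to a count that depends only on the difference of the two vertices. First I would record that, since $f$ is even, the set $D:=\supp(f)\setminus\{0\}$ satisfies $D=-D$, so $\Gamma_f$ is undirected, and for each $g\in GF(p)^n$ the translation $\tau_g:x\mapsto x+g$ is a graph automorphism (because $(x+g)-(y+g)=x-y$). Consequently the connected component of $0$ is exactly the Cayley graph $\Gamma(\langle D\rangle,D)$ on the subgroup $H=\langle D\rangle\cong GF(p)^m$ generated by $D$, and every other component is a translate $\tau_g(H)$, hence graph-isomorphic to it. This reduces the statement to the connected case, in which $D$ generates $V$.

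Next I would use translation-invariance of distance: applying $\tau_{-v_1}$ gives $\mathrm{dist}(v_1,v_2)=\mathrm{dist}(0,v_2-v_1)$, so $\mathrm{dist}(x,y)=\rho(y-x)$ where $\rho(w):=\mathrm{dist}(0,w)$. Writing $w=v_2-v_1$ and $k=\rho(w)$, the neighbors of $v_1$ are the $v_1+d$ with $d\in D$, and $\mathrm{dist}(v_1+d,v_2)=\rho(w-d)$. Hence
\[
\begin{gathered}
a_k=|\{d\in D:\rho(w-d)=k\}|,\quad
b_k=|\{d\in D:\rho(w-d)=k+1\}|,\\
c_k=|\{d\in D:\rho(w-d)=k-1\}|,
\end{gathered}
\]
so the whole proposition is equivalent to the assertion that each of these three counts depends only on $k=\rho(w)$ and not on the particular $w$ on the sphere $\Gamma_k(0)=\{w:\rho(w)=k\}$.

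The main obstacle is exactly this uniformity, and it is where the real content lies: vertex-transitivity only moves $v_1$, not the relative position $w$, so it does not by itself force the counts to agree across different $w$ at the same distance. The natural way to close the gap is to produce, for any $w,w'$ with $\rho(w)=\rho(w')$, an automorphism of $\Gamma_f$ fixing $0$ and sending $w\mapsto w'$; the automorphisms fixing $0$ include the linear maps $A\in GL_n(p)$ with $A(D)=D$ (among them the negation $x\mapsto-x$, available because $f$ is even), so one would try to show this stabilizer acts transitively on each distance-sphere. I expect this to be the crux, and a delicate one: transitivity on spheres is essentially distance-transitivity, which the preceding remark shows can already fail, so a proof routed through a single transitive symmetry cannot succeed for \emph{every} even $f$. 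Establishing distance-regularity in general would therefore require either a direct sphere-counting argument that does not pass through a transitive action, or additional hypotheses on $f$ guaranteeing that the intersection numbers $a_k,b_k,c_k$ are constant; I would scrutinize this step carefully, as it is the point at which the stated generality (``any even function'') is most likely to need qualification.
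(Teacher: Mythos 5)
Your reduction --- translations act as graph automorphisms, $\mathrm{dist}(v_1,v_2)=\rho(v_2-v_1)$ with $\rho(w)=\mathrm{dist}(0,w)$, so that $a_k,b_k,c_k$ become the counts $|\{d\in D:\rho(w-d)=k\}|$, $|\{d\in D:\rho(w-d)=k+1\}|$, $|\{d\in D:\rho(w-d)=k-1\}|$ for $w=v_2-v_1$ --- is exactly the content of the paper's first two Claims ($\Gamma_1(0)=\supp(f)$ and $\Gamma_k(v)=v+\Gamma_k(0)$). The step you declined to assert, namely that these counts depend only on $\rho(w)$ and not on the particular $w$ on a distance sphere, is precisely the paper's third Claim: that the cardinalities $|(u+\Gamma_i(0))\cap(v+\Gamma_j(0))|$ are ``independent of $u,v$,'' which the paper justifies only with ``this follows from the definitions.'' It does not follow: that cardinality equals the number of pairs $(a,b)\in\Gamma_i(0)\times\Gamma_j(0)$ with $a-b=v-u$, so it is a function of the difference $v-u$, and nothing forces it to be constant as $v-u$ ranges over a sphere $\Gamma_k(0)$. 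So you located the crux correctly, and the gap you identified is a gap in the paper's own proof, not a defect of your approach.

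In fact the proposition is false as stated, so no completion of either argument is possible without additional hypotheses. Take $p=3$, $n=3$, let $e_1,e_2,e_3$ be the standard basis of $GF(3)^3$, and let $f$ be the indicator function (value $1$ on $D$, value $0$ elsewhere) of
\[
D=\{\pm e_1,\ \pm e_2,\ \pm e_3,\ \pm(e_1+e_2)\},
\]
so that $f$ is even, $f(0)=0$, and $\Gamma_f$ is connected because $D$ spans $GF(3)^3$. The pairs $(0,e_1)$ and $(0,e_3)$ are both edges, but $0$ and $e_1$ have three common neighbors, namely $-e_1$, $-e_2$, and $e_1+e_2$ (since $-e_1-e_1=e_1$, $-e_2-e_1=-(e_1+e_2)$, and $(e_1+e_2)-e_1=e_2$ all lie in $D$), while $0$ and $e_3$ have exactly one, namely $-e_3$ (since $-e_3-e_3=e_3\in D$, and no other element of $D$ works). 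Hence $a_1$ is not well defined and this connected Cayley graph is not distance-regular. (For $p=2$, where every function is even, $D=\{e_1,e_2,e_3,e_4,e_1+e_2\}\subset GF(2)^4$ gives the same contradiction: the edge $(0,e_1)$ has $a_1=2$ but the edge $(0,e_3)$ has $a_1=0$.) One caution about your closing reasoning: failure of distance-transitivity does not by itself refute distance-regularity --- the Shrikhande graph cited in the paper's preceding Remark is strongly regular, hence distance-regular, despite not being distance-transitive --- so your hedged conclusion that the hypothesis ``most likely needs qualification'' was the right strength of claim to make there; the example above settles the matter.
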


\begin{proof}
First, we prove the following Claim: 

\[
\Gamma_k(0)=\{v\in GF(p)^n\ |\ v\ {\rm is\ the\ sum\ of}\ k\ {\rm support\ vectors,\ and\ no\ fewer}\}.
\]
We prove this by indiction. The statement for $k=1$ is obvious, since
$\Gamma_1(0)={\rm Supp}(f)$. Assume the statement is true for $k$. We prove it
for $k+1$. Let $v'\in \Gamma_{k+1}(0)$, so dist$(0,v')=k+1$. There is a 
$v''\in \Gamma_k(0)$ such that $v'=v''+v'''$, for some $v'''\in {\rm Supp}(f)$.
By the induction hypothesis, $v''$ can be written as the sum of $k$ support vectors,
so $v'$ is the sum of $k+1$ vectors (thus, proving the claim).

Claim: If $v\in GF(p)^n$ is arbitrary then

\[
\Gamma_k(v)=v+\Gamma_k(0),
\]
for $1\leq k\leq {\rm diam}(\Gamma)$. This follows from the definitions.

Claim: For all $i,j$ with $1\leq i,j\leq {\rm diam}(\Gamma)$, and for
all $u,v$ with $u,v\in GF(p)^n$, the cardinalities

\[
|(u+\Gamma_i(0))\cap (v+\Gamma_j(0))|,
\]
are independent of $u,v$.  This follows from the definitions.

From these claims, the Proposition follows.
\end{proof}

If $\Gamma$ is any weighted graph (without loops or multiple edges), 
we fix a labeling of its set of 
vertices $V(\Gamma)$, which we often identify with the set
$\{1,2,\dots, N=|V(\Gamma)|\}$. Moreover, we assume that the edge weights
of $\Gamma$ are positive integers. If $u,v$ are vertices of 
$\Gamma$, then we say a walk $P$ from $u$ to $v$ 
has {\it weight sequence}
\index{weight sequence}
$(w_1,w_2,\dots,w_k)$ if is there is a sequence of edges in $\Gamma$ 
connecting $u$ to $v$, $(v_0=u,v_1)$, $(v_1,v_2)$, \dots , 
$(v_{k-1},v_k=v)$, say, where edge $(v_{i-1},v_i)$ has weight $w_i$.
If $A=(a_{ij})$ denotes the $N\times N$ weighted adjacency matrix of $\Gamma$,
so

\[
a_{ij} = 
\left\{
\begin{array}{ll}
w, & {\rm if }\ (i,j)\ {\rm is\ an\ edge\ of \ weight}\ w,\\
0, & {\rm if }\ (i,j)\ {\rm is\ not\ an\ edge\ of \ }\Gamma.
\end{array}
\right.
\]
From this adjacency matrix $A$, we can derive weight-specific
adjacency matrices as follows. For each weight $w$ of $\Gamma$, let
$A(w)=(a(w)_{ij})$ denote the $N\times N$ $(1,0)$-matrix defined by

\[
a(w)_{ij} = 
\left\{
\begin{array}{ll}
1, & {\rm if }\ (i,j)\ {\rm is\ an\ edge\ of \ weight}\ w,\\
0, & {\rm if }\ (i,j)\ {\rm is\ not\ an\ edge\ of \ weight}\ w.
\end{array}
\right.
\]
Let us impose the following conventions. 
\begin{itemize}
\item
If $u,v$ are distinct 
vertices of $\Gamma$ but $(u,v)$ is not an edge of $\Gamma$ then 
we say the {\it weight} of $(u,v)$ is $w=0$. 
\item
If $u=v$ is a vertex of $\Gamma$ (so $(u,v)$ is not an edge,
since $\Gamma$ has no loops) then 
we say the {\it weight} of $(u,v)$ is $w=-1$. 
\end{itemize}
This allows us to define the weight-specific adjacency
matrices $A(-1),A(0)$ as well, and we can (and do) extend the weight set of 
$\Gamma$ by appending $0,-1$. Clearly, these weight-specific adjacency 
matrices have disjoint supports: if $a(w)_{ij}\not= 0$ 
then $a(w')_{ij}=0$ for all weights $w'\not= w$.
\index{edge weights}

The well-known matrix-walk theorem can be formulated as follows.

\begin{proposition}
{\rm
For any vertices
$u,v$ of $\Gamma$ and any sequence of non-zero edge weights
$w_1,w_2,\dots , w_k$, the $(u,v)$ of $A(w_1)A(w_2)\dots A(w_k)$ is equal to
the number of walks of weight sequence $(w_1,w_2,\dots,w_k)$ from
$u$ to $v$. Moreover, ${\rm tr}\, A(w_1)A(w_2)\dots A(w_k)$ is equal to
the total number of closed walks of $\Gamma$ of weight sequence 
$(w_1,w_2,\dots,w_k)$.
}
\end{proposition}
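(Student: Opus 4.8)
The plan is to prove this by induction on the length $k$ of the weight sequence, mimicking the proof of the classical matrix-walk theorem but tracking edge weights through the weight-specific matrices $A(w)$. The one structural fact I would invoke throughout is that the matrices $A(w)$ have pairwise disjoint supports, as noted just above the statement: each edge $(i,j)$ of $\Gamma$ carries a single well-defined weight, so $A(w)_{ij}=1$ for exactly one value of $w$ and vanishes for all others. This is what guarantees that counting walks according to their weight sequence involves no double counting.

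For the base case $k=1$, I would read off directly from the definition of $A(w_1)$ that its $(u,v)$ entry is $1$ precisely when $(u,v)$ is an edge of weight $w_1$, and $0$ otherwise. This is exactly the number of length-one walks from $u$ to $v$ having weight sequence $(w_1)$, so the base case holds.

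For the inductive step, assume the claim for every sequence of length $k-1$. Using the definition of matrix multiplication I would expand the $(u,v)$ entry as
\[
\bigl(A(w_1)\cdots A(w_k)\bigr)_{u,v}
= \sum_{x\in V(\Gamma)} \bigl(A(w_1)\cdots A(w_{k-1})\bigr)_{u,x}\, A(w_k)_{x,v}.
\]
By the induction hypothesis the first factor counts the walks from $u$ to $x$ with weight sequence $(w_1,\dots,w_{k-1})$, while the second factor equals $1$ exactly when $(x,v)$ is an edge of weight $w_k$ and $0$ otherwise. Hence each summand counts precisely those walks of weight sequence $(w_1,\dots,w_k)$ from $u$ to $v$ whose penultimate vertex is $x$. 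Since every such walk has a unique penultimate vertex, summing over $x$ yields the total number of walks from $u$ to $v$ with weight sequence $(w_1,\dots,w_k)$, completing the induction.

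The trace assertion then follows immediately: by definition ${\rm tr}\,A(w_1)\cdots A(w_k)=\sum_{u\in V(\Gamma)}\bigl(A(w_1)\cdots A(w_k)\bigr)_{u,u}$, and by the first part each diagonal entry counts the walks of weight sequence $(w_1,\dots,w_k)$ from $u$ back to $u$, that is, the closed walks based at $u$. Summing over all base points $u$ gives the total number of closed walks of that weight sequence. I do not anticipate a genuine obstacle here; the only point requiring care is the bookkeeping in the inductive step, namely verifying that decomposing a length-$k$ walk into a length-$(k-1)$ walk followed by a single weighted edge is a bijection. This is exactly where the disjoint-support property of the $A(w)$ is used, since it ensures the final edge has an unambiguous weight $w_k$.
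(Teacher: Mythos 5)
Your proof is correct. Note that the paper itself offers no proof of this proposition --- it is stated as ``the well-known matrix-walk theorem'' and left unproved --- so there is nothing to compare against; your induction on $k$ (base case from the definition of $A(w_1)$, inductive step via the expansion $\bigl(A(w_1)\cdots A(w_k)\bigr)_{u,v}=\sum_{x}\bigl(A(w_1)\cdots A(w_{k-1})\bigr)_{u,x}A(w_k)_{x,v}$, and the bijection between length-$k$ walks and pairs consisting of a length-$(k-1)$ walk plus a final edge of weight $w_k$) is exactly the standard argument one would supply, and your appeal to the disjoint-support property of the matrices $A(w)$ correctly identifies why a walk's weight sequence is unambiguous, so no double counting occurs. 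The trace assertion follows from the first part just as you say.
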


Let us return to describing the Cayley graph in (\ref{eqn:wtdCayleyGraph}) above.
We identify  $\zzz/p^n\zzz$ with $\{0,1,\dots, p^n-1\}$, and let

\begin{equation}
\label{eqn:pary-rep-map}
\eta:\zzz/p^n\zzz\to GF(p)^n
\end{equation}
be the $p$-ary representation map. In other words, if
we regard $x\in \zzz/p^n\zzz$ as a polynomial in
$p$ of degree $\leq n-1$, then $\eta(x)$ is the
list of coefficients, arranged in order of 
decreasing degree.
This is a bijection. (Actually, for our purposes, any bijection will
do, but the $p$-ary representation is the most natural one.) 

\begin{lemma}
A graph $\Gamma$ having vertices $V$ is a 
(edge-weighted) Cayley graph for some 
even $GF(p)$-valued function $f$ on $V$ with $f(0)=0$ if and only if
$\Gamma$ is regular and the adjacency matrix $A=(A_{ij})$ 
of $\Gamma$ has the following properties:
(a) $A_{0,i}=1$ if and only if $f(\eta (i))\not= 0$,
(b) $A_{i,j}=1$ if and only if $A_{0,k}=1$, where 
$\eta (k)=\eta (i)-\eta (j)$.
\end{lemma}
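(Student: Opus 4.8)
The plan is to show that both sides of the equivalence are just two ways of saying that the underlying (unweighted) graph is a Cayley graph of the group $G=GF(p)^n$, transported to the index set $\{0,1,\dots,p^n-1\}$ via the bijection $\eta$. Condition (b) is exactly the assertion that the $(0,1)$-adjacency matrix $A$ is ``$G$-circulant,'' i.e. that $A_{i,j}$ depends only on the difference $\eta(i)-\eta(j)$; condition (a) records that the top row of $A$ reads off the support of $f$. I would prove the two implications separately, each by unwinding the definition of $\Gamma_f$ against the $\eta$-labeling.

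For the forward direction, assume $\Gamma=\Gamma_f$ for some even $f$ with $f(0)=0$. Regularity is immediate: the neighbors of a vertex $u$ are exactly $\{v : f(u-v)\neq 0\}=u-{\rm Supp}(f)$, and since $v\mapsto u-v$ is a bijection of $G$ this set has cardinality $|{\rm Supp}(f)|$ independent of $u$. For (a), by definition $A_{0,i}=1$ iff $(0,\eta(i))$ is an edge, i.e. iff $f(-\eta(i))\neq 0$, which by evenness is iff $f(\eta(i))\neq 0$. For (b), $A_{i,j}=1$ iff $f(\eta(i)-\eta(j))\neq 0$; letting $k$ be the unique index with $\eta(k)=\eta(i)-\eta(j)$ (unique since $\eta$ is a bijection onto $G$ and $\eta(i)-\eta(j)\in G$), this is iff $f(\eta(k))\neq 0$, which by (a) is iff $A_{0,k}=1$.

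For the converse, assume $\Gamma$ is regular and its $(0,1)$-adjacency matrix $A$ satisfies (b). I would reconstruct $f$ from the top row: set $D=\{\eta(i):A_{0,i}=1\}$ and let $f$ be the indicator function of $D$, so that $f$ takes values in $\{0,1\}\subseteq GF(p)$ and (a) holds by construction. Since $\Gamma$ has no loops, $A_{0,0}=0$, so $0=\eta(0)\notin D$ and $f(0)=0$. Evenness, i.e. $D=-D$, follows directly from (b) applied to the pair $(0,i)$: it gives $A_{0,i}=1$ iff $A_{0,k}=1$ where $\eta(k)=\eta(0)-\eta(i)=-\eta(i)$, so $\eta(i)\in D$ iff $-\eta(i)\in D$. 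Finally, to identify $\Gamma$ with $\Gamma_f$, (b) gives $A_{i,j}=1$ iff $A_{0,k}=1$ with $\eta(k)=\eta(i)-\eta(j)$, and (a) turns this into $f(\eta(i)-\eta(j))\neq 0$, which is precisely the adjacency condition defining $\Gamma_f$.

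Most of the verification is bookkeeping against the definition of $\Gamma_f$ and the bijection $\eta$, so there is no deep obstacle. The points requiring the most care are (i) keeping the sign straight in $\eta(i)-\eta(j)$ versus $\eta(j)-\eta(i)$ when extracting $D=-D$ in the converse, and (ii) noticing that the evenness needed to call $\Gamma_f$ a Cayley graph is already supplied by (b) itself (at $i=0$), rather than being an extra hypothesis. A conceptual caveat worth stating explicitly is that the $(0,1)$-matrix $A$ determines only the support of $f$, not its values there; hence the reconstructed $f$ is only one even representative with the correct support, and the ``edge-weighted'' structure in the statement is pinned down only after one additionally specifies the weights (the values of $f$) on ${\rm Supp}(f)$.
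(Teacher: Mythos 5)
Your proof is correct, and in spirit it is the same definition-unwinding the paper uses, but the two differ in ways worth recording. The paper's entire proof is one line: it treats $A$ as the \emph{weighted} adjacency matrix, observes that $A_{i,j}=w$ iff there is an edge of weight $w$ from $\eta(i)$ to $\eta(j)$ iff $f(\eta(i)-\eta(j))=w$, and concludes with ``the result follows''; it never constructs $f$ in the converse direction at all. You instead read $A$ as the $(0,1)$-matrix of the underlying simple graph, which is in fact the only reading under which condition (a) as stated can hold once $f$ takes values other than $0$ and $1$: for the weighted matrix one has $A_{0,i}=f(-\eta(i))=f(\eta(i))$ by evenness, so an entry equal to $2$ would violate ``$A_{0,i}=1$ iff $f(\eta(i))\neq 0$'' even though $\Gamma$ is a Cayley graph. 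Having fixed that reading, you do the real work the paper's one-liner glosses over: in the converse you build $f$ as the indicator of the top-row support $D$, get $f(0)=0$ from looplessness, and extract evenness ($D=-D$) from condition (b) specialized at $i=0$ --- a step the paper leaves entirely implicit. Your closing caveat is also genuinely on point and exposes a mismatch between the lemma's statement and its proof: conditions (a) and (b) constrain only the unweighted structure, so they certify only that $\Gamma$ admits \emph{some} edge-weighted Cayley structure (e.g.\ with indicator weights); recovering prescribed weights requires the weight of $(i,j)$ to depend only on $\eta(i)-\eta(j)$, which is exactly what the paper's later weight-by-weight variant of this lemma (with conditions stated for each $w\in GF(p)$) is designed to capture. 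In short, your argument is sound, strictly more complete than the paper's, and correctly diagnoses why the lemma as printed conflates the weighted and unweighted adjacency matrices.
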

\index{Cayley graph!edge-weighted}

\begin{proof}
Let $w\in GF(p)$. We know that $A_{i,j}=w$ if and only if 
there is an edge of weight $w$ from
$\eta (i)$ to $\eta (j)$ if and only if $f(\eta (i)-\eta (j))=w$.
The result follows.
\end{proof}

We assume, unless stated otherwise, that $f$ is even.
For each $u\in V$, define
\begin{itemize}
\item
$N(u)=N_{\Gamma_f}(u)$ to be the set of all
neighbors of $u$ in $\Gamma_f$,
\item
$N(u,a)=N_{\Gamma_f}(u,a)$ to be the set of all
neighbors $v$ of $u$ in $\Gamma_f$ for which the edge
$(u,v)\in E_f$ has weight $a$ (for each 
$a\in GF(p)^\times = GF(p)-\{0\}$),
\item
$N(u,0)=N_{\Gamma_f}(u,0)$ to be the set of all
non-neighbors $v$ of $u$ in $\Gamma_f$ (i.e., 
we have $(u,v)\notin E_f$),
\item
$
{\rm supp}(f)=\{v\in V\ |\ f(v)\not=0\}
$
to be the {\it support} of $f$. 
\end{itemize}
\index{support}
It is clear that
${\rm supp}(f)=N(0)$ is the set of neighbors of  
the zero vector. More generally, for any $u\in V$,

\begin{equation}
\label{eqn:nbhd1}
N(u)=u+{\rm supp}(f),
\end{equation}
where the last set is the collection of all vectors 
$u+v$, for some $v\in {\rm supp}(f)$.

We call a map
$g:GF(p)^n\to GF(p)$ {\it balanced} if the cardinalities
$|g^{-1}(x)|$ ($x\in GF(p)$) do not depend on $x$.
\index{balanced}
We call the {\it signature} of $f:GF(p)^n\to GF(p)$ the list 

\[
|S_0|, \ 
|S_1|, \ |S_2|, \dots, |S_{p-1}|,
\]
where, for each $i$ in $GF(p)$,
\index{signature} 

\begin{equation}
\label{eqn:support-i}
S_i = \{ x\ |\ f(x)=i\}.
\end{equation}
We can extend equation (\ref{eqn:nbhd1}) to the more precise statement

\begin{equation}
\label{eqn:nbhd2}
N(u,a)=u+S_a,
\end{equation}
for all $a\in GF(p)$.
We call $N(u,a)$ the {\it $a$-neighborhood} of $u$.
\index{neighborhood} 

A connected simple graph $\Gamma$ (without edge weights) 
is called {\it strongly regular} 
if it consists of $v$ vertices such that

\[
|N(u_1) \cap N(u_2)| =
\left\{
\begin{array}{ll}
k, & u_1=u_2,\\
\lambda, & u_1\in N(u_2),\\
\mu, & u_1\notin N(u_2).\\
\end{array}
\right.
\]
In the usual terminology/notation, such a graph is said
to have parameters $srg(\nu, k, \lambda, \mu)$.
\index{SRG}
\index{strongly regular graph} 

\begin{remark}
\label{remark:PDS2}
Let

\[
V = GF(p)^n,\ \ \ \ D=\supp(f),\ \ \ D'=S_0-\{0\}.
\]
These sets, because $f$ is even, have the property that
$D^{-1}=D$, $(D')^{-1}=D'$.
For each $d\in D$, let

\[
\lambda_d = |\{(g,h)\in D\times D\ |\ g-h=d\}|,
\]
and, for each $d'\in D'$, let

\[
\mu_{d'} = |\{(g,h)\in D\times D\ |\ g-h=d'\}|.
\]
It is known that if $D$ is a partial difference set (PDS) on the additive
group of $V$ then
(a) $\lambda_d$ does not depend on $d\in D$ (the common value
is denoted $\lambda$), and
(b) $\mu_{d'}$ does not depend on $d'\in D'$ 
(the common value is denoted $\mu$).

See Theorem \ref{thrm:PDSiffSRG}
for an equivalence between Cayley graphs of PDSs and 
strongly regular graphs.

Let $k=|D|$ and $\nu=|V|$.
Since $g-h\in D$ if and only if $f(g-h)\not= 0$ (for distinct
$g,h\in V$), there are $k\lambda$ non-neighbors
in $V^2$. Likewise, since $g-h\in D'$ if and only if 
$f(g-h)\not= 0$ (for distinct $g,h\in V$), there are 
$(\nu - k-1)\mu$ neighbors in $V^2$. Therefore, since vertex pairs 
must be neighbors or non-neighbors,

\begin{equation}
\label{eqn:pds2}
k^2-k = k\lambda + (\nu - k-1)\mu .
\end{equation}

\end{remark}

The concept of strongly regular simple graphs generalizes to 
edge-weighted graphs. 


\begin{definition} ({\bf edge-weighted SRG})
\label{def:SRG}
{\rm
Let $\Gamma$  be a connected edge-weighted graph which is regular
as a simple (unweighted) graph.
The graph $\Gamma$ is called {\it strongly regular} 
with parameters $v$, $k=(k_a)_{a\in W}$, 
$\lambda=(\lambda_a)_{a\in W^3}$, $\mu=(\mu_a)_{a\in W^2}$,
denoted $SRG_{W}(v,k,\lambda,\mu)$, if it consists of $v$ vertices
such that, for each $a=(a_1,a_2)\in W^2$ 

\begin{equation}
\label{eqn:wtd-srg}
|N(u_1,a_1) \cap N(u_2,a_2)| =
\left\{
\begin{array}{ll}
k_{a}, & u_1=u_2,\\
\lambda_{a_1,a_2,a_3}, & u_1\in N(u_2,a_3),\ u_1\not= u_2,\\
\mu_{a}, &u_1\notin N(u_2),\ u_1\not= u_2,\\
\end{array}
\right.
\end{equation}
where $k=(k_a\ |\ a\in W)\in \zzz^{|W|}$,
$\lambda=(\lambda_a\ |\ a\in W^3)\in \zzz^{|W^3|}$,
$\mu=(\mu_a\ |\ a\in W^2)\in \zzz^{|W^2|}$,
and $W= GF(p)$ is the set of weights,
including $0$ (recall an ``edge'' has weight $0$ if the vertices are not 
neighbors).
}
\end{definition}
\index{SRG!edge-weighted}

How does the above notion of an edge-weighted strongly 
regular graph relate to the 
usual notion of a strongly regular graph?

\begin{lemma}
\label{lemma:wtdsrg}
Let $\Gamma$ be an edge-weighted strongly regular 
graph as in (\ref{def:SRG}), with edge-weights $W$ and 
parameters $(v,(k_a),(\lambda_{a_1,a_2,a_3}), (\mu_{a_1,a_2}))$.
If
\[
\sum_{(a_1,a_2) \in W^2} \lambda_{a_1,a_2,a_3}
\]
does not depend on $a_3$, for $a_3 \in W$,
then $\Gamma$ is strongly regular (as an 
unweighted graph) with
parameters $(v,k,\lambda,\mu)$ where 

\[
k = \sum_{a \in W} k_{a},\ \ 
\lambda = \sum_{(a_1,a_2) \in W^2} \lambda_{a_1,a_2,a_3},\ \ \ \
\mu = \sum_{(a_1,a_2) \in W^2} \mu_{a_1,a_2}.
\]
\end{lemma}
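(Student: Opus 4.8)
The plan is to mirror the proof of Lemma \ref{lemma:unweightedPDS}, transporting the bookkeeping there from the level curves of a weighted PDS to the weight-classes of neighbors in $\Gamma$. The key structural fact is that, for any vertex $u$, the unweighted neighborhood is a disjoint union
\[
N(u)=\bigsqcup_{a\in W,\ a\neq 0} N(u,a),
\]
since each edge incident to $u$ carries exactly one nonzero weight. The edge-weighted strong regularity of Definition \ref{def:SRG} forces $|N(u,a)|=k_a$ to be independent of $u$ (the $u_1=u_2$ clause of (\ref{eqn:wtd-srg}), with $a_1=a_2=a$), so $\Gamma$ has constant degree $k=\sum_{a\neq 0}k_a$; this, together with the connectivity built into Definition \ref{def:SRG}, supplies the regularity condition of Definition \ref{defn:SRG}.

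First I would establish the decomposition of the common-neighbor count. Fix distinct vertices $u_1,u_2$. Since the sets $N(u_1,a_1)$ are pairwise disjoint as $a_1$ ranges over nonzero weights, and likewise the $N(u_2,a_2)$, the intersections $N(u_1,a_1)\cap N(u_2,a_2)$ are pairwise disjoint with union $N(u_1)\cap N(u_2)$. Hence
\[
|N(u_1)\cap N(u_2)|=\sum_{a_1\neq 0}\sum_{a_2\neq 0}|N(u_1,a_1)\cap N(u_2,a_2)|,
\]
and each summand is one of the structure constants supplied by (\ref{eqn:wtd-srg}).

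Next I would split into the two cases of Definition \ref{defn:SRG}. If $u_1,u_2$ are adjacent, then $u_1\in N(u_2,a_3)$ for the unique nonzero weight $a_3$ of the edge joining them, so each summand equals $\lambda_{a_1,a_2,a_3}$ and the total is $\sum_{a_1,a_2}\lambda_{a_1,a_2,a_3}$. The hypothesis of the lemma asserts this sum is independent of $a_3$; calling the common value $\lambda$, every adjacent pair has exactly $\lambda$ common neighbors. If $u_1,u_2$ are non-adjacent, each summand equals $\mu_{a_1,a_2}$, so the total is the constant $\mu=\sum_{a_1,a_2}\mu_{a_1,a_2}$. Assembling the three counts yields that $\Gamma$ is a $(v,k,\lambda,\mu)$-strongly regular graph in the sense of Definition \ref{defn:SRG}.

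The main obstacle, indeed the only substantive point (the rest being disjointness bookkeeping), is the role of the hypothesis in the adjacent case. Distinct adjacent pairs may be joined by edges of different weights $a_3$, and a priori their common-neighbor totals $\sum_{a_1,a_2}\lambda_{a_1,a_2,a_3}$ could depend on $a_3$; the unweighted graph is strongly regular precisely when these totals agree across all $a_3$, which is exactly what the hypothesis guarantees (and, as the remark preceding Lemma \ref{lemma:unweightedPDS} warns, it can genuinely fail otherwise). A secondary point needing care is the weight-$0$ convention: the summations must be restricted to nonzero weights so that the non-neighbor classes $N(u_i,0)$ do not pollute the common-neighbor count, while the loop weight $-1$ (reserved for $u_1=u_2$) plays no role once $u_1\neq u_2$ is assumed.
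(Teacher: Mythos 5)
Your proof is correct, and it is precisely the direct-from-definitions verification that the paper leaves out (its entire ``proof'' is the sentence ``The proof follows directly from the definitions''), carried out by mirroring the paper's own proof of Lemma \ref{lemma:unweightedPDS} in the way the analogy suggests. Your restriction of all sums to nonzero weights is also the right reading of the paper's loose notation: summing literally over all of $W^2$ (which by Definition \ref{def:SRG} includes the weight $0$) would make the hypothesis vacuous and force the degenerate values $k=v-1$ and $\lambda=\mu=v-2$, so the nonzero-weight interpretation is clearly the intended one.
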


The proof follows directly from the definitions.

Let $(G,D)$ be a symmetric weighted PDS.

\begin{definition}
\label{defn:wtdCG2}
{\rm
The {\it edge-weighted Cayley graph $\Gamma=\Gamma(G,D)$ 
associated to the symmetric weighted PDS} $(G,D)$ 
is the edge-weighted graph constructed as follows. Let the 
vertices of the graph be the elements of the group $G$. 
Two vertices $g_1$ and $g_2$ are 
connected by an edge of 
weight $i$ if $g_2 = d g_1$ for some $d \in D_i$. Since 
$(G,D)$ is symmetric, the graph $\Gamma$ is undirected.
}
\end{definition}

\begin{remark}
This notion of an edge-weighted strongly regular graph 
differs slightly from the 
notion of a strongly regular graph
decomposition in \cite{vD}, in which the 
individual graphs of the decomposition must 
each be strongly regular.  

\end{remark}

\begin{definition}
We say that an edge-weighted strongly regular graph is 
{\it amorphic} if it's corresponding association scheme is
amorphic in the sense of \cite{CP}.
\end{definition}

The following result is due to van Dam \cite{vD}
(see \cite{CP}).

\begin{proposition}
(van Dam)
\label{prop:vandam}
Let $f:GF(p)^n\to GF(p)$ be an even bent function with $f(x)=0$
if and only if $x=0$.
If the weighted Cayley graph of $f$, $\Gamma_f$, is an edge-weighted strongly regular 
amorphic graph then $\Gamma_f$ has a strongly regular
decomposition into subgraphs $\Gamma_i$ all of whose edges have weight
$i$ (where $i\in GF(p)$, $i\not= 0$), and each $\Gamma_i$ is, as
an unweighted graph, a strongly regular graph of either
Latin square type or of negative Latin square type.
\end{proposition}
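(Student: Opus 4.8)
The plan is to reduce the statement to the known classification of amorphic association schemes and then transfer the conclusion back to the edge-weighted Cayley graph. First I would record the combinatorial data attached to $f$: because $f$ is even and $f^{-1}(0)=\{0\}$, the sets $D_i=f^{-1}(i)$, $i\in GF(p)^\times$, partition $GF(p)^n\setminus\{0\}$ and (written additively) satisfy $D_i=-D_i$. Hence the relations $R_0=\Delta$ and $R_i=\{(x,y)\ |\ x-y\in D_i\}$, $1\le i\le p-1$, are symmetric and their union is all of $GF(p)^n\times GF(p)^n$; the subgraph $\Gamma_i$ consisting of the weight-$i$ edges of $\Gamma_f$ is exactly the Cayley graph $\Gamma(G,D_i)$, and since every nonzero difference lies in some $D_i$, the $\Gamma_i$ form a decomposition of the complete graph $K_{p^n}$ into spanning, regular subgraphs.

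By the definition preceding the proposition, the hypothesis that $\Gamma_f$ is an amorphic edge-weighted strongly regular graph means precisely that $(GF(p)^n,R_0,R_1,\dots,R_{p-1})$ is an amorphic association scheme in the sense of \cite{CP}. I would then invoke van Dam's classification \cite{vD}: in an amorphic symmetric association scheme with $d\ge 3$ classes, each relation graph $R_i$ ($i\ge 1$) is strongly regular, and moreover they are either \emph{all} of Latin square type or \emph{all} of negative Latin square type, with parameters of the form $(N^2,R(N-1),N+R^2-3R,R^2-R)$ and $N^2$ the number of vertices. Applying this with $d=p-1$ and $N=p^{n/2}$ shows each $R_i$, hence each $\Gamma_i$ viewed as an unweighted graph, is strongly regular of the asserted type, and the weight decomposition of $\Gamma_f$ is the required strongly regular decomposition.

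The step I expect to be the main obstacle is matching the hypotheses of van Dam's theorem rather than any computation. Van Dam's dichotomy requires $d\ge 3$, i.e. $p-1\ge 3$; this covers $p=5$ but not the two-class case $p=3$, where $R_1$ and $R_2$ are merely complementary strongly regular graphs and are not a priori of (negative) Latin square type. For $p=3$ I would argue separately, using that $f$ is even and bent to pin down the parameters $(v,k_i,\lambda,\mu)$ of each $D_i$ as a (weighted) partial difference set, as in the $GF(9)$ computation of Example \ref{example:17}, and then check directly that they realize the Latin square form with $N=p^{n/2}$ or its negative analogue. A secondary point to verify is that $n$ is even, so that $N=p^{n/2}\in\zzz$ and the Latin square parameters make sense: the Latin-square-type conclusion forces $p^n$ to be a perfect square, which is consistent with the setting (compare the Corollary following Proposition \ref{prop:KSW}).
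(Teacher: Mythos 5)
The paper contains no proof of Proposition \ref{prop:vandam}; it is stated as a known consequence of van Dam's work \cite{vD} (see \cite{CP}). Your argument is exactly that intended derivation: since $f$ is even and $f^{-1}(0)=\{0\}$, the weight classes $\Gamma_i$ are the graphs of the symmetric relations $R_i$ of the association scheme attached to the edge-weighted strongly regular graph $\Gamma_f$, amorphicity of $\Gamma_f$ is by the paper's definition amorphicity of that scheme, and the Gol'fand--Ivanov--Klin/van Dam classification of amorphic schemes gives the dichotomy. So in substance you follow the same route the paper relies on.

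Your caveat about the number of classes is the substantive point, and you are right: the classification requires at least three classes, so the citation covers $p\geq 5$ but not $p=3$, where any two-class scheme is trivially amorphic yet complementary strongly regular graphs need not be of (negative) Latin square type. Your proposed patch for $p=3$ does close this, and more cheaply than you suggest. For $a\neq 0$ put $u=\sum_{d\in D_1}\zeta^{-\langle a,d\rangle}$ and $w=\sum_{d\in D_2}\zeta^{-\langle a,d\rangle}$; these are the nonprincipal eigenvalues of $\Gamma_1$, $\Gamma_2$, they are rational integers because $D_i=-D_i$ and $p=3$, and $1+u+w=0$. Then $W_f(a)=1+\zeta u+\zeta^2 w=(2+u)+\zeta(2u+1)$, so bentness gives $(2+u)^2-(2+u)(2u+1)+(2u+1)^2=3(u^2+u+1)=3^n$. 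Since $u^2+u+1$ is never divisible by $9$, the equation $u^2+u+1=3^{n-1}$ has integer solutions only for $n\leq 2$; hence for $p=3$ the hypotheses of the proposition can hold only when $n=2$, where $u\in\{1,-2\}$ and a trace count forces $|D_1|=|D_2|=4$, so $\Gamma_1$ and $\Gamma_2$ are both $(9,4,1,2)$ graphs, i.e.\ of Latin square type ($N=3$, $R=2$). This settles the one case the cited theorem does not reach --- a gap in the paper's bare citation rather than in your proposal.
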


The weighted adjacency matrix $A$ of the Cayley graph of $f$
is the matrix whose entries are 

\[
        A_{i,j} = f(\eta(i) - \eta(j)), 
\]
where $\eta(k)$ is the $p$-ary representation as in 
(\ref{eqn:pary-rep-map}).
Note $\Gamma_f$ is a regular digraph (each vertex has the same
in-degree 
and the same out-degree as each other vertex). The in-degree and the out-degree
both equal $wt(f)$, where $wt$ denotes
the Hamming weight of $f$, when regarded as a vector of integer values
(of length $p^n$). Let

\[
\omega=\omega_f = wt(f)
\]
denote the cardinality of 
$\supp(f) = \{v\in V\ |\ f(v)\not= 0\}$.
Note that $\hat{f}(0)=\omega\geq |\supp(f)|$.
If $f$ is even then $\Gamma_f$ is an $\omega$-regular graph.

If $A$ is the adjacency matrix of a (simple, unweighted) strongly
regular graph having parameters $(v,k,\lambda,\mu)$ then

\begin{equation}
\label{eqn:schur-ring1}
A^2 = kI+\lambda A + \mu (J-I-A),
\end{equation}
where $J$ is the all $1$s matrix and $I$ is the identity matrix.
This is relatively easy to verify, by simply computing
$(A^2)_{ij}$ in the three separate cases (a) $i=j$, (b) $i\not= j$
and $i,j$ adjacent, (c) $i\not= j$ and $i,j$ 
non-adjacent\footnote{
It can also be proven by character-theoretic methods, but this 
method seems harder to generalize to the edge-weighted case.
}.

If $A$ is the adjacency matrix of an edge-weighted strongly
regular graph having parameters $(v,k_a,\lambda_{a_1,a_2,a_3},\mu_{a_1,a_2})$ 
and positive weights $W\in \zzz$
one can compute $(A^2)_{ij}$ explicitly, again by looking at 
the three separate cases (a) $i=j$, (b) $i\not= j$
and $i,j$ adjacent, (c) $i\not= j$ and $i,j$ non-adjacent.
We obtain

\begin{equation}
(A^2)_{i,j}
=
\left\{
\begin{array}{ll}
\sum_{a\in W} a^2k_a, & i=j,\\
\sum_{(a,b)\in W^2} ab\lambda_{(a,b,c)}, & i\not=j, i\in N(j,c),\\
\sum_{(a,b)\in W^2} ab\mu_{(a,b)}, & i\not=j, i\notin N(j).\\
\end{array}
\right.
\end{equation}

As the following lemma illustrates, it is very easy to
characterize Cayley graphs in terms of its adjacency matrix.

\begin{lemma}
A graph $\Gamma$ having vertices $V$ is a Cayley graph for some 
 even $GF(p)$-valued function $f$ on $V$ with $f(0)=0$ if and only if
$\Gamma$ is regular and the adjacency matrix $A=(a_{ij})$ 
of $\Gamma$ has the following properties: for each $w\in GF(p)$,
(a) $a_{1,i}=w$ if and only if $f(\eta (i))=w$,
(b) $a_{i,j}=w$ if and only if $a_{1,k}=w$, where 
$\eta (k)=\eta (i)-\eta (j)$.
\end{lemma}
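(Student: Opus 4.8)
The plan is to prove both implications by simply unwinding the definition of the edge-weighted Cayley graph, whose weighted adjacency matrix is $A_{i,j} = f(\eta(i)-\eta(j))$. Once this identity is in hand, conditions (a) and (b) are seen to be nothing more than the two assertions ``the top row records the values of $f$'' and ``$A$ is constant along the difference relation induced by $\eta$.'' I would take (a) as the definition of $f$ in the converse direction, which is what removes the apparent circularity of having $f$ appear both in the hypothesis and in the conclusion.

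First I would do the forward direction. Assuming $\Gamma=\Gamma_f$ for an even $f$ with $f(0)=0$, the entries are $A_{i,j}=f(\eta(i)-\eta(j))$ by definition. Since $\eta(0)=0$, the top row (indexed by the zero vector) satisfies $a_{1,i}=f(-\eta(i))=f(\eta(i))$ by evenness, which is exactly (a). For (b), whenever $\eta(k)=\eta(i)-\eta(j)$ we get $a_{1,k}=f(\eta(k))=f(\eta(i)-\eta(j))=a_{i,j}$, so the two entries carry the same weight $w\in GF(p)$. Regularity was already recorded (an even $f$ gives an $\omega$-regular graph with $\omega=wt(f)$), so this direction is immediate.

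For the converse I would use (a) to \emph{define} $f\colon GF(p)^n\to GF(p)$ by $f(\eta(i))=a_{1,i}$, i.e. $f(v)=a_{1,\eta^{-1}(v)}$, and then verify that $\Gamma$ is genuinely $\Gamma_f$ with $f$ even and $f(0)=0$. Applying (b) gives $a_{i,j}=a_{1,k}=f(\eta(k))=f(\eta(i)-\eta(j))$, so $A$ coincides with the Cayley adjacency matrix of $f$, hence $\Gamma=\Gamma_f$. Putting $i=j$ forces $\eta(k)=0$, so $k=\eta^{-1}(0)$, and since $\Gamma$ has no loops, $a_{i,i}=0$, whence $f(0)=0$. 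Evenness then follows from the symmetry $a_{i,j}=a_{j,i}$ of the undirected weighted adjacency matrix: comparing the indices $k,k'$ determined by $\eta(k)=\eta(i)-\eta(j)$ and $\eta(k')=\eta(j)-\eta(i)=-\eta(k)$ yields $f(\eta(k))=f(-\eta(k))$ for every $k$.

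The computations are routine; the only care needed is the bookkeeping through the $p$-ary representation bijection $\eta$, in particular the fact that, with $i$ fixed, as $j$ runs over all vertices the difference $\eta(i)-\eta(j)$ (equivalently the index $k$) runs bijectively over all vertices. This same remark shows that (b) alone forces every row of $A$ to be a rearrangement of the top row, so the regularity hypothesis is actually redundant; I would flag this but retain it as stated to parallel the unweighted lemma above. I anticipate the main (and very mild) obstacle to be phrasing the biconditional cleanly at the level of individual weights $w$, which the definitional reading of (a) handles without difficulty.
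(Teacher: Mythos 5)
Your proof is correct and is essentially the route the paper intends: the paper omits the argument as ``following from the definitions,'' and what you have written is exactly that definitional unwinding of $A_{i,j}=f(\eta(i)-\eta(j))$, with the apparent circularity in the converse resolved, as it must be, by taking (a) as the definition of $f$ and then checking $f(0)=0$ (no loops) and evenness (symmetry of $A$). Your side remark that condition (b) makes the regularity hypothesis redundant is also correct, since for fixed $i$ the index $k$ with $\eta(k)=\eta(i)-\eta(j)$ runs bijectively over all vertices as $j$ does, so every row of $A$ is a permutation of the top row.
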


This statement follows from the definitions and its proof is omitted.

Note that

\[
W_f(0)=|S_0|+|S_1|\zeta+\dots +|S_{p-1}|\zeta^{p-1},
\]
which we can regard as an identity in the 
$(p-1)$-dimensional $\qqq$-vector space
$\qqq(\zeta)$. The relation

\[
1+\zeta+\zeta^2+\dots +\zeta ^{p-1}=0,
\]
gives

\[
\begin{array}{c}
W_f(0)-|S_0|+|S_1|\\
=(|S_2|-|S_1|)\zeta^2+\dots
+(|S_{p-1}|-|S_1|)\zeta^{p-1}.
\end{array}
\]
We have proven the following result.

\begin{lemma}
\label{lemma:sig}
If $f:GF(p)^n\to GF(p)$ has the property that $W_f(0)$ is a rational
number then

\[
|S_1| =  |S_2| = \dots =|S_{p-1}|,
\]
and

\[
W_f(0)=|S_0|-|S_1|.
\]
In particular,

\[
\begin{array}{rl}
|{\rm supp}(f)|&=|S_1| +|S_2|+ \dots + |S_{p-1}|\\
&=(p-1) |S_1|=(p-1)  (|S_0|-W_f(0)).
\end{array}
\]
\end{lemma}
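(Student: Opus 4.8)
The plan is to read everything off the single identity
\[
W_f(0)=\sum_{i=0}^{p-1}|S_i|\,\zeta^i
=|S_0|+|S_1|\zeta+\dots+|S_{p-1}|\zeta^{p-1},
\]
which follows by grouping the defining sum (\ref{eqn:WT}) for $W_f$ at $u=0$ according to the value $f(x)=i$: each level set $S_i$ contributes $|S_i|$ copies of $\zeta^{i}$. The whole lemma is then the observation that a rational number can be written in powers of $\zeta$ in only one, very rigid, way.

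The key input is the arithmetic of the cyclotomic field. Since $p$ is prime, $\zeta=e^{2\pi i/p}$ is a primitive $p$-th root of unity whose minimal polynomial over $\qqq$ is the cyclotomic polynomial $1+x+\dots+x^{p-1}$; hence $[\qqq(\zeta):\qqq]=p-1$ and $\{1,\zeta,\dots,\zeta^{p-2}\}$ is a $\qqq$-basis of $\qqq(\zeta)$. First I would use the relation $1+\zeta+\dots+\zeta^{p-1}=0$ to eliminate the non-basis term $\zeta^{p-1}$, obtaining the unique expansion
\[
W_f(0)=(|S_0|-|S_{p-1}|)+(|S_1|-|S_{p-1}|)\zeta+\dots+(|S_{p-2}|-|S_{p-1}|)\zeta^{p-2}.
\]

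Next I would invoke the hypothesis $W_f(0)\in\qqq$. A rational number has expansion $r\cdot 1+0\cdot\zeta+\dots+0\cdot\zeta^{p-2}$ in the chosen basis, so by uniqueness of coordinates every coefficient of $\zeta,\zeta^2,\dots,\zeta^{p-2}$ above must vanish. This forces $|S_1|=|S_2|=\dots=|S_{p-1}|$, which is the first assertion, and reading off the constant term gives $W_f(0)=|S_0|-|S_{p-1}|=|S_0|-|S_1|$.

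The ``in particular'' clause is then routine bookkeeping: $|\supp(f)|=\sum_{i=1}^{p-1}|S_i|=(p-1)|S_1|$ since the summands are now all equal, and substituting $|S_1|=|S_0|-W_f(0)$ from the previous step gives $(p-1)(|S_0|-W_f(0))$. The only genuinely nontrivial step is the $\qqq$-linear independence of $1,\zeta,\dots,\zeta^{p-2}$ --- i.e. the irreducibility of the $p$-th cyclotomic polynomial --- which is precisely where the primality of $p$ is used; everything else is substitution into a basis.
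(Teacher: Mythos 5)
Your proof is correct and is essentially the paper's argument: both expand $W_f(0)=\sum_i |S_i|\zeta^i$, use the relation $1+\zeta+\dots+\zeta^{p-1}=0$ to rewrite this in a $\qqq$-basis of $\qqq(\zeta)$, and conclude from rationality and uniqueness of coordinates that the level-set sizes $|S_1|,\dots,|S_{p-1}|$ coincide. The only (immaterial) difference is that you eliminate $\zeta^{p-1}$ and work in the basis $\{1,\zeta,\dots,\zeta^{p-2}\}$, while the paper eliminates $\zeta$ and works with $\{1,\zeta^2,\dots,\zeta^{p-1}\}$.
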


\begin{remark}
It is also known that if $n$ is even and $f$ is bent then

\[
|S_1| =  |S_2| = \dots =|S_{p-1}|.
\]
\end{remark}

We have more to say about these sets later.

\subsection{Cayley graphs of bent functions}

For example, the Cayley graph of the even bent function in 
Example \ref{example:bent1} is given in Figure 
\ref{fig:bent1-tikz}.

\begin{figure}[t!]
\begin{minipage}{\textwidth}
\begin{center}
\definecolor{lorange}{rgb}{1.0,0.5,0}
\begin{center}
  \begin{tikzpicture}
    [font=\scriptsize,
    node/.style={shape=circle,draw=black,minimum width=0.5cm,thick,fill=lorange},
    edge1/.style={thick,red},
    edge2/.style={very thick,blue},
    edge/.style={thick}]

    \node (0) [node] at (0,4) {0};
    \node (1) [node] at (-3.5,3) {1};
    \node (2) [node] at (-6,1) {2};
    \node (3) [node] at (-5.5,-1) {3};
    \node (4) [node] at (-2.333,-3) {4};
    \node (5) [node] at (2.666,-3) {5};
    \node (6) [node] at (5.5,-1) {6};
    \node (7) [node] at (6,1) {7};
    \node (8) [node] at (3.5,3) {8};

    \draw [edge1] (0) to node[black]{$1$} (1);
    \draw [edge1] (0)  to node[black]{$1$} (2);
    \draw [edge1] (0)  to node[black]{$1$} (3);
    \draw [edge2] (0)  to node[black]{$2$} (4);
    \draw [edge2] (0)  to node[above left,black,pos=.7]{$2$} (5);
    \draw [edge1] (0)  to node[black]{$1$} (6);
    \draw [edge2] (0)  to node[above left,black,pos=.32]{$2$} (7);
    \draw [edge2] (0) -- node[above left,black,pos=.6]{$2$} (8);

    \draw [edge1] (1)  to node[black]{$1$} (2);
    \draw [edge2] (1)  to node[above left,black,pos=.42]{$2$} (3);
    \draw [edge1] (1)  to node[black]{$1$} (4);
    \draw [edge2] (1)  to node[above left,black,pos=.6]{$2$} (5);
    \draw [edge2] (1)  to node[above left,black,pos=.65]{$2$} (6);
    \draw [edge1] (1)  to node[black]{$1$} (7);
    \draw [edge2] (1)  to node[black]{$2$} (8);

    \draw [edge2] (2)  to node[black]{$2$} (3);
    \draw [edge2] (2)  to node[black]{$2$} (4);
    \draw [edge1] (2)  to node[above left,black,pos=.65]{$1$} (5);
    \draw [edge2] (2)  to node[above left,black,pos=.38]{$2$} (6);
    \draw [edge2] (2)  to node[black]{$2$} (7);
    \draw [edge1] (2)  to node[black]{$1$} (8);

    \draw [edge1] (3)  to node[black]{$1$} (4);
    \draw [edge1] (3)  to node[above left,black,pos=.52]{$1$} (5);
    \draw [edge1] (3)  to node[black]{$1$} (6);
    \draw [edge2] (3)  to node[above left,black,pos=.35]{$2$} (7);
    \draw [edge2] (3)  to node[above left,black,pos=.65]{$2$} (8);

    \draw [edge1] (4)  to node[black]{$1$} (5);
    \draw [edge2] (4)  to node[above left,black,pos=.43]{$2$} (6);
    \draw [edge1] (4)  to node[above left,black,pos=.6]{$1$} (7);
    \draw [edge2] (4)  to node[above left,black,pos=.7]{$2$} (8);

    \draw [edge2] (5)  to node[black]{$2$} (6);
    \draw [edge2] (5)  to node[black]{$2$} (7);
    \draw [edge1] (5) to node[black]{$1$} (8);

    \draw [edge1] (6) to node[black]{$1$} (7);
    \draw [edge1] (6) to node[above left,black,pos=.6]{$1$} (8);

    \draw [edge1] (7) to node[black]{$1$} (8);

  \end{tikzpicture}
\end{center}
\end{center}
\end{minipage}
\caption{The undirected Cayley graph of an even $GF(3)$-valued bent function of two
  variables from Example \ref{example:bent1}. (The vertices are
  ordered as in the Example.))
}
\label{fig:bent1-tikz}
\end{figure}
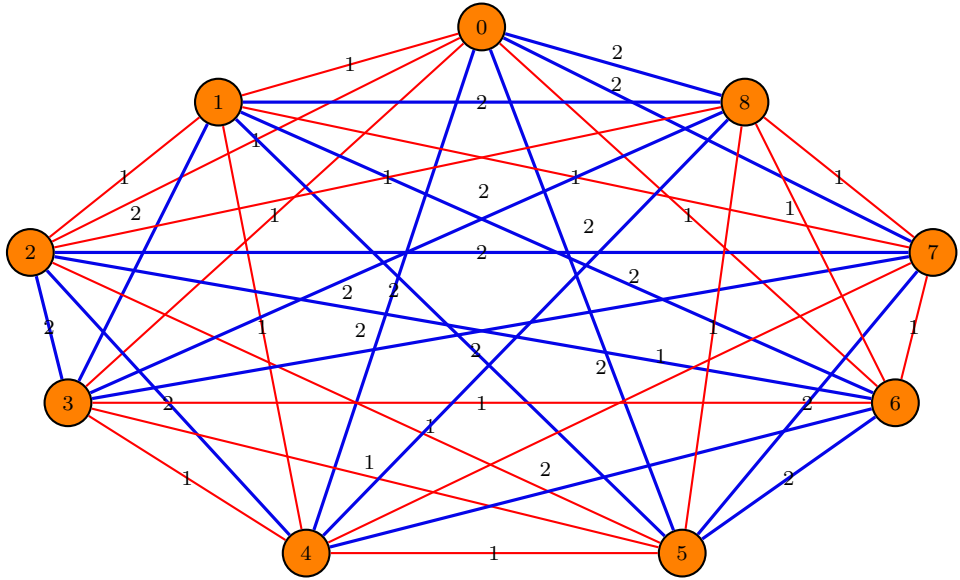


\begin{remark}
\label{remark:PDS}
In Chee et al \cite{CTZ}, it is shown that if $n$ is even then the
unweighted Cayley graph of certain\footnote{By ``certain'' we mean that
$f$, regarded as a function $GF(p^n)\to GF(p)$, is homogeneous
of some degree.} weakly regular even bent functions 
$f:GF(p)^n\to GF(p)$, with $f(0)=0$, is strongly regular.

\end{remark}

\begin{problem}
\label{prob:cayleygraph1to4}
Some natural problems arise. For $f$ even, 

\begin{enumerate}
\item
find necessary and sufficient conditions for $\Gamma_f$ to be strongly
regular,
\item
find necessary and sufficient conditions for $\Gamma_f$ to be
connected (and more generally find a formula for the number of
connected components of $\Gamma_f$),
\item
classify the spectrum of $\Gamma_f$ in terms of the values of the
Fourier transform of $f$,
\item
in general, which graph-theoretic properties of $\Gamma_f$ can be tied
to function-theoretic properties of $f$?
\end{enumerate}
\end{problem}

\begin{theorem}
(Bernasconi Correspondence, \cite{B}, \cite{BC}, \cite{BCV})
\label{thrm:BC} 
Let $f:GF(2)^n\to GF(2)$. The function $f$ is bent if and only if 
the Cayley graph of $f$ is a strongly regular graph having
parameters $(2^n, k, \lambda, \lambda)$ for some $\lambda$, 
where $k=|\supp(f)|$.
\end{theorem}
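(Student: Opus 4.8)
The plan is to diagonalize the adjacency matrix of $\Gamma_f$ by the characters of the abelian group $GF(2)^n$ and to match its spectrum against the Walsh spectrum of $f$. Write $D=\supp(f)=f^{-1}(1)$ and $k=|D|$; since every element of $GF(2)^n$ is its own inverse we automatically have $D=D^{-1}$, so by Theorem \ref{thrm:PDSiffSRG} it suffices to understand when $D$ is a $(2^n,k,\lambda,\mu)$-PDS with $\lambda=\mu$. Because $\Gamma_f$ is a Cayley graph on an abelian group, its adjacency matrix $A$ is simultaneously diagonalized by the characters $\chi_u(x)=(-1)^{\langle u,x\rangle}$, with eigenvalues $\theta_u=\sum_{d\in D}(-1)^{\langle u,d\rangle}$. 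The first step is the bookkeeping identity, valid because $\zeta=-1$ and $\sum_{x}(-1)^{\langle u,x\rangle}$ equals $2^n$ or $0$ according as $u=0$ or $u\neq 0$: one finds $\theta_0=k$ and $\theta_u=-\tfrac12 W_f(u)$ for every $u\neq 0$.

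For the forward direction, suppose $f$ is bent. Then $W_f(u)=\pm 2^{n/2}$ for all $u$ (the Walsh values are real since $\zeta=-1$), so the nontrivial eigenvalues of $A$ are exactly $\pm 2^{n/2-1}$. Hence $A$ has at most the three eigenvalues $k,\,2^{n/2-1},\,-2^{n/2-1}$; a short argument shows $\Gamma_f$ is connected (the support of a bent function lies in no hyperplane, else some $\theta_u$ with $u\neq 0$ would equal $k$, contradicting $|\theta_u|=2^{n/2-1}$, so $D$ generates $GF(2)^n$), and a connected regular graph with three eigenvalues is strongly regular. Since for such a graph the two nontrivial eigenvalues are the roots of $x^2-(\lambda-\mu)x-(k-\mu)$ coming from (\ref{eqn:schur-ring1}), their being the symmetric pair $\pm 2^{n/2-1}$ forces $\lambda-\mu=0$ and $k-\mu=2^{n-2}$; this yields the parameters $(2^n,k,\lambda,\lambda)$ with $\lambda=k-2^{n-2}$.

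For the converse, suppose $\Gamma_f$ is strongly regular with parameters $(2^n,k,\lambda,\lambda)$. Then (\ref{eqn:schur-ring1}) collapses to $A^2=(k-\lambda)I+\lambda J$, so every nontrivial eigenvalue satisfies $\theta_u^2=k-\lambda$ and hence $|W_f(u)|=2\sqrt{k-\lambda}$ is constant for $u\neq 0$. It remains to identify this constant as $2^{n/2}$ and to pin down $|W_f(0)|=|2^n-2k|$. Here I would use the integrality constraints forced by the group structure: since $\trace A=0$, writing $m_+,m_-$ for the multiplicities of $\pm\sqrt{k-\lambda}$ gives $(m_+-m_-)\sqrt{k-\lambda}=-k$ with $m_++m_-=2^n-1$, so $\sqrt{k-\lambda}$ is a positive integer $s$ dividing $k$ with $k/s$ odd. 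Combined with the standard relation $k(k-1)=\lambda(2^n-1)$ and the exclusion of the degenerate difference sets $k\in\{0,1,2^n-1\}$ (whose Cayley graphs are disconnected or complete, hence not strongly regular in the sense of Definition \ref{defn:SRG}), these constraints force $k=2^{n-1}\pm 2^{n/2-1}$ and $k-\lambda=2^{n-2}$. With $k-\lambda=2^{n-2}$ one gets $|W_f(u)|=2^{n/2}$ for $u\neq 0$, and substituting $k=2^{n-1}\pm 2^{n/2-1}$ into $W_f(0)=2^n-2k$ gives $|W_f(0)|=2^{n/2}$ as well, so $f$ is bent.

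The main obstacle is precisely this last parameter identification in the converse. The difficulty is that strong regularity of $\Gamma_f$ only asserts that the nontrivial Walsh coefficients share a common magnitude, and Parseval's equation $\sum_u|W_f(u)|^2=2^{2n}$ becomes an algebraic identity once $k(k-1)=\lambda(2^n-1)$ is imposed, so it gives no information about the actual value of $k-\lambda$. The real content is therefore the classification statement that a nontrivial difference set in an elementary abelian $2$-group necessarily has Menon--Hadamard parameters; one may either prove this directly from the integrality of the eigenvalue multiplicities together with the $2$-adic divisibility of the order $k-\lambda$, or simply invoke Dillon's correspondence (Theorem \ref{thrm:DC}) to pass from ``$D$ is a difference set'' to ``$D$ is an elementary Hadamard difference set,'' and hence to the bentness of $f$.
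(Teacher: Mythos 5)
The first thing to note is that the paper contains no proof of Theorem \ref{thrm:BC} at all: it is quoted as a known result of Bernasconi, Codenotti and VanderKam, so there is no in-paper argument to compare yours against. Your spectral route (the characters $\chi_u$ diagonalize the adjacency matrix, $\theta_0=k$, $\theta_u=-\tfrac{1}{2}W_f(u)$ for $u\neq 0$) is exactly the approach taken in that literature; your bookkeeping identities are correct, your forward direction is complete for $n\geq 4$, and your observation that Parseval collapses to the standard counting identity $k(k-1)=\lambda(2^n-1)$, hence carries no extra information, is correct and well taken. (One caveat: for $n=2$ the forward direction and indeed the theorem itself degenerate. The bent function $x_0x_1$ has support $\{(1,1)\}$, which \emph{does} lie in a hyperplane, and its Cayley graph is the disconnected graph $2K_2$, which is not strongly regular under Definition \ref{defn:SRG}; your hyperplane argument only yields a contradiction when $k\neq 2^{n/2-1}$, which holds for $n\geq 4$ but not $n=2$.)

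The genuine gap is the step in the converse that you yourself isolate as ``the real content.'' The assertion that integrality of the multiplicities, $k(k-1)=\lambda(2^n-1)$, and non-degeneracy force $k=2^{n-1}\pm 2^{n/2-1}$ and $k-\lambda=2^{n-2}$ is true, but it is asserted rather than proved, and the escape hatch you offer is circular: Theorem \ref{thrm:DC} has ``elementary Hadamard difference set'' as its \emph{hypothesis}, i.e., it presupposes that the parameters already have Menon form $(4m^2,2m^2-m,m^2-m)$, so it cannot be used to ``pass from `$D$ is a difference set' to `$D$ is an elementary Hadamard difference set.''' That passage is precisely the classification you need. To close the gap you must do the arithmetic, for instance as follows: from $s^2=k-\lambda$ with $s\in\zzz$ and $k(k-1)=\lambda(2^n-1)$ one gets $w^2+s^2(2^n-1)=4^{n-1}$, where $w=|k-2^{n-1}|$; writing $s=2^c r$ and $w=2^c w'$ with $r,w'$ odd (one checks $v_2(w)=c$ and that $2c\leq n-2$, so $2(n-1-c)\geq n$), reduction modulo $2^n$ gives $(w'-r)(w'+r)\equiv 0 \pmod{2^n}$, hence $2^{n-1}$ divides $w'-r$ or $w'+r$; the size bounds $w'<2^{n-1-c}$ and $r<2^{(n-1)/2-c}$ then leave only $w'=r$, which forces $r=1$, $n=2c+2$, $s=2^{n/2-1}$, $k=2^{n-1}\pm 2^{n/2-1}$, apart from the excluded degenerate solutions $k\in\{1,2^n-1\}$. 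With this (or an explicit citation of the classification of nontrivial difference sets in elementary abelian $2$-groups, which is a theorem in its own right and not Theorem \ref{thrm:DC}), your proof is complete.
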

\index{Bernasconi Correspondence}

The (naive) analog of this for $p>2$ is formalized below in 
Analog \ref{analog:BC-SRG}

Regarding Problem 1, we have the following natural expectation.

Regarding the Bernasconi correspondence, we have the 
following graph-theoretical generalization (whose statement 
may or may not be true).

\begin{analog}
\label{conj:weighted-cayley}
\label{analog:BC-SRG}
Assume $n$ is even.
If $f:GF(p)^n\to GF(p)$ is even bent then, for each $a\in GF(p)^\times$, we
have

\begin{itemize}
\item
if $u_1,u_2\in V$ are $a_3$-neighbors in the Cayley graph of $f$ 
then
$|N(u_1,a_1) \cap N(u_2,a_2)|$ does not depend on $u_1,u_2$
(with a given edge-weight), for each $a_1,a_2,a_3\in GF(p)^\times$;
\item
if $u_1,u_2\in V$ are distinct and not neighbors in the Cayley graph of $f$ 
then $|N(u_1,a_1) \cap N(u_2,a_2)|$ does not depend on $u_1,u_2$, 
for each $a_1,a_2\in GF(p)^\times$.
\end{itemize}
In other words, the associated Cayley graphs is edge-weighted 
strongly regular as in Definition \ref{def:SRG}.
\end{analog}

\index{Bernasconi Correspondence!analog}

Unfortunately, it is not true in general.

\begin{remark}
\begin{enumerate}
\item
This analog is false when $p=5$.

\item
This analog remains false if you replace
``$f:GF(p)^n\to GF(p)$ is even bent'' in the hypothesis by
``$f:GF(p)^n\to GF(p)$ is even bent and regular.''
However, when $p=3$ and $n=2$, see Lemma \ref{lemma:34}(a).

\item
In general, this analog remains false if you replace
``$f:GF(p)^n\to GF(p)$ is even bent'' in the hypothesis by
``$f:GF(p)^n\to GF(p)$ is even bent and weakly regular.''
However, when $p=3$ and $n=2$, see Lemma \ref{lemma:34}(b).

\item
This analog is false if $n$ is odd.

\item
The converse of this analog, as stated, is false if $p>2$.

\end{enumerate}
\end{remark}

Parts 2 and 3 in Problem \ref{prob:cayleygraph1to4}
are addressed below (see Lemmas \ref{lemma:connected1}
and \ref{lemma:connected2}, resp., and \S \ref{sec:FTs}).

The adjacency matrix $A=A_f$ is the matrix whose entries are 

\[
        A_{i,j} = f_\ccc(\eta(i) - \eta(j)), 
\]
where $\eta(k)$ is the $p$-ary representation as in 
(\ref{eqn:pary-rep-map}). Ignoring edge weights, we let

\[
        A^{*}_{i,j} = 
\left\{
\begin{array}{ll}
1, &f_\ccc(\eta(i) - \eta(j))\not=0,\\
0, &{\rm otherwise}.
\end{array}
\right. 
\]
Note $\Gamma_f$ is a regular edge-weighted 
digraph (each vertex has the same in-degree 
and the same out-degree as each other vertex). 
The in-degree and the out-degree
both equal $wt(f)$, where $wt$ denotes
the Hamming weight of $f$, when regarded as a vector (of length $p^n$) of 
integers. Let

\[
\omega=\omega_f = wt(f)
\]
denote the cardinality of 
$\supp(f) = \{v\in V\ |\ f(v)\not= 0\}$
and let

\[
\sigma_f = \sum_{v\in V} f_\ccc(v).
\]
Note that $\hat{f}(0)=\sigma_f\geq |\supp(f)|$.
If $f$ is even then $\Gamma_f$ is an $\sigma_f$-regular 
(edge-weighted) graph. If we ignore weights, then it is
an $\omega_f$-weighted graph.

Recall that, given a graph $\Gamma$ and its adjacency matrix $A$, 
the spectrum 
$\sigma(\Gamma)=\{\lambda_1,\lambda_2,\dots,\lambda_N\}$,
where $N=p^n$, is the multi-set of eigenvalues of $A$. 
Following a standard convention,
we index the elements $\lambda_i=\lambda_i(A)$
of the spectrum in such a way that they are monotonically increasing
(using the lexicographical ordering of $\ccc$).
Because $\Gamma_f$ is regular, the row sums of $A$ are all $\omega$
whence the all-ones vector is an eigenvector of $A$ with eigenvalue $\omega$.
We will see later (Corollary \ref{corollary:spectrum}) that 
$\lambda_N(A)=\sigma_f$.

Let $D$ denote the identity matrix multiplied by $\sigma_f$. 
The {\it Laplacian} of
$\Gamma_f$ can be defined as the matrix $L = D - A$. 
\index{Laplacian} 

\begin{lemma}
\label{lemma:connected1}
Assume $f$ is even.
As an edge-weighted graph, 
$\Gamma_f$ is connected if and only if 
$\lambda_{N-1}(A)<\lambda_N(A)=\sigma_f$.
If we ignore edge weights, then
$\Gamma_f$ is connected if and only if 
$\lambda_{N-1}(A^*)<\lambda_N(A^*)=\omega_f$.
\end{lemma}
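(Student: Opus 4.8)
The plan is to recognize this statement as the edge-weighted version of a standard fact from Perron--Frobenius theory: a nonnegative regular graph is connected precisely when its largest adjacency eigenvalue is simple. The two features that make this work here are that $f$ is even, so $A$ is symmetric, and that the edge weights are positive integers, so $A$ is entrywise nonnegative. Combined with the already-noted fact that $\Gamma_f$ is $\sigma_f$-regular, every row sum of $A$ equals $\sigma_f$ and the all-ones vector $\mathbf{1}$ satisfies $A\mathbf{1}=\sigma_f\mathbf{1}$.

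First I would pin down the top eigenvalue unconditionally. For a nonnegative matrix the spectral radius is itself an eigenvalue and is bounded above by the maximum row sum; since all row sums equal $\sigma_f$ and $\sigma_f$ is already an eigenvalue via $\mathbf{1}$, the spectral radius is exactly $\sigma_f$. Because $A$ is symmetric its eigenvalues are real and the largest one coincides with the spectral radius, so $\lambda_N(A)=\sigma_f$ holds whether or not $\Gamma_f$ is connected. This disposes of the asserted value of $\lambda_N$ and reduces the lemma to deciding exactly when $\sigma_f$ is a \emph{simple} eigenvalue.

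Next I would translate connectivity into irreducibility: $\Gamma_f$ is connected if and only if $A$ is irreducible as a nonnegative matrix. For the forward direction, connectedness gives irreducibility, and the Perron--Frobenius theorem asserts that the spectral radius $\sigma_f$ is then a simple eigenvalue, so $\lambda_{N-1}(A)<\lambda_N(A)=\sigma_f$. For the converse I would argue by contrapositive: if $\Gamma_f$ is disconnected, then after relabeling the vertices $A$ is block diagonal with one block per connected component and at least two blocks. Since the weighted degree of a vertex is computed entirely within its own component, each block is again $\sigma_f$-regular, hence has $\sigma_f$ as an eigenvalue (with the all-ones vector supported on that component). Thus $\sigma_f$ occurs with multiplicity at least the number of components, which is $\geq 2$, forcing $\lambda_{N-1}(A)=\lambda_N(A)=\sigma_f$. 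This gives the equivalence.

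Finally, the unweighted statement is proved verbatim with $A^*$ in place of $A$ and $\omega_f$ in place of $\sigma_f$, since $A^*$ is the symmetric $(0,1)$-adjacency matrix with constant row sum $\omega_f$ and the same Perron--Frobenius argument applies. I expect the only delicate point to be the converse direction, where one must verify that each connected component of the \emph{weighted} graph is genuinely $\sigma_f$-regular so that every block contributes the eigenvalue $\sigma_f$; this rests on the regularity of $\Gamma_f$ together with the fact that a vertex's neighbors all lie in its own component, and it is precisely what keeps the weighted generalization from breaking down.
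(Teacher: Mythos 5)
Your proof is correct, but it follows a genuinely different route from the paper's. The paper works with the Laplacian $L = \sigma_f I - A$: it converts the adjacency spectrum to the Laplacian spectrum via $\lambda_i(L) = \sigma_f - \lambda_{N-i+1}(A)$, notes $\lambda_i(L)\geq 0$, and then invokes Fiedler's theorem (algebraic connectivity) that $\lambda_2(L)>0$ if and only if the graph is connected; the desired inequality $\lambda_{N-1}(A)<\sigma_f$ is just the translation of $\lambda_2(L)>0$. You instead argue directly on $A$ with Perron--Frobenius: regularity plus nonnegativity pins the spectral radius at $\sigma_f$ (independently of connectivity), connectivity is identified with irreducibility so that the top eigenvalue is simple in the connected case, and the converse is handled explicitly by the block-diagonal decomposition into components, each block again $\sigma_f$-regular and hence contributing the eigenvalue $\sigma_f$ with total multiplicity at least the number of components. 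What your approach buys is self-containedness and symmetry of the argument: both directions are proved from one principle, and the converse is made constructive rather than absorbed into the ``if and only if'' of a cited theorem. What the paper's approach buys is brevity -- one spectral identity plus a citation -- and it fits the Laplacian framework the paper has just introduced. Both arguments rest on the same two structural facts (evenness of $f$ gives symmetry of $A$; regularity ties $\sigma_f$, resp.\ $\omega_f$, to the spectrum), and both pass verbatim to the unweighted matrix $A^*$, so your proof is a valid substitute.
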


\begin{proof}
We only prove the statement for the edge-weighted case.

Note that for $i = 1,\ldots, N$,
$\lambda_i(L) = \omega - \lambda_{N-i+1}(A)$, since
$\det (L-\lambda I) = \det(\sigma_f I-A-\lambda I)
=(-1)^n\det (A -(\sigma_f-\lambda)I)$.
Thus, $\lambda_i(L) \geq 0$, for all $i$. By a theorem of Fiedler 
\cite{Fiedler73}, 
$\lambda_2(L) > 0$ if and only if $\Gamma_f$ is connected. But
$\lambda_2(L) > 0$ is equivalent to $\sigma_f - \lambda_{N-1}(A) > 0$.
\end{proof}

Clearly, the vertices in $\Gamma_f$ connected to $0\in V$
is in natural bijection with $\supp(f)$. Let $W_j$ denote the 
subset of $V$ consisting of those vectors which can be 
written as the sum of $j$ elements in $\supp(f)$ but not
$j-1$. Clearly,

\[
W_1 = \supp(f) \subset W_2 \subset \dots \subset Span(\supp(f)).
\]

For each $v_0\in W_1=\supp(f)$, the vertices connected to
$v_0$ are the vectors in

\[
\supp(f_{v_0}) = \{v\in V\ |\ f(v-v_0)\not= 0\},
\]
where $f_{v_0}(v) = f(v-v_0)$ denotes the translation of $f$ by
$-v_0$. Therefore, 

\[
\supp(f_{v_0}) = v_0+\supp(f) .
\]
In particular, all the vectors in $W_2$ are connected to $0\in V$.
For each $v_0\in W_2$, the vertices connected to $v_0$ are the vectors in
$
\supp(f_{v_0}) = v_0+\supp(f) ,
$
so all the vectors in $W_3$ are connected to $0\in V$.
Inductively, we see that $Span(\supp(f))$ is the connected component
of $0$ in $\Gamma_f$.
Pick any $u \in V$ representing a non-trivial coset in
$V/Span(\supp(f))$. Clearly, $0$ is not connected with 
$u$ in $\Gamma_f$. However, the above reasoning implies
$u$ is connected to $v$ if and only if they
represent the same coset in $V/Span(\supp(f))$.
This proves the following result.

\begin{lemma}
\label{lemma:connected2}
The connected components of $\Gamma_f$ are in one-to-one
correspondence with the elements of the quotient space
$V/Span(\supp(f))$.
\end{lemma}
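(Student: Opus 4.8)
The plan is to reduce the statement to a standard fact about Cayley graphs of abelian groups. Since $f$ is even, $\supp(f)$ is stable under negation, so the underlying unweighted graph of $\Gamma_f$ is the (undirected) Cayley graph of the additive group $V=GF(p)^n$ with connection set $S=\supp(f)$; connectivity is insensitive to the edge weights, so I would work entirely with this unweighted graph. The content of the lemma is then exactly the identification of the subgroup of $V$ generated by $S$ with $Span(\supp(f))$, together with the observation that the connected components of any Cayley graph are the cosets of the subgroup generated by its connection set.

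First I would pin down the connected component of $0$. For the inclusion into $Span(\supp(f))$, I would induct on the length of a walk from $0$: a single edge moves a vertex $w$ to $w+s$ for some $s\in S$, so every vertex reachable from $0$ is a finite sum of support vectors and hence lies in $Span(\supp(f))$. For the reverse inclusion I would use the sets $W_j$ introduced just before the lemma, where $W_j$ consists of the vectors expressible as a sum of $j$ support vectors and no fewer; an easy induction shows that each vector of $W_{j+1}$ is adjacent to some vector of $W_j$ (subtract off a single support vector), so $\bigcup_j W_j$ is contained in the component of $0$. Since over $GF(p)$ a scalar multiple $c\cdot s$ with $0\le c<p$ is the $c$-fold sum $s+\cdots+s$, every element of $Span(\supp(f))$ is such a finite sum, i.e. lies in $\bigcup_j W_j$. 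Combining the two inclusions gives that the component of $0$ is precisely $Span(\supp(f))$.

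Next I would transport this to an arbitrary vertex by translation. For each $u\in V$ the map $x\mapsto x+u$ preserves the quantity $f(x-y)$ and is therefore an automorphism of $\Gamma_f$, so it carries the component of $0$ onto the component of $u$; hence the component of $u$ is $u+Span(\supp(f))$, the coset of $u$ modulo $Span(\supp(f))$. Two vertices thus lie in the same component if and only if they lie in the same coset, so the distinct components are exactly the distinct cosets, and the assignment sending a component to the coset it equals is the asserted bijection onto $V/Span(\supp(f))$.

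I expect no serious obstacle; the only point needing care is the reverse inclusion, namely that every vector of $Span(\supp(f))$ is genuinely reached by a walk from $0$. Two small facts special to the finite $GF(p)$-vector-space setting make this clean: in a finite abelian group the collection of nonempty sums of elements of a set is already a subgroup (each element has finite order, so $0$ and additive inverses arise as repeated sums), and this subgroup coincides with the $GF(p)$-linear span because integer multiples reduce modulo $p$ to scalar multiples. The standing hypothesis that $f$ is even enters separately, to ensure that $\Gamma_f$ is undirected so that reachability is a symmetric relation and ``connected component'' carries its usual meaning.
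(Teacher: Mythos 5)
Your proposal is correct and follows essentially the same route as the paper: the sets $W_j$ of vectors expressible as sums of $j$ support vectors (and no fewer), an induction showing their union is exactly the connected component of $0$, and a translation argument identifying the remaining components with the cosets of $Span(\supp(f))$. The only difference is that you explicitly justify the reverse inclusion — that every element of $Span(\supp(f))$ is genuinely a finite sum of support vectors, since scalar multiples over $GF(p)$ are repeated sums — a point the paper's proof leaves implicit.
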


\subsection{Group actions on bent functions}

We note here some useful facts about the action of nondegenerate linear 
transforms on $p$-ary functions.  Suppose that  $f:V=GF(p)^n \to GF(p)$ 
and $\phi:V \to V$ is a nondegenerate linear transformation (isomorphism 
of $V$), and $g(x)=f(\phi(x))$.  The functions $f$ and $g$ both have the same 
signature, $(|f^{-1}(i)|\ |\ i = 1,\dots,p-1)$.

It is straightforward to calculate that 
$$W_f g(u) = W_f ( (\phi^{-1})^T u)$$  
(where $T$ denotes transpose).

It follows that if $f$ is bent, so is $g=f\circ \phi$, and if $f$ is bent and regular, so is $g$.
If $f$ is bent and weakly regular, with $\mu$-regular dual $f^*$, then $g$ 
is bent and weakly regular, with $\mu$-regular dual $g^*$, where $g^*(u)=f^*( (\phi^{-1})^T u)$.

Next, we examine the effect of the group action on bent functions and the corresponding 
weighted PDSs.

\begin{proposition}
{\rm
Let $f:GF(p)^{n} \rightarrow GF(p)$ be an even, bent function such that $f(0)=0$ 
and define $D_{i} = f^{-1}(i)$ for $i \in GF(p)-\{0\}$. 
Suppose $\phi :GF(p)^{n} \rightarrow GF(p)^{n}$ is a linear map that is invertible 
(i.e., det $\phi \neq 0 \pmod{p}$). Define the function $g = f \circ \phi$; $g$ 
is the composition of a bent function and an affine function, so it is also bent.
If the collection of sets 
$\{D_{1}, D_{2}, \cdots ,D_{p-1}\}$ forms a weighted partial difference
set for $GF(p)^{n}$ then so does its image under the 
function $\phi$.
}
\end{proposition}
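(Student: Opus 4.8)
The plan is to reduce the statement to a single structural fact: an invertible linear map $\phi$ on $GF(p)^n$ is an automorphism of the underlying additive group $G=(GF(p)^n,+)$, and any group automorphism carries a symmetric weighted PDS to a symmetric weighted PDS with \emph{exactly the same} parameters. Write $E_i=\phi(D_i)$ for the image sets. Since $\phi$ is linear, $\phi(D_0)=\{0\}$, and since $\phi$ is a bijection, $E_p:=\phi(D_p)=G\setminus\bigcup_{i=0}^{p-1}E_i$; as the $D_i$ are disjoint and exhaust $G$, so are the $E_i$. Thus $\{\phi(D_0),\dots,\phi(D_p)\}$ is again a legitimate decomposition of $G$. (The level sets of $g=f\circ\phi$ are the sets $\phi^{-1}(D_i)$ rather than $\phi(D_i)$, but since $\phi^{-1}$ is also a linear automorphism the argument below applies verbatim to either, so the choice of $\phi$ versus $\phi^{-1}$ is immaterial.)

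First I would record the homomorphism identity for difference lists. For each pair $i,j$ and each $d_1\in D_i$, $d_2\in D_j$ we have $\phi(d_1)-\phi(d_2)=\phi(d_1-d_2)$, so as multisets
\[
E_i-E_j=\{\phi(d_1)-\phi(d_2)\ |\ d_1\in D_i,\ d_2\in D_j\}=\phi(D_i-D_j),
\]
where $\phi$ acts elementwise on the difference list $D_i-D_j$ (the additive form of $D_iD_j^{-1}$ used in the paper). Because $\phi$ is a bijection, the number of times a group element $x$ occurs in $E_i-E_j$ equals the number of times $\phi^{-1}(x)$ occurs in $D_i-D_j$.

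Next I would transfer the counts. Fix $\ell$ and let $x$ be a nonidentity element of $E_\ell$; then $\phi^{-1}(x)$ is a nonidentity element of $D_\ell$, so by hypothesis $\phi^{-1}(x)$ occurs exactly $\lambda_{i,j,\ell}$ times in $D_i-D_j$, hence $x$ occurs exactly $\lambda_{i,j,\ell}$ times in $E_i-E_j$. Identically, for $x\in G\setminus E$ the preimage $\phi^{-1}(x)$ lies in $G\setminus D$ and occurs $\mu_{i,j}$ times, so $x$ occurs $\mu_{i,j}$ times. Since $|E_i|=|D_i|=k_i$, this shows $\{E_1,\dots,E_{p-1}\}$ satisfies the weighted PDS counting property with precisely the parameters $(v,(k_i),(\lambda_{i,j,\ell}),(\mu_{i,j}))$ of the original. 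Finally, the symmetry condition is preserved: because $f$ is even we have $-D_i=D_i$, and linearity of $\phi$ gives $-E_i=\phi(-D_i)=\phi(D_i)=E_i$ (more generally, if $D_i^{-1}=D_{\tau(i)}$ then $E_i^{-1}=E_{\tau(i)}$).

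I do not expect a genuine obstacle here: the entire content is the bookkeeping of multiset multiplicities under the bijection $\phi$, and the one point to state carefully is that the resulting parameters are not merely \emph{some} weighted PDS parameters but identical to those of $\{D_1,\dots,D_{p-1}\}$. It is worth emphasizing that bentness of $f$ (and hence of $g=f\circ\phi$) plays no role in this preservation argument; the fact that the $E_i$ form a weighted PDS is a purely group-theoretic consequence of $\phi$ being an automorphism of $(GF(p)^n,+)$, with the hypothesis that the $D_i$ already form a weighted PDS doing all the work.
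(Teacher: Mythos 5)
Your proof is correct, but it takes a genuinely different route from the paper's. The paper argues through the algebraic characterization: it invokes the equivalence between a weighted PDS and a Schur ring in the group ring $\ccc[GF(p)^n]$ (the identity $C_i\cdot C_j=\sum_k \rho_{ij}^k C_k$), extends $\phi$ to a homomorphism of $\ccc[GF(p)^n]$, and concludes that this homomorphism carries the Schur ring attached to the level curves of $g$ to the one attached to the level curves of $f$, hence carries one weighted PDS structure to the other. You instead verify the combinatorial definition directly: the multiset identity $E_i-E_j=\phi(D_i-D_j)$, the transfer of multiplicities under the bijection $\phi$ (multiplicity of $x$ in $\phi(M)$ equals multiplicity of $\phi^{-1}(x)$ in $M$), and preservation of the inverse-closure condition. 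What the paper's approach buys is economy and coherence with its surrounding framework (Schur rings and association schemes are used throughout, e.g.\ in the correspondence between weighted PDSs and class-$(r+1)$ schemes), though as written it only checks additivity of the extended $\phi$ and leaves the multiplicativity on $\ccc[G]$ --- the point that actually makes it a Schur-ring homomorphism --- implicit. What your approach buys is that it is elementary and self-contained, it makes explicit that the image is a weighted PDS with \emph{identical} parameters $(v,(k_i),(\lambda_{i,j,\ell}),(\mu_{i,j}))$ rather than merely some weighted PDS, it handles the $\phi$ versus $\phi^{-1}$ bookkeeping (level curves of $g$ are $\phi^{-1}(D_i)$, not $\phi(D_i)$) cleanly, and it isolates the observation that bentness of $f$ is irrelevant to the preservation statement --- a point the paper's proof also tacitly relies on but never states.
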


\begin{proof}
We can explore this question by utilizing the Schur ring generated by the sets $D_{i}$.
Define $D_0=\{0\}$, where $0$ denotes the zero vector in $GF(p)^n$, and
define $D_p=GF(p)^n-\cup_{0\leq i\leq p-1} D_i$.

($D_0,D_{1}, D_{2}, \cdots ,D_{p-1},D_p$) forms a weighted partial difference set for $GF(p)^{n}$ 
if and only if ($C_{0}, C_{1}, C_{2}, \cdots ,C_{p}$) forms a Schur ring in $\ccc [GF(p)^n]$, where
\begin{center}
$C_{0}$ = $\{0\}$ (where $0$ denotes the zero element of $\ccc [GF(p)^n]$),
\end{center}
\begin{center}
$C_{1} = D_{1}, \cdots, C_{p-1} = D_{p-1}$
\end{center}
\begin{center}
$C_{p} = GF(p)^{n} - (C_{0} \cup \cdots \cup C_{p-1})$
\end{center}
\begin{center}
$C_{i} \cdot C_{j} = \sum\limits_{k=0}^{p} \rho_{ij}^{k} C_{k}$,
\end{center}
for some intersection numbers $\rho_{ij}^k\in \zzz$.
Note that $f$ is even, so $C_{i} = C_{i}^{-1}$ for all $i$, where $C_{i}^{-1} = \{-x\ |\ x \in C_{i}\}$.
Define $S_{i} = g^{-1}(i) = \{v \in GF(p)^{n}:g(v)=i\}$. 
$D_{i} = f^{-1}(i) = (g \circ \phi^{-1})^{-1}(i) = (\phi \cdot g^{-1})(i) = \phi(S_{i})$. 
So the map $\phi$ sends $S_i$ to $D_i$. $\phi$ can be extended to a map from 
$\ccc [GF(p)^n] \rightarrow \ccc [GF(p)^n]$ such that 
$\phi (g_1 + g_2) = \phi(g_1) + \phi(g_2)$ and $\phi(S_i) = D_i$. So $\phi$ is 
a homomorphism from the Schur ring of $g$ to the Schur ring of $f$. 
Therefore, the level curves of $g$ give rise to a Schur ring, and the
weighted partial difference set generated by $f$ is sent to a weighted 
partial difference set generated by $g$ under the map $\phi^{-1}$. 
We conclude that the Schur ring of $g$ corresponds to a  weighted partial difference set
for $GF(p)^{n}$, which is the image of that for $f$.
\end{proof}

\begin{remark}
It is known that for ``homogeneous'' weakly regular bent 
functions\footnote{Here, ``homogeneous'' is meant in the
sense of \cite{PTFL}, not in the sense we use in this paper.},
the level curves give rise to a weighted PDS. In fact, the weighted
PDS corresponds to an association scheme and the dual association
scheme corresponds to the dual bent function (see \cite{PTFL},
Corollary 3, and \cite{CTZ}). We know that any bent
function equivalent to such a bent function also has this property,
thanks to the proposition above.
\end{remark}

Our data seems to support the following statement.

\begin{conjecture}
\label{conjecture:main}
Let $f:GF(p)^n\to GF(p)$ be an even bent function, with
$p>2$ and $f(0)=0$. If the level curves of $f$ give rise to a weighted 
partial difference set\footnote{In the sense of 
Remark \ref{remark:level-curves}.} 
then $f$ is homogeneous and weakly regular.
\end{conjecture}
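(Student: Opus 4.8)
The hypothesis and the desired conclusion sit on opposite sides of a combinatorial–arithmetic divide, so my plan is to translate the weighted-PDS hypothesis into a statement about the Walsh spectrum of $f$ via character theory, extract weak regularity from it, and only then confront homogeneity, which I expect to be the genuine obstacle.

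First I would move everything into the group ring $\ccc[GF(p)^n]$ and diagonalize using the additive characters $\chi_u(x)=\zeta^{\langle u,x\rangle}$. Writing $D_i=f^{-1}(i)$ for $1\le i\le p-1$, $D_0=\{0\}$, and $D_p=f^{-1}(0)\setminus\{0\}$, the weighted-PDS hypothesis (in the sense of Remark~\ref{remark:level-curves}) says exactly that the indicator elements $D_1,\dots,D_p\in\ccc[GF(p)^n]$, together with $D_0$, span a Schur ring, equivalently a Bose–Mesner algebra. Hence the $\{D_i\}$ are simultaneously diagonalized by the $\chi_u$: the characters partition into common eigenspaces $V_0=\langle\text{all-ones}\rangle,V_1,\dots,V_s$ with $s\le p$, and on $V_j$ each $D_i$ acts by a single eigenvalue $\widehat{\mathbf 1}_{D_i}(u)=\sum_{x\in D_i}\zeta^{-\langle u,x\rangle}$. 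Since $W_f(u)=\widehat{\mathbf 1}_{S_0}(u)+\sum_{a=1}^{p-1}\zeta^{a}\,\widehat{\mathbf 1}_{D_a}(u)$ with $S_0=D_0\cup D_p$, it follows that $u\mapsto W_f(u)$ is constant on each dual class $V_j$; in particular the Walsh transform of a level-curve weighted PDS takes at most $p+1$ distinct values.

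Next I would feed in bentness and Proposition~\ref{prop:KSW}. Bentness forces $|W_f(u)|=p^{n/2}$ for every $u$, so the (at most $p+1$) values of $W_f$ all have modulus $p^{n/2}$, and by Proposition~\ref{prop:KSW} each equals $\epsilon(u)\zeta^{f^*(u)}p^{n/2}$ with $\epsilon(u)$ a fourth root of unity (a sign when $n$ is even or $p\equiv1\bmod 4$). By Lemma~\ref{lemma:weaklyregular}, weak regularity is precisely the assertion that $W_f(u)/W_f(0)$ is always a $p$-th root of unity, i.e.\ that $\epsilon(u)$ is independent of $u$. To pin $\epsilon$ down I would exploit the Galois action on $\qqq(\zeta)$: property~(2) of the Walsh transform gives $W_f(u)^{\sigma_k}=W_{kf}(ku)$, which lets me compare the finitely many spectral values across dual classes and against the value $W_f(0)$, which Lemma~\ref{lemma:sig} pins down (and which is rational, equal to $|S_0|-|S_1|$, when $n$ is even). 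The aim is to show that the modulus-$p^{n/2}$ constraint together with membership in the single cyclotomic field $\qqq(\zeta)$ leaves no room for two distinct root-of-unity prefactors, yielding weak regularity.

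The hard part will be homogeneity. Weak regularity is an assertion about the \emph{values} of the spectrum, whereas homogeneity (in the sense of \cite{PTFL}) is an assertion about the \emph{algebraic form} of $f$, and nothing in the Bose–Mesner/eigenvalue analysis above sees the polynomial structure directly. My plan would be to recover the scalar-multiplication symmetry combinatorially: use the proposition on invertible linear maps in the preceding subsection (which shows $GL$-equivalence preserves the weighted-PDS property and carries the associated Schur ring to a Schur ring) to argue that, for each $a\in GF(p)^\times$, the dilation $x\mapsto ax$ must permute the level curves $\{D_i\}$ compatibly with the scheme's intersection numbers, and then to upgrade this to an identity $f(ax)=a^{k}f(x)$ for a fixed $k$. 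The obstruction is that a weighted PDS could in principle arise ``accidentally'' from a non-homogeneous weakly regular bent function whose level curves happen to be a PDS without any dilation symmetry; ruling this out seems to require either a classification of the weakly regular bent functions whose level curves form a PDS (tying back to \cite{CTZ,PTFL}) or a new arithmetic argument coupling the Galois and dilation actions on the Walsh spectrum. This is precisely the step our computations verify case by case but that I cannot close in general, which is why the statement remains Conjecture~\ref{conjecture:main}.
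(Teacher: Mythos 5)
There is no proof in the paper for you to match: the statement is Conjecture \ref{conjecture:main}, which the authors explicitly leave open. Their only support is computational --- the exhaustive classifications of even bent functions with $f(0)=0$ for $(p,n)\in\{(3,2),(3,3),(5,2)\}$ into $GL(n,GF(p))$-orbits, together with a case-by-case check of which orbits yield weighted PDSs (all $18$ functions for $(3,2)$; only $B_1,B_3$ for $(3,3)$; only $f_1,f_2,f_5,f_6,f_9$ for $(5,2)$) and the observation that every such orbit consists of homogeneous, weakly regular functions. So your closing admission is exactly right: you have a strategy, not a proof, and the paper has no proof either.

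Judged as a strategy, your first reduction is sound and is a genuine structural consequence the paper never states: a Schur-ring structure on $(D_0,\dots,D_p)$ forces the additive characters into at most $p+1$ dual classes on which every $\widehat{\mathbf 1}_{D_i}$, hence $W_f$, is constant. But the weak-regularity step has a real gap beyond the one you flag for homogeneity: finitely many Walsh values of modulus $p^{n/2}$ in $\qqq(\zeta)$ do not by themselves force a constant unimodular prefactor, and the Galois identity $W_f(u)^{\sigma_k}=W_{kf}(ku)$ compares $f$ with $kf$, not $f$ with itself. To make it bite you would at least need the observation that $(kf)^{-1}(i)=f^{-1}(k^{-1}i)$, so $kf$ has the \emph{same} partition into level curves and hence the same Schur ring; even granting that, it is not clear the prefactors on distinct dual classes get pinned down, so ``weak regularity follows'' should be downgraded from an expected outcome to an open sub-problem. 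Two further cautions. First, ``homogeneous'' in the conjecture is meant in the paper's algebraic-normal-form sense, not the sense of \cite{PTFL}; the footnote you are implicitly leaning on disambiguates in the opposite direction. Second, calibrate against the paper's data carefully: its example $f(x_0,x_1,x_2)=x_0x_1+x_2^2$ on $GF(3)^3$ is asserted there to be non-weakly-regular, yet a direct Gauss-sum computation gives $W_f(u)=i\,3^{3/2}\,\zeta^{-u_0u_1-u_2^2}$, which is weakly regular with $\mu=i$ (consistent with the conjecture, inconsistent with that example's text), so not every printed claim is a safe anchor for your argument.
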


\section{Intersection numbers}

This section is devoted to stating some results on the 
$p_{ij}^k$'s.

\begin{theorem}
\label{thrm:pijk-formula}
{\rm
Let $f:GF(p)^n \rightarrow GF(p)$ be a function and let $\Gamma$ be
its Cayley graph. 
Assume $\Gamma$ is a weighted strongly regular graph. Let $A =
(a_{k,l})$ be the 
adjacency matrix of $\Gamma$. Let $A_i = (a^i_{k,l})$ be the $(0,1)$-matrix where 
\begin{center}
$a^i_{k,l} = 
\begin{cases}
1 & \text{if } a_{k,l} = i \\
0 & \text{otherwise}
\end{cases}
$
\end{center}
for each $i = 1,2,\dots ,p-1$. Let $A_0$ be the $p^n \times p^n$
identity matrix. 
Let $A_p$ be the $(0,1)$-matrix such that $A_0 + A_1 + \cdots +
A_{p-1} + A_p = J$, 
the $p^n \times p^n$ matrix with all entries 1. Let $R$ denote the
matrix ring generated 
by $ \{A_0, A_1, \cdots ,A_p\}$. The intersection numbers $p^k_{ij}$ defined by

\begin{equation}
\label{eqn:AiAj}
A_iA_j = \sum\limits_{k=0}^p p^k_{ij}A_k
\end{equation}
satisfy the formula

\[
p^k_{ij} = \left(\dfrac{1}{p^n|D_k|}\right) Tr(A_iA_jA_k),
\]
for all $i,j,k = 1,2,\dots,p$.
}
\end{theorem}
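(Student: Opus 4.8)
The plan is to compute the trace $\trace(A_iA_jA_k)$ two different ways and match them against the defining relation (\ref{eqn:AiAj}). The key observation is that each $A_\ell$ is a $(0,1)$-matrix indexed by $GF(p)^n\times GF(p)^n$ with $(A_\ell)_{s,t}=1$ precisely when $\eta(s)-\eta(t)\in D_\ell$ (with $D_0=\{0\}$ and $D_p$ the complement of the support); these are the weight-specific adjacency matrices of the Cayley graph, so they form the Schur-ring/association-scheme picture developed earlier in the excerpt. Because $\Gamma$ is assumed weighted strongly regular, the products $A_iA_j$ lie in the span of the $A_k$, which is exactly what (\ref{eqn:AiAj}) asserts, and the $p^k_{ij}$ are the structure constants of the associated association scheme.

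First I would apply the trace to both sides of (\ref{eqn:AiAj}). Since the $A_k$ have pairwise disjoint supports and the matrix ring $R$ is spanned by them, multiplying (\ref{eqn:AiAj}) on the right by $A_k$ and taking traces is the natural move. The essential computational fact I need is the orthogonality relation
\[
\trace(A_m A_k) = \delta_{m,k}\, \trace(A_k A_k),
\]
which holds because $A_m A_k$ has a nonzero diagonal entry at vertex $s$ only when there is a $t$ with $\eta(s)-\eta(t)\in D_m$ and $\eta(t)-\eta(s)\in D_k$, forcing (using that $f$ is even, so $D_m^{-1}=D_m$) $m=k$. Combined with $\trace(A_kA_k)=\trace(A_kA_k^t)=p^n|D_k|$ — each of the $p^n$ rows of $A_k$ has exactly $|D_k|$ ones by the translation-invariance $N(u,\ell)=u+D_\ell$ from (\ref{eqn:nbhd2}) — this gives
\[
\trace\!\left(\sum_{\ell=0}^p p^\ell_{ij}A_\ell A_k\right)
= p^k_{ij}\,\trace(A_kA_k)
= p^k_{ij}\, p^n|D_k|.
\]
Setting this equal to $\trace(A_iA_jA_k)$ and solving yields the claimed formula $p^k_{ij}=\bigl(p^n|D_k|\bigr)^{-1}\trace(A_iA_jA_k)$.

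The two steps I expect to require the most care are the orthogonality relation and the count $\trace(A_kA_k)=p^n|D_k|$. For orthogonality, the subtlety is that it genuinely uses that $f$ is even (equivalently $D_k=D_k^{-1}$), so that a closed walk $s\to t\to s$ with edge-weights $(m,k)$ can return only when $m=k$; without evenness one would instead get $\trace(A_mA_k)=\delta_{m,k^*}\,p^n|D_k|$ where $k^*$ indexes $D_k^{-1}$, and the formula would need adjusting. The count itself is the cleaner half, following from regularity of the weight-specific adjacency matrices. The main obstacle, then, is not any single computation but rather being careful that the Schur-ring structure is genuinely available: one must invoke the standing hypothesis that $\Gamma$ is a weighted strongly regular graph to guarantee that $A_iA_j$ actually lies in $\mathrm{span}\{A_0,\dots,A_p\}$, since only then is (\ref{eqn:AiAj}) well-posed and the trace-extraction legitimate. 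Finally I would remark that the formula is symmetric in the cyclic sense $\trace(A_iA_jA_k)=\trace(A_jA_kA_i)$, which recovers the standard symmetries of the intersection numbers $p^n|D_k|\,p^k_{ij}=p^n|D_i|\,p^i_{jk}$.
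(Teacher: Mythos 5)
Your proof is correct, but it takes a genuinely different route from the paper's. The paper proves the formula combinatorially: it invokes the Matrix-Walk Theorem to interpret $\trace(A_iA_jA_k)$ as the total number of closed walks of length $3$ with weight sequence $(i,j,k)$, uses the translation $u \mapsto u - u_1$ to show every such triangle can be moved to one containing the zero vector (this is where the Cayley-graph structure enters), concludes that $p^{-n}\trace(A_iA_jA_k)$ counts weight-$(i,j,k)$ triangles through $0$, and finally divides by the $|D_k|$ weight-$k$ edges at $0$. Your argument instead stays inside the adjacency algebra: you take the defining relation $A_iA_j = \sum_\ell p_{ij}^\ell A_\ell$, multiply by $A_k$, and extract the coefficient via the orthogonality relation $\trace(A_\ell A_k) = \delta_{\ell,k}\, p^n|D_k|$, which you correctly reduce to the disjointness of the level sets together with $D_k^{-1} = D_k$ (evenness of $f$ — a hypothesis the theorem statement leaves implicit but which is forced by $\Gamma$ being an undirected weighted SRG, and which the paper's proof also uses silently when it treats triangles as unoriented). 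Your route is the classical Bose--Mesner-algebra argument (essentially the one behind (17.13) in the cited Cameron--van Lint reference): it is shorter, isolates exactly where evenness is needed, and immediately yields the cyclic symmetries $|D_k|p_{ij}^k = |D_i|p_{jk}^i$ that the paper derives separately as a corollary. What the paper's walk-counting proof buys in exchange is an explicit combinatorial interpretation of $p_{ij}^k$ as a triangle count, which motivates why the normalization $p^n|D_k|$ appears. One small caveat in both proofs: the formula only makes sense when $|D_k| \neq 0$, i.e.\ when each weight class is nonempty; you may want to flag that degenerate case, since uniqueness of the coefficients $p_{ij}^k$ also relies on the nonzero $A_k$ having disjoint supports.
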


This is (17.13) in \cite{CvL}. We provide a different proof for the reader's convenience.

\begin{proof}
By the Matrix-Walk Theorem, $A_iA_j$ can be considered as counting
walks along the Cayley graph of specific edge weights. Supposed
$(u,v)$ is an edge of $\Gamma$ with weight $k$. If $k=0$, then $u=v$
and the edge is a loop. If $k=p$, then $(u,v)$ is technically not an
edge in $\Gamma$, but 
we will label it as an edge of weight $p$. 
\\
\\
The $(u,v)$-th entry of $A_iA_j$ is the number of walks of length 2
from $u$ to $v$ where the first 
edge has weight $i$ and the second edge has weight $j$; the entry is 0
if no such walk exists. 
If we consider the $(u,v)$-th entry on each side of the equation 
(\ref{eqn:AiAj})
we can deduce that $p^k_{ij}$ is the number of walks of length 2 from
$u$ to $v$ where the first 
edge has weight $i$ and the second edge has weight $j$ (it equals 0 if
no such walk exists) 
for any edge $(u,v)$ with weight $k$ in $\Gamma$. 
\\
\\
Similarly, the Matrix-Walk Theorem implies that $Tr(A_iA_jA_k)$ is the
total number of walks of length 3 
having edge weights $i,j,k$. We claim that if $\triangle$ is any
triangle with edge weights $i,j,k$, then by 
subtracting an element $v \in GF(p)^n$, we will obtain a triangle in
$\Gamma$ containing the zero vector 
as a vortex with the same edge weights.  Suppose $\triangle =
(u_1,u_2,u_3)$, where $(u_1,u_2)$ has 
edge weight $i$, $(u_2,u_3)$ has edge weight $j$, and $(u_3,u_1)$ has
edge weight $k$. 
Let $\triangle ' = (0,u_2-u_1,u_3-u_1)$. We compute the edge weights of $\triangle '$:
\begin{center}
edge weight of $(0,u_2-u_1)=f((u_2-u_1)-0)=f(u_2-u_1)=i$
\\
edge weight of $(u_2-u_1,u_3-u_1)=f((u_3-u_1)-(u_2-u_1)=f(u_3-u_2)=j$
\\
edge weight of $(u_3-u_1,0)=f(0-(u_3-u_1))=f(u_1-u_3)=k$
\end{center}
Thus the claim is proven.
\\
\\
Therefore,
\begin{center}
$\left(\dfrac{1}{|GF(p)^n|}\right)Tr(A_iA_jA_k) = \left(\dfrac{1}{p^n}\right)Tr(A_iA_jA_k)$
\end{center}
is the number of closed walks of length 3 having edge 
weights $i,j,k$ and containing the zero vector as a 
vertex, incident to the edge of weight $i$ and the edge of weight $k$.
\\
\\
There are $|D_k|$ edges incident to the zero vector, so 
\begin{center}
$\left(\dfrac{1}{p^n}\right) \left(\dfrac{1}{|D_k|}\right)Tr(A_iA_jA_k)$
\end{center}
is the number of walks of length 2 from the zero vector to 
any neighbor of it along an edge of weight $k$. This is 
equivalent to the definition of the number $p^k_{ij}$ in the 
Matrix-Walk Theorem.  
\end{proof}

The following corollary is well-known (see \cite{CvL}, page 202).

\begin{corollary}
{\rm
Let $G = GF(p)^n$. Let $D_0, \cdots, D_r \subseteq G$ such that $D_i \cap D_j = \emptyset$ if $i \neq j$, and 
\begin{itemize}
\item
$G$ is the disjoint union of $D_0 \cup \cdots \cup D_r$
\item
for each $i$ there is a $j$ such that $D_i^{-1} = D_j$, and
\item
$D_i \cdot D_j = \sum\limits_{k=0}^r p_{ij}^k D_k$ for some positive integer $p_{ij}^k$.
\end{itemize}
Then, for all $i,j,k$, $|D_k| p_{ij}^k = |D_i| p_{kj}^i$.
}
\end{corollary}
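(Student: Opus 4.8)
The plan is to deduce the identity from the normalized-trace formula for intersection numbers just established in Theorem \ref{thrm:pijk-formula}. Here the matrices $A_0,\dots,A_p$ (more generally $A_0,\dots,A_r$) are the $(0,1)$-adjacency matrices of the relations $R_m=\{(x,y)\ |\ y-x\in D_m\}$, which is exactly the Cayley-graph setup of that theorem, so the formula and its Matrix-Walk derivation apply verbatim. I would apply it twice, once in each of the two orders appearing in the claim, obtaining
\[
|D_k|\,p_{ij}^k = \frac{1}{p^n}\,Tr(A_iA_jA_k),
\qquad
|D_i|\,p_{kj}^i = \frac{1}{p^n}\,Tr(A_kA_jA_i).
\]
Thus the corollary is equivalent to the single trace identity $Tr(A_iA_jA_k)=Tr(A_kA_jA_i)$, and the entire problem is reduced to comparing these two triple products.

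To prove the trace identity, the slick route is to use $Tr(M)=Tr(M^{T})$ together with $(A_iA_jA_k)^{T}=A_k^{T}A_j^{T}A_i^{T}$, whereupon I only need $A_m^{T}=A_m$ for each class, i.e. $D_m=D_m^{-1}$. In the setting that matters throughout this paper, $f$ is even, so every $D_m=-D_m$, each $A_m$ is a symmetric matrix, and the identity is immediate. For the reader who wants to see \emph{why} it is true independently of matrix algebra, I would give the combinatorial reading coming from the Matrix-Walk Theorem exactly as in the proof of Theorem \ref{thrm:pijk-formula}: up to the free translation of the base point (a factor of $p^n$), $Tr(A_iA_jA_k)$ counts triples $(a,b,c)\in D_i\times D_j\times D_k$ with $a+b+c=0$; call this $N(i,j,k)$. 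The reflection $(a,b,c)\mapsto(c,b,a)$ is an involution carrying the defining set of $N(i,j,k)$ bijectively onto that of $N(k,j,i)$, since it only interchanges the two outer summands while preserving $a+b+c=0$. Hence $N(i,j,k)=N(k,j,i)$, which is $Tr(A_iA_jA_k)=Tr(A_kA_jA_i)$, and the corollary follows.

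The step I expect to require the most care is the bookkeeping of inverse-indices, and it is worth flagging as a genuine subtlety rather than a triviality. The stated hypotheses only guarantee that $\{D_0,\dots,D_r\}$ is closed under inversion (for each $i$ there is a $j$ with $D_i^{-1}=D_j$), not that each individual class is symmetric. A careful direct count, tracking the map $(a,b)\mapsto(a+b,-b)$ on $\{(a,b)\in D_i\times D_j\ |\ a+b\in D_k\}$, actually yields $|D_k|\,p_{ij}^k=|D_i|\,p_{k\,j^{*}}^i$, where $D_{j^{*}}=-D_j$; correspondingly, the transpose turns each $A_m$ into $A_{m^{*}}$ rather than $A_m$. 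So the identity as written is precisely the symmetric specialization $j^{*}=j$. My proposal is therefore to carry out the argument above under the symmetry $D_m=D_m^{-1}$ (automatic for even $f$, which is the standing assumption), where all the starred indices collapse and the trace/reflection argument closes the proof; alternatively one states the conclusion in the sharper form with $j^{*}$, which holds for any inverse-closed Schur ring.
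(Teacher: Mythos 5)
Your proposal is correct and, at its core, follows the same strategy as the paper: both reduce the claim, via the trace formula of Theorem \ref{thrm:pijk-formula}, to the single identity $\mathrm{Tr}(A_iA_jA_k)=\mathrm{Tr}(A_kA_jA_i)$. Where you differ is in how that identity is justified, and your version is the more careful one. The paper asserts it follows from $\mathrm{Tr}(AB)=\mathrm{Tr}(BA)$, but that property only gives invariance under \emph{cyclic} permutations, $\mathrm{Tr}(A_iA_jA_k)=\mathrm{Tr}(A_kA_iA_j)$, not under the reversal $(i,j,k)\mapsto(k,j,i)$; to get the reversal one needs either commutativity of the $A_m$ (true here because $G$ is abelian, so each $A_m$ is a sum of commuting translation matrices -- a fact the paper never invokes) or your transpose argument $\mathrm{Tr}(M)=\mathrm{Tr}(M^{T})$ combined with $A_m^{T}=A_m$, i.e.\ $D_m=D_m^{-1}$. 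Your flagged subtlety about inverse-indices is also a genuine catch rather than pedantry: the corollary's hypotheses only require the \emph{family} $\{D_m\}$ to be closed under inversion, and under that weaker hypothesis the stated identity can fail. For instance, take $G=GF(3)$ with $D_0=\{0\}$, $D_1=\{1\}$, $D_2=\{2\}$: then $|D_2|\,p_{11}^2=1$ while $|D_1|\,p_{21}^1=0$, since $D_2\cdot D_1=D_0$. The correct general statement is exactly your starred form $|D_k|\,p_{ij}^k=|D_i|\,p_{kj^{*}}^i$ with $D_{j^*}=D_j^{-1}$, and both the paper's proof and the trace formula of Theorem \ref{thrm:pijk-formula} itself implicitly assume each class is individually symmetric -- which holds for level sets of an even function, the standing assumption elsewhere in the paper, but is not among the hypotheses written into this corollary. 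Your combinatorial double count (triples in $D_i\times D_j\times D_k$ summing to zero, with the involution $(a,b,c)\mapsto(c,b,a)$) is a nice self-contained substitute that makes the role of symmetry transparent.
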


\begin{proof}
For all $i,j,k,$ we have the following identity of adjacency matrices:
\begin{center}
Tr($A_iA_jA_k$) = $p^n|D_k|p_{ij}^k$
\end{center}
where $p^n$ is the order of $G$ and $p_{ij}^k$ is an intersection number. Since Tr($AB$) = Tr($BA$) for all matrices $A$ and $B$, Tr($A_iA_jA_k$) = Tr($A_kA_jA_i$), and the proposition follows.
\end{proof}   

We can apply this concept to a weighted partial difference set and achieve similar results. If $G$ is a set and $D=D_{1} \cup D_{2} \cup \dots \cup D_r$ (all $D_{i}$ distinct) is a weighted partial difference set of $G$, then we can construct an association scheme as follows:
\\
\begin{itemize}
\item
Define $R_0=\Delta_G= \{ (x,x)\in G\times G\ | \ x\in G\}$.
\item
For $1 \leq i \leq r$, define $R_{i}=\{(x,y)\in G\times G\ | \ xy^{-1} \in D_{i}, x \not= y\}$
\item
Define $R_{r+1}=\{(x,y)\in G\times G\ | \ xy^{-1} \notin D, x \not= y\}$
\end{itemize}

\begin{proposition}
\label{prop:wtPDS2AS}
{\rm
If $R_{r+1}$ is non-empty then
the collection ($G,R_{0},R_{1},\dots,R_{r},R_{r+1}$) as 
defined above produces an association scheme of class 
$r+1$.
}
\end{proposition}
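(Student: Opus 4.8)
The plan is to reduce the statement to the Schur ring criterion recorded earlier in the paper: once we know that the group-ring elements
\[
C_0=\{1\},\qquad C_i=D_i\ (1\le i\le r),\qquad C_{r+1}=D'=G\setminus(D\cup\{1\})
\]
form a Schur ring over $G$, the relations $R_i=\{(x,y)\mid xy^{-1}\in C_i\}$ are exactly the ones in the statement, and the correspondence ``a Schur ring gives rise to its corresponding association scheme'' stated just after the definition of a Schur ring yields an association scheme of class $r+1$. The hypothesis that $R_{r+1}$ is non-empty is precisely what guarantees $C_{r+1}\neq\emptyset$, so that there are genuinely $r+2$ classes $C_0,\dots,C_{r+1}$ and the scheme has class $r+1$ rather than fewer.

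First I would check the two easy Schur ring axioms. The partition $G=C_0\cup C_1\cup\dots\cup C_{r+1}$ (disjoint) is immediate from the disjointness of $D=D_1\cup\dots\cup D_r$, the assumption $1\notin D$, and the definition of $C_{r+1}$ as the remaining elements. For inverse-closure, each $i$ with $1\le i\le r$ has some $j$ with $D_i^{-1}=D_j$ by the definition of a weighted PDS, so $C_i^{-1}=C_j$; also $C_0^{-1}=C_0$, and since $D=D^{-1}$ and $\{1\}^{-1}=\{1\}$ we get $(C_{r+1})^{-1}=C_{r+1}$. Translating to relations, $R_i^*=R_j$, so the dual axiom holds (and the scheme is symmetric when the weighted PDS is).

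The heart of the argument is the multiplicative closure $C_iC_j=\sum_k\rho_{ij}^k C_k$ with the coefficient of every element of $C_k$ equal to one integer $\rho_{ij}^k$; this is where the weighted PDS hypothesis enters, and I expect it to be the main obstacle. For $1\le i,j\le r$ I would pass from the product $D_iD_j$ to a difference list via inverse-closure: writing $D_{j'}=D_j^{-1}$ we have $D_iD_j=D_iD_{j'}^{-1}$, and the defining property of a weighted PDS says exactly that each non-identity element of $D_\ell$ occurs $\lambda_{i,j',\ell}$ times and each element of $G\setminus D$ occurs $\mu_{i,j'}$ times in this list (the identity occurs $|D_i|$ or $0$ times according as $i=j'$ or not). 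Hence the coefficient of any group element in $D_iD_j$ depends only on the class $C_k$ containing it, which is the Schur ring multiplication rule; alternatively one may simply invoke the Lemma equating a symmetric weighted PDS with the group-ring identity (\ref{eqn:schur-ring4}). The products involving $C_0$ are trivial, since $C_0C_j=C_j$, and the products involving the complement class are handled by the identity $C_{r+1}=G-C_0-\sum_{i=1}^r C_i$ in $\ccc[G]$ together with $G\cdot C_j=|C_j|\,G=|C_j|\sum_k C_k$: substituting reduces $C_{r+1}C_j$ to a $\zzz$-linear combination of the already-established products, hence again to a combination of the $C_k$. This closes the Schur ring.

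Finally, I would spell out the resulting constancy of the intersection numbers, since that is the defining property of an association scheme. For $(x,y)\in R_k$, counting $z$ with $(x,z)\in R_i$ and $(z,y)\in R_j$ amounts to counting factorizations $xy^{-1}=ab$ with $a=xz^{-1}\in C_i$ and $b=zy^{-1}\in C_j$, and this count is the coefficient of $xy^{-1}$ in $C_iC_j$. By the previous paragraph that coefficient equals $\rho_{ij}^k$ and depends only on $k$, so $p_{ij}(x,y)=p_{ij}^k:=\rho_{ij}^k$ is constant on each $R_k$. This verifies that $(G,R_0,\dots,R_{r+1})$ is an association scheme of class $r+1$; since $G$ is abelian the Schur ring is commutative, so the scheme is commutative as well.
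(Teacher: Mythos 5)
Your proposal is correct and follows essentially the same route as the paper's proof: build the Schur ring on the classes $\{1\}, D_1,\dots,D_r, G\setminus(D\cup\{1\})$, establish multiplicative closure from the weighted PDS identity (equation (\ref{eqn:schur-ring4})) together with $G\cdot C_j=|C_j|\,G$ for the complement class, and then obtain constancy of $p_{ij}(x,y)$ on each $R_k$ by identifying it with the coefficient of $xy^{-1}$ in the group-ring product $C_iC_j$. If anything, you are slightly more careful than the paper on two small points --- converting the product $D_iD_j$ into a difference list $D_iD_{j'}^{-1}$ via inverse-closure before invoking the weighted PDS definition, and noting explicitly that the hypothesis $R_{r+1}\neq\emptyset$ is what guarantees the scheme has class exactly $r+1$.
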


\begin{proof}
{\rm
Consider the subring $S$ of $\ccc[G]$ generated by $D_0, \cdots , D_{r+1}$, where $D_0 = \{1\}$ and $D_{r+1} = G \setminus (D \cup \{1\})$. First, we show that $S$ is a Schur ring.

We know that for $0 \leq i \leq r$, $D_i^{-1} = D_j$ for some $j$. $D_{r+1}^{-1} = D_{r+1}$ because $(G,D)$ is a partial difference set if and only if $(G,G \setminus D)$ is a partial difference set. 

We can then compute $D_i \cdot D_j$ in $\ccc[G]$; by the definition of a weighted partial difference set,
\begin{equation}
\label{eqn:schur-ring4}
D_i \cdot D_j = \alpha_{ij} \cdot 1 + \sum_{l=1}^r \lambda_{i,j,l} D_l + \mu_{i,j} D_{r+1},
\end{equation}
for some integer $\alpha_{ij}$. So the Schur ring decomposition formula
\begin{center}
$D_i \cdot D_j$ = $\sum\limits_0^{r+1} p_{ij}^k D_k$
\end{center}
holds for some integer $p_{ij}^k$.

Furthermore, 
$p_{ij}^0 = \delta_{ij} k_i$ for $ 0 \leq i, j \leq r+1$ 
and $p_{0j}^l = 
\delta_{jl}$ for $0 \leq j, l \leq r+1$.  

By expanding out expressions for $D_i \cdot G$ and 
$D_{r+1} \cdot G$, it can be shown that 

\[
p_{i,r+1}^l = k_i \delta_{il} - 
\sum_{j=1}^r \lambda_{ijl}
\]
for $1 \leq i, l \leq r$, 

\[
p_{i,r+1}^{r+1} = k_i - \sum_{j=1}^r \mu_{ij},
\]
for $1 \leq i \leq r$, and 

\[
p_{r+1,r+1}^{r+1} = k_{r+1}-1-\sum_{i=1}^r k_i 
+ \sum_{i=1}^r\sum_{j=1}^r\mu_{ij}.
\]
Also, $p_{ij}^l=p_{ji}^l$ for all $i$ and $j$, by symmetry. 

Next, we will show that for all $i,j,k \in \{0, \cdots, r+1\}$ and for $(x,y) \in R_k$,
\[
|\{z \in G | (x,z) \in R_i, (z,y) \in R_j\}|
\]
is a constant that depends only on $k$ (and $i,j$).

Choose $(x,z) \in R_i, (z,y) \in R_j$; then $xz^{-1} \in D_i, zy^{-1} \in D_j$. 
Consider $(xz^{-1})(zy^{-1}) = xy^{-1} \in D_i \cdot D_j$. This is independent 
of $z$. There are exactly $p_{ij}^k$ such elements $z$ by the Schur ring 
structure identity, since every element in $D_k$ (e.g. $xy^{-1}$) is repeated $p_{ij}^k$ times.
}
\end{proof}

\subsection{Fourier transforms and graph spectra}
\label{sec:FTs}

In the case $p=2$,
the spectrum of $\Gamma_f$ is determined by the set of values of the
Walsh-Hadamard transform of
$f$ when regarded as a vector of (integer) $0,1$-values (of length $2^n$).
Does this result have an analog for $p>2$?

\begin{definition} ({\bf Butson matrix})
{\rm
We call an $N\times N$ complex matrix $M$ a {\it Butson matrix} if

\[
M\cdot \overline{M}^t=NI_N, 
\]
where $I_N$ is the $N\times N$ identity matrix. 
}
\end{definition}
\index{Butson matrix}

\begin{lemma}
\label{lemma:balance}
Consider a map $g:GF(p)^n\to GF(p)$,
where we identify $GF(p)$ with $\{0,1,2,\dots, p-1\}$.
The following are equivalent.
\begin{itemize}
\item[(a)]
$g$ is balanced.
\item[(b)]
$|g^{-1}(x)|=p^{n-1}$, for each $x\in GF(p)$.
\item[(c)]
The Fourier transform of $\zeta^g$ satisfies
$\hat{\zeta^g}(0)=0$.
\end{itemize}
\end{lemma}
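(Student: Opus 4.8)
The plan is to reduce all three conditions to a single explicit formula for $\widehat{\zeta^g}(0)$ and then run the implications around a cycle. Writing $n_j = |g^{-1}(j)|$ for $j \in GF(p) = \{0,1,\dots,p-1\}$, I would begin by computing directly from the definition (\ref{eqn:FT}): since $\langle x,0\rangle = 0$ for every $x \in V$, we have
\[
\widehat{\zeta^g}(0) = \sum_{x \in V} \zeta^{g(x)} = \sum_{j=0}^{p-1} n_j \zeta^j,
\]
where the last equality groups the sum over the level sets $g^{-1}(j)$. (This is just $W_g(0)$, consistent with the identity $W_f = (\zeta^f)^\wedge$ recorded earlier.) Everything reduces to analyzing when this integer combination of $p$-th roots of unity vanishes.

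The two easy directions I would dispatch quickly. For (a) $\iff$ (b): the level sets $g^{-1}(0),\dots,g^{-1}(p-1)$ partition $V$, so $\sum_{j=0}^{p-1} n_j = p^n$, and the $n_j$ are all equal exactly when their common value is $p^n/p = p^{n-1}$. For (b) $\Rightarrow$ (c): substituting $n_j = p^{n-1}$ into the displayed formula and invoking the relation $1 + \zeta + \dots + \zeta^{p-1} = 0$ (already used in Lemma \ref{lemma:sig}) gives $\widehat{\zeta^g}(0) = p^{n-1}(1 + \zeta + \dots + \zeta^{p-1}) = 0$.

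The only direction with genuine content is (c) $\Rightarrow$ (b), and this is where I would concentrate the argument. The key fact is that, because $p$ is prime, the minimal polynomial of $\zeta$ over $\qqq$ is the cyclotomic polynomial $\Phi_p(x) = 1 + x + \dots + x^{p-1}$, so $\{1,\zeta,\dots,\zeta^{p-2}\}$ is a $\qqq$-basis of the $(p-1)$-dimensional field $\qqq(\zeta)$. Using $\zeta^{p-1} = -(1 + \zeta + \dots + \zeta^{p-2})$ to eliminate the top power, the hypothesis $\sum_{j} n_j \zeta^j = 0$ rewrites as
\[
(n_0 - n_{p-1}) + (n_1 - n_{p-1})\zeta + \dots + (n_{p-2} - n_{p-1})\zeta^{p-2} = 0,
\]
and linear independence of the basis forces $n_0 = n_1 = \dots = n_{p-1}$, which is (b). The main (if modest) obstacle is precisely this step: one must know that the single $\qqq$-linear relation among $1,\zeta,\dots,\zeta^{p-1}$ is exactly the cyclotomic one, i.e.\ that $[\qqq(\zeta):\qqq] = p-1$. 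This is a standard consequence of the primality of $p$, which I would cite rather than reprove, and it is what rules out any unbalanced distribution of the $n_j$ masquerading as a second vanishing relation.
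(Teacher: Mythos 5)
Your proposal is correct and follows essentially the same route as the paper: both express $\hat{\zeta^g}(0)=\sum_j n_j\zeta^j$, handle (a) $\Leftrightarrow$ (b) and (b) $\Rightarrow$ (c) as immediate, and prove the converse by viewing the sum as an identity in the $(p-1)$-dimensional space $\qqq(\zeta)$ and invoking the cyclotomic relation $1+\zeta+\cdots+\zeta^{p-1}=0$ together with linear independence (the paper eliminates $\zeta$, you eliminate $\zeta^{p-1}$ — a purely cosmetic difference).
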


\begin{proof}
It is easy to show that (a) and (b) are equivalent.
Also, it is not hard to establish (a) implies (c).

We show (c) implies (a).
This is proven by an argument similar to that used for Lemma
\ref{lemma:sig}.

Note that

\[
\hat{\zeta^g}(0)=|{\rm supp}(g)_0|+|{\rm supp}(g)_1| \zeta+
\dots +|{\rm supp}(g)_{p-1}|\zeta^{p-1},
\]
which we can regard as an identity in the $(p-1)$-dimensional 
$\qqq$-vector space $\qqq(\zeta)$. 
If $\hat{\zeta^g}(0)$ is rational then relation

\[
1+\zeta+\zeta^2+\dots +\zeta ^{p-1}=0,
\]
implies all the $|{\rm supp}(g)_j|$ are equal, for $j\not= 0$.
It also implies $\hat{\zeta^g}(0)=|{\rm supp}(g)_0|-|{\rm supp}(g)_1|$.
Therefore, $\hat{\zeta^g}(0)=0$ implies $g$ is balanced.
\end{proof}

The following equivalences are known (see for example
\cite{T} and \cite{CD}), but proofs are included for the
convenience of the reader.

\begin{proposition}
\label{prop:bent}
Let $f:GF(p)^n\to GF(p)$ be any function.
The following are equivalent.
\begin{itemize}
\item[(a)]
$f$ is bent.
\item[(b)]
The matrix $\zeta^F=(\zeta^{f(\eta(i)-\eta(j))})_{0\leq i,j\leq p^n-1}$
is Butson, where $\eta$ is as in (\ref{eqn:pary-rep-map}).
\item[(c)]
The derivative 
\[
D_bf(x) = f(x+b)-f(x),
\]
is balanced, for each $b\not= 0$.

\end{itemize}
\end{proposition}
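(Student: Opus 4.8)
The plan is to prove the three equivalences $(a)\Leftrightarrow(b)$ and $(a)\Leftrightarrow(c)$, using Lemma \ref{lemma:balance} as the engine for the second equivalence. The statement is a standard characterization, so the work is mostly in assembling definitions carefully.

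For $(a)\Leftrightarrow(b)$, I would unwind both conditions in terms of the Walsh transform. The matrix $\zeta^F$ has $(i,j)$ entry $\zeta^{f(\eta(i)-\eta(j))}$, so $\zeta^F$ is a circulant (more precisely, $G$-circulant) matrix built from the function $\zeta^f$ on $V=GF(p)^n$. The Butson condition $\zeta^F \cdot \overline{\zeta^F}^t = p^n I_{p^n}$ says precisely that the rows of $\zeta^F$ are pairwise orthogonal (the diagonal entries of the product are automatically $p^n$ since each row has $p^n$ entries of absolute value $1$). Computing the $(i,k)$ entry of the product gives
\[
\left(\zeta^F \overline{\zeta^F}^t\right)_{i,k}
= \sum_{j} \zeta^{f(\eta(i)-\eta(j))}\overline{\zeta^{f(\eta(k)-\eta(j))}}
= \sum_{x\in V} \zeta^{f(x+c)-f(x)},
\]
where $c=\eta(i)-\eta(k)$ and $x=\eta(i)-\eta(j)$ ranges over $V$. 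This is the autocorrelation of $\zeta^f$ at $c$, and it is well known that $f$ is bent if and only if this autocorrelation vanishes for every $c\neq 0$ (equivalently, $|W_f(u)|^2=p^n$ for all $u$, by the inverse Fourier/Parseval relation). So the Butson condition is exactly the vanishing of all non-trivial autocorrelations, which is exactly bentness.

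For $(a)\Leftrightarrow(c)$, the cleanest route is through the same autocorrelation identity. The derivative $D_b f(x)=f(x+b)-f(x)$ is a map $V\to GF(p)$, and by Lemma \ref{lemma:balance}(c) it is balanced if and only if the Fourier transform of $\zeta^{D_b f}$ vanishes at $0$, i.e.
\[
\widehat{\zeta^{D_b f}}(0)=\sum_{x\in V}\zeta^{f(x+b)-f(x)}=0.
\]
But this sum is precisely the autocorrelation of $\zeta^f$ at $b$ computed above. Hence $D_b f$ is balanced for every $b\neq 0$ if and only if all non-trivial autocorrelations of $\zeta^f$ vanish, which I already identified with bentness. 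This makes $(c)$ fall out of the same computation as $(b)$, so the two equivalences share their core.

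\textbf{The main obstacle} is the standard-but-essential identity linking bentness, defined via $|W_f(u)|=p^{n/2}$, to the vanishing of the non-trivial autocorrelations $\sum_x \zeta^{f(x+c)-f(x)}$ for $c\neq 0$. This is the one nontrivial analytic input: one must expand $\sum_u |W_f(u)|^2 \zeta^{\langle u,c\rangle}$ and use the orthogonality relation $\sum_{y\in V}\zeta^{-\langle y,w\rangle}=p^n\delta_{w,0}$ (already established in the proof of the duality lemma) to show that the autocorrelation function is the inverse Fourier transform of $|W_f|^2$. Bentness means $|W_f|^2$ is the constant $p^n$, whose inverse Fourier transform is supported only at $c=0$; conversely, if all autocorrelations vanish off $0$, then $|W_f(u)|^2$ is constant in $u$, and Parseval forces that constant to be $p^n$. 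Once this Wiener–Khinchin-type identity is in hand, both equivalences are immediate, so I would state and prove it first as the central step and then deduce $(b)$ and $(c)$ from it.
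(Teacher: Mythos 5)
Your proposal is correct and takes essentially the same approach as the paper: the paper's proof of (a)$\Leftrightarrow$(c) is exactly your Wiener--Khinchin step (it computes $\hat h = |W_f|^2$ for the autocorrelation $h(b)=\sum_{x\in V}\zeta^{f(x+b)-f(x)}$ and invokes Lemma \ref{lemma:balance}), and its proof of (b)$\Leftrightarrow$(c) is your identification of the off-diagonal entries of $\zeta^F\overline{\zeta^F}^t$ with these autocorrelations. The only difference is bookkeeping: you hang both equivalences off (a), while the paper chains (a)$\Leftrightarrow$(c)$\Leftrightarrow$(b); the underlying computations are identical.
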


\begin{remark}
From Proposition \ref{prop:bent}, we know
$D_bf(x)$ is balanced, for each $b\not= 0$, if and only if $f$
is bent. Therefore, we know that

\[
|\{u\in V\ |\ f(u-u_1)=f(u-u_2)\}|=p^{n-1}
\]
(which is obviously independent of $u_1,u_2$).
On the other hand, if $f$ is bent, it is not true in general
that, for each $a\in GF(p)$, 

\[
|\{u\in V\ |\ f(u-u_1)=f(u-u_2)=a\}|,
\]
is independent of $u_1,u_2\in V$.
See Example \ref{example:bent1} for a counterexample.
\end{remark}

Let 

\[
h(b) = 
(\zeta^{D_b f})^\wedge(0) = 
\sum_{x \in V}
\zeta^{f(x+b)-f(x)}.
\]

\begin{proof}
{\it (a) $\implies$ (c)}:
Note that 

\[
\begin{array}{rl}
\hat{h}(y) = &
\sum_{b \in V}
\sum_{x \in V}
\zeta^{f(x+b)-f(x)}
\zeta^{- \langle y,b\rangle}\\
&= \sum_{b \in V} \sum_{x \in V}
\zeta^{f(x+b)-f(x)- \langle y,b\rangle - \langle y,x\rangle + \langle y,x\rangle}\\
&= 
\sum_{x \in V}
\zeta^{-f(x) + \langle y,x\rangle}
\sum_{b \in V} 
\zeta^{f(x+b)- \langle y,x+b\rangle}\\
&= \hat{\zeta^f}(y)\overline{\hat{\zeta^f}(y)}
=|\hat{\zeta^f}(y)|^2=|W_f(y)|^2.
\end{array}
\]
Therefore, if $f$ is bent then $\hat{h}$ is a constant,
which means that $h$ is supported at $0$.
By Lemma \ref{lemma:balance}, $D_bf(x)$ is balanced.

{\it (c) $\implies$ (a)}:
We reverse the above argument.
Suppose $D_bf(x)$ is balanced.
By Lemma \ref{lemma:balance}, $h$ is supported at $0$,
so  $\hat{h}$ is a constant. Plug in $y=0$ and using the fact
$D_bf(x)$ is balanced, we see that the constant must 
$|V|=p^n$, Thus $=|W_f(y)|=p^{n/2}$.

{\it (c) $\implies$ (b)}:
Note that

\[
\sum_{j=0}^{p^n-1}
\zeta^{f(\eta(i)-\eta(j)) - f(\eta(j)-\eta(k))}
=
\sum_{x \in V}
\zeta^{f(x) - f(x+\eta(i)-\eta(k))}
=
\sum_{x \in V}
\zeta^{f(x+b)-f(x)},
\]
where $b=\eta(i)-\eta(k)$. If $D_bf(x)$ is balanced
then by Lemma \ref{lemma:balance}, this sum is zero
for all $b\not= 0$. These are the off-diagonal terms in the
product $\zeta^F\overline{\zeta^F}^t$. Those terms
when $i=k$ are the diagonal terms. They are 
obviously $|V|=p^n$. This implies $\zeta^F$ is Butson.

{\it (b) $\implies$ (c)}:
This follows by reversing the above argument. The details are omitted.

\end{proof}

Recall a circulant matrix is a square matrix where each row vector 
is a cyclic shift one element to the right relative to the preceding 
row vector.
Our Fourier transform matrix $F$ is not circulant, but is ``block circulant.''
Like circulant matrices, it has the property that
$\vec{v}_a =  (\zeta^{-\langle a,x\rangle}\ |\ x\in V)$ is an
eigenvector with eigenvalue $\lambda_a = \hat{f}(-a)$ (something related to a
value of the Hadamard transform of $f$). 
Thus, the proposition below shows that it ``morally'' behaves like
a circulant matrix in some ways.
\index{circulant matrix}

\begin{proposition}
The eigenvalues $\lambda_a = \hat{f}(-a)$ of this
matrix $F$ are values of the Fourier transform of the function $f_\ccc$,

\[
\hat{f}(y) = \sum_{x\in V} f_\ccc(x)\zeta^{-\langle x,y\rangle},
\]
and the eigenvectors are the vectors of $p$-th roots
of unity,

\[
\vec{v}_a =  (\zeta^{-\langle a,x\rangle}\ |\ x\in V).
\]
\end{proposition}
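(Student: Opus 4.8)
The plan is to verify the eigenvalue equation $F\vec{v}_a = \lambda_a \vec{v}_a$ by a direct computation, and then to observe that the vectors $\vec{v}_a$ (for $a\in V$) already form a basis of $\ccc^{p^n}$, so that no eigenvalue is missed. Recall that $F$ is the matrix indexed by $V$ (via $\eta$) with entries $F_{u,v}=f_\ccc(u-v)$; this is precisely the weighted adjacency matrix of $\Gamma_f$ introduced earlier.

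First I would fix $a\in V$ and compute the $u$-th coordinate of $F\vec{v}_a$, namely
\[
(F\vec{v}_a)_u = \sum_{v\in V} f_\ccc(u-v)\,\zeta^{-\langle a,v\rangle}.
\]
The substitution $w=u-v$ is a bijection of $V$ for each fixed $u$, and it gives $v=u-w$ and $\langle a,v\rangle=\langle a,u\rangle-\langle a,w\rangle$, so that
\[
(F\vec{v}_a)_u = \zeta^{-\langle a,u\rangle}\sum_{w\in V} f_\ccc(w)\,\zeta^{\langle a,w\rangle} = \zeta^{-\langle a,u\rangle}\,\hat{f}(-a),
\]
where the last equality uses the symmetry $\langle a,w\rangle=\langle w,a\rangle$ together with the definition of $\hat{f}$. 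Since $(\vec{v}_a)_u=\zeta^{-\langle a,u\rangle}$, this reads exactly $(F\vec{v}_a)_u=\hat{f}(-a)\,(\vec{v}_a)_u$, so $\vec{v}_a$ is an eigenvector of $F$ with eigenvalue $\lambda_a=\hat{f}(-a)$. This change of variables is also the formal reason $F$ behaves ``like a circulant matrix,'' as claimed in the surrounding discussion.

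Finally, to see that this accounts for the entire spectrum, I would note that the $\vec{v}_a$ are the additive characters of $V=GF(p)^n$. Distinct characters of a finite abelian group are orthogonal, by a short Gauss-sum argument of exactly the same type as the computation $\sum_{y\in V}\zeta^{-\langle y,w\rangle}=0$ for $w\neq 0$ carried out in the proof of the lemma on Walsh duals above; hence they are linearly independent, and since there are $p^n=\dim_\ccc \ccc^{p^n}$ of them, they form a basis. Thus $F$ is diagonalized by the $\vec{v}_a$ and its spectrum is exactly $\{\hat{f}(-a)\ |\ a\in V\}$.

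I do not anticipate a genuine obstacle: the statement is the standard fact that a group-circulant (convolution) operator on a finite abelian group is diagonalized by the group characters, with eigenvalues given by the Fourier transform of its first row. The only point requiring minor care is the bookkeeping with the bilinear form under $w=u-v$, and confirming that the sign conventions land the eigenvalue on $\hat{f}(-a)$ rather than $\hat{f}(a)$.
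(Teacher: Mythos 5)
Your proof is correct and takes essentially the same route as the paper's: both verify the eigenvalue equation by a change of variables in the convolution sum $\sum_{v} f_\ccc(u-v)\zeta^{-\langle a,v\rangle}$, factoring out $\zeta^{-\langle a,u\rangle}$ and identifying the remaining sum as $\hat{f}(-a)$. Your closing step — that the characters $\vec{v}_a$ are pairwise orthogonal, hence form a basis of $\ccc^{p^n}$, so the spectrum is \emph{exactly} $\{\hat{f}(-a)\ |\ a\in V\}$ — is a small but worthwhile addition: the paper leaves this completeness argument implicit, even though it is precisely what is needed to deduce Corollary \ref{corollary:spectrum}.
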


\begin{proof}
In $F = (F_{i,j})$, we have $F_{i,j} = f_\ccc(\eta(i)-\eta(j))$ for 
$i,j\in \{0,1,\dots, p^n-1\}$. For each $a\in GF(p)^n$, let

\[
\vec{v}_a = (\zeta^{-\langle a,\eta(i)\rangle}\ |\ i\in \{0,1,\dots, p^n-1\})
\]
Then

\[
F\vec{v}_a = (\sum_{y\in V} f_\ccc(x-y)\zeta^{-\langle a,y\rangle}\ |\ x \in
V).
\]
The entry in the $i$th coordinate, where $x=\eta(i)$ is given by 

\[
\begin{array}{rl}
\sum_{y\in V} f_\ccc(x-y)\zeta^{-\langle a,y\rangle}
&=\sum_{y\in V} f_\ccc(-y)\zeta^{-\langle a,y+x\rangle}\\
&\  \\
&=\zeta^{-\langle a,x\rangle}\sum_{y\in V} f_\ccc(-y)\zeta^{-\langle
  a,y\rangle}\\
&\  \\
&=\zeta^{-\langle a,x\rangle}\sum_{y\in V} f_\ccc(y)\zeta^{\langle
  a,y\rangle} \\
&\  \\
&= \zeta^{-\langle a,x\rangle}\hat{f}(-a).
\end{array}
\]
Therefore, the coordinates of the vector $F\vec{v}_a$ are
the same as those of $\vec{v}_a$, up to a scalar factor.
Thus $\lambda_a = \hat{f}(-a)$ is an eigenvalue and
$\vec{v}_a = (\zeta^{-\langle a,x\rangle}\ |\ x\in V)$ is an
eigenvector.

\end{proof}

\begin{corollary}
The matrix $F$ is invertible if and
only if none of the values of the Fourier transform of 
$f_\ccc$ vanish.
\end{corollary}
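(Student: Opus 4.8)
The plan is to lean on the preceding Proposition, which already exhibits a complete system of eigenpairs for $F$, and then to invoke the elementary fact that a square matrix is invertible precisely when $0$ is not one of its eigenvalues. First I would recall that, by that Proposition, for each $a\in GF(p)^n$ the vector $\vec{v}_a=(\zeta^{-\langle a,\eta(i)\rangle}\mid i)$ is an eigenvector of $F$ with eigenvalue $\lambda_a=\hat{f}(-a)$. There are exactly $N=p^n$ such vectors, one for each $a\in V$.

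The key step is to verify that these $\vec{v}_a$ form a basis of $\ccc^{N}$, so that the $\hat{f}(-a)$ account for the \emph{entire} spectrum of $F$. The $\vec{v}_a$ are the additive characters of $V$, and they are pairwise orthogonal under the standard Hermitian inner product: for $a\neq b$,
\[
\langle \vec{v}_a,\vec{v}_b\rangle
= \sum_{x\in V}\zeta^{-\langle a-b,\,x\rangle}=0,
\]
by the same character-sum evaluation of $\sum_{y\in V}\zeta^{-\langle y,w\rangle}$ already carried out earlier in the excerpt (it equals $p^n$ when $w=0$ and $0$ otherwise). Having $N$ pairwise-orthogonal, hence linearly independent, eigenvectors means they span $\ccc^{N}$; consequently $F$ is diagonalizable and its list of eigenvalues, counted with multiplicity, is exactly $\{\hat{f}(-a)\mid a\in V\}$, so $\det F=\prod_{a\in V}\hat{f}(-a)$.

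Finally, as $a$ ranges over $V$ so does $-a$, whence this multiset coincides with $\{\hat{f}(y)\mid y\in V\}$ and $\det F=\prod_{y\in V}\hat{f}(y)$. Therefore $F$ is invertible if and only if $0$ is not an eigenvalue, equivalently $\det F\neq 0$, equivalently none of the Fourier values $\hat{f}(y)$ vanish, which is the assertion.

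The one point deserving genuine care is the completeness claim: merely knowing each $\hat{f}(-a)$ is \emph{an} eigenvalue is not enough, since a priori $F$ could harbor a zero eigenvalue not on that list. Ruling this out is exactly what the orthogonality (hence linear independence) of the characters $\vec{v}_a$ accomplishes, and once completeness is established the equivalence follows at once from the determinant computation.
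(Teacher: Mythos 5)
Your proof is correct and follows essentially the same route the paper intends: the corollary is stated as an immediate consequence of the preceding proposition, whose eigenpairs $(\hat{f}(-a),\vec{v}_a)$ you invoke. Your explicit verification that the $p^n$ character vectors are orthogonal, hence form a basis so that the $\hat{f}(-a)$ exhaust the spectrum (with multiplicity), is exactly the detail the paper leaves implicit, and it is the right point to insist on.
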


\begin{corollary}
\label{corollary:spectrum}
The spectrum of the graph $\Gamma_f$ is 
precisely the set of values of the Fourier transform of $f_\ccc$.
\end{corollary}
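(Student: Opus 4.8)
The plan is to identify the adjacency matrix of $\Gamma_f$ with the matrix $F$ of the preceding Proposition and then read off the spectrum from that Proposition. Recall that the weighted adjacency matrix $A = A_f$ of $\Gamma_f$ has entries $A_{i,j} = f_\ccc(\eta(i) - \eta(j))$, which is exactly the matrix $F$. Thus the spectrum of $\Gamma_f$ is, by definition, the multiset of eigenvalues of $F$, and the whole task is to determine that multiset.

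First I would recall from the Proposition that for each $a \in GF(p)^n$ the vector $\vec{v}_a = (\zeta^{-\langle a,x\rangle}\ |\ x\in V)$ is an eigenvector of $F$ with eigenvalue $\lambda_a = \hat{f}(-a)$. This already shows that each value $\hat{f}(-a)$ of the Fourier transform occurs as an eigenvalue, so the set of Fourier values is contained in the spectrum. The content of the corollary is the reverse inclusion, i.e. that these eigenvectors account for the entire spectrum.

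Next I would argue completeness. The vectors $\vec{v}_a$ are precisely the characters of the additive group $GF(p)^n$, and by the orthogonality relation $\sum_{y \in V} \zeta^{-\langle y,w\rangle}$ equal to $p^n$ or $0$ (the same computation already carried out in the excerpt when establishing the Walsh transform of the dual) they are pairwise orthogonal, hence linearly independent. Since there are exactly $p^n = |V|$ of them and $F$ is a $p^n \times p^n$ matrix, the family $\{\vec{v}_a\}_{a \in V}$ is a complete eigenbasis of $\ccc^{p^n}$. Consequently the full multiset of eigenvalues of $F$ is exactly $\{\hat{f}(-a)\ |\ a \in V\}$.

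Finally, since $a \mapsto -a$ is a bijection of $V$, we have $\{\hat{f}(-a)\ |\ a \in V\} = \{\hat{f}(y)\ |\ y \in V\}$, the set of all values of the Fourier transform of $f_\ccc$; this yields the claimed equality (indeed as multisets, not merely as sets). I expect no real obstacle here: the only step needing care is the linear independence, equivalently completeness, of the eigenvectors $\vec{v}_a$, since it is precisely this that guarantees the spectrum is the entire collection of Fourier values rather than some proper subset.
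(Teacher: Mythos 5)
Your proof is correct and follows essentially the same route as the paper: the paper states this corollary without a separate proof, as an immediate consequence of the preceding Proposition identifying $F$ with the weighted adjacency matrix and exhibiting the character vectors $\vec{v}_a$ as eigenvectors with eigenvalues $\hat{f}(-a)$. Your only addition is to make explicit the completeness step (the $p^n$ characters are pairwise orthogonal, hence a full eigenbasis of $\ccc^{p^n}$, so no other eigenvalues can occur), which is exactly the detail the paper leaves implicit.
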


\section{Examples of Cayley graphs}

Let $V=GF(p)^n$ and let $f:V\to GF(p)$. If we fix an ordering 
on $GF(p)^n$, then the $p^n\times p^n$ matrix

\begin{equation}
\label{eqn:adj_mat}
F = (f_\ccc(x-y)\ |\ x,y\in V),
\end{equation}
is a $\zzz$-valued matrix. Here $x$ indexes the rows and
$y$ indexes the columns.

\begin{example}
{\rm
I can be shown that Example \ref{example:17} (or an isomorphic copy)
arises via the bent function $b_8$ (see also Example \ref{example:25}). 
For this example of $b_8$, we compute the adjacency matrix associated to the 
members $R_1$ and $R_2$ of the association scheme 
$(G,R_0,R_1,R_2,R_3)$, where $G = GF(3)^2$, 

\[
R_i = \{(g,h)\in  G\times G\ |\ gh^{-1} \in D_i\},\ \ \ \ \ i=1,2,
\]
and $D_i = f^{-1}(i)$.

Consider the following Sage computation:

\vskip .15in
{\footnotesize{
\begin{Verbatim}[fontsize=\scriptsize,fontfamily=courier,fontshape=tt,frame=single,label=\sage]

sage: attach "/home/wdj/sagefiles/hadamard_transform.sage"
sage: FF = GF(3)
sage: V = FF^2
sage: Vlist = V.list()
sage: flist = [0,2,2,0,0,1,0,1,0]
sage: f = lambda x: GF(3)(flist[Vlist.index(x)])
sage: F = matrix(ZZ, [[f(x-y) for x in V] for y in V])
sage: F  ## weighted adjacency matrix
[0 2 2 0 0 1 0 1 0]
[2 0 2 1 0 0 0 0 1]
[2 2 0 0 1 0 1 0 0]
[0 1 0 0 2 2 0 0 1]
[0 0 1 2 0 2 1 0 0]
[1 0 0 2 2 0 0 1 0]
[0 0 1 0 1 0 0 2 2]
[1 0 0 0 0 1 2 0 2]
[0 1 0 1 0 0 2 2 0]
sage: eval1 = lambda x: int((x==1))
sage: eval2 = lambda x: int((x==2))
sage: F1 = matrix(ZZ, [[eval1(f(x-y)) for x in V] for y in V])
sage: F1
[0 0 0 0 0 1 0 1 0]
[0 0 0 1 0 0 0 0 1]
[0 0 0 0 1 0 1 0 0]
[0 1 0 0 0 0 0 0 1]
[0 0 1 0 0 0 1 0 0]
[1 0 0 0 0 0 0 1 0]
[0 0 1 0 1 0 0 0 0]
[1 0 0 0 0 1 0 0 0]
[0 1 0 1 0 0 0 0 0]
% sage: F1.eigenmatrix_right()
% (
% [ 2  0  0  0  0  0  0  0  0]  [ 1  0  0  1  0  0  0  0  0]
% [ 0  2  0  0  0  0  0  0  0]  [ 0  1  0  0  1  0  0  0  0]
% [ 0  0  2  0  0  0  0  0  0]  [ 0  0  1  0  0  1  0  0  0]
% [ 0  0  0 -1  0  0  0  0  0]  [ 0  1  0  0  0  0  1  0  0]
% [ 0  0  0  0 -1  0  0  0  0]  [ 0  0  1  0  0  0  0  1  0]
% [ 0  0  0  0  0 -1  0  0  0]  [ 1  0  0  0  0  0  0  0  1]
% [ 0  0  0  0  0  0 -1  0  0]  [ 0  0  1  0  0 -1  0 -1  0]
% [ 0  0  0  0  0  0  0 -1  0]  [ 1  0  0 -1  0  0  0  0 -1]
% [ 0  0  0  0  0  0  0  0 -1], [ 0  1  0  0 -1  0 -1  0  0]
sage: F2 = matrix(ZZ, [[eval2(f(x-y)) for x in V] for y in V])
sage: F2
[0 1 1 0 0 0 0 0 0]
[1 0 1 0 0 0 0 0 0]
[1 1 0 0 0 0 0 0 0]
[0 0 0 0 1 1 0 0 0]
[0 0 0 1 0 1 0 0 0]
[0 0 0 1 1 0 0 0 0]
[0 0 0 0 0 0 0 1 1]
[0 0 0 0 0 0 1 0 1]
[0 0 0 0 0 0 1 1 0]
sage: F1*F2-F2*F1 == 0
True
sage: delta = lambda x: int((x[0]==x[1]))
sage: F3 = matrix(ZZ, [[(eval0(f(x-y))+delta([x,y]))%2 for x in V] for y in V])
sage: F3
[0 0 0 1 1 0 1 0 1]
[0 0 0 0 1 1 1 1 0]
[0 0 0 1 0 1 0 1 1]
[1 0 1 0 0 0 1 1 0]
[1 1 0 0 0 0 0 1 1]
[0 1 1 0 0 0 1 0 1]
[1 1 0 1 0 1 0 0 0]
[0 1 1 1 1 0 0 0 0]
[1 0 1 0 1 1 0 0 0]
sage: F3*F2-F2*F3==0
True
sage: F3*F1-F1*F3==0
True
sage: F0 = matrix(ZZ, [[delta([x,y]) for x in V] for y in V])
sage: F0
[1 0 0 0 0 0 0 0 0]
[0 1 0 0 0 0 0 0 0]
[0 0 1 0 0 0 0 0 0]
[0 0 0 1 0 0 0 0 0]
[0 0 0 0 1 0 0 0 0]
[0 0 0 0 0 1 0 0 0]
[0 0 0 0 0 0 1 0 0]
[0 0 0 0 0 0 0 1 0]
[0 0 0 0 0 0 0 0 1]
sage: F1*F3 == 2*F2 + F3
True

\end{Verbatim}
}}

The Sage computation above tells us that the adjacency matrix of $R_1$ is 

\[
A_1 = 
\left(\begin{array}{rrrrrrrrr}
0 & 0 & 0 & 0 & 0 & 1 & 0 & 1 & 0 \\
0 & 0 & 0 & 1 & 0 & 0 & 0 & 0 & 1 \\
0 & 0 & 0 & 0 & 1 & 0 & 1 & 0 & 0 \\
0 & 1 & 0 & 0 & 0 & 0 & 0 & 0 & 1 \\
0 & 0 & 1 & 0 & 0 & 0 & 1 & 0 & 0 \\
1 & 0 & 0 & 0 & 0 & 0 & 0 & 1 & 0 \\
0 & 0 & 1 & 0 & 1 & 0 & 0 & 0 & 0 \\
1 & 0 & 0 & 0 & 0 & 1 & 0 & 0 & 0 \\
0 & 1 & 0 & 1 & 0 & 0 & 0 & 0 & 0
\end{array}\right),
\]
the adjacency matrix of $R_2$ is 

\[
A_2 = 
\left(\begin{array}{rrrrrrrrr}
0 & 1 & 1 & 0 & 0 & 0 & 0 & 0 & 0 \\
1 & 0 & 1 & 0 & 0 & 0 & 0 & 0 & 0 \\
1 & 1 & 0 & 0 & 0 & 0 & 0 & 0 & 0 \\
0 & 0 & 0 & 0 & 1 & 1 & 0 & 0 & 0 \\
0 & 0 & 0 & 1 & 0 & 1 & 0 & 0 & 0 \\
0 & 0 & 0 & 1 & 1 & 0 & 0 & 0 & 0 \\
0 & 0 & 0 & 0 & 0 & 0 & 0 & 1 & 1 \\
0 & 0 & 0 & 0 & 0 & 0 & 1 & 0 & 1 \\
0 & 0 & 0 & 0 & 0 & 0 & 1 & 1 & 0
\end{array}\right),
\]
and the adjacency matrix of $R_3$ is 

\[
A_3 = 
\left(\begin{array}{rrrrrrrrr}
0 & 0 & 0 & 1 & 1 & 0 & 1 & 0 & 1 \\
0 & 0 & 0 & 0 & 1 & 1 & 1 & 1 & 0 \\
0 & 0 & 0 & 1 & 0 & 1 & 0 & 1 & 1 \\
1 & 0 & 1 & 0 & 0 & 0 & 1 & 1 & 0 \\
1 & 1 & 0 & 0 & 0 & 0 & 0 & 1 & 1 \\
0 & 1 & 1 & 0 & 0 & 0 & 1 & 0 & 1 \\
1 & 1 & 0 & 1 & 0 & 1 & 0 & 0 & 0 \\
0 & 1 & 1 & 1 & 1 & 0 & 0 & 0 & 0 \\
1 & 0 & 1 & 0 & 1 & 1 & 0 & 0 & 0
\end{array}\right)
\]
Of course, the adjacency matrix of $R_0$ is 
the identity matrix. In the above computation,
Sage has also verified that they commute and satisfy

\[
A_1A_3 = 2A_2+A_3
\]
in the Schur ring.
}
\end{example}

\begin{example}
\label{example:2vars-2}
{\rm 
We take $V=GF(3)^2$ and consider an even function.

\vskip .15in
{\footnotesize{
\begin{Verbatim}[fontsize=\scriptsize,fontfamily=courier,fontshape=tt,frame=single,label=\sage]

sage: flist = [0,1,1,2,0,1,2,1,0]
sage: f = lambda x: GF(3)(flist[Vlist.index(x)])
sage: x = V.random_element()
sage: f(x) == f(-x)
True
sage: Gamma = boolean_cayley_graph(f, V)
sage: A = Gamma.adjacency_matrix(); A
[0 1 1 2 0 1 2 1 0]
[1 0 1 1 2 0 0 2 1]
[1 1 0 0 1 2 1 0 2]
[2 1 0 0 1 1 2 0 1]
[0 2 1 1 0 1 1 2 0]
[1 0 2 1 1 0 0 1 2]
[2 0 1 2 1 0 0 1 1]
[1 2 0 0 2 1 1 0 1]
[0 1 2 1 0 2 1 1 0]
sage: Gamma.connected_components_number()
1
\end{Verbatim}
}}

\vskip .2in
\noindent
The plot returned by 
\newline
{\small{
\verb+Graph(A).show(layout="circular", edge_labels=True, graph_border=True,dpi=150)+
}}
is shown in 
Figure \ref{fig:example2-tikz}.


\begin{figure}[t!]
\begin{minipage}{\textwidth}
\begin{center}
\definecolor{lorange}{rgb}{1.0,0.5,0}
\begin{center}
  \begin{tikzpicture}
    [font=\scriptsize,
    node/.style={shape=circle,draw=black,minimum width=0.5cm,thick,fill=lorange},
    edge1/.style={thick,red},
    edge2/.style={very thick,blue},
    edge/.style={thick}]

    \node (0) [node] at (0,4) {0};
    \node (1) [node] at (-3.5,3) {1};
    \node (2) [node] at (-6,1) {2};
    \node (3) [node] at (-5.5,-1) {3};
    \node (4) [node] at (-2.333,-3) {4};
    \node (5) [node] at (2.666,-3) {5};
    \node (6) [node] at (5.5,-1) {6};
    \node (7) [node] at (6,1) {7};
    \node (8) [node] at (3.5,3) {8};

    \draw [edge1] (0) to node[black]{$1$} (1);
    \draw [edge1] (0)  to node[black]{$1$} (2);
    \draw [edge2] (0)  to node[black]{$2$} (3);
    \draw [edge1] (0)  to node[above left,black,pos=.7]{$1$} (5);
    \draw [edge2] (0)  to node[black]{$2$} (6);
    \draw [edge1] (0)  to node[above left,black,pos=.32]{$1$} (7);

    \draw [edge1] (1)  to node[black]{$1$} (2);
    \draw [edge1] (1)  to node[above left,black,pos=.42]{$1$} (3);
    \draw [edge2] (1)  to node[black]{$2$} (4);
    \draw [edge2] (1)  to node[black]{$2$} (7);
    \draw [edge1] (1)  to node[black]{$1$} (8);

    \draw [edge1] (2)  to node[black]{$1$} (4);
    \draw [edge2] (2)  to node[above left,black,pos=.65]{$2$} (5);
    \draw [edge1] (2)  to node[above left,black,pos=.38]{$1$} (6);
    \draw [edge2] (2)  to node[black]{$2$} (8);

    \draw [edge1] (3)  to node[black]{$1$} (4);
    \draw [edge1] (3)  to node[above left,black,pos=.52]{$1$} (5);
    \draw [edge2] (3)  to node[black]{$2$} (6);
    \draw [edge1] (3)  to node[above left,black,pos=.65]{$1$} (8);

    \draw [edge1] (4)  to node[black]{$1$} (5);
    \draw [edge1] (4)  to node[above left,black,pos=.43]{$1$} (6);
    \draw [edge2] (4)  to node[above left,black,pos=.6]{$2$} (7);

    \draw [edge1] (5)  to node[black]{$1$} (7);
    \draw [edge2] (5) to node[black]{$2$} (8);

    \draw [edge1] (6) to node[black]{$1$} (7);
    \draw [edge1] (6) to node[above left,black,pos=.6]{$1$} (8);

    \draw [edge1] (7) to node[black]{$1$} (8);

  \end{tikzpicture}
\end{center}
\end{center}
\end{minipage}
\caption{The undirected unweighted 
Cayley graph of an even $GF(3)$-valued function of two
  variables from Example \ref{example:2vars-2}. (The vertices are
  ordered as in the Example.)}
\label{fig:example2-tikz}
\end{figure}

This example shall be continued below.

}
\end{example}

\begin{example}
\label{example:2vars-3}
{\rm
We take $V=GF(3)^2$ and consider an even function
whose Cayley graph has three connected components.

\vskip .15in
{\footnotesize{
\begin{Verbatim}[fontsize=\scriptsize,fontfamily=courier,fontshape=tt,frame=single,label=\sage]

sage: flist = [0,0,0,1,0,0,1,0,0]
sage: f = lambda x: GF(3)(flist[Vlist.index(x)])
sage: x = V.random_element()
sage: f(x) == f(-x)
True
sage: Gamma = boolean_cayley_graph(f, V)
sage: A = Gamma.adjacency_matrix(); A
[0 0 0 1 0 0 1 0 0]
[0 0 0 0 1 0 0 1 0]
[0 0 0 0 0 1 0 0 1]
[1 0 0 0 0 0 1 0 0]
[0 1 0 0 0 0 0 1 0]
[0 0 1 0 0 0 0 0 1]
[1 0 0 1 0 0 0 0 0]
[0 1 0 0 1 0 0 0 0]
[0 0 1 0 0 1 0 0 0]
sage: Gamma.connected_components_number()
3

\end{Verbatim}
}}

\vskip .2in
\noindent
The plot returned by {\tt Graph(A).show()} is shown in Figure \ref{fig:example3-tikz}.

\begin{figure}[t!]
\begin{minipage}{\textwidth}
\begin{center}
\definecolor{lorange}{rgb}{1.0,0.5,0}
\begin{center}
  \begin{tikzpicture}
    [font=\scriptsize,
    node/.style={shape=circle,draw=black,minimum width=0.5cm,thick,fill=lorange},
    edge1/.style={thick,red},
    edge2/.style={very thick,blue},
    edge/.style={thick}]

    \node (0) [node] at (0,0) {0};
    \node (6) [node] at (-2,0) {6};
    \node (3) [node] at (-1,2) {3};
    \node (1) [node] at (2,2) {1};
    \node (4) [node] at (1,0) {4};
    \node (7) [node] at (3,0) {7};
    \node (2) [node] at (4,0) {2};
    \node (5) [node] at (6,0) {5};
    \node (8) [node] at (5,2) {8};

    \draw [edge1] (0) to node[black]{$1$} (6);
    \draw [edge1] (0) to node[black]{$1$} (3);

    \draw [edge1] (1)  to node[black]{$1$} (4);
    \draw [edge1] (1)  to node[black]{$1$} (7);

    \draw [edge1] (2)  to node[black]{$1$} (5);
    \draw [edge1] (2)  to node[black]{$1$} (8);

    \draw [edge1] (3)  to node[black]{$1$} (6);

    \draw [edge1] (4)  to node[black]{$1$} (7);

    \draw [edge1] (5) to node[black]{$1$} (8);

  \end{tikzpicture}
\end{center}
\end{center}
\end{minipage}
\caption{The undirected Cayley graph of an even $GF(3)$-valued function of two
  variables from Example \ref{example:2vars-3}. (The vertices are
  ordered as in the Example.)}
\label{fig:example3-tikz}
\end{figure}

}
\end{example}

\begin{example}
{\rm
We return to the ternary function from Example \ref{example:2vars-2}.

\vskip .15in
{\footnotesize{
\begin{Verbatim}[fontsize=\scriptsize,fontfamily=courier,fontshape=tt,frame=single,label=\sage]

sage: V = GF(3)^2
sage: Vlist = V.list()   
sage: Vlist              
[(0, 0), (1, 0), (2, 0), (0, 1), (1, 1), (2, 1), (0, 2), (1, 2), (2, 2)]
sage: flist = [0,1,1,2,0,1,2,1,0]
sage: f = lambda x: GF(3)(flist[Vlist.index(x)])
sage: Gamma = boolean_cayley_graph(f, V)
sage: Gamma.adjacency_matrix()
[0 1 1 2 0 1 2 1 0]
[1 0 1 1 2 0 0 2 1]
[1 1 0 0 1 2 1 0 2]
[2 1 0 0 1 1 2 0 1]
[0 2 1 1 0 1 1 2 0]
[1 0 2 1 1 0 0 1 2]
[2 0 1 2 1 0 0 1 1]
[1 2 0 0 2 1 1 0 1]
[0 1 2 1 0 2 1 1 0]
sage: Gamma.spectrum()
[8, 2, 2, -1, -1, -1, -1, -4, -4]
sage: [CC(fourier_transform(f, a)) for a in V] 
[8.00000000000000, 2.00000000000000 - 6.66133814775094e-16*I, 
2.00000000000000 - 6.66133814775094e-16*I, 
-1.00000000000000 - 8.88178419700125e-16*I, 
-1.00000000000000 - 9.99200722162641e-16*I, 
-4.00000000000000 - 1.33226762955019e-15*I, 
-1.00000000000000 - 8.88178419700125e-16*I, 
-4.00000000000000 - 1.22124532708767e-15*I, 
-1.00000000000000 - 9.99200722162641e-16*I]

\end{Verbatim}
}}

\vskip .2in
\noindent
This shows that, in this case, the spectrum of the Cayley graph of $f$ 
agrees with the values of the
Fourier transform of $f_\ccc$.
}
\end{example}

Suppose we want to write the function $\zeta^{f(x)}$ as a linear
combination of translates of the function $f$:

\begin{equation}
\zeta^{f(x)}=\sum_{a\in V} c_a f(x-a),
\label{eqn:*}
\end{equation}
for some $c_a\in \ccc$. This may be regarded as the convolution 
of $f_\ccc$ with a function, $c$. One way to solve for the $c_a$'s is
to write this as a matrix equation,

\[
\zeta^{\vec{f}} = F\cdot \vec{c},
\]
where $\vec{c}=\vec{c}_f= (c_a\ |\ a\in V)$ and
$\zeta^{\vec{f}} = (\zeta^{f(x)}\ |\ x\in V)$. 
If $F$ is invertible, that is if the Fourier transform
of $f$ is always non-zero, then 

\[
\vec{c} = F^{-1}\zeta^{\vec{f}}.
\]

If (\ref{eqn:*}) holds then we can write the 
Walsh transform $f$,

\[
W_f(u) =
\sum_{x \in GF(p)^n}
\zeta^{f(x)- \langle u,x\rangle},
\]
as a linear combination of values of the Fourier
transform,

\[
\hat{f}(y) = \sum_{x\in V} f(x)\zeta^{-\langle x,y\rangle}.
\]
In other words, 

\begin{equation}
\label{eqn:WTvFT}
\begin{array}{rl}
W_f(u) 
&= \sum_{a\in V} c_a 
\sum_{x \in GF(p)^n}
\zeta^{-\langle u,x\rangle}
f(x-a)\\
&\  \\
&= \sum_{a\in V} c_a 
\sum_{x \in GF(p)^n}
\zeta^{-\langle u,x+a\rangle}
f(x)\\
&\  \\
&= \sum_{a\in V} c_a \zeta^{-\langle u,a\rangle}
\sum_{x \in GF(p)^n}
\zeta^{-\langle u,x\rangle}
f(x)\\
&\  \\
&= \hat{f}(u)
\sum_{a\in V} c_a \zeta^{-\langle u,a\rangle}.
\end{array}
\end{equation}
This may be regarded as the product of Fourier transforms
(that of the function $f_\ccc$ and that of the function $c$,
which depends on $f$).
In other words, there is a relationship between the Fourier transform
of a $GF(p)$-valued function and its Walsh-Hadamard transform. 
However, it is not explicit unless one knows the function 
$c$ (which depends on $f$ in a complicated way).

\subsection{$GF(3)^2\to GF(3)$}

Using Sage, we verified the following fact (originally discovered by 
the last-named author, Walsh).

\begin{proposition}
There are $18$ even bent functions
$f:GF(3)^2\to GF(3)$ such that $f(0)=0$. The group $G=GL(2,GF(3))$
acts on the set ${\mathbb{B}}$ of all such bent functions and 
there are two orbits in ${\mathbb{B}}/G$:

\[
{\mathbb{B}}/G = B_1\cup B_2,
\]
where $|B_1|=12$ and  $|B_2|=6$. 

The $18$ bent functions $b_1, b_2, \dots , b_{18}$ are given here in table form and algebraic normal form.  The orbit $B_1$ consists of the functions $b_2$, $b_3$, $b_4$, $b_5$, $b_6$, $b_7$, $b_8$, $b_{11}$, $b_{14}$, $b_{15}$, and $b_{16}$.  These functions are all regular.  The orbit $B_2$ consists of the functions $b_1$, $b_{10}$, $b_{12}$, $b_{13}$, $b_{17}$, and $b_{18}$. These functions are weakly regular (but not regular).

Each of the bent functions give rise to a weighted PDS.

\end{proposition}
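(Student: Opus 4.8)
The plan is to treat the statement as a finite, computer-assisted verification, but to organize the work around the group-theoretic and arithmetic invariants so that only a handful of representatives need to be examined. First I would pin down the count $|\mathbb{B}|$. An even function $f$ with $f(0)=0$ is determined by its values on a transversal of the four pairs $\{v,-v\}$ with $v\in GF(3)^2\setminus\{0\}$, so there are exactly $3^4=81$ candidates; evaluating the Walsh transform (\ref{eqn:WT}) over all $81$ and testing $|W_f(u)|=3$ for every $u$ isolates the $18$ bent ones, as already referenced in the Example in the introduction. Next I would confirm that $G=GL(2,GF(3))$ genuinely acts on $\mathbb{B}$: for invertible linear $\phi$ the function $f\circ\phi$ is again even, since $(f\circ\phi)(-x)=f(-\phi x)=f(\phi x)$, it still vanishes at $0$, and it is bent by the transformation law $W_{f\circ\phi}(u)=W_f((\phi^{-1})^T u)$ from the subsection on group actions. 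Hence $\mathbb{B}$ is $G$-stable.

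The orbit structure is where I would exploit invariants rather than enumerate all $48\cdot 18$ images. The same transformation law shows that regularity and weak regularity are $G$-invariant, so the subset $B_1$ of regular functions and the subset $B_2$ of weakly-regular-but-not-regular functions are each unions of $G$-orbits. To classify the $18$ functions into these types I would use Proposition \ref{prop:KSW}: since $n=2$ is even, $W_f(u)/3=(-1)^{f_*(u)}\zeta^{f^*(u)}$, so from each Walsh spectrum I need only read off the sign $(-1)^{f_*(u)}$. A function is regular exactly when this sign is $+1$ for all $u$, and (by Lemma \ref{lemma:weaklyregular}) weakly regular exactly when the sign is constant in $u$; the functions with constant sign $-1$ are then weakly regular but not regular. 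This step separates $\mathbb{B}$ into the claimed sets of cardinalities $12$ and $6$.

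To see that each of $B_1,B_2$ is a \emph{single} orbit, I would compute a point stabilizer for one representative of each. If $\mathrm{Stab}_G(b)$ has order $4$ (respectively $8$), then orbit-stabilizer gives an orbit of size $48/4=12$ (respectively $48/6$, i.e.\ $6$), and since that orbit is contained in the invariant set of the same cardinality it must fill it. For the weighted-PDS claim I would verify the partial-difference-set property of the level curves (Analog \ref{analog:DC-PDS}) directly for one representative of each orbit, and then invoke the Proposition on the group action on weighted PDSs, which shows that $\phi$ carries the weighted-PDS structure along an orbit; every function in $\mathbb{B}$ therefore inherits it.

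The main obstacle is not conceptual but bookkeeping: there is no slick closed-form argument, and the result ultimately rests on the finite Walsh-spectrum and orbit-stabilizer computations. The one genuine subtlety is confirming that all $18$ functions are at least weakly regular, so that the regular versus weakly-regular-not-regular dichotomy really partitions $\mathbb{B}$; this again follows from the spectra, by checking that the sign $(-1)^{f_*(u)}$ is constant in $u$ for each of the $18$ bent functions.
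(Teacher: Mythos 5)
Your proposal is correct, and it takes a genuinely more structured route than the paper does. The paper's entire proof is the sentence ``Using Sage, we verified the following fact'': an outright exhaustive computation that enumerates the $81$ even functions with $f(0)=0$, tests bentness, and then computes the orbits, the regularity types, and the weighted-PDS property for all $18$ functions directly. You organize the same finite verification around invariants so that far less must be checked by machine: the transformation law $W_{f\circ\phi}(u)=W_f\bigl((\phi^{-1})^{T}u\bigr)$ makes bentness, regularity, and weak regularity $G$-invariant; Proposition \ref{prop:KSW} with $n=2$ even reduces the regular / weakly-regular-not-regular / neither trichotomy to reading a sign off each Walsh spectrum (here you correctly flag the one subtle point, that all $18$ spectra must be checked to have constant sign, and it is worth also noting that this works because $-\zeta^{a}$ is never a cube root of unity, so the sign in the KSW decomposition is unambiguous and ``constant sign'' really is equivalent to weak regularity via Lemma \ref{lemma:weaklyregular}); orbit--stabilizer plus the containment of the orbit of a regular function inside the invariant $12$-element set pins down both orbits from a single stabilizer computation per class; and the paper's proposition on images of weighted PDSs under invertible linear maps lets you verify the PDS property for one representative per orbit and propagate it. This buys a shorter, more conceptual verification, close in spirit to the paper's later observation that the $12$ regular functions are exactly the linear transforms of $x_0^2+x_0x_1$ and the $6$ others the transforms of $x_0^2+x_1^2$, whereas the paper's brute-force route requires no supporting lemmas at all. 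One arithmetic slip to fix: an orbit of size $6$ corresponds to a stabilizer of order $8$, so the count is $48/8=6$, not ``$48/6$'' as written.
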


\begin{example}
{\rm

Consider the even function $f:GF(3)^2\to GF(3)$  with the 
following values:

\vskip .15in
{\footnotesize{
\begin{tabular}{c|ccccccccc}
$GF(3)^2$ & (0, 0) & (1, 0) & (2, 0) & (0, 1) & (1, 1) & (2, 1) & (0,
2) & (1, 2) & (2, 2) \\ \hline
$f$ & 0  & 1  & 1  & 2  & 0  & 1  & 2  & 1  & 0 \\
\end{tabular}
}}
\vskip .15in
The Cayley graph $\Gamma$ of $f$
is given in 
Figure \ref{fig:GF3-fcn-tikz}.

\begin{figure}[t!]
\begin{minipage}{\textwidth}
\begin{center}
\definecolor{lorange}{rgb}{1.0,0.5,0}
\begin{center}
  \begin{tikzpicture}
    [font=\scriptsize,
    node/.style={shape=circle,draw=black,minimum width=0.5cm,thick,fill=lorange},
    edge1/.style={thick,red},
    edge2/.style={very thick,blue},
    edge/.style={thick}]

    \node (0) [node] at (0,4) {0};
    \node (1) [node] at (-3.5,3) {1};
    \node (2) [node] at (-6,1) {2};
    \node (3) [node] at (-5.5,-1) {3};
    \node (4) [node] at (-2.333,-3) {4};
    \node (5) [node] at (2.666,-3) {5};
    \node (6) [node] at (5.5,-1) {6};
    \node (7) [node] at (6,1) {7};
    \node (8) [node] at (3.5,3) {8};

    \draw [edge1] (0) to node[black]{$1$} (1);
    \draw [edge1] (0)  to node[black]{$1$} (2);
    \draw [edge2] (0)  to node[black]{$2$} (3);
    \draw [edge1] (0)  to node[above left,black,pos=.7]{$1$} (5);
    \draw [edge2] (0)  to node[black]{$2$} (6);
    \draw [edge1] (0)  to node[above left,black,pos=.32]{$1$} (7);

    \draw [edge1] (1)  to node[black]{$1$} (2);
    \draw [edge1] (1)  to node[above left,black,pos=.42]{$1$} (3);
    \draw [edge2] (1)  to node[black]{$2$} (4);
    \draw [edge2] (1)  to node[black]{$2$} (7);
    \draw [edge1] (1)  to node[black]{$1$} (8);

    \draw [edge1] (2)  to node[black]{$1$} (4);
    \draw [edge2] (2)  to node[above left,black,pos=.65]{$2$} (5);
    \draw [edge1] (2)  to node[above left,black,pos=.38]{$1$} (6);
    \draw [edge2] (2)  to node[black]{$2$} (8);

    \draw [edge1] (3)  to node[black]{$1$} (4);
    \draw [edge1] (3)  to node[above left,black,pos=.52]{$1$} (5);
    \draw [edge2] (3)  to node[black]{$2$} (6);
    \draw [edge1] (3)  to node[above left,black,pos=.65]{$1$} (8);

    \draw [edge1] (4)  to node[black]{$1$} (5);
    \draw [edge1] (4)  to node[above left,black,pos=.43]{$1$} (6);
    \draw [edge2] (4)  to node[above left,black,pos=.6]{$2$} (7);

   \draw [edge1] (5) to node[black]{$1$} (7);
   \draw [edge2] (5) to node[black]{$2$} (8);

    \draw [edge1] (6) to node[black]{$1$} (7);
    \draw [edge1] (6) to node[above left,black,pos=.6]{$1$} (8);

    \draw [edge1] (7) to node[black]{$1$} (8);

  \end{tikzpicture}
\end{center}

\end{center}
\end{minipage}
\caption{The weighted Cayley graph of a non-bent even $GF(3)$-valued function.}
\label{fig:GF3-fcn-tikz}
\end{figure}
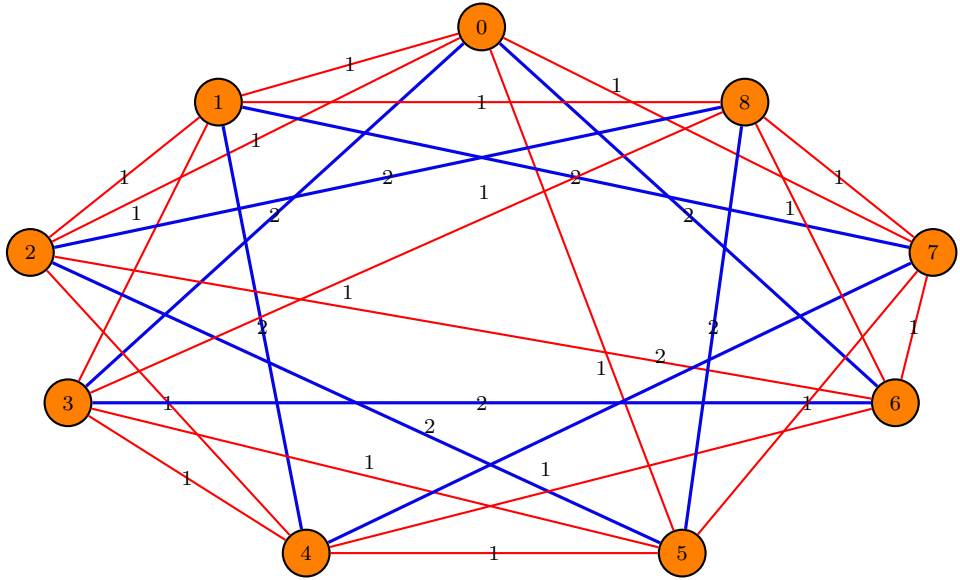


The values of the Hadamard transform of $f$ are listed below
(showing that $f$ is not bent).
\vskip .15in

\begin{Verbatim}[fontsize=\scriptsize,fontfamily=courier,fontshape=tt,frame=single,label=\sage]

sage: V = GF(3)^2
sage: flist = [0,1,1,2,0,1,2,1,0]
sage: Vlist = V.list()
sage: f = lambda x: GF(3)(flist[Vlist.index(x)])
sage: [hadamard_transform(f,a) for a in V]      
[e^(2/3*I*pi) + e^(4/3*I*pi) + e^(2/3*I*pi) + e^(4/3*I*pi) + 2*e^(2/3*I*pi) + 3,
 e^(2/3*I*pi) + 5*e^(4/3*I*pi) + 3,
 3*e^(4/3*I*pi) + e^(2/3*I*pi) + 2*e^(4/3*I*pi) + 3,
 e^(2/3*I*pi) + 2*e^(4/3*I*pi) + 3*e^(2/3*I*pi) + 3,
 e^(4/3*I*pi) + 4*e^(2/3*I*pi) + e^(4/3*I*pi) + 3,
 e^(4/3*I*pi) + e^(2/3*I*pi) + e^(4/3*I*pi) + 6,
 e^(4/3*I*pi) + e^(2/3*I*pi) + e^(4/3*I*pi) + 3*e^(2/3*I*pi) + 3,
 e^(4/3*I*pi) + e^(2/3*I*pi) + e^(4/3*I*pi) + 6,
 4*e^(2/3*I*pi) + 2*e^(4/3*I*pi) + 3]
sage: [CC(hadamard_transform(f,a)) for a in V]  
[-2.22044604925031e-16 + 1.73205080756888*I,
 -2.22044604925031e-15 - 3.46410161513775*I,
 -1.99840144432528e-15 - 3.46410161513775*I,
 -2.22044604925031e-16 + 1.73205080756888*I,
 1.73205080756888*I,
 4.50000000000000 - 0.866025403784438*I,
 1.73205080756888*I,
 4.50000000000000 - 0.866025403784438*I,
 1.73205080756888*I]

\end{Verbatim}
\vskip .1in

This $f$ is not bent and has algebraic normal form
\[
2x_0^2x_1^2 + x_0^2 + x_0x_1 +2 x_1^2.
\]
In particular, it is non-homogeneous.
The weighted adjacency matrix of its Cayley graph is  

\[
\left(\begin{array}{rrrrrrrrr}
0 & 1 & 1 & 2 & 0 & 1 & 2 & 1 & 0 \\
1 & 0 & 1 & 1 & 2 & 0 & 0 & 2 & 1 \\
1 & 1 & 0 & 0 & 1 & 2 & 1 & 0 & 2 \\
2 & 1 & 0 & 0 & 1 & 1 & 2 & 0 & 1 \\
0 & 2 & 1 & 1 & 0 & 1 & 1 & 2 & 0 \\
1 & 0 & 2 & 1 & 1 & 0 & 0 & 1 & 2 \\
2 & 0 & 1 & 2 & 1 & 0 & 0 & 1 & 1 \\
1 & 2 & 0 & 0 & 2 & 1 & 1 & 0 & 1 \\
0 & 1 & 2 & 1 & 0 & 2 & 1 & 1 & 0
\end{array}\right).
\]
The matrix $N_{22}$ whose $u,v$-entry is
$|N(u,2)\cap N(v,2)|$, $u,v$ vertices of $\Gamma$, is:

\[
\left(\begin{array}{rrrrrrrrr}
2 & 0 & 0 & 1 & 0 & 0 & 1 & 0 & 0 \\
0 & 2 & 0 & 0 & 1 & 0 & 0 & 1 & 0 \\
0 & 0 & 2 & 0 & 0 & 1 & 0 & 0 & 1 \\
1 & 0 & 0 & 2 & 0 & 0 & 1 & 0 & 0 \\
0 & 1 & 0 & 0 & 2 & 0 & 0 & 1 & 0 \\
0 & 0 & 1 & 0 & 0 & 2 & 0 & 0 & 1 \\
1 & 0 & 0 & 1 & 0 & 0 & 2 & 0 & 0 \\
0 & 1 & 0 & 0 & 1 & 0 & 0 & 2 & 0 \\
0 & 0 & 1 & 0 & 0 & 1 & 0 & 0 & 2
\end{array}\right).
\]
\begin{itemize}
\item
Are all the values of $N_{22}[u,v]$ the same if $u,v$ are distinct
vertices which are not neighbors? Yes:  $N_{22}[u,v]=0$ for all such
$u,v$. 
Therefore, $\mu_{(2,2)}=0$ in the notation of (\ref{eqn:wtd-srg}).

\item
Are all the values of $N_{22}[v,v]$ the same? Yes:  
$N_{22}[v,v]=2$ for all $v$.
Therefore, $k_{(2,2)}=2$ in the notation of (\ref{eqn:wtd-srg}).
\item
Are all the values of $N_{22}[u,v]$ the same if $u,v$ are neighbors
with edge-weight $2$? Yes:  $N_{22}[u,v]=1$ for all such
$u,v$. 
Therefore, $\lambda_{(2,2,2)}=1$ in the notation of (\ref{eqn:wtd-srg}).
\item
Are all the values of $N_{22}[u,v]$ the same if $u,v$ are neighbors
with edge-weight $1$? Yes:  $N_{22}[u,v]=0$ for all such
$u,v$. 
Therefore, $\lambda_{(2,2,1)}=0$ in the notation of (\ref{eqn:wtd-srg}).
\end{itemize}

The matrix $N_{12}$, whose $u,v$-entry is
$|N(u,2)\cap N(v,1)|=|N(u,1)\cap N(v,2)|$, is:

\[
\left(\begin{array}{rrrrrrrrr}
0 & 1 & 1 & 0 & 2 & 1 & 0 & 1 & 2 \\
1 & 0 & 1 & 1 & 0 & 2 & 2 & 0 & 1 \\
1 & 1 & 0 & 2 & 1 & 0 & 1 & 2 & 0 \\
0 & 1 & 2 & 0 & 1 & 1 & 0 & 2 & 1 \\
2 & 0 & 1 & 1 & 0 & 1 & 1 & 0 & 2 \\
1 & 2 & 0 & 1 & 1 & 0 & 2 & 1 & 0 \\
0 & 2 & 1 & 0 & 1 & 2 & 0 & 1 & 1 \\
1 & 0 & 2 & 2 & 0 & 1 & 1 & 0 & 1 \\
2 & 1 & 0 & 1 & 2 & 0 & 1 & 1 & 0
\end{array}\right).
\]
Define $N_{21}$ similarly. Sage verifies that $N_{12}=N_{21}$.

\begin{itemize}
\item
Are all the values of $N_{21}[u,v]$ the same if $u,v$ are distinct
vertices which are not neighbors? Yes:  $N_{21}[u,v]=2$ for all such
$u,v$. 
Therefore, $\mu_{(2,1)}=2$ in the notation of (\ref{eqn:wtd-srg}).
\item
Are all the values of $N_{21}[v,v]$ the same? Yes:  
$N_{21}[v,v]=0$ for all $v$.
Therefore, $k_{(2,1)}=0$ in the notation of (\ref{eqn:wtd-srg}).
\item
Are all the values of $N_{21}[u,v]$ the same if $u,v$ are neighbors
with edge-weight $2$? Yes:  $N_{21}[u,v]=0$ for all such
$u,v$. 
Therefore, $\lambda_{(2,1,2)}=0$ in the notation of (\ref{eqn:wtd-srg}).
\item
Are all the values of $N_{21}[u,v]$ the same if $u,v$ are neighbors
with edge-weight $1$? Yes:  $N_{21}[u,v]=1$ for all such
$u,v$. 
Therefore, $\lambda_{(2,1,1)}=1$ in the notation of (\ref{eqn:wtd-srg}).
\end{itemize}

The matrix $N_{11}$, whose $u,v$-entry is
$|N(u,1)\cap N(v,1)|$, is:

\[
\left(\begin{array}{rrrrrrrrr}
4 & 1 & 1 & 2 & 2 & 1 & 2 & 1 & 2 \\
1 & 4 & 1 & 1 & 2 & 2 & 2 & 2 & 1 \\
1 & 1 & 4 & 2 & 1 & 2 & 1 & 2 & 2 \\
2 & 1 & 2 & 4 & 1 & 1 & 2 & 2 & 1 \\
2 & 2 & 1 & 1 & 4 & 1 & 1 & 2 & 2 \\
1 & 2 & 2 & 1 & 1 & 4 & 2 & 1 & 2 \\
2 & 2 & 1 & 2 & 1 & 2 & 4 & 1 & 1 \\
1 & 2 & 2 & 2 & 2 & 1 & 1 & 4 & 1 \\
2 & 1 & 2 & 1 & 2 & 2 & 1 & 1 & 4
\end{array}\right).
\]
\begin{itemize}
\item
Are all the values of $N_{11}[u,v]$ the same if $u,v$ are distinct
vertices which are not neighbors? Yes:  $N_{11}[u,v]=2$ for all such
$u,v$. 
Therefore, $\mu_{(1,1)}=2$ in the notation of (\ref{eqn:wtd-srg}).
\item
Are all the values of $N_{11}[v,v]$ the same? Yes:  
$N_{11}[v,v]=4$ for all $v$.
Therefore, $k_{(1,1)}=4$ in the notation of (\ref{eqn:wtd-srg}).
\item
Are all the values of $N_{11}[u,v]$ the same if $u,v$ are neighbors
with edge-weight $2$? Yes:  $N_{11}[u,v]=2$ for all such $u,v$. 
Therefore, $\lambda_{(1,1,2)}=2$ in the notation of (\ref{eqn:wtd-srg}).
\item
Are all the values of $N_{11}[u,v]$ the same if $u,v$ are neighbors
with edge-weight $1$? Yes:  $N_{11}[u,v]=1$ for all such $u,v$. 
Therefore, $\lambda_{(1,1,1)}=1$ in the notation of (\ref{eqn:wtd-srg}).
\end{itemize}

In summary, we have 

\[
\mu_{(1,1)} = 2,\ k_{(1,1)} = 4, \ \lambda_{(1,1,1)} = 1, \ \lambda_{(1,1,2)} = 2, \ 
\]
\[
\mu_{(1,2)} = 2,\ k_{(1,2)} = 0, \ \lambda_{(1,2,1)} = 1, \ \lambda_{(1,2,2)} = 0, \ 
\]
\[
\mu_{(2,2)} = 0,\ k_{(2,2)} = 2, \ \lambda_{(2,2,1)} = 0, \ \lambda_{(2,2,2)} = 1.
\]

This verifies the statements in the conclusion of
Analog \ref{analog:BC-SRG} for
this function. In other words, the associated edge-weighted Cayley graph
is strongly regular. (However, $f$ is not bent.)
}
\end{example}

\begin{example}
{\rm

Consider the even function $f:GF(3)^2\to GF(3)$  with the 
following values:

\vskip .15in
{\footnotesize{
\begin{tabular}{c|ccccccccc}
$GF(3)^2$ & (0, 0) & (1, 0) & (2, 0) & (0, 1) & (1, 1) & (2, 1) & (0,
2) & (1, 2) & (2, 2) \\ \hline
$f$ & 0  & 2  & 2  & 2  & 0  & 1  & 2  & 1  & 0 \\
\end{tabular}
}}

\vskip .15in

This $f$ has algebraic normal form
\[
x_0^2x_1^2 +2 x_0^2 + x_0x_1 +2 x_1^2,
\]
and is not bent and non-homogeneous.
The weighted adjacency matrix of its Cayley graph is  

\[
\left(\begin{array}{rrrrrrrrr}
0 & 2 & 2 & 2 & 0 & 1 & 2 & 1 & 0 \\
2 & 0 & 2 & 1 & 2 & 0 & 0 & 2 & 1 \\
2 & 2 & 0 & 0 & 1 & 2 & 1 & 0 & 2 \\
2 & 1 & 0 & 0 & 2 & 2 & 2 & 0 & 1 \\
0 & 2 & 1 & 2 & 0 & 2 & 1 & 2 & 0 \\
1 & 0 & 2 & 2 & 2 & 0 & 0 & 1 & 2 \\
2 & 0 & 1 & 2 & 1 & 0 & 0 & 2 & 2 \\
1 & 2 & 0 & 0 & 2 & 1 & 2 & 0 & 2 \\
0 & 1 & 2 & 1 & 0 & 2 & 2 & 2 & 0
\end{array}\right).
\]
We have 

\[
\mu_{(1,1)} = 0,\ k_{(1,1)} = 2, \ \lambda_{(1,1,1)} = 1, \ \lambda_{(1,1,2)} = 0, \ 
\]
\[
\mu_{(1,2)} = 2,\ k_{(1,2)} = 0, \ \lambda_{(1,2,1)} = 0, \ \lambda_{(1,2,2)} = 1, \ 
\]
\[
\mu_{(2,2)} = 2,\ k_{(2,2)} = 4, \ \lambda_{(2,2,1)} = 2, \ \lambda_{(2,2,2)} = 1.
\]
This verifies the statements in the conclusion of
Conjecture \ref{conj:weighted-cayley} for
this function. (Again, $f$ is not bent.)
}
\end{example}

\begin{example}
{\rm

Consider the even function $f:GF(3)^2\to GF(3)$  with the 
following values:

\vskip .15in
{\footnotesize{
\begin{tabular}{c|ccccccccc}
$GF(3)^2$ & (0, 0) & (1, 0) & (2, 0) & (0, 1) & (1, 1) & (2, 1) & (0,
2) & (1, 2) & (2, 2) \\ \hline
$f$ & 0  & 0  & 0  & 2  & 0  & 1  & 2  & 1  & 0 \\
\end{tabular}
}}

\vskip .15in


This $f$ has algebraic normal form
\[
x_0x_1 +2 x_1^2,
\]
and is bent and homogeneous.
The weighted adjacency matrix of its Cayley graph is  

\[
\left(\begin{array}{rrrrrrrrr}
0 & 0 & 0 & 2 & 0 & 1 & 2 & 1 & 0 \\
0 & 0 & 0 & 1 & 2 & 0 & 0 & 2 & 1 \\
0 & 0 & 0 & 0 & 1 & 2 & 1 & 0 & 2 \\
2 & 1 & 0 & 0 & 0 & 0 & 2 & 0 & 1 \\
0 & 2 & 1 & 0 & 0 & 0 & 1 & 2 & 0 \\
1 & 0 & 2 & 0 & 0 & 0 & 0 & 1 & 2 \\
2 & 0 & 1 & 2 & 1 & 0 & 0 & 0 & 0 \\
1 & 2 & 0 & 0 & 2 & 1 & 0 & 0 & 0 \\
0 & 1 & 2 & 1 & 0 & 2 & 0 & 0 & 0
\end{array}\right).
\]
We have 

\[
\mu_{(1,1)} = 0,\ k_{(1,1)} = 2, \ \lambda_{(1,1,1)} = 1, \ \lambda_{(1,1,2)} = 0, \ 
\]
\[
\mu_{(1,2)} = 1,\ k_{(1,2)} = 0, \ \lambda_{(1,2,1)} = 0, \ \lambda_{(1,2,2)} = 0, \ 
\]
\[
\mu_{(2,2)} = 0,\ k_{(2,2)} = 2, \ \lambda_{(2,2,1)} = 0, \ \lambda_{(2,2,2)} = 1.
\]
This verifies the statements in the conclusion of
Conjecture \ref{conj:weighted-cayley} for
this function, $f=b_9$.
}
\end{example}

The last-named author (SW) has made the following observation.

\begin{proposition}
\label{prop:SW}
Let $f:GF(3)^2\to GF(3)$ be an even bent function with $f(0)=0$.
If the level curves of $f$,

\[
D_i = \{v\in GF(3)^2\ |\ f(v)=i\},
\]
yield a weighted PDS with
intersection numbers $p_{ij}^k$ then one of the following occurs.

\begin{enumerate}

\item
We have $|D_1|=|D_2|=2$, and the intersection numbers $p_{ij}^k$
are given as follows:

\[
\begin{array}{cc}
\begin{array}{c|cccc}
p_{ij}^0 & 0 & 1 & 2 & 3 \\ \hline
0         & 1 & 0 & 0 & 0 \\
1         & 0 & 2 & 0 & 0 \\
2         & 0 & 0 & 2 & 0 \\
3         & 0 & 0 & 0 & 4 \\
\end{array}
 &
\begin{array}{c|cccc}
p_{ij}^1 & 0 & 1 & 2 & 3 \\ \hline
0         & 0 & 1 & 0 & 0 \\
1         & 1 & 1 & 0 & 0 \\
2         & 0 & 0 & 0 & 2 \\
3         & 0 & 0 & 2 & 2 \\
\end{array} \\
 & \\
\begin{array}{c|cccc}
p_{ij}^2 & 0 & 1 & 2 & 3 \\ \hline
0         & 0 & 0 & 1 & 0 \\
1         & 0 & 0 & 0 & 2 \\
2         & 1 & 0 & 1 & 0 \\
3         & 0 & 2 & 0 & 2 \\
\end{array}
 &
\begin{array}{c|cccc}
p_{ij}^3 & 0 & 1 & 2 & 3 \\ \hline
0         & 0 & 0 & 0 & 1 \\
1         & 0 & 0 & 1 & 1 \\
2         & 0 & 1 & 0 & 1 \\
3         & 1 & 1 & 1 & 1 \\
\end{array} \\
\end{array}
\]
Furthermore, $D = D_1 \cup D_2$ is a  
$(9,4,1,2)$-PDS of 
Latin square type ($N=3$ and $R=2$) and 
negative Latin square type ($N=-3$ and $R=-1$).
\item
We have $|D_1|=|D_2|=4$, $D_3=\emptyset$,
and the intersection numbers $p_{ij}^k$
are given as follows:

\[
\begin{array}{cc}
\begin{array}{c|ccc}
p_{ij}^0 & 0 & 1 & 2  \\ \hline
0         & 1 & 0 & 0  \\
1         & 0 & 4 & 0  \\
2         & 0 & 0 & 4  \\
\end{array}
 &
\begin{array}{c|cccc}
p_{ij}^1 & 0 & 1 & 2  \\ \hline
0         & 0 & 1 & 0  \\
1         & 1 & 1 & 2  \\
2         & 0 & 2 & 2  \\
\end{array} \\
 & \\
\begin{array}{c|cccc}
p_{ij}^2 & 0 & 1 & 2  \\ \hline
0         & 0 & 0 & 1  \\
1         & 0 & 2 & 2  \\
2         & 1 & 2 & 1  \\
\end{array}
 &
{\rm no} \  p_{ij}^3 \\
\end{array}
\]

\end{enumerate}

\end{proposition}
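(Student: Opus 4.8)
The plan is to first reduce to the two cases by a signature computation, then to pin down each table by exploiting the $GL(2,GF(3))$-action. Since $f$ is even and bent with $n=2$ even, the remark following Lemma \ref{lemma:sig} gives $|S_1|=|S_2|$, so I set $k_1:=|D_1|=|D_2|$ and $|S_0|=9-2k_1$. Then
\[
W_f(0)=|S_0|+k_1\zeta+k_1\zeta^2=(9-2k_1)+k_1(\zeta+\zeta^2)=9-3k_1 ,
\]
an integer; bentness forces $|W_f(0)|=3^{n/2}=3$, so $9-3k_1=\pm 3$ and hence $k_1\in\{2,4\}$. Correspondingly $|D_3|=|S_0|-1=8-2k_1\in\{4,0\}$, which already separates the two cases of the statement and yields $D_3=\emptyset$ exactly when $k_1=4$. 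Moreover $W_f(0)/3=1$ when $k_1=2$ and $W_f(0)/3=-1$ when $k_1=4$; by Lemma \ref{lemma:weaklyregular} together with the regularity criterion stated just after it, $W_f(0)/3$ being a cube root of unity is equivalent to regularity, so $k_1=2$ picks out the regular functions and $k_1=4$ the weakly-regular non-regular ones.

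Next I would argue that the intersection numbers $p_{ij}^k$ are constant on each $GL(2,GF(3))$-orbit. The proposition above on invertible linear maps $\phi$ induces a group-algebra automorphism carrying the Schur ring of $g=f\circ\phi$ onto that of $f$ with $g^{-1}(i)\mapsto f^{-1}(i)$; a ring isomorphism preserves structure constants, so $f$ and $g$ have identical $p_{ij}^k$. By the classification of even bent functions $GF(3)^2\to GF(3)$ recalled above, the regular functions form one orbit and the weakly-regular non-regular ones form another; combined with the previous paragraph, each value of $k_1$ is realized by a single orbit. Hence it suffices to compute the two tables on one representative of each orbit, conveniently through the trace formula $p_{ij}^k=\frac{1}{9\,|D_k|}\,\mathrm{Tr}(A_iA_jA_k)$ of Theorem \ref{thrm:pijk-formula}.

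For $k_1=2$ (Case 1) I would take the representative $b_8$, which is isomorphic to the configuration of Example \ref{example:17}: identifying $GF(3)^2$ with the additive group of $GF(9)=GF(3)[x]/(x^2+1)$ one has $D_1=\{1,2\}$ and $D_2=\{x,2x\}$. The products $D_iD_j$ for $i,j\in\{1,2\}$ are already worked out there, while the entries involving $D_0=\{0\}$ follow from $p_{0j}^k=\delta_{jk}$ and those involving $D_3=GF(9)\setminus(D_0\cup D_1\cup D_2)$ from the row/column-sum relations (or again the trace formula); reading these off produces $P_0,\dots,P_3$. Summing over $i,j\in\{1,2\}$ then recovers $D=D_1\cup D_2$ as the set of nonzero squares, a $(9,4,1,2)$-partial difference set that is simultaneously of Latin square type ($N=3,R=2$) and of negative Latin square type ($N=-3,R=-1$), which establishes the final sentence of Case 1.

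For $k_1=4$ (Case 2) I would pick any weakly-regular non-regular representative and compute $P_0,P_1,P_2$ directly, noting that $D_3=\emptyset$ leaves only the classes $0,1,2$. I expect this to be the crux. The association-scheme axioms alone -- $p_{ij}^0=\delta_{ij}|D_i|$, the sums $\sum_i p_{ij}^k=|D_j|$, the reciprocity $|D_k|p_{ij}^k=|D_i|p_{kj}^i$, and $\sum_k p_{ij}^k|D_k|=|D_i||D_j|$ -- reduce everything to the single free parameter $b=p_{12}^1$ constrained only by $b\in\{1,2,3\}$, so the combinatorics does not by itself determine the table and the actual bent function must be used. The clean way to fix $b=2$ is to observe that $|S_0|=1$ forces $f^{-1}(0)=\{0\}$, whence Proposition \ref{prop:vandam} applies and each weight class $\Gamma_i$ is a strongly regular graph of (negative) Latin square type; with $v=9$ and degree $|D_1|=4$ the only parameters are $(9,4,1,2)$, so $D_1$ is itself a $(9,4,1,2)$-PDS with $D_1'=D_2$, giving $p_{11}^0=4$, $p_{11}^1=1$, $p_{11}^2=2$, hence $b=2$ and the remaining entries. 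Absent an amorphicity check, the direct computation on the representative (equivalently, the trace formula) supplies the same values unconditionally.
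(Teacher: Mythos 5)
Your proposal is correct, but it is organized quite differently from the paper's own treatment. The paper gives no structured argument at all: its entire proof is the sentence that the result ``is verified using a case-by-case analysis,'' i.e.\ a (computer-aided) finite check in the spirit of the Sage verifications and of Lemma \ref{lemma:34}. You instead (i) derive the dichotomy $|D_1|=|D_2|\in\{2,4\}$ a priori, from the remark following Lemma \ref{lemma:sig} (equal level-curve sizes for $n$ even) together with $W_f(0)=9-3k_1$ and $|W_f(0)|=3$; (ii) observe that the intersection numbers are $GL(2,GF(3))$-invariant, since the group-action proposition's Schur-ring isomorphism carries the level-curve decomposition of $f\circ\phi$ to that of $f$ and hence preserves structure constants; (iii) invoke the two-orbit classification of the $18$ bent functions; and (iv) verify each table on one representative --- $b_8$ via Example \ref{example:17} for Case 1, a weight-$8$ representative for Case 2. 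This buys things the paper's proof does not: an explanation of why exactly two tables occur, a reduction of the verification to two small hand computations, and the sharp observation that the counting axioms ($p_{ij}^0=\delta_{ij}|D_i|$, row sums, reciprocity) leave $p_{12}^1\in\{1,2,3\}$ undetermined in Case 2, so the bent function itself must be used. Two caveats, neither fatal: your matching of $k_1$ with the orbits via the regularity criterion tacitly assumes weak regularity (which the classification supplies; alternatively, just note that the signature is itself a $GL$-invariant separating the two orbits, since both signatures occur and there are only two orbits); and your appeal to Proposition \ref{prop:vandam} in Case 2 needs amorphicity, which you never check --- but, as you say, the direct computation on the representative renders that appeal unnecessary.
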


This is verified using a case-by-case analysis.

\subsection{$GF(3)^3\to GF(3)$}

We can classify some bent functions on $GF(3)^3$
in terms of the corresponding combinatorial structure of their
level curves. Unlike the $GF(3)^2$ case, not all such bent 
functions have ``combinatorial'' level curves.

\begin{proposition}
There are $2340$ even bent functions
$f:GF(3)^3\to GF(3)$ such that $f(0)=0$. The group $G=GL(3,GF(3))$
acts on the set ${\mathbb{B}}$ of all such bent functions and 
there are $4$ orbits in ${\mathbb{B}}/G$:

\[
{\mathbb{B}}/G = B_1\cup B_2\cup B_3\cup B_4,
\]
where $|B_1|=234$,  $|B_2|=936$, $|B_3|=234$,
and $|B_4|=936$.

\begin{table}
\centering
\setlength{\extrarowheight}{3pt}
\begin{tabular}{|l|l|}
\hline 
$B_1$ & $f_1(x_0,x_1,x_2)=x_0^2 + x_1^2+x_2^2$ \\
 \hline
$B_2$ & $f_2(x_0,x_1,x_2)=x_0x_2 + 2x_1^2+2x_0^2 x_1^2$ \\
\hline
$B_3$ & $f_3(x_0,x_1,x_2)=-x_0^2 - x_1^2-x_2^2$ \\
\hline
$B_4$ & $f_4(x_0,x_1,x_2)=-x_0x_2 - 2x_1^2-2x_0^2 x_1^2$ \\
\hline
\end{tabular}
\caption{Representatives of orbits in $\mathbb B/G$}
\label{tab:tab33}
\end{table}

The bent functions which give rise to a weighted PDS\footnote{Note, 
the weighted PDSs are given in the examples below.} are
those in orbits $B_1$ and $B_3$. The 
other bent functions do not.

The functions in orbits $B_1$ and $B_3$ are weakly regular but not regular. 
The functions in orbits $B_2$ and $B_4$ are not weakly regular.
\end{proposition}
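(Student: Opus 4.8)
The plan is to treat this statement as a classification carried out by exhaustive computation over $GF(3)^3$, organized so that the genuinely structural claims (regularity and the existence of a weighted PDS) are reduced to a single representative per $GL$-orbit. First I would set up the enumeration: an even function $f$ with $f(0)=0$ is determined by its values on one representative of each of the $13$ antipodal pairs $\{x,-x\}$ in $GF(3)^3\setminus\{0\}$, so there are exactly $3^{13}=1594323$ candidates (matching the earlier example). For each I would test bentness using Proposition \ref{prop:bent}, i.e. check that every derivative $D_b f$ is balanced (equivalently that $\zeta^F$ is Butson); counting the successes yields $2340$. Since any $\phi\in GL(3,GF(3))$ fixes $0$, commutes with negation, and carries bent functions to bent functions (by the $GL$-action results established above, under which $W_{f\circ\phi}(u)=W_f((\phi^{-1})^Tu)$), the set $\mathbb B$ is preserved and $G=GL(3,GF(3))$ acts on it.

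Next I would compute the orbits. The invariants that separate them are, first, the algebraic degree of $f$ as a reduced polynomial, which is invariant under linear substitution: $B_1,B_3$ consist of quadratic forms while $B_2,B_4$ are genuinely quartic. Within the quadratic case, two nondegenerate ternary forms over $\mathbb F_3$ are $GL$-equivalent iff they share a discriminant square-class, and $\mathrm{disc}(x_0^2+x_1^2+x_2^2)=1$ is a square whereas $\mathrm{disc}(-(x_0^2+x_1^2+x_2^2))=2$ is a nonsquare, so $f_1\not\sim f_3$. The stabilizer of $f_1$ is the orthogonal group $O_3(\mathbb F_3)$, of order $2\cdot 3\cdot(3^2-1)=48$; since $|G|=26\cdot24\cdot18=11232$, orbit-stabilizer gives $|B_1|=|B_3|=11232/48=234$. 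A direct computation of the stabilizers of $f_2,f_4$ gives order $12$, so $|B_2|=|B_4|=11232/12=936$, and $234+936+234+936=2340$ confirms that these are exactly the four orbits.

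For the regularity claims I would argue field-theoretically rather than numerically. Because $n=3$ is odd and $p=3\equiv 3\pmod 4$, Proposition \ref{prop:KSW} gives $W_f(u)p^{-3/2}=i^{f_*(u)}\zeta^{f^*(u)}$. Now $W_f(u)\in\zzz[\zeta]\subset\qqq(\sqrt{-3})$, while $p^{3/2}=3\sqrt3\notin\qqq(\sqrt{-3})$ and $i\cdot 3\sqrt3=3\sqrt{-3}\in\qqq(\sqrt{-3})$; comparing these, $f_*(u)$ must be odd for every $u$, so $W_f(u)p^{-3/2}=\pm i\,\zeta^{f^*(u)}$ is never a cube root of unity. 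Hence \emph{no} bent function on $GF(3)^3$ is regular. Moreover $W_f(u)/W_f(0)=i^{f_*(u)-f_*(0)}\zeta^{f^*(u)-f^*(0)}$, which by Lemma \ref{lemma:weaklyregular} is a cube root of unity exactly when the sign $i^{f_*(u)}$ is constant in $u$; I would verify on one representative per orbit that this sign is constant for $f_1,f_3$ (weakly regular) and non-constant for $f_2,f_4$ (not weakly regular), using $GL$-invariance of weak regularity to pass to the whole orbit.

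Finally, for the weighted-PDS dichotomy I would again reduce to representatives, since the $GL$-action sends a weighted PDS of level curves to a weighted PDS (shown earlier). For $B_1,B_3$ the representatives are homogeneous (quadratic) and weakly regular, so their level curves form a weighted PDS by the known results for homogeneous weakly regular bent functions (\cite{CTZ}, \cite{PTFL}). For $B_2,B_4$ I would exhibit, by computing the group-ring products $D_iD_j^{-1}$ (equivalently the difference multiplicities of Definition \ref{defn:wtPDS}), a single non-identity element whose multiplicity is not constant across the relevant level curve, so the weighted-PDS condition fails. The main obstacle is this last negative step together with the reliance on exhaustive search: the enumeration and orbit computation are routine at this size, but certifying that $B_2,B_4$ genuinely \emph{fail} the weighted-PDS condition (rather than merely failing some stronger normalization) requires care, and it is precisely this dichotomy that is predicted by, and lends support to, Conjecture \ref{conjecture:main}.
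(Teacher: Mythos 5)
Your proposal reaches the paper's conclusions by a genuinely different, and in places stronger, route. The paper's own proof is almost entirely computational: it uses Hou's degree bound (so that every bent function lies in the set $\mathbb{E}$ of $3^{12}$ even functions of ANF degree at most $4$ with $f(0)=0$), partitions $\mathbb{E}$ by signature ($35$ classes) and then, with Mathematica, into all $281$ orbits of $GL(3,GF(3))$, tests one representative per orbit for bentness to find the four bent orbits and their sizes, and finally certifies regularity and the weighted-PDS dichotomy by Sage computations on representatives, transported along orbits by the group-action proposition (as you also do). You instead enumerate all $3^{13}$ even functions, which avoids any appeal to Hou's bound; you identify the orbits structurally rather than by exhaustive orbit enumeration, via the discriminant square-class for the quadratic pair and orbit--stabilizer with $|O_3(GF(3))|=48$, $|GL(3,GF(3))|=11232$, giving $|B_1|=|B_3|=234$; and, most notably, you prove non-regularity by a cyclotomic argument: since $\mathbb{Q}(\zeta)=\mathbb{Q}(\sqrt{-3})$ contains $\sqrt{-3}$ but not $\sqrt{3}$, Proposition \ref{prop:KSW} forces $f_*(u)$ to be odd for every $u$, so $W_f(u)3^{-3/2}=\pm i\,\zeta^{f^*(u)}$ is never a cube root of unity, and weak regularity reduces to constancy of the sign $i^{f_*(u)}$. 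This is a real improvement over the paper, which records non-regularity only as a numerical observation and whose stated justification (that $W_f(0)/3^{3/2}$ is a ``$6$th root of unity but not a cube root'') is not even consistent with your parity argument; a direct count for the $B_2$ representative gives $W_f(0)=3\sqrt{3}\,i$, i.e.\ a fourth root of unity times $3^{3/2}$, exactly as your argument predicts.

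One step does need repair. You assert that $234+936+234+936=2340$ ``confirms that these are exactly the four orbits,'' but that inference requires the four orbits to be pairwise distinct, and your invariants (ANF degree and discriminant) separate $B_1$ from $B_3$ and the quadratic pair from the quartic pair, yet nothing you wrote separates $B_2$ from $B_4$. If $f_2$ and $f_4=-f_2$ were $GL$-equivalent, your sum would double-count a single orbit of size $936$ and the classification would be incomplete, with further orbits unaccounted for. The fix is one line, using the signature invariant already available from the group-action section: the two nonzero level curves of $f_2$ have sizes $6$ and $12$ (in some order), negation interchanges $f^{-1}(1)$ and $f^{-1}(2)$, and signature is preserved by the $GL$-action, so $f_2\not\sim f_4$. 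With that added, your orbit count is complete, and the rest of your plan goes through: the positive PDS cases for $B_1,B_3$ follow from the cited results of \cite{CTZ} and \cite{PTFL} together with $GL$-equivariance (the paper instead exhibits the intersection numbers directly, in Proposition \ref{prop:gf33bent}), and your proposed certificate for failure in $B_2,B_4$ --- a single non-identity element with non-constant difference multiplicity --- is exactly what the paper exhibits, e.g.\ $\mu_{(1,1)}=\{4,6\}$ for the $B_2$ representative.
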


\begin{remark}
The result above agrees with the results of Pott et al \cite{PTFL},
where they overlap.

\end{remark}

\begin{example}
{\rm

Consider the example of the even function $f:GF(3)^3\to GF(3)$  given 
in \S \ref{sec:bent-gf3**3} below.
The adjacency matrix of its edge-weighted Cayley graph $\Gamma = (V,E)$ is given below.

\vskip .15in

{\scriptsize{
\begin{Verbatim}[fontsize=\scriptsize,fontfamily=courier,fontshape=tt,frame=single,label=\sage]

sage: FF = GF(3)
sage: V = FF^3
sage: Vlist = V.list()
sage: Vlist
[(0, 0, 0), (1, 0, 0), (2, 0, 0), (0, 1, 0), (1, 1, 0), (2, 1, 0), (0, 2, 0), 
 (1, 2, 0), (2, 2, 0), (0, 0, 1), (1, 0, 1), (2, 0, 1), (0, 1, 1), (1, 1, 1), 
 (2, 1, 1), (0, 2, 1), (1, 2, 1), (2, 2, 1), (0, 0, 2), (1, 0, 2), (2, 0, 2), 
 (0, 1, 2), (1, 1, 2), (2, 1, 2), (0, 2, 2), (1, 2, 2), (2, 2, 2)]
sage: flist = [0,2,2,1,1,1,1,1,1,2,0,1,1,2,0,1,0,0,2,1,0,1,0,0,1,0,2]
sage: f = lambda x: GF(3)(flist[Vlist.index(x)])   
sage: [CC(hadamard_transform(f,a)).abs() for a in V]
[5.19615242270663, 5.19615242270663, 5.19615242270664, 
5.19615242270663, 5.19615242270663, 5.19615242270663,
 5.19615242270663, 5.19615242270663, 5.19615242270663,
 5.19615242270663, 5.19615242270663, 5.19615242270663,
 5.19615242270663, 5.19615242270664, 5.19615242270663,
 5.19615242270663, 5.19615242270663, 5.19615242270663,
 5.19615242270664, 5.19615242270663, 5.19615242270663,
 5.19615242270663, 5.19615242270663, 5.19615242270663,
 5.19615242270663, 5.19615242270663, 5.19615242270664]
sage: Gamma = boolean_cayley_graph(f, V)
sage: Gamma.spectrum()
[18, 3, 3, 3, 3, 3, 3, 0, 0, 0, 0, 0, 0, 0, 0, -3, -3, -3, -3, -3, -3, -3, 
 -3, -3, -3, -3, -3]

\end{Verbatim}
}}
\vskip .1in

The algebraic normal form of $f$ is

\[
x_0x_2 +2x_1^2 + 2 x_0^2x_1^2 \, ,
\]
which is non-homogeneous but bent. It is not regular, nor merely weakly regular.
The weighted adjacency matrix is 

{\scriptsize{
\[
\left(\begin{array}{rrrrrrrrrrrrrrrrrrrrrrrrrrr}
0 & 2 & 2 & 1 & 1 & 1 & 1 & 1 & 1 & 2 & 0 & 1 & 1 & 2 & 0 & 1 & 0 & 0 & 2 & 1 & 0 & 1 & 0 & 0 & 1 & 0 & 2 \\
2 & 0 & 2 & 1 & 1 & 1 & 1 & 1 & 1 & 1 & 2 & 0 & 0 & 1 & 2 & 0 & 1 & 0 & 0 & 2 & 1 & 0 & 1 & 0 & 2 & 1 & 0 \\
2 & 2 & 0 & 1 & 1 & 1 & 1 & 1 & 1 & 0 & 1 & 2 & 2 & 0 & 1 & 0 & 0 & 1 & 1 & 0 & 2 & 0 & 0 & 1 & 0 & 2 & 1 \\
1 & 1 & 1 & 0 & 2 & 2 & 1 & 1 & 1 & 1 & 0 & 0 & 2 & 0 & 1 & 1 & 2 & 0 & 1 & 0 & 2 & 2 & 1 & 0 & 1 & 0 & 0 \\
1 & 1 & 1 & 2 & 0 & 2 & 1 & 1 & 1 & 0 & 1 & 0 & 1 & 2 & 0 & 0 & 1 & 2 & 2 & 1 & 0 & 0 & 2 & 1 & 0 & 1 & 0 \\
1 & 1 & 1 & 2 & 2 & 0 & 1 & 1 & 1 & 0 & 0 & 1 & 0 & 1 & 2 & 2 & 0 & 1 & 0 & 2 & 1 & 1 & 0 & 2 & 0 & 0 & 1 \\
1 & 1 & 1 & 1 & 1 & 1 & 0 & 2 & 2 & 1 & 2 & 0 & 1 & 0 & 0 & 2 & 0 & 1 & 1 & 0 & 0 & 1 & 0 & 2 & 2 & 1 & 0 \\
1 & 1 & 1 & 1 & 1 & 1 & 2 & 0 & 2 & 0 & 1 & 2 & 0 & 1 & 0 & 1 & 2 & 0 & 0 & 1 & 0 & 2 & 1 & 0 & 0 & 2 & 1 \\
1 & 1 & 1 & 1 & 1 & 1 & 2 & 2 & 0 & 2 & 0 & 1 & 0 & 0 & 1 & 0 & 1 & 2 & 0 & 0 & 1 & 0 & 2 & 1 & 1 & 0 & 2 \\
2 & 1 & 0 & 1 & 0 & 0 & 1 & 0 & 2 & 0 & 2 & 2 & 1 & 1 & 1 & 1 & 1 & 1 & 2 & 0 & 1 & 1 & 2 & 0 & 1 & 0 & 0 \\
0 & 2 & 1 & 0 & 1 & 0 & 2 & 1 & 0 & 2 & 0 & 2 & 1 & 1 & 1 & 1 & 1 & 1 & 1 & 2 & 0 & 0 & 1 & 2 & 0 & 1 & 0 \\
1 & 0 & 2 & 0 & 0 & 1 & 0 & 2 & 1 & 2 & 2 & 0 & 1 & 1 & 1 & 1 & 1 & 1 & 0 & 1 & 2 & 2 & 0 & 1 & 0 & 0 & 1 \\
1 & 0 & 2 & 2 & 1 & 0 & 1 & 0 & 0 & 1 & 1 & 1 & 0 & 2 & 2 & 1 & 1 & 1 & 1 & 0 & 0 & 2 & 0 & 1 & 1 & 2 & 0 \\
2 & 1 & 0 & 0 & 2 & 1 & 0 & 1 & 0 & 1 & 1 & 1 & 2 & 0 & 2 & 1 & 1 & 1 & 0 & 1 & 0 & 1 & 2 & 0 & 0 & 1 & 2 \\
0 & 2 & 1 & 1 & 0 & 2 & 0 & 0 & 1 & 1 & 1 & 1 & 2 & 2 & 0 & 1 & 1 & 1 & 0 & 0 & 1 & 0 & 1 & 2 & 2 & 0 & 1 \\
1 & 0 & 0 & 1 & 0 & 2 & 2 & 1 & 0 & 1 & 1 & 1 & 1 & 1 & 1 & 0 & 2 & 2 & 1 & 2 & 0 & 1 & 0 & 0 & 2 & 0 & 1 \\
0 & 1 & 0 & 2 & 1 & 0 & 0 & 2 & 1 & 1 & 1 & 1 & 1 & 1 & 1 & 2 & 0 & 2 & 0 & 1 & 2 & 0 & 1 & 0 & 1 & 2 & 0 \\
0 & 0 & 1 & 0 & 2 & 1 & 1 & 0 & 2 & 1 & 1 & 1 & 1 & 1 & 1 & 2 & 2 & 0 & 2 & 0 & 1 & 0 & 0 & 1 & 0 & 1 & 2 \\
2 & 0 & 1 & 1 & 2 & 0 & 1 & 0 & 0 & 2 & 1 & 0 & 1 & 0 & 0 & 1 & 0 & 2 & 0 & 2 & 2 & 1 & 1 & 1 & 1 & 1 & 1 \\
1 & 2 & 0 & 0 & 1 & 2 & 0 & 1 & 0 & 0 & 2 & 1 & 0 & 1 & 0 & 2 & 1 & 0 & 2 & 0 & 2 & 1 & 1 & 1 & 1 & 1 & 1 \\
0 & 1 & 2 & 2 & 0 & 1 & 0 & 0 & 1 & 1 & 0 & 2 & 0 & 0 & 1 & 0 & 2 & 1 & 2 & 2 & 0 & 1 & 1 & 1 & 1 & 1 & 1 \\
1 & 0 & 0 & 2 & 0 & 1 & 1 & 2 & 0 & 1 & 0 & 2 & 2 & 1 & 0 & 1 & 0 & 0 & 1 & 1 & 1 & 0 & 2 & 2 & 1 & 1 & 1 \\
0 & 1 & 0 & 1 & 2 & 0 & 0 & 1 & 2 & 2 & 1 & 0 & 0 & 2 & 1 & 0 & 1 & 0 & 1 & 1 & 1 & 2 & 0 & 2 & 1 & 1 & 1 \\
0 & 0 & 1 & 0 & 1 & 2 & 2 & 0 & 1 & 0 & 2 & 1 & 1 & 0 & 2 & 0 & 0 & 1 & 1 & 1 & 1 & 2 & 2 & 0 & 1 & 1 & 1 \\
1 & 2 & 0 & 1 & 0 & 0 & 2 & 0 & 1 & 1 & 0 & 0 & 1 & 0 & 2 & 2 & 1 & 0 & 1 & 1 & 1 & 1 & 1 & 1 & 0 & 2 & 2 \\
0 & 1 & 2 & 0 & 1 & 0 & 1 & 2 & 0 & 0 & 1 & 0 & 2 & 1 & 0 & 0 & 2 & 1 & 1 & 1 & 1 & 1 & 1 & 1 & 2 & 0 & 2 \\
2 & 0 & 1 & 0 & 0 & 1 & 0 & 1 & 2 & 0 & 0 & 1 & 0 & 2 & 1 & 1 & 0 & 2 & 1 & 1 & 1 & 1 & 1 & 1 & 2 & 2 & 0
\end{array}\right)
\]
}}

We have 

\[
\mu_{(1,1)} = \{4, 6\},\ k_{(1,1)} = 12, \ \lambda_{(1,1,1)} = 5, \ \lambda_{(1,1,2)} = 6, \ 
\]
\[
\mu_{(1,2)} = 3,\ k_{(1,2)} = 0, \ \lambda_{(1,2,1)} = \{2,4\}, \ \lambda_{(1,2,2)} = 2, \ 
\]
\[
\mu_{(2,2)} = \{0,2\},\ k_{(2,2)} = 6, \ \lambda_{(2,2,1)} = \{0,2\}, \ \lambda_{(2,2,2)} = 1.
\]

In this example, Analog \ref{conj:weighted-cayley} is false.

}
\end{example}

\begin{example}
{\rm

Consider the example of the bent even function $f:GF(3)^3\to GF(3)$  given 
by

\[
f(x_0,x_1)=x_0x_1+x_2^2 \, ,
\]
which is homogeneous but bent. It is not weakly regular.
The adjacency matrix of the associated edge-weighted Cayley graph is 

{\scriptsize{
\[
\left(\begin{array}{rrrrrrrrrrrrrrrrrrrrrrrrrrr}
0 & 0 & 0 & 0 & 1 & 2 & 0 & 2 & 1 & 1 & 1 & 1 & 1 & 2 & 0 & 1 & 0 & 2 & 1 & 1 & 1 & 1 & 2 & 0 & 1 & 0 & 2 \\
0 & 0 & 0 & 2 & 0 & 1 & 1 & 0 & 2 & 1 & 1 & 1 & 0 & 1 & 2 & 2 & 1 & 0 & 1 & 1 & 1 & 0 & 1 & 2 & 2 & 1 & 0 \\
0 & 0 & 0 & 1 & 2 & 0 & 2 & 1 & 0 & 1 & 1 & 1 & 2 & 0 & 1 & 0 & 2 & 1 & 1 & 1 & 1 & 2 & 0 & 1 & 0 & 2 & 1 \\
0 & 2 & 1 & 0 & 0 & 0 & 0 & 1 & 2 & 1 & 0 & 2 & 1 & 1 & 1 & 1 & 2 & 0 & 1 & 0 & 2 & 1 & 1 & 1 & 1 & 2 & 0 \\
1 & 0 & 2 & 0 & 0 & 0 & 2 & 0 & 1 & 2 & 1 & 0 & 1 & 1 & 1 & 0 & 1 & 2 & 2 & 1 & 0 & 1 & 1 & 1 & 0 & 1 & 2 \\
2 & 1 & 0 & 0 & 0 & 0 & 1 & 2 & 0 & 0 & 2 & 1 & 1 & 1 & 1 & 2 & 0 & 1 & 0 & 2 & 1 & 1 & 1 & 1 & 2 & 0 & 1 \\
0 & 1 & 2 & 0 & 2 & 1 & 0 & 0 & 0 & 1 & 2 & 0 & 1 & 0 & 2 & 1 & 1 & 1 & 1 & 2 & 0 & 1 & 0 & 2 & 1 & 1 & 1 \\
2 & 0 & 1 & 1 & 0 & 2 & 0 & 0 & 0 & 0 & 1 & 2 & 2 & 1 & 0 & 1 & 1 & 1 & 0 & 1 & 2 & 2 & 1 & 0 & 1 & 1 & 1 \\
1 & 2 & 0 & 2 & 1 & 0 & 0 & 0 & 0 & 2 & 0 & 1 & 0 & 2 & 1 & 1 & 1 & 1 & 2 & 0 & 1 & 0 & 2 & 1 & 1 & 1 & 1 \\
1 & 1 & 1 & 1 & 2 & 0 & 1 & 0 & 2 & 0 & 0 & 0 & 0 & 1 & 2 & 0 & 2 & 1 & 1 & 1 & 1 & 1 & 2 & 0 & 1 & 0 & 2 \\
1 & 1 & 1 & 0 & 1 & 2 & 2 & 1 & 0 & 0 & 0 & 0 & 2 & 0 & 1 & 1 & 0 & 2 & 1 & 1 & 1 & 0 & 1 & 2 & 2 & 1 & 0 \\
1 & 1 & 1 & 2 & 0 & 1 & 0 & 2 & 1 & 0 & 0 & 0 & 1 & 2 & 0 & 2 & 1 & 0 & 1 & 1 & 1 & 2 & 0 & 1 & 0 & 2 & 1 \\
1 & 0 & 2 & 1 & 1 & 1 & 1 & 2 & 0 & 0 & 2 & 1 & 0 & 0 & 0 & 0 & 1 & 2 & 1 & 0 & 2 & 1 & 1 & 1 & 1 & 2 & 0 \\
2 & 1 & 0 & 1 & 1 & 1 & 0 & 1 & 2 & 1 & 0 & 2 & 0 & 0 & 0 & 2 & 0 & 1 & 2 & 1 & 0 & 1 & 1 & 1 & 0 & 1 & 2 \\
0 & 2 & 1 & 1 & 1 & 1 & 2 & 0 & 1 & 2 & 1 & 0 & 0 & 0 & 0 & 1 & 2 & 0 & 0 & 2 & 1 & 1 & 1 & 1 & 2 & 0 & 1 \\
1 & 2 & 0 & 1 & 0 & 2 & 1 & 1 & 1 & 0 & 1 & 2 & 0 & 2 & 1 & 0 & 0 & 0 & 1 & 2 & 0 & 1 & 0 & 2 & 1 & 1 & 1 \\
0 & 1 & 2 & 2 & 1 & 0 & 1 & 1 & 1 & 2 & 0 & 1 & 1 & 0 & 2 & 0 & 0 & 0 & 0 & 1 & 2 & 2 & 1 & 0 & 1 & 1 & 1 \\
2 & 0 & 1 & 0 & 2 & 1 & 1 & 1 & 1 & 1 & 2 & 0 & 2 & 1 & 0 & 0 & 0 & 0 & 2 & 0 & 1 & 0 & 2 & 1 & 1 & 1 & 1 \\
1 & 1 & 1 & 1 & 2 & 0 & 1 & 0 & 2 & 1 & 1 & 1 & 1 & 2 & 0 & 1 & 0 & 2 & 0 & 0 & 0 & 0 & 1 & 2 & 0 & 2 & 1 \\
1 & 1 & 1 & 0 & 1 & 2 & 2 & 1 & 0 & 1 & 1 & 1 & 0 & 1 & 2 & 2 & 1 & 0 & 0 & 0 & 0 & 2 & 0 & 1 & 1 & 0 & 2 \\
1 & 1 & 1 & 2 & 0 & 1 & 0 & 2 & 1 & 1 & 1 & 1 & 2 & 0 & 1 & 0 & 2 & 1 & 0 & 0 & 0 & 1 & 2 & 0 & 2 & 1 & 0 \\
1 & 0 & 2 & 1 & 1 & 1 & 1 & 2 & 0 & 1 & 0 & 2 & 1 & 1 & 1 & 1 & 2 & 0 & 0 & 2 & 1 & 0 & 0 & 0 & 0 & 1 & 2 \\
2 & 1 & 0 & 1 & 1 & 1 & 0 & 1 & 2 & 2 & 1 & 0 & 1 & 1 & 1 & 0 & 1 & 2 & 1 & 0 & 2 & 0 & 0 & 0 & 2 & 0 & 1 \\
0 & 2 & 1 & 1 & 1 & 1 & 2 & 0 & 1 & 0 & 2 & 1 & 1 & 1 & 1 & 2 & 0 & 1 & 2 & 1 & 0 & 0 & 0 & 0 & 1 & 2 & 0 \\
1 & 2 & 0 & 1 & 0 & 2 & 1 & 1 & 1 & 1 & 2 & 0 & 1 & 0 & 2 & 1 & 1 & 1 & 0 & 1 & 2 & 0 & 2 & 1 & 0 & 0 & 0 \\
0 & 1 & 2 & 2 & 1 & 0 & 1 & 1 & 1 & 0 & 1 & 2 & 2 & 1 & 0 & 1 & 1 & 1 & 2 & 0 & 1 & 1 & 0 & 2 & 0 & 0 & 0 \\
2 & 0 & 1 & 0 & 2 & 1 & 1 & 1 & 1 & 2 & 0 & 1 & 0 & 2 & 1 & 1 & 1 & 1 & 1 & 2 & 0 & 2 & 1 & 0 & 0 & 0 & 0
\end{array}\right).
\]
}}

\begin{Verbatim}[fontsize=\scriptsize,fontfamily=courier,fontshape=tt,frame=single,label=\sage]

sage: p = 3; n = 3
sage: FF = GF(p)
sage:  V = GF(p)**n
sage: f = lambda x: FF(x[0]*x[1]+x[2]^2)   
sage: flist = [f(v) for v in V]
sage: flist
[0, 0, 0, 0, 1, 2, 0, 2, 1, 1, 1, 1, 1, 2, 0, 1, 0, 2, 1, 1, 1, 1, 2, 0, 1, 0, 2]
sage: 
sage: 
sage: [CC(hadamard_transform(f,a)).abs() for a in V]
[5.19615242270663, 5.19615242270663, 5.19615242270664, 
5.19615242270663, 5.19615242270663, 5.19615242270663,
 5.19615242270663, 5.19615242270663, 5.19615242270663,
 5.19615242270663, 5.19615242270663, 5.19615242270663,
 5.19615242270663, 5.19615242270664, 5.19615242270663,
 5.19615242270663, 5.19615242270663, 5.19615242270663,
 5.19615242270664, 5.19615242270663, 5.19615242270663,
 5.19615242270663, 5.19615242270663, 5.19615242270663,
 5.19615242270663, 5.19615242270663, 5.19615242270664]
sage: Gamma = boolean_cayley_graph(f, V)
sage: Gamma.spectrum()
[18, 3, 3, 3, 3, 3, 3, 0, 0, 0, 0, 0, 0, 0, 0, -3, -3, -3, -3, -3, -3, -3, 
 -3, -3, -3, -3, -3]
sage: Gamma.is_strongly_regular()
False

\end{Verbatim}

The unweighed Cayley graph of $f$ is regular but has four distinct eigenvalues, so is
not strongly regular (as Sage indicates above). However, a Sage computation shows
$|W_f(a)|=3^{3/2}$ for all $a \in GF(3)^3$, $f$ is bent.
Since $W_f(0)/3^{3/2}$ is a $6$-th root of unity but not a cube root, $f$ is not weakly
regular.

By a Sage computation, we have 

\[
\mu_{(1,1)} = 6,\ k_{(1,1)} = 12, \ \lambda_{(1,1,1)} = 5, \ \lambda_{(1,1,2)} = 4, \ 
\]
\[
\mu_{(1,2)} = 3,\ k_{(1,2)} = 0, \ \lambda_{(1,2,1)} = 2, \ \lambda_{(1,2,2)} = 4, \ 
\]
\[
\mu_{(2,2)} = 0,\ k_{(2,2)} = 6, \ \lambda_{(2,2,1)} = 2, \ \lambda_{(2,2,2)} = 1.
\]


In this example, Analog \ref{conj:weighted-cayley} is true.

}
\end{example}

Let $f:GF(3)^3\to GF(3)$ be an even bent function with $f(0)=0$,
let
\[
D_i = \{v\in GF(3)^3\ |\ f(v)=i\},\ \ \ i=1,2,
\]
let $D_0=\{0\}$ and $D_3=GF(3)^3\setminus (D_0\cup D_1\cup D_2)$.

The next result extends Proposition \ref{prop:SW}.

\begin{proposition}
\label{prop:gf33bent}
Let $f:GF(3)^3\to GF(3)$ be an even bent function with $f(0)=0$.
If the level curves of $f$, $D_i$, yield a weighted PDS with
intersection numbers $p_{ij}^k$ then one of the following occurs.

\begin{enumerate}

\item
We have $|D_1|=6$, $|D_2|=12$, and the intersection numbers $p_{ij}^k$
are given as follows:

\[
\begin{array}{cc}
\begin{array}{c|cccc}
p_{ij}^0 & 0 & 1 & 2 & 3 \\ \hline
0         & 1 & 0 & 0 & 0 \\
1         & 0 & 6 & 0 & 0 \\
2         & 0 & 0 & 12 & 0 \\
3         & 0 & 0 & 0 & 8 \\
\end{array}
 &
\begin{array}{c|cccc}
p_{ij}^1 & 0 & 1 & 2 & 3 \\ \hline
0         & 0 & 1 & 0 & 0 \\
1         & 1 & 1 & 4 & 0 \\
2         & 0 & 4 & 4 & 4 \\
3         & 0 & 0 & 4 & 4 \\
\end{array} \\
 & \\
\begin{array}{c|cccc}
p_{ij}^2 & 0 & 1 & 2 & 3 \\ \hline
0         & 0 & 0 & 1 & 0 \\
1         & 0 & 2 & 2 & 2 \\
2         & 1 & 2 & 5 & 4 \\
3         & 0 & 2 & 4 & 2 \\
\end{array}
 &
\begin{array}{c|cccc}
p_{ij}^3 & 0 & 1 & 2 & 3 \\ \hline
0         & 0 & 0 & 0 & 1 \\
1         & 0 & 0 & 3 & 3 \\
2         & 0 & 3 & 6 & 3 \\
3         & 1 & 3 & 3 & 1 \\
\end{array} \\
\end{array}
\]
\item
We have $|D_1|=12$, $|D_2|=6$, and the intersection numbers $p_{ij}^k$
are given as follows:

\[
\begin{array}{cc}
\begin{array}{c|cccc}
p_{ij}^0 & 0 & 1 & 2 & 3 \\ \hline
0         & 1 & 0 & 0 & 0 \\
1         & 0 & 12 & 0 & 0 \\
2         & 0 & 0 & 6 & 0 \\
3         & 0 & 0 & 0 & 8 \\
\end{array}
 &
\begin{array}{c|cccc}
p_{ij}^1 & 0 & 1 & 2 & 3 \\ \hline
0         & 0 & 1 & 0 & 0 \\
1         & 1 & 5 & 2 & 4 \\
2         & 0 & 2 & 2 & 2 \\
3         & 0 & 4 & 2 & 2 \\
\end{array} \\
 & \\
\begin{array}{c|cccc}
p_{ij}^2 & 0 & 1 & 2 & 3 \\ \hline
0         & 0 & 0 & 1 & 0 \\
1         & 0 & 4 & 4 & 4 \\
2         & 1 & 4 & 1 & 0 \\
3         & 0 & 4 & 0 & 4 \\
\end{array}
 &
\begin{array}{c|cccc}
p_{ij}^3 & 0 & 1 & 2 & 3 \\ \hline
0         & 0 & 0 & 0 & 1 \\
1         & 0 & 6 & 3 & 3 \\
2         & 0 & 3 & 0 & 3 \\
3         & 1 & 3 & 3 & 1 \\
\end{array} \\
\end{array}
\]

\end{enumerate}

\end{proposition}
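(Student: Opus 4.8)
The plan is to pin down the level-set sizes by a norm computation, reduce to two orbit representatives using $GL(3,GF(3))$-invariance, and then read off both tables from an elementary Hamming-weight count.

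First I would determine $|D_0|,\dots,|D_3|$. Since $f$ is bent, $|W_f(0)|^2 = 3^3 = 27$. Writing $n_i = |f^{-1}(i)|$ and $\omega = \zeta = e^{2\pi i/3}$, we have $W_f(0) = n_0 + n_1\omega + n_2\omega^2 = (n_0 - n_2) + (n_1 - n_2)\omega$, so with $x = n_0 - n_2$, $y = n_1 - n_2$ the relation $|W_f(0)|^2 = 27$ becomes the norm equation $x^2 - xy + y^2 = 27$ in $\zzz[\omega]$. A finite check ($|x|,|y|\le 6$) shows its only integer solutions are the six associates of $(1-\omega)^3$, and together with $n_0 + n_1 + n_2 = 27$ these force $(n_0,n_1,n_2)$ to be a permutation of $\{6,9,12\}$ (the Kumar--Scholtz--Welch normalization of Proposition~\ref{prop:KSW} is consistent with, but not needed for, this step). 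Because $f$ is even, the involution $x\mapsto -x$ pairs up all nonzero vectors (its only fixed point is $0\in f^{-1}(0)$), so $n_1,n_2$ are even while $n_0$ is odd; hence $n_0 = 9$, giving $|D_0|=1$, $|D_3| = n_0 - 1 = 8$, and $\{|D_1|,|D_2|\} = \{6,12\}$. This is exactly the dichotomy between Case 1 ($|D_1|=6$) and Case 2 ($|D_1|=12$), and already matches the diagonal matrices $P_0$.

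Next I would reduce to representatives. By the classification of even bent functions on $GF(3)^3$ stated above, those whose level curves form a weighted PDS are precisely the orbits $B_1$ and $B_3$ under $GL(3,GF(3))$, with representatives $f_1 = x_0^2 + x_1^2 + x_2^2$ and $f_3 = -f_1$; one checks directly that $f_1$ realizes $(n_1,n_2)=(6,12)$ (Case 1) and $f_3$ realizes $(12,6)$ (Case 2). The proposition on the $GL$-action shows that for $\phi\in GL(3,GF(3))$ the induced group-ring homomorphism carries the Schur ring of $g=f\circ\phi$ onto that of $f$, sending $D_i(g)$ to $D_i(f)$ with matching index $i$; hence the structure constants $p_{ij}^k$ are constant on each orbit, and it suffices to compute the two tables for $f_1$ and $f_3$. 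For $f_1$ the computation is purely combinatorial: since $t^2\in\{0,1\}$ in $GF(3)$ with $t^2=0$ iff $t=0$, we have $f_1(x)\equiv \mathrm{wt}(x)\pmod 3$, where $\mathrm{wt}(x)$ counts the nonzero coordinates, so $D_i = \{x : \mathrm{wt}(x)=i\}$ for $i=0,1,2,3$. Then $p_{ij}^k = |\{d : \mathrm{wt}(d)=i,\ \mathrm{wt}(g-d)=j\}|$ for any fixed $g$ with $\mathrm{wt}(g)=k$; this is independent of the choice of $g$ because the monomial subgroup of $GL(3,GF(3))$ (coordinate permutations together with independent sign changes) fixes each weight class setwise and acts transitively on it, which simultaneously confirms that $f_1$ genuinely gives a weighted PDS. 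Carrying out these counts for all $(i,j,k)$ yields the Case 1 table. Finally, since $f_3 = -f_1$ merely interchanges $D_1$ and $D_2$ (fixing $D_0,D_3$), relabeling gives $p_{ij}^k(f_3) = p_{\pi(i)\pi(j)}^{\pi(k)}(f_1)$ with $\pi$ the transposition $(1\,2)$, and applying this to the Case 1 table produces exactly the Case 2 table.

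I expect the main obstacle to be the bookkeeping of the last step: there are roughly forty independent entries to count, and one must invoke the transitivity of the monomial group to guarantee each $p_{ij}^k$ is well defined before tabulating. The only non-elementary input is the earlier classification identifying $B_1$ and $B_3$ as exactly the weighted-PDS orbits; without it, one would instead have to solve the large constrained linear system coming from the weighted-PDS axioms (row and column sums equal to $|D_j|$, the symmetry $p_{ij}^k = p_{ji}^k$, and the reciprocity $|D_k|p_{ij}^k = |D_i|p_{kj}^i$ from the corollary above), which is the alternative but considerably more laborious route.
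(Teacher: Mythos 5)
Your proposal is correct, and it takes a genuinely different route from the paper, whose own justification of this proposition is essentially exhaustive machine verification: the text surrounding Proposition \ref{prop:gf33bent} describes Mathematica/Sage computations over orbit representatives, in the same spirit as the ``case-by-case analysis'' cited for Proposition \ref{prop:SW}. What you do differently: (i) you derive the signature constraint from the norm equation $x^2-xy+y^2=27$ in $\zzz[\omega]$, with evenness supplying the parity argument (the nonzero vectors pair up under $x\mapsto -x$, so $|f^{-1}(0)|$ is odd and must be $9$, forcing $\{|D_1|,|D_2|\}=\{6,12\}$), rather than reading the sizes off the computer output; (ii) you exploit the identity $f_1(x)=\mathrm{wt}(x)\bmod 3$, so that $D_0,D_1,D_2,D_3$ are exactly the Hamming-weight classes, and you use transitivity of the monomial group (coordinate permutations composed with sign changes) on each weight class both to legitimize the counts $p_{ij}^k=|\{d:\mathrm{wt}(d)=i,\ \mathrm{wt}(g-d)=j\}|$ as independent of $g\in D_k$ and to make the Case 1 table hand-checkable (spot checks such as $p_{22}^2=5$, $p_{22}^3=6$, $p_{33}^1=4$, $p_{11}^3=0$ all agree with the stated table); (iii) you obtain Case 2 for free from Case 1 via the relabeling $\pi=(1\,2)$ induced by $f_3=-f_1$, which also explains the otherwise unremarked symmetry between the two tables (one can verify, e.g., that $p_{11}^1$ in Case 2 equals $p_{22}^2=5$ from Case 1). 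Both your argument and the paper's rest on the same non-elementary input --- the $GL(3,GF(3))$-orbit classification and the fact that only the orbits $B_1$ and $B_3$ yield weighted PDSs --- together with invariance of the $p_{ij}^k$ under the group action (the paper's Schur-ring homomorphism proposition guarantees the structure constants are orbit invariants, with indices preserved); so neither proof is self-contained, and the hypothesis-filtering step you import is itself computational. But your version replaces machine enumeration of roughly forty structure constants with a short, human-verifiable combinatorial count and explains \emph{why} the answer has the shape it does, which the paper's approach does not.
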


In this case, 
if $f:GF(3)^3\to GF(3)$ satisfies the hypothesis of the above
proposition then $f$ is necessarily quadratic\footnote{However, this may very
likely have more to do with the fact that $p$ and $n$ are so small.}.

One way to investigate this question is to partition the set of even functions into 
equivalence classes with respect to the group action of 
$GL(3,GF(3))$, then pick a representative from each class and test for
bentness. Once we know which orbits under $GL(3,GF(3))$ are bent, we can
check the conjecture and the question for a representative from each orbit.

What are these orbits?

Consider the set $\mathbb{E}$ of all functions $f:GF(3)^3 \rightarrow GF(3)$ such that 
\begin{itemize}
\item $f$ is even,
\item $f(0)=0$, and
\item the degree of the algebraic normal form of $f$ is at most $4$.
\end{itemize}

The algebraic normal form of such a function must be of the form 
\[
f(x_0,x_1,x_2)=a_1 x_0^2 + a_2 x_0 x_1 + a_3 x_0 x_2 + a_4 x_1^2 +a_5 x_1 x_2 + a_6 x_2^2 \
\]
\[
+ b_1 x_0^2 x_1^2 +b_2 x_0^2 x_1 x_2 + b_3 x_0^2 x_2^2 + b_4 x_0 x_1^2 x_2 
+ b_5 x_0 x_1 x_2^2 + b_6 x_1^2 x_2^2
\]
where $a_1, ... , a_6, b_1, ... , b_6$ are in $GF(3)^3$.
Thus there are $3^{12} = 531,441$ such functions.  Recall the {\it signature} of $f$ is
the sequence of cardinalities of the level curves

\[
D_i = \{x\in GF(3)^3\ |\ f(x)=i\}.
\]

Let $G = GL(3,GF(3))$ be the set of nondegenerate linear transformations 
$\phi:GF(3)^3 \rightarrow GF(3)^3$.  This group acts on ${\mathbb{E}}$ 
in a natural way and we say $f\in {\mathbb{E}}$ is {\it equivalent} to
$g\in {\mathbb{E}}$ if and only if $f$ is sent to $g$ under some
element of $G$. An equivalence class is simply an orbit
in ${\mathbb{E}}$ under this action of $G$. 
Mathematica was used to calculate that $|G|=11232$. 
However, since $f(\phi(x))=f(-\phi(x))$ for all $\phi$ in $G$ and $x$ in $GF(3)^3$, 
there are at most $5616$ functions in the equivalence class of any nonzero element of 
$\mathbb{E}$.  

If $f$ is bent, then so is $f \circ \phi$, for $\phi$ in $G$.  Therefore, one way 
to find all bent functions in $\mathbb{E}$ is to partition $\mathbb{B}$ into equivalence 
classes under the action of $G$ and test an element of each equivalence class 
to see if it is bent.  However, the computational time for attacking this problem directly was prohibitive.  

We next note that the size of the level curves $f^{-1}(1)$ and $f^{-1}(2)$ is preserved 
under the action of elements of $G$, i.e., the signature of $f$ is the same 
for all functions in each equivalence class.  Mathematica was used to partition 
$\mathbb{E}$ into sets with the same signature.  There are $120120$ elements of 
$\mathbb{E}$ or signature $(6,12)$ or $(12,6)$. There are $35$ signatures that 
occur. The sizes of the signature equivalence classes range from $0$ (for the zero function) 
to $90090$ for $|D_1|=|D_2|=8$.  

Mathematica was then used to find all equivalence classes of functions in 
$\mathbb{E}$ under  transformations in $G$ for each of the $35$ signature 
equivalence classes.  There are a total of $281$ equivalence classes of functions in 
$\mathbb{E}$ under the action of $GL(3,GF(3))$.
Of these, $4$ classes consist of bent functions.  In other words, if 
$\mathbb{B}$ denotes the subset of $\mathbb{E}$ consisting of bent
functions then $G$ acts on $\mathbb{B}$ and the number of orbits is $4$.

There were two equivalence classes of bent functions of
type $|D_1|=6$ and $|D_2|=12$.   The other two 
bent classes were of type $|D_1|=12$ and $|D_2|=6$ and consisted of the negatives 
of the functions in the first two classes.  We will call the classes $B_1$, $B_2$, $B_3$, 
and $B_4$:

\[
{\mathbb{B}}/G = B_1\cup B_2\cup B_3\cup B_4.
\]
Note the $(6,12)$ classes are negatives of the $(12,6)$
classes, so after a possible reindexing, we have $B_3=-B_1$ and $B_4=-B_2$.  

A representative of $B_1$ is

\[
x_1^2+x_2^2+x_3^2.
\]
There are $234$ bent functions in its equivalence class under nondegenerate linear 
transformations. Note that the algebraic normal form of each function in this class 
is quadratic.

A representative of $B_2$ is 

\[
x_0 x_2+2x_1^2+2x_0^2 x_1^2.
\]
There are $936$ bent functions in its equivalence class under nondegenerate linear transformations.

Thus there are a total of $2340$ bent functions in $\mathbb{B}$.

We know that if $W_f(0)$ is rational then the level curves
$f^{-1}(i)$, $i\not= 0$, have the same cardinality\footnote{In fact, 
this is true any time $n$ is even (see \cite{CTZ}).}.
The following Sage computation shows that, in this case, $W_f(0)$ is not a
rational number for the representatives of $B_1,B_2$ displayed above.

\vskip .15in
{\footnotesize{
\begin{Verbatim}[fontsize=\scriptsize,fontfamily=courier,fontshape=tt,frame=single,label=\sage]

sage: PR.<x0,x1,x2> = PolynomialRing(FF, 3, "x0,x1,x2")
sage: f = x0^2 + x1^2 + x2^2
sage: V = GF(3)^3
sage: Vlist = V.list()
sage: flist = [f(x[0],x[1],x[2]) for x in Vlist]
sage: f = lambda x: FF(flist[Vlist.index(x)])
sage: hadamard_transform(f,V(0))
12*e^(4/3*I*pi) + 6*e^(2/3*I*pi) + 9
sage: CC(hadamard_transform(f,V(0)))
-3.99680288865056e-15 - 5.19615242270663*I
sage: f = x0*x2 + 2*x1^2 + 2*x0^2*x2^2
sage: flist = [f(x[0],x[1],x[2]) for x in Vlist]
sage: f = lambda x: FF(flist[Vlist.index(x)])
sage: hadamard_transform(f,V(0))
14*e^(4/3*I*pi) + 2*e^(2/3*I*pi) + 11
sage: CC(hadamard_transform(f,V(0)))
2.99999999999999 - 10.3923048454133*I
\end{Verbatim}
}}

\subsection{$GF(5)^2\to GF(5)$}

Using Sage, we give examples of bent functions of
$2$ variables over $GF(5)$ and study their signatures (\ref{eqn:support-i}).

\begin{proposition}
There are $1420$ even bent functions
$f:GF(5)^2\to GF(5)$ such that $f(0)=0$. The group $G=GL(2,GF(5))$
acts on the set ${\mathbb{B}}$ of all such bent functions and 
there are $11$ orbits in ${\mathbb{B}}/G$:

\[
{\mathbb{B}}/G = B_1\cup B_2\cup B_3\cup B_4\cup B_5\cup 
B_6\cup B_7\cup B_8\cup B_9\cup B_{10}\cup B_{11},
\]
where $|B_1|=40$,  $|B_2|=60$, $|B_3|=\dots =|B_9|=120$,
and $|B_{10}| =|B_{11}|=240$.
The bent functions which give rise to a weighted PDS\footnote{Note, 
the weighted PDSs are given in the examples below.} are
$f_1$, $f_2$, $f_5$, $f_6$, $f_9$ in Table \ref{tab:tab1}. The 
other $f_i$'s do not.
\end{proposition}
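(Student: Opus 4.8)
The statement is entirely a finite verification, so the plan is to organize a computation and explain why it terminates at the stated numbers. First I would parametrize the search space. The negation map $x\mapsto -x$ on $GF(5)^2$ fixes the origin and partitions the $24$ nonzero vectors into $12$ pairs $\{x,-x\}$; an even function with $f(0)=0$ is therefore freely determined by its value on one representative of each pair, giving exactly $5^{12}=244140625$ candidates, matching the count in the Example. For each such $f$ I would test bentness directly against $|W_f(u)|=5$ for all $u\in GF(5)^2$ (equivalently, by Proposition \ref{prop:bent}, that every derivative $D_bf$ is balanced), and record the subset $\mathbb B$ of bent functions; the claim is $|\mathbb B|=1420$.

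Next I would analyze the action of $G=GL(2,GF(5))$ by $f\mapsto f\circ\phi$. Since each such $f$ is even, $f\circ(-\phi)=f\circ\phi$, so $-I$ lies in the kernel of this action on $\mathbb B$; hence every orbit length divides $|G|/2=480/2=240$. Computing the orbits explicitly yields $11$ of them, and the bookkeeping check is that the proposed orbit sizes sum correctly and each divides $240$:
\[
40+60+7\cdot 120+2\cdot 240 = 40+60+840+480 = 1420 .
\]
To keep this step tractable I would presort $\mathbb B$ by the signature $(|S_1|,\dots,|S_4|)$, which is a $G$-invariant (functions in a common orbit share a signature, as noted in the group-action discussion above), before forming orbits and choosing representatives $f_1,\dots,f_{11}$.

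Finally, for the weighted-PDS classification I would, for each representative $f_i$, form the level-curve decomposition $D_1,\dots,D_4$ with $D_a=f_i^{-1}(a)$ and test the defining property of a symmetric weighted PDS (Definition \ref{defn:wtPDS}): whether each list $D_iD_j^{-1}$ represents every nonidentity element of a fixed $D_\ell$ a constant number $\lambda_{i,j,\ell}$ of times and every element of $G\setminus D$ a constant number $\mu_{i,j}$ of times. By the equivalence between symmetric weighted PDSs and edge-weighted strongly regular Cayley graphs established earlier, this is the same as verifying that the intersection matrices whose entries are $|N(u,a_1)\cap N(v,a_2)|$ are constant on each of the three classes (diagonal, $a_3$-neighbors, non-neighbors) appearing in Definition \ref{def:SRG}, exactly as carried out by hand in the $GF(3)^2$ examples. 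Running this test isolates $f_1,f_2,f_5,f_6,f_9$ as the functions whose level curves form a weighted PDS; for each of the remaining six representatives the test fails, which I would certify by exhibiting a single triple $(a_1,a_2,a_3)$ together with two neighbor (or non-neighbor) pairs that give distinct intersection counts.

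The main obstacle is computational scale rather than anything conceptual: a naive loop over all $5^{12}$ even functions with a full Walsh computation per function is costly, so the real work is to organize the search so that it is feasible — presorting by the $G$-invariant signature (and using the constraint $|S_1|=\dots=|S_4|$ valid for bent $f$ since $n=2$ is even, per the remark after Lemma \ref{lemma:sig}) to trim the candidate set, then reducing to orbit representatives before performing the comparatively cheap per-representative weighted-PDS test. Once the enumeration is trimmed in this way, every remaining step is a bounded finite check, and the numerical assertions ($1420$, the orbit count $11$, the listed orbit sizes, and the five PDS-yielding functions) follow by direct inspection of the output.
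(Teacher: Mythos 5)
Your proposal is correct and takes essentially the same route as the paper: the paper establishes this proposition by exactly the kind of finite, Sage-aided verification you describe (enumerate the even functions with $f(0)=0$, test $|W_f(u)|=5$ to find the $1420$ bent ones, use the $G$-invariance of the signature to organize the decomposition into $GL(2,GF(5))$-orbits, and check the weighted-PDS property only on the eleven orbit representatives of Table \ref{tab:tab1}). The only differences are implementation details rather than ideas: the paper shrinks the search space to the $5^8=390625$ even polynomial functions of ANF degree at most $4$ (instead of all $5^{12}$ even functions) before filtering by signature, and it certifies that $f_3$, $f_4$, $f_7$, $f_8$, $f_{10}$, $f_{11}$ fail to give weighted PDSs by computing the would-be intersection numbers $p_{ij}^k$ from $\trace(A_iA_jA_k)=|GF(5)^2|\,|D_k|\,p_{ij}^k$ and observing that they are not integers, rather than by exhibiting explicit pairs with unequal intersection counts as you propose.
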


\begin{remark}
The result above agrees with the results of Pott et al \cite{PTFL},
where they overlap.

\end{remark}

\begin{table}
\centering
\setlength{\extrarowheight}{3pt}
\begin{tabular}{|l|l|c|}
\hline 
$B_1$ & $f_1(x_0,x_1)=-x_0^2 + 2x_1^2$ & weakly regular\\
 \hline
$B_2$ & $f_2(x_0,x_1)=-x_0x_1 + x_1^2$ & regular\\
\hline
$B_3$ & $f_3(x_0,x_1)=-2x_0^4 + 2x_0^2 + 2x_0x_1$  & regular\\
\hline
$B_4$ & $f_4(x_0,x_1)=-x_1^4 + x_0x_1 - 2x_1^2$ & regular \\
\hline
$B_5$ & $f_5(x_0,x_1)= x_0^3x_1 + 2x_1^4$ & regular \\
\hline
$B_6$ & $f_6(x_0,x_1)=-x_0x_1^3 + x_1^4$  & regular\\
\hline
$B_7$ & $f_7(x_0,x_1)=x_1^4 - x_0x_1$ & regular \\
\hline
$B_8$ & $f_8(x_0,x_1)= 2x_1^4 - 2x_0x_1 + 2x_1^2$ & regular \\
\hline
$B_9$ & $f_9(x_0,x_1)= -x_0^3x_1 + x_1^4$  & regular\\
\hline
$B_{10}$ & $f_{10}(x_0,x_1)=2x_0x_1^3 + x_1^4 - x_1^2$ & regular \\
\hline
$B_{11}$ & $f_{11}(x_0,x_1)=x_0x_1^3 - x_1^4 - 2x_1^2$  & regular\\
\hline
\end{tabular}
\caption{Representatives of orbits in $\mathbb B/G$}
\label{tab:tab1}
\end{table}

\begin{example}
\label{example:x02+x0x1}
{\rm

Consider the example of the even function $f:GF(5)^2\to GF(5)$  given 
in \S \ref{sec:bent-gf5**2} below.

\vskip .15in

\begin{Verbatim}[fontsize=\scriptsize,fontfamily=courier,fontshape=tt,frame=single,label=\sage]

sage: FF = GF(5)
sage: V = FF^2
sage: Vlist = V.list()
sage: R.<x0,x1> = PolynomialRing(FF,2,"x0,x1")
sage: ff = x0^2+x0*x1
sage: flist = [ff(x0=v[0],x1=v[1]) for v in V]
sage: f = lambda x: FF(flist[Vlist.index(x)])
sage: Gamma = boolean_cayley_graph(f, V)
sage: Gamma.connected_components_number()
1
sage: Gamma.spectrum()
[16, 1, 1, 1, 1, 1, 1, 1, 1, 1, 1, 1, 1, 1, 1, 1, 1, -4, -4, -4, -4, -4, -4, -4, -4]

\end{Verbatim}
\vskip .1in

This $f$ is homogeneous, bent and regular (hence also weakly regular).
Its edge-weighted Cayley graph $\Gamma = (V,E)$ has
weighted adjacency matrix given by

{\scriptsize{
\[
\left(\begin{array}{rrrrrrrrrrrrrrrrrrrrrrrrr}
0 & 1 & 4 & 4 & 1 & 0 & 2 & 1 & 2 & 0 & 0 & 3 & 3 & 0 & 4 & 0 & 4 & 0 & 3 & 3 & 0 & 0 & 2 & 1 & 2 \\
1 & 0 & 1 & 4 & 4 & 0 & 0 & 2 & 1 & 2 & 4 & 0 & 3 & 3 & 0 & 3 & 0 & 4 & 0 & 3 & 2 & 0 & 0 & 2 & 1 \\
4 & 1 & 0 & 1 & 4 & 2 & 0 & 0 & 2 & 1 & 0 & 4 & 0 & 3 & 3 & 3 & 3 & 0 & 4 & 0 & 1 & 2 & 0 & 0 & 2 \\
4 & 4 & 1 & 0 & 1 & 1 & 2 & 0 & 0 & 2 & 3 & 0 & 4 & 0 & 3 & 0 & 3 & 3 & 0 & 4 & 2 & 1 & 2 & 0 & 0 \\
1 & 4 & 4 & 1 & 0 & 2 & 1 & 2 & 0 & 0 & 3 & 3 & 0 & 4 & 0 & 4 & 0 & 3 & 3 & 0 & 0 & 2 & 1 & 2 & 0 \\
0 & 0 & 2 & 1 & 2 & 0 & 1 & 4 & 4 & 1 & 0 & 2 & 1 & 2 & 0 & 0 & 3 & 3 & 0 & 4 & 0 & 4 & 0 & 3 & 3 \\
2 & 0 & 0 & 2 & 1 & 1 & 0 & 1 & 4 & 4 & 0 & 0 & 2 & 1 & 2 & 4 & 0 & 3 & 3 & 0 & 3 & 0 & 4 & 0 & 3 \\
1 & 2 & 0 & 0 & 2 & 4 & 1 & 0 & 1 & 4 & 2 & 0 & 0 & 2 & 1 & 0 & 4 & 0 & 3 & 3 & 3 & 3 & 0 & 4 & 0 \\
2 & 1 & 2 & 0 & 0 & 4 & 4 & 1 & 0 & 1 & 1 & 2 & 0 & 0 & 2 & 3 & 0 & 4 & 0 & 3 & 0 & 3 & 3 & 0 & 4 \\
0 & 2 & 1 & 2 & 0 & 1 & 4 & 4 & 1 & 0 & 2 & 1 & 2 & 0 & 0 & 3 & 3 & 0 & 4 & 0 & 4 & 0 & 3 & 3 & 0 \\
0 & 4 & 0 & 3 & 3 & 0 & 0 & 2 & 1 & 2 & 0 & 1 & 4 & 4 & 1 & 0 & 2 & 1 & 2 & 0 & 0 & 3 & 3 & 0 & 4 \\
3 & 0 & 4 & 0 & 3 & 2 & 0 & 0 & 2 & 1 & 1 & 0 & 1 & 4 & 4 & 0 & 0 & 2 & 1 & 2 & 4 & 0 & 3 & 3 & 0 \\
3 & 3 & 0 & 4 & 0 & 1 & 2 & 0 & 0 & 2 & 4 & 1 & 0 & 1 & 4 & 2 & 0 & 0 & 2 & 1 & 0 & 4 & 0 & 3 & 3 \\
0 & 3 & 3 & 0 & 4 & 2 & 1 & 2 & 0 & 0 & 4 & 4 & 1 & 0 & 1 & 1 & 2 & 0 & 0 & 2 & 3 & 0 & 4 & 0 & 3 \\
4 & 0 & 3 & 3 & 0 & 0 & 2 & 1 & 2 & 0 & 1 & 4 & 4 & 1 & 0 & 2 & 1 & 2 & 0 & 0 & 3 & 3 & 0 & 4 & 0 \\
0 & 3 & 3 & 0 & 4 & 0 & 4 & 0 & 3 & 3 & 0 & 0 & 2 & 1 & 2 & 0 & 1 & 4 & 4 & 1 & 0 & 2 & 1 & 2 & 0 \\
4 & 0 & 3 & 3 & 0 & 3 & 0 & 4 & 0 & 3 & 2 & 0 & 0 & 2 & 1 & 1 & 0 & 1 & 4 & 4 & 0 & 0 & 2 & 1 & 2 \\
0 & 4 & 0 & 3 & 3 & 3 & 3 & 0 & 4 & 0 & 1 & 2 & 0 & 0 & 2 & 4 & 1 & 0 & 1 & 4 & 2 & 0 & 0 & 2 & 1 \\
3 & 0 & 4 & 0 & 3 & 0 & 3 & 3 & 0 & 4 & 2 & 1 & 2 & 0 & 0 & 4 & 4 & 1 & 0 & 1 & 1 & 2 & 0 & 0 & 2 \\
3 & 3 & 0 & 4 & 0 & 4 & 0 & 3 & 3 & 0 & 0 & 2 & 1 & 2 & 0 & 1 & 4 & 4 & 1 & 0 & 2 & 1 & 2 & 0 & 0 \\
0 & 2 & 1 & 2 & 0 & 0 & 3 & 3 & 0 & 4 & 0 & 4 & 0 & 3 & 3 & 0 & 0 & 2 & 1 & 2 & 0 & 1 & 4 & 4 & 1 \\
0 & 0 & 2 & 1 & 2 & 4 & 0 & 3 & 3 & 0 & 3 & 0 & 4 & 0 & 3 & 2 & 0 & 0 & 2 & 1 & 1 & 0 & 1 & 4 & 4 \\
2 & 0 & 0 & 2 & 1 & 0 & 4 & 0 & 3 & 3 & 3 & 3 & 0 & 4 & 0 & 1 & 2 & 0 & 0 & 2 & 4 & 1 & 0 & 1 & 4 \\
1 & 2 & 0 & 0 & 2 & 3 & 0 & 4 & 0 & 3 & 0 & 3 & 3 & 0 & 4 & 2 & 1 & 2 & 0 & 0 & 4 & 4 & 1 & 0 & 1 \\
2 & 1 & 2 & 0 & 0 & 3 & 3 & 0 & 4 & 0 & 4 & 0 & 3 & 3 & 0 & 0 & 2 & 1 & 2 & 0 & 1 & 4 & 4 & 1 & 0
\end{array}\right)
\]
}}

Using Sage, we have 

\[
\mu_{(1,1)}=0,\ k_{(1,1)}=4,\ \lambda_{(1,1,1)}=0,\ \lambda_{(1,1,2)}=2, \ \lambda_{(1,1,3)}=0,\ \lambda_{(1,1,4)}=1,
\]
\[
\mu_{(1,2)}=1,\ k_{(1,2)}=0,\ \lambda_{(1,2,1)}=2,\ \lambda_{(1,2,2)}=0, \ \lambda_{(1,2,3)}=0,\ \lambda_{(1,2,4)}=0,
\]
\[
\mu_{(1,3)}= 1,\ k_{(1,3)}= 0,\ \lambda_{(1,3,1)}= 0,\ \lambda_{(1,3,2)}= 0, \ \lambda_{(1,3,3)}= 2,\ \lambda_{(1,3,4)}= 0,
\]
\[
\mu_{(1,4)}= 1,\ k_{(1,4)}= 0,\ \lambda_{(1,4,1)}= 1,\ \lambda_{(1,4,2)}= 0, \ \lambda_{(1,4,3)}= 0,\ \lambda_{(1,4,4)}= 1,
\]
\[
\mu_{(2,2)}=0,\ k_{(2,2)}=4,\ \lambda_{(2,2,1)}=0,\ \lambda_{(2,2,2)}=0, \ \lambda_{(2,2,3)}=1,\ \lambda_{(2,2,4)}=2,
\]
\[
\mu_{(2,3)}= 1,\ k_{(2,3)}= 0,\ \lambda_{(2,3,1)}= 0,\ \lambda_{(2,3,2)}= 1, \ \lambda_{(2,3,3)}= 1,\ \lambda_{(2,3,4)}= 0,
\]
\[
\mu_{(2,4)}= 1,\ k_{(2,4)}= 0,\ \lambda_{(2,4,1)}= 0,\ \lambda_{(2,4,2)}= 2, \ \lambda_{(2,4,3)}= 0,\ \lambda_{(2,4,4)}= 0,
\]
\[
\mu_{(3,3)}= 0,\ k_{(3,3)}= 4,\ \lambda_{(3,3,1)}= 2,\ \lambda_{(3,3,2)}= 1, \ \lambda_{(3,3,3)}= 0,\ \lambda_{(3,3,4)}= 0,
\]
\[
\mu_{(3,4)}= 1,\ k_{(3,4)}= 0,\ \lambda_{(3,4,1)}= 0,\ \lambda_{(3,4,2)}= 0, \ \lambda_{(3,4,3)}= 0,\ \lambda_{(3,4,4)}= 2,
\]
\[
\mu_{(4,4)}= 0,\ k_{(4,4)}= 4,\ \lambda_{(4,4,1)}= 1,\ \lambda_{(4,4,2)}= 0, \ \lambda_{(4,4,3)}= 2,\ \lambda_{(4,4,4)}= 0\, .
\]

In this example, Analog \ref{conj:weighted-cayley} is true.

The parameters as an unweighted strongly regular graph are
$(25,16,9,12)$.
}
\end{example}

\vskip .15in

\begin{example}
{\rm

Consider the example of the even function $f:GF(5)^2\to GF(5)$  given 
by 

\[
f(x_0,x_1)= x_0^4+2x_0x_1.
\]
This is non-homogeneous, but bent and regular.

\vskip .15in

\begin{Verbatim}[fontsize=\scriptsize,fontfamily=courier,fontshape=tt,frame=single,label=\sage]

sage: p = 5; n = 2
sage: FF = GF(p)
sage: V = GF(p)**n
sage: Vlist = V.list()
sage: f = lambda x: FF(x[0]^4+2*x[0]*x[1])
sage: [CC(hadamard_transform(f,a)).abs() for a in V]
[5.00000000000000, 5.00000000000000, 5.00000000000000, 5.00000000000000,
 5.00000000000000, 5.00000000000000, 5.00000000000000, 5.00000000000000,
 5.00000000000000, 5.00000000000000, 5.00000000000000, 5.00000000000000,
 5.00000000000000, 5.00000000000000, 5.00000000000000, 5.00000000000000,
 5.00000000000000, 5.00000000000000, 5.00000000000000, 5.00000000000000,
 5.00000000000000, 5.00000000000000, 5.00000000000000, 5.00000000000000,
 5.00000000000000]
sage: Gamma = boolean_cayley_graph(f, V)
sage: Gamma.spectrum()
[16, 3.236067977499790?, 3.236067977499790?, 3.236067977499790?, 3.236067977499790?,
 1, 1, 1, 1, -0.3819660112501051?, -0.3819660112501051?, -0.3819660112501051?,
 -0.3819660112501051?, -1.236067977499790?, -1.236067977499790?, -1.236067977499790?,
 -1.236067977499790?, -2.618033988749895?, -2.618033988749895?, -2.618033988749895?,
 -2.618033988749895?, -4, -4, -4, -4]
sage: Gamma.is_strongly_regular()
False

\end{Verbatim}
\vskip .1in

Its edge-weighted Cayley graph $\Gamma = (V,E)$ has
weighted adjacency matrix given by

{\scriptsize{
\[
\left(\begin{array}{rrrrrrrrrrrrrrrrrrrrrrrrr}
0 & 1 & 1 & 1 & 1 & 0 & 3 & 0 & 2 & 4 & 0 & 0 & 4 & 3 & 2 & 0 & 2 & 3 & 4 & 0 & 0 & 4 & 2 & 0 & 3 \\
1 & 0 & 1 & 1 & 1 & 4 & 0 & 3 & 0 & 2 & 2 & 0 & 0 & 4 & 3 & 0 & 0 & 2 & 3 & 4 & 3 & 0 & 4 & 2 & 0 \\
1 & 1 & 0 & 1 & 1 & 2 & 4 & 0 & 3 & 0 & 3 & 2 & 0 & 0 & 4 & 4 & 0 & 0 & 2 & 3 & 0 & 3 & 0 & 4 & 2 \\
1 & 1 & 1 & 0 & 1 & 0 & 2 & 4 & 0 & 3 & 4 & 3 & 2 & 0 & 0 & 3 & 4 & 0 & 0 & 2 & 2 & 0 & 3 & 0 & 4 \\
1 & 1 & 1 & 1 & 0 & 3 & 0 & 2 & 4 & 0 & 0 & 4 & 3 & 2 & 0 & 2 & 3 & 4 & 0 & 0 & 4 & 2 & 0 & 3 & 0 \\
0 & 4 & 2 & 0 & 3 & 0 & 1 & 1 & 1 & 1 & 0 & 3 & 0 & 2 & 4 & 0 & 0 & 4 & 3 & 2 & 0 & 2 & 3 & 4 & 0 \\
3 & 0 & 4 & 2 & 0 & 1 & 0 & 1 & 1 & 1 & 4 & 0 & 3 & 0 & 2 & 2 & 0 & 0 & 4 & 3 & 0 & 0 & 2 & 3 & 4 \\
0 & 3 & 0 & 4 & 2 & 1 & 1 & 0 & 1 & 1 & 2 & 4 & 0 & 3 & 0 & 3 & 2 & 0 & 0 & 4 & 4 & 0 & 0 & 2 & 3 \\
2 & 0 & 3 & 0 & 4 & 1 & 1 & 1 & 0 & 1 & 0 & 2 & 4 & 0 & 3 & 4 & 3 & 2 & 0 & 0 & 3 & 4 & 0 & 0 & 2 \\
4 & 2 & 0 & 3 & 0 & 1 & 1 & 1 & 1 & 0 & 3 & 0 & 2 & 4 & 0 & 0 & 4 & 3 & 2 & 0 & 2 & 3 & 4 & 0 & 0 \\
0 & 2 & 3 & 4 & 0 & 0 & 4 & 2 & 0 & 3 & 0 & 1 & 1 & 1 & 1 & 0 & 3 & 0 & 2 & 4 & 0 & 0 & 4 & 3 & 2 \\
0 & 0 & 2 & 3 & 4 & 3 & 0 & 4 & 2 & 0 & 1 & 0 & 1 & 1 & 1 & 4 & 0 & 3 & 0 & 2 & 2 & 0 & 0 & 4 & 3 \\
4 & 0 & 0 & 2 & 3 & 0 & 3 & 0 & 4 & 2 & 1 & 1 & 0 & 1 & 1 & 2 & 4 & 0 & 3 & 0 & 3 & 2 & 0 & 0 & 4 \\
3 & 4 & 0 & 0 & 2 & 2 & 0 & 3 & 0 & 4 & 1 & 1 & 1 & 0 & 1 & 0 & 2 & 4 & 0 & 3 & 4 & 3 & 2 & 0 & 0 \\
2 & 3 & 4 & 0 & 0 & 4 & 2 & 0 & 3 & 0 & 1 & 1 & 1 & 1 & 0 & 3 & 0 & 2 & 4 & 0 & 0 & 4 & 3 & 2 & 0 \\
0 & 0 & 4 & 3 & 2 & 0 & 2 & 3 & 4 & 0 & 0 & 4 & 2 & 0 & 3 & 0 & 1 & 1 & 1 & 1 & 0 & 3 & 0 & 2 & 4 \\
2 & 0 & 0 & 4 & 3 & 0 & 0 & 2 & 3 & 4 & 3 & 0 & 4 & 2 & 0 & 1 & 0 & 1 & 1 & 1 & 4 & 0 & 3 & 0 & 2 \\
3 & 2 & 0 & 0 & 4 & 4 & 0 & 0 & 2 & 3 & 0 & 3 & 0 & 4 & 2 & 1 & 1 & 0 & 1 & 1 & 2 & 4 & 0 & 3 & 0 \\
4 & 3 & 2 & 0 & 0 & 3 & 4 & 0 & 0 & 2 & 2 & 0 & 3 & 0 & 4 & 1 & 1 & 1 & 0 & 1 & 0 & 2 & 4 & 0 & 3 \\
0 & 4 & 3 & 2 & 0 & 2 & 3 & 4 & 0 & 0 & 4 & 2 & 0 & 3 & 0 & 1 & 1 & 1 & 1 & 0 & 3 & 0 & 2 & 4 & 0 \\
0 & 3 & 0 & 2 & 4 & 0 & 0 & 4 & 3 & 2 & 0 & 2 & 3 & 4 & 0 & 0 & 4 & 2 & 0 & 3 & 0 & 1 & 1 & 1 & 1 \\
4 & 0 & 3 & 0 & 2 & 2 & 0 & 0 & 4 & 3 & 0 & 0 & 2 & 3 & 4 & 3 & 0 & 4 & 2 & 0 & 1 & 0 & 1 & 1 & 1 \\
2 & 4 & 0 & 3 & 0 & 3 & 2 & 0 & 0 & 4 & 4 & 0 & 0 & 2 & 3 & 0 & 3 & 0 & 4 & 2 & 1 & 1 & 0 & 1 & 1 \\
0 & 2 & 4 & 0 & 3 & 4 & 3 & 2 & 0 & 0 & 3 & 4 & 0 & 0 & 2 & 2 & 0 & 3 & 0 & 4 & 1 & 1 & 1 & 0 & 1 \\
3 & 0 & 2 & 4 & 0 & 0 & 4 & 3 & 2 & 0 & 2 & 3 & 4 & 0 & 0 & 4 & 2 & 0 & 3 & 0 & 1 & 1 & 1 & 1 & 0
\end{array}\right)
\]
}}

\vskip .15in

Using Sage, we have 

\[
\mu_{(1,1)}=0,\ k_{(1,1)}=4,\ \lambda_{(1,1,1)}=3,\ \lambda_{(1,1,2)}=0, \ \lambda_{(1,1,3)}=0,\ \lambda_{(1,1,4)}=0,
\]
\[
\mu_{(1,2)}=1,\ k_{(1,2)}=0,\ \lambda_{(1,2,1)}=0,\ \lambda_{(1,2,2)}=0, \ \lambda_{(1,2,3)}=1,\ \lambda_{(1,2,4)}=1,
\]
\[
\mu_{(1,3)}= 1,\ k_{(1,3)}= 0,\ \lambda_{(1,3,1)}= 0,\ \lambda_{(1,3,2)}= 1, \ \lambda_{(1,3,3)}= 0,\ \lambda_{(1,3,4)}= 1,
\]
\[
\mu_{(1,4)}= 1,\ k_{(1,4)}= 0,\ \lambda_{(1,4,1)}= 0,\ \lambda_{(1,4,2)}= 1, \ \lambda_{(1,4,3)}= 1,\ \lambda_{(1,4,4)}= 0,
\]
\[
\mu_{(2,2)}=\{0,1\},\ k_{(2,2)}=4,\ \lambda_{(2,2,1)}=0,\ \lambda_{(2,2,2)}=0, \ \lambda_{(2,2,3)}=2,\ \lambda_{(2,2,4)}=0,
\]
\[
\mu_{(2,3)}= \{0,1\},\ k_{(2,3)}= 0,\ \lambda_{(2,3,1)}= 1,\ \lambda_{(2,3,2)}= 2, \ \lambda_{(2,3,3)}= 0,\ \lambda_{(2,3,4)}= 0,
\]
\[
\mu_{(2,4)}= \{0,1\},\ k_{(2,4)}= 0,\ \lambda_{(2,4,1)}= 1,\ \lambda_{(2,4,2)}= 0, \ \lambda_{(2,4,3)}= 0,\ \lambda_{(2,4,4)}= 2,
\]
\[
\mu_{(3,3)}= \{0,2\},\ k_{(3,3)}= 4,\ \lambda_{(3,3,1)}= 0,\ \lambda_{(3,3,2)}= 0, \ \lambda_{(3,3,3)}= 0,\ \lambda_{(3,3,4)}= 1,
\]
\[
\mu_{(3,4)}= \{0,1\},\ k_{(3,4)}= 0,\ \lambda_{(3,4,1)}= 1,\ \lambda_{(3,4,2)}= 0, \ \lambda_{(3,4,3)}= 1,\ \lambda_{(3,4,4)}= 1,
\]
\[
\mu_{(4,4)}= 0,\ k_{(4,4)}= 4,\ \lambda_{(4,4,1)}= 0,\ \lambda_{(4,4,2)}= 2, \ \lambda_{(4,4,3)}= 1,\ \lambda_{(4,4,4)}= 0 \, .
\]

In this example, Analog  \ref{conj:weighted-cayley} is false.
}
\end{example}

The number of even (polynomial) functions $f$ of degree
less than or equal to $4$ is $5^8=390625$. The number of such 
functions having signature $(4,4,4,4)$ is $10740$ and 
the number of such functions having signature $(6,6,6,6)$ is $2920$.

Using Sage, we discovered there are $1420$ even bent functions
$f:GF(5)^2\to GF(5)$ such that $f(0)=0$. The group $G=GL(2,GF(5))$
acts on the set ${\mathbb{B}}$ of all such bent functions and 
there are $11$ orbits in ${\mathbb{B}}/G$:

\[
{\mathbb{B}}/G = B_1\cup B_2\cup B_3\cup B_4\cup B_5\cup 
B_6\cup B_7\cup B_8\cup B_9\cup B_{10}\cup B_{11},
\]
where $|B_1|=40$,  $|B_2|=60$, $|B_3|=\dots =|B_9|=120$,
and $|B_{10}| =|B_{11}|=240$.

A representative of $B_1$ is

\[
f_1(x_0,x_1) =  -x_0^2 + 2x_1^2.
\]
A representative of $B_2$ is

\[
f_2(x_0,x_1) =  -x_0x_1 + x_1^2.
\]
A representative of $B_3$ is

\[
f_3(x_0,x_1) = -2x_0^4 + 2x_0^2 + 2x_0x_1.
\]
A representative of $B_4$ is

\[
f_4(x_0,x_1) = -x_1^4 + x_0x_1 - 2x_1^2.
\]
A representative of $B_5$ is

\[
f_5(x_0,x_1) =  x_0^3x_1 + 2x_1^4.
\]
A representative of $B_6$ is

\[
f_6(x_0,x_1) = -x_0x_1^3 + x_1^4
\]
A representative of $B_7$ is

\[
f_7(x_0,x_1) = x_1^4 - x_0x_1
\]
A representative of $B_8$ is

\[
f_8(x_0,x_1) =   2x_1^4 - 2x_0x_1 + 2x_1^2.
\]
A representative of $B_9$ is

\[
f_9(x_0,x_1) =  -x_0^3x_1 + x_1^4
\]
A representative of $B_{10}$ is

\[
f_{10}(x_0,x_1) =  2x_0x_1^3 + x_1^4 - x_1^2.
\]
A representative of $B_{11}$ is

\[
f_{11}(x_0,x_1) =  x_0x_1^3 - x_1^4 - 2x_1^2.
\]

These $11$ bent functions form a complete set of
representatives of the $G$-equivalence classes
of ${\mathbb{B}}$. We write $f\sim g$ if and only if
$f=g\circ\phi$, for some $\phi\in G$.
The group $GF(5)^\times$ also acts on ${\mathbb{B}}$.

\begin{itemize}
\item
for $i\in \{1,2,6\}$,
the functions $af_i$, for $a \in GF(5)^\times$, are all $G$-equivalent,
\item
$f_3\sim 2f_4\sim 3f_7\sim 4f_8$,
\item
$f_4\sim 3f_3\sim 4f_7\sim 2f_8$,
\item
$f_5\sim 4f_5\sim 2f_9\sim 3f_9$,
\item
$f_7\sim 2f_3\sim 4f_4\sim 3f_8$,
\item
$f_8\sim 4f_3\sim 3f_4\sim 2f_7$,
\item
$f_9\sim 2f_5\sim 3f_5\sim 4f_9$,
\item
$f_{10}\sim 4f_{10}\sim 2f_{11}\sim 3f_{11}$,
\item
$f_{11}\sim 2f_{10}\sim 3f_{10}\sim 4f_{11}$.
\end{itemize}
It follows that $f_3$, $f_4$, $f_7$ and $f_8$ all must have the same
signature. Similarly,  $f_5$ and $f_9$ must have the same
signature, and $f_{10}$ and $f_{11}$ must have the same
signature.

Note $f_5$ and $f_6$ are not $GL(2,GF(5))$-equivalent but 
they both corresponding to weighted PDSs with the same intersection numbers.
In particular, the adjacency ring corresponding to $f_5$ is
isomorphic to the adjacency ring corresponding to $f_6$.

\begin{example}
{\rm
The example of $f_1$ above can be used to construct an edge-weighted 
strongly regular Cayley graph, hence also a weighted PDS attached to its
level curves.

Define the level curve $D_i$ ($i=1,2,3,4$) as above, the let
$D_0=\{0\}$ and $D_5=GF(5)^2\setminus \cup_{i=0}^4 D_i$. We can interprete
$p_{ij}^k$ to be the number of times each element of $D_k$ occurs in
$D_jD_i^{-1}$. By computing these numbers directly using Sage, we obtain
the intersection numbers $p_{ij}^k$:

\[
\begin{array}{cc}
\begin{array}{c|cccccc}
p_{ij}^0 & 0 & 1 & 2 & 3 & 4 & 5\\ \hline
0         & 1 & 0 & 0 & 0 & 0 & 0 \\
1         & 0 & 6 & 0 & 0  & 0 & 0\\
2         & 0 & 0 & 6 & 0  & 0 & 0\\
3         & 0 & 0 & 0 & 6 & 0 & 0\\
4         & 0 & 0 & 0 & 0 & 6 & 0\\
5         & 0 & 0 & 0 & 0 & 0 & 0\\
\end{array}
 &
\begin{array}{c|cccccc}
p_{ij}^1 & 0 & 1 & 2 & 3 & 4 & 5\\ \hline
0         & 0 & 1 & 0 & 0 & 0 & 0 \\
1         & 1 & 2 & 0 & 2 & 1 & 0 \\
2         & 0 & 0 & 2 & 2 & 2 & 0 \\
3         & 0 & 2 & 2 & 0 & 2 & 0 \\
4         & 0 & 1 & 2 & 2 & 1 & 0 \\
5         & 0 & 0 & 0 & 0 & 0 & 0 \\
\end{array} \\
 & \\
\begin{array}{c|cccccc}
p_{ij}^2 & 0 & 1 & 2 & 3 & 4 & 5\\ \hline
0         & 0 & 0 & 1 & 0 & 0 & 0 \\
1         & 0 & 0 & 2 & 2  & 2 & 0\\
2         & 1 & 2 & 2 & 1  & 0 & 0\\
3         & 0 & 2 & 1 & 1 & 0 & 0\\
4         & 0 & 2 & 0 & 2 & 2 & 0\\
5         & 0 & 0 & 0 & 0 & 0 & 0\\
\end{array}
 &
\begin{array}{c|cccccc}
p_{ij}^3   & 0 & 1 & 2 & 3 & 4 & 5\\ \hline
0         & 0 & 0 & 0 & 1 & 0 & 0 \\
1         & 0 & 2 & 2 & 0 & 2 & 0\\
2         & 0 & 2 & 1 & 1 & 2 & 0\\
3         & 1 & 0 & 1 & 2 & 2 & 0\\
4         & 0 & 2 & 2 & 2 & 0 & 0\\
5         & 0 & 0 & 0 & 0 & 0 & 0\\
\end{array} \\
 & \\
\begin{array}{c|cccccc}
p_{ij}^4 & 0 & 1 & 2 & 3 & 4 & 5\\ \hline
0         & 0 & 0 & 0 & 0 & 1 & 0 \\
1         & 0 & 1 & 2 & 2  & 1 & 0\\
2         & 0 & 2 & 0 & 2  & 2 & 0\\
3         & 0 & 2 & 2 & 2 & 0 & 0\\
4         & 1 & 1 & 2 & 0 & 2 & 0\\
5         & 0 & 0 & 0 & 0 & 0 & 0\\
\end{array}
 &
\begin{array}{c|cccccc}
p_{ij}^5 & 0 & 1 & 2 & 3 & 4 & 5\\ \hline
0         & 0 & 0 & 0 & 0 & 0 & 0 \\
1         & 0 & 0 & 0 & 0  & 0 & 0\\
2         & 0 & 0 & 0 & 0  & 0 & 0\\
3         & 0 & 0 & 0 & 0 & 0 & 0\\
4         & 0 & 0 & 0 & 0 & 0 & 0\\
5         & 0 & 0 & 0 & 0 & 0 & 0\\
\end{array} \\
\end{array}
\]

}
\end{example}

\begin{example}
{\rm
The example of $f_2$ above can be used to construct an edge-weighted 
strongly regular Cayley graph, hence also a weighted PDS attached to its
level curves.

Define the level curve $D_i$ ($i=1,2,3,4$) as above, the let
$D_0=\{0\}$ and $D_5=GF(5)^2\setminus \cup_{i=0}^4 D_i$. We can interprete
$p_{ij}^k$ to be the number of times each element of $D_k$ occurs in
$D_jD_i^{-1}$. By computing these numbers directly using Sage, we obtain
the intersection numbers $p_{ij}^k$:

\[
\begin{array}{cc}
\begin{array}{c|cccccc}
p_{ij}^0 & 0 & 1 & 2 & 3 & 4 & 5\\ \hline
0         & 1 & 0 & 0 & 0 & 0 & 0 \\
1         & 0 & 4 & 0 & 0  & 0 & 0\\
2         & 0 & 0 & 4 & 0  & 0 & 0\\
3         & 0 & 0 & 0 & 4 & 0 & 0\\
4         & 0 & 0 & 0 & 0 & 4 & 0\\
5         & 0 & 0 & 0 & 0 & 0 & 8\\
\end{array}
 &
\begin{array}{c|cccccc}
p_{ij}^1 & 0 & 1 & 2 & 3 & 4 & 5\\ \hline
0         & 0 & 1 & 0 & 0 & 0 & 0 \\
1         & 1 & 0 & 2 & 0 & 1 & 0 \\
2         & 0 & 2 & 0 & 0 & 0 & 2 \\
3         & 0 & 0 & 0 & 2 & 0 & 2 \\
4         & 0 & 1 & 0 & 0 & 0 & 3 \\
5         & 0 & 0 & 2 & 2 & 3 & 1 \\
\end{array} \\
& \\
\begin{array}{c|cccccc}
p_{ij}^2 & 0 & 1 & 2 & 3 & 4 & 5\\ \hline
0         & 0 & 0 & 1 & 0 & 0 & 0 \\
1         & 0 & 2 & 0 & 0 & 0 & 2\\
2         & 1 & 0 & 0 & 1  & 2 & 0\\
3         & 0 & 0 & 1 & 1 & 0 & 2\\
4         & 0 & 0 & 2 & 0 & 0 & 2\\
5         & 0 & 2 & 0 & 2 & 2 & 2\\
\end{array}
 &
\begin{array}{c|cccccc}
p_{ij}^3   & 0 & 1 & 2 & 3 & 4 & 5\\ \hline
0         & 0 & 0 & 0 & 1 & 0 & 0 \\
1         & 0 & 0 & 0 & 2 & 0 & 2\\
2         & 0 & 0 & 1 & 1 & 0 & 2\\
3         & 1 & 2 & 1 & 0 & 0 & 0\\
4         & 0 & 0 & 0 & 0 & 2 & 2\\
5         & 0 & 2 & 2 & 0 & 2 & 2\\
\end{array} \\
& \\
\begin{array}{c|cccccc}
p_{ij}^4 & 0 & 1 & 2 & 3 & 4 & 5\\ \hline
0         & 0 & 0 & 0 & 0 & 1 & 0 \\
1         & 0 & 1 & 0 & 0 & 1 & 2\\
2         & 0 & 0 & 2 & 0 & 0 & 2\\
3         & 0 & 0 & 0 & 0 & 2 & 2\\
4         & 1 & 1 & 0 & 2 & 0 & 0\\
5         & 0 & 2 & 2 & 2 & 0 & 2\\
\end{array}
 &
\begin{array}{c|cccccc}
p_{ij}^5 & 0 & 1 & 2 & 3 & 4 & 5\\ \hline
0         & 0 & 0 & 0 & 0 & 0 & 1 \\
1         & 0 & 0 & 1 & 1 & 1 & 1\\
2         & 0 & 1 & 0 & 1 & 1 & 1\\
3         & 0 & 1 & 1 & 0 & 1 &  1\\
4         & 0 & 1 & 1 & 1 & 0 & 1 \\
5         & 1 & 1 & 1 & 1 & 1 & 3\\
\end{array} \\
\end{array}
\]

}
\end{example}

\begin{example}
{\rm
The level curves of $f_3$ above do not give rise to a weighted PDS.

On the other hand, we can define the adjacency matrix $A_i$ attached 
to the level curve $D_i$ ($i=1,2,3,4$) as the $25\times 25$ matrix
obtained by taking the weighted adjacency matrix $A$ of
the corresponding Cayley graph and putting a $1$ in every entry where
the corresponding entry of $A$ is equal to $i$, and a $0$ otherwise.
In this case,

\[
D_1 = \{(2, 0), (3, 0), (4, 2), (1, 3)\},
\]
\[
D_2 = \{ (1, 1), (3, 1), (2, 4), (4, 4)\},
\]
\[
D_3 = \{ (4, 1), (3, 2), (2, 3), (1, 4)\},
\]
\[
D_4 = \{ (1, 2), (2, 2), (3, 3), (4, 3)\}.
\]
The ``adjacency matrix'' $A_0$ is the $25\times 25$ identity matrix
and the ``adjacency matrix'' $A_5$ is the $25\times 25$ matrix 
which has the property that $A_0+A_1+A_2+A_3+A_4+A_5$ is
the all $1$'s matrix.

If the Cayley graph {\it were} a strongly regular
edge-weighted graph then, according to \cite{CvL}, equation (17.13) (proven in Theorem
\ref{thrm:pijk-formula} above),
the intersection numbers $p_{ij}^k$ could be computed using

\[
\trace(A_iA_jA_k)=|GF(5)^2||D_k|p_{ij}^k.
\]
Using \sage, we compute

\[
\trace(A_1^3)=0,
\trace(A_1^2A_2)=0,
\trace(A_1A_2^2)=200,
\trace(A_2^3)=0,
\]
\[
\trace(A_1^2A_3)=0,
\trace(A_1A_3^2)=0,
\trace(A_3^3)=300,
\]
\[
\trace(A_1^2A_4)=200,
\trace(A_1A_4^2)=0,
\trace(A_4^3)=0,
\]
\[
\trace(A_1^2A_5)=100,
\trace(A_1A_5^2)=300,
\trace(A_5^3)=300,
\]
\[
\trace(A_2^2A_3)=0,
\trace(A_2^2A_4)=100,
\trace(A_2A_3^2)=0,
\]
\[
\trace(A_2A_4^2)=100,
\trace(A_3^2A_4)=0,
\trace(A_3A_4^2)=0,
\]
\[
\trace(A_2^2A_5)=0,
\trace(A_2A_5^2)=400,
\]
\[
\trace(A_3^2A_5)=0,
\trace(A_3A_5^2)=200,
\]
\[
\trace(A_4^2A_5)=200,
\trace(A_4A_5^2)=200,
\]
\[
\trace(A_1A_2A_3)=100,
\trace(A_1A_2A_4)=0,
\]
\[
\trace(A_1A_2A_5)=100,
\trace(A_1A_3A_5)=200,
\]
\[
\trace(A_1A_4A_5)=100,
\trace(A_2A_4A_5)=100,
\]
\[
\trace(A_2A_3A_5)=200,
\trace(A_3A_4A_5)=200,
\]
\[
\trace(A_1A_3A_4)=100,
\trace(A_2A_3A_4)=100.
\]
Using these, we can compute the $p_{ij}^k$'s.

We have $|D_1|=|D_2|=|D_3|=|D_4|=4$, and the intersection numbers $p_{ij}^k$
are given as follows:

\[
\begin{array}{cc}
\begin{array}{c|cccccc}
p_{ij}^0 & 0 & 1 & 2 & 3 & 4 & 5\\ \hline
0         & 1 & 0 & 0 & 0 & 0 & 0 \\
1         & 0 & 4 & 0 & 0  & 0 & 0\\
2         & 0 & 0 & 4 & 0  & 0 & 0\\
3         & 0 & 0 & 0 & 4 & 0 & 0\\
4         & 0 & 0 & 0 & 0 & 4 & 0\\
5         & 0 & 0 & 0 & 0 & 0 & 8\\
\end{array}
 &
\begin{array}{c|cccccc}
p_{ij}^1 & 0 & 1 & 2 & 3 & 4 & 5\\ \hline
0         & 0 & 1 & 0 & 0 & 0 & 0 \\
1         & 1 & 0 & 0 & 0  & 2 & 1\\
2         & 0 & 0 & 2 & 1  & 0 & 1\\
3         & 0 & 0 & 1 & 0 & 1 & 2\\
4         & 0 & 2 & 0 & 1 & 0 & 1\\
5         & 0 & 1 & 1 & 2 & 1 & 3\\
\end{array} \\
 & \\
\begin{array}{c|cccccc}
p_{ij}^2 & 0 & 1 & 2 & 3 & 4 & 5\\ \hline
0         & 0 & 0 & 1 & 0 & 0 & 0 \\
1         & 0 & 0 & 2 & 1  & 0 & 1\\
2         & 1 & 2 & 0 & 0  & 1 & 0\\
3         & 0 & 1 & 0 & 0 & 1 & 2\\
4         & 0 & 0 & 1 & 1 & 1 & 1\\
5         & 0 & 1 & 0 & 2 & 1 & 4\\
\end{array}
 &
\begin{array}{c|cccccc}
p_{ij}^3   & 0 & 1 & 2 & 3 & 4 & 5\\ \hline
0         & 0 & 0 & 0 & 1 & 0 & 0 \\
1         & 0 & 0 & 1 & 0  & 1 & 2\\
2         & 0 & 1 & 0 & 0  & 1 & 2\\
3         & 1 & 0 & 0 & 3 & 0 & 0\\
4         & 0 & 1 & 1 & 0 & 0 & 2\\
5         & 0 & 2 & 2 & 0 & 2 & 2\\
\end{array} \\
 & \\
\begin{array}{c|cccccc}
p_{ij}^4 & 0 & 1 & 2 & 3 & 4 & 5\\ \hline
0         & 0 & 0 & 0 & 0 & 1 & 0 \\
1         & 0 & 2 & 0 & 1  & 0 & 1\\
2         & 0 & 0 & 1 & 1  & 1 & 1\\
3         & 0 & 1 & 1 & 0 & 0 & 2\\
4         & 1 & 0 & 1 & 0 & 0 & 2\\
5         & 0 & 1 & 1 & 2 & 2 & 2\\
\end{array}
 &
\begin{array}{c|cccccc}
p_{ij}^5 & 0 & 1 & 2 & 3 & 4 & 5\\ \hline
0         & 0 & 0 & 0 & 0 & 0 & 1 \\
1         & 0 & 1/2 & 1/2 & 1  & 1/2 & 1/2\\
2         & 0 & 1/2 & 0 & 1  & 1/2 & 2\\
3         & 0 & 1 & 1 & 0 & 1 & 1\\
4         & 0 & 1/ & 1/2 & 1 & 1 & 1\\
5         & 1 & 3/2 & 2 & 1 & 1 & 3/2\\
\end{array} \\
\end{array}
\]
In other words, they are not integers, so cannot correspond to an
edge-weighted strongly regular graph.
}
\end{example}

\begin{example}
{\rm
Consider the bent function
\[
f_4(x_0,x_1)=-x_1^4-2x_1^2+x_0x_1.
\]
This function represents a $GL(2,GF(5))$ orbit of size $120$.
The level curves of this function do not give rise to a weighted PDS.
By the way, if we try a computation of all the $p_{ij}^k$'s as in the
above example, we do not get integers.
Similarly, the  level curves of $f_7$, $f_8$, $f_{10}$, and $f_{11}$ do not
give rise to a weighted PDS, since the $p_{ij}^k$'s are not always integers.

}
\end{example}

\begin{example}
{\rm
The example of $f_5$ above can be used to construct an edge-weighted 
strongly regular Cayley graph, hence also a weighted PDS attached to its
level curves.

The intersection numbers $p_{ij}^k$ are given by:

\[
\begin{array}{cc}
\begin{array}{c|cccccc}
p_{ij}^0 & 0 & 1 & 2 & 3 & 4 & 5\\ \hline
0         & 1 & 0 & 0 & 0 & 0 & 0 \\
1         & 0 & 4 & 0 & 0  & 0 & 0\\
2         & 0 & 0 & 4 & 0  & 0 & 0\\
3         & 0 & 0 & 0 & 4 & 0 & 0\\
4         & 0 & 0 & 0 & 0 & 4 & 0\\
5         & 0 & 0 & 0 & 0 & 0 & 8\\
\end{array}
 &
\begin{array}{c|cccccc}
p_{ij}^1 & 0 & 1 & 2 & 3 & 4 & 5\\ \hline
0         & 0 & 1 & 0 & 0 & 0 & 0 \\
1         &1 & 3 & 0 & 0 & 0 & 0\\
2         & 0 & 0 & 0 & 1 & 1 & 2\\
3         & 0 & 0 & 1 & 0 & 1 & 2\\
4         & 0 & 0 & 1 & 1 & 0 & 2\\
5         & 0 & 0 & 2 & 2 & 2 & 2\\
\end{array} \\
& \\
\begin{array}{c|cccccc}
p_{ij}^2 & 0 & 1 & 2 & 3 & 4 & 5\\ \hline
0         & 0 & 0 & 1 & 0 & 0 & 0 \\
1         & 0 & 0 & 0 & 1 & 1 & 2\\
2         & 1 & 0 & 3 & 0 & 0 & 0\\
3         & 0 & 1 & 0 & 0 & 1 & 2\\
4         & 0 & 1 & 0 & 1 & 0 & 2\\
5         & 0 & 2 & 0 & 2 & 2 & 2\\
\end{array}
 &
\begin{array}{c|cccccc}
p_{ij}^3   & 0 & 1 & 2 & 3 & 4 & 5\\ \hline
0         & 0 & 0 & 0 & 1 & 0 & 0 \\
1         & 0 & 0 & 1 & 0 & 1 & 2\\
2         & 0 & 1 & 0 & 0 & 1 & 2\\
3         & 1 & 0 & 0 & 3 & 0 & 0\\
4         & 0 & 1 & 1 & 0 & 0 & 2\\
5         & 0 & 2 & 2 & 0 & 2 & 2\\
\end{array} \\
& \\
\begin{array}{c|cccccc}
p_{ij}^4 & 0 & 1 & 2 & 3 & 4 & 5\\ \hline
0         & 0 & 0 & 0 & 0 & 1 & 0 \\
1         & 0 & 0 & 1 & 1 & 0 & 2\\
2         & 0 & 1 & 0 & 1 & 0 & 2\\
3         & 0 & 1 & 1 & 0 & 0 & 2\\
4         & 1 & 0 & 0 & 0 & 3 & 0\\
5         & 0 & 2 & 2 & 2 & 0 & 2\\
\end{array}
 &
\begin{array}{c|cccccc}
p_{ij}^5 & 0 & 1 & 2 & 3 & 4 & 5\\ \hline
0         & 0 & 0 & 0 & 0 & 0 & 1 \\
1         & 0 & 0 & 1 & 1 & 1 & 1\\
2         & 0 & 1 & 0 & 1 & 1 & 1\\
3         & 0 & 1 & 1 & 0 & 1 &  1\\
4         & 0 & 1 & 1 & 1 & 0 & 1 \\
5         & 1 & 1 & 1 & 1 & 1 & 3\\
\end{array} \\
\end{array}
\]
The examples of $f_6$ and $f_9$ above have the same $p_{ij}^k$'s.

Note $f_5$ and $f_6$ are not multiples. Therefore, the $p_{ij}^k$'s do not
determine the equivalence class of the bent function nor even
the (larger) equivalence class ``up to a scalar factor'' .

}
\end{example}

\section{Examples of bent functions}

\subsection{Algebraic Normal Form}

Similar to how Carlet \cite{C} shows that every Boolean function can be 
written in
algebraic normal form, we can show that each $GF(p)$-valued function over
$GF(p)^n$ can be written in algebraic normal form as well. 

An {\it atomic $p$-ary function} is a function
$GF(p)^n \rightarrow GF(p)$ supported at a single point. For 
$v\in GF(p)^n$, the atomic function supported at $v$ is
the function $f_v: GF(p)^n \rightarrow GF(p)$
such that $f_v(v)=1$ and for every $w\in GF(p)^n$ 
such that $w\not=v$ $f_v(w)=0$.
We begin by
showing how to write the algebraic normal form of the atomic $p$-ary functions,
where 

\begin{theorem}\label{thm:atomic-ANF}
  Let $f_v$ be an atomic $p$-ary function. Then
  \begin{equation}\label{eqn:atomic-anf}
  f_v(x) = \prod_{i=0}^{n-1}{\left(\frac{1}{(p-1)!}\prod_{j=1}^{p-1}{(j+v_i-x_i)}\right)}.
  \end{equation}
\end{theorem}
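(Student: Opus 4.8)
The plan is to reduce the $n$-variable identity to a one-variable computation, exploiting the fact that the right-hand side is already presented as a product over coordinates. For each coordinate $i$, define the one-variable factor
\[
\delta_i(x_i) = \frac{1}{(p-1)!}\prod_{j=1}^{p-1}(j+v_i-x_i),
\]
so that the claimed formula reads $f_v(x)=\prod_{i=0}^{n-1}\delta_i(x_i)$. The heart of the argument is to show that each $\delta_i$ is the Kronecker delta concentrated at $v_i$, i.e. $\delta_i(x_i)=1$ when $x_i=v_i$ and $\delta_i(x_i)=0$ otherwise. Once this is established, the product $\prod_i\delta_i(x_i)$ is nonzero precisely when $x_i=v_i$ for every $i$, that is, exactly when $x=v$, and in that case it equals $1$; this is word-for-word the defining property of the atomic function $f_v$, so the two sides agree at every point of $GF(p)^n$ and the theorem follows.

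To prove the one-variable claim I would substitute $t=x_i-v_i\in GF(p)$, rewriting the factor as $j+v_i-x_i=j-t$, and split into two cases. If $t=0$ (i.e. $x_i=v_i$) the product telescopes to $\prod_{j=1}^{p-1}j=(p-1)!$, which cancels the prefactor and gives $\delta_i=1$. If $t\neq 0$, then $t$ lies in $\{1,2,\dots,p-1\}$, so the index $j=t$ occurs in the product and contributes the factor $t-t=0$; hence the whole product vanishes and $\delta_i=0$. This case analysis is the only real content, and it is elementary.

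The one point that genuinely requires care, rather than being purely formal, is the meaning of the normalizing constant $1/(p-1)!$ inside $GF(p)$: I would note that by Wilson's theorem $(p-1)!\equiv -1\pmod p$, so $(p-1)!$ is a nonzero, hence invertible, element of $GF(p)$ and the reciprocal is well-defined. I expect this to be the main (mild) obstacle, since the reader must be convinced the formula lives in $GF(p)$ rather than in $\qqq$; everything else is a direct verification. No appeal to earlier results in the paper is needed beyond the definition of $f_v$ itself.
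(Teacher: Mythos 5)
Your proof is correct and takes essentially the same approach as the paper's: a direct pointwise verification that each factor equals $1$ when $x_i=v_i$ (the product telescoping to $(p-1)!$ and cancelling the prefactor) and $0$ otherwise (some index $j$ making $j+v_i-x_i=0$). Your coordinate-wise Kronecker-delta packaging, and the remark via Wilson's theorem that $(p-1)!\equiv -1$ is invertible so the formula makes sense in $GF(p)$, are minor refinements of the same argument rather than a different route.
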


\begin{proof}
  First, we start by showing that $f_v(v)=1$. We can do this by plugging $v$
  directly into (\ref{eqn:atomic-anf}).
  \begin{align*}
  f_v(v) &= \prod_{i=0}^{n-1}{\left(\frac{1}{(p-1)!}\prod_{j=1}^{p-1}{(j+v_i-v_i)}\right)} \\
         &= \prod_{i=0}^{n-1}{\left(\frac{1}{(p-1)!}\prod_{j=1}^{p-1}{j}\right)} \\
         &= \prod_{i=0}^{n-1}{\left(\frac{(p-1)!}{(p-1)!}\right)} \\
         &= 1.
  \end{align*}

  Second, we show that $f_v(w)=0$ for every $w\not=v$. Let $w\not=v$. Then,
  pick $k$ such that $w_k\not=v_k$. So there exists $j \in \{1,\dots,n-1\}$ such
  that $j+v_k-w_k = 0 \pmod p$. Thus, the inside product of (\ref{eqn:atomic-anf}) is
  $0$ for $i=k$ and the whole equation is $0$. So $f_v(w) = 0$.
\end{proof}

It easily follows that every $GF(p)$-valued function over $GF(p)^n$ can be
written in algebraic normal form.

\begin{corollary}
  Let $g:GF(p)^n \rightarrow GF(p)$. Then
  \begin{equation}\label{eqn:anf}
    g(x) = \sum_{v\in GF(p)^n}{g(v)f_v(x)}.
  \end{equation}
\end{corollary}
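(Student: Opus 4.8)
The plan is to evaluate both sides of (\ref{eqn:anf}) at an arbitrary point and reduce everything to the defining property of the atomic functions $f_v$ established in Theorem \ref{thm:atomic-ANF}. Since $GF(p)^n$ is finite, two $GF(p)$-valued functions on it are equal if and only if they agree at every point, so it suffices to fix $w \in GF(p)^n$ and show that the right-hand side, evaluated at $x = w$, equals $g(w)$.

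First I would substitute $x = w$ into the sum $\sum_{v \in GF(p)^n} g(v) f_v(w)$. By Theorem \ref{thm:atomic-ANF}, each atomic function satisfies $f_v(v) = 1$ and $f_v(w) = 0$ whenever $w \neq v$; equivalently $f_v(w) = \delta_{v,w}$, the Kronecker delta. Consequently every term of the sum vanishes except the $v = w$ term, and the sum collapses to $g(w) f_w(w) = g(w) \cdot 1 = g(w)$. This matches the left-hand side evaluated at $w$, and since $w$ was arbitrary the identity (\ref{eqn:anf}) holds as an equality of functions.

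The only point requiring any care — and it is already subsumed in the statement of Theorem \ref{thm:atomic-ANF} — is that the formula for $f_v$ contains the factor $1/(p-1)!$, so one needs $(p-1)!$ to be invertible in $GF(p)$. This holds because $(p-1)!$ is a product of nonzero residues and hence is nonzero modulo $p$ (indeed $(p-1)! \equiv -1 \pmod p$ by Wilson's theorem). Because Theorem \ref{thm:atomic-ANF} already guarantees that $f_v$ is well-defined and genuinely atomic, no further work is needed, and I expect no real obstacle: the corollary is an immediate consequence of the pointwise characterization of the $f_v$, together with the triviality that a function on a finite set is determined by its values.
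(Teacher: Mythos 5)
Your proof is correct, and it is exactly the argument the paper has in mind: the paper states this corollary with no proof at all (``It easily follows\dots''), the implicit reasoning being precisely your observation that $f_v(w)=\delta_{v,w}$ by Theorem \ref{thm:atomic-ANF}, so the sum collapses pointwise to $g(w)$. Nothing is missing; your aside on the invertibility of $(p-1)!$ is a nicety already absorbed into the statement of the theorem.
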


\begin{example}
{\rm
Sage can easily list all the atomic functions
over $GF(3)$ having $2$ variables:

\begin{Verbatim}[fontsize=\scriptsize,fontfamily=courier,fontshape=tt,frame=single,label=\sage]

sage: V = GF(3)^2
sage: x0,x1 = var("x0,x1")               
sage: xx = [x0,x1]
sage: [expand(prod([2*prod([GF(3)(j)+v[i]-xx[i] for j in range(1,3)]) 
       for i in range(2)])) for v in V]
[x0^2*x1^2 + 2*x0^2 + 2*x1^2 + 1,
 x0^2*x1^2 + x0*x1^2 + 2*x0^2 + 2*x0,
 x0^2*x1^2 + 2*x0*x1^2 + 2*x0^2 + x0,
 x0^2*x1^2 + x0^2*x1 + 2*x1^2 + 2*x1,
 x0^2*x1^2 + x0^2*x1 + x0*x1^2 + x0*x1,
 x0^2*x1^2 + x0^2*x1 + 2*x0*x1^2 + 2*x0*x1,
 x0^2*x1^2 + 2*x0^2*x1 + 2*x1^2 + x1,
 x0^2*x1^2 + 2*x0^2*x1 + x0*x1^2 + 2*x0*x1,
 x0^2*x1^2 + 2*x0^2*x1 + 2*x0*x1^2 + x0*x1]
sage: f = x0^2*x1^2 + x0^2*x1 + x0*x1^2 + x0*x1             
sage: [f(x0=v[0],x1=v[1]) for v in V]             
[0, 0, 0, 0, 1, 0, 0, 0, 0]

\end{Verbatim}
}
\end{example}

\begin{proposition} (Hou)
{\rm
The degree of any bent function 
$f:GF(p)^n\to GF(p)$, when represented in ANF, satisfies

\[
\deg(f)\leq \frac{n(p-1)}{2} +1.
\]
The degree of any weakly regular bent function 
$f:GF(p)^n\to GF(p)$, when represented in ANF, satisfies

\[
\deg(f)\leq \frac{n(p-1)}{2}.
\]
}
\end{proposition}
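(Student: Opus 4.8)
The plan is to translate the analytic bentness condition $|W_f(u)| = p^{n/2}$ into a $\pi$-adic divisibility statement in the cyclotomic ring $\zzz[\zeta]$ and then read off constraints on the top-degree part of the algebraic normal form. Write $\pi = 1-\zeta$, so that $(p) = (\pi)^{p-1}$, the residue field $\zzz[\zeta]/(\pi)$ is $GF(p)$ (with $\zeta \equiv 1$), and the normalized valuation $v$ satisfies $v(\pi)=1$, $v(p)=p-1$. Since complex conjugation is a Galois automorphism fixing $v$ and $W_f(u)\overline{W_f(u)} = p^n$, bentness forces $v(W_f(u)) = \tfrac{n(p-1)}{2}$ for every $u$ (the half-integer case when $n$ is odd is handled by applying the computation to $W_f(u)^2$, or directly via Proposition \ref{prop:KSW}).

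First I would expand each Walsh coefficient $\pi$-adically. Letting $G_u(x)\in\{0,\dots,p-1\}$ be the residue representing $f(x)-\langle u,x\rangle$, the identity $\zeta^{t} = (1-\pi)^{t}$ gives $W_f(u) = \sum_{j=0}^{p-1} A_j(u)\,(-\pi)^j$ with $A_j(u) = \sum_{x}\binom{G_u(x)}{j}\in\zzz$. The bridge to the ANF is the Gauss-sum identity $\sum_{y\in GF(p)} y^{m} = -1$ if $(p-1)\mid m$ and $m>0$, and $0$ otherwise: it yields $\sum_{x} P(x) = (-1)^n\,[\text{coeff. of } x_0^{p-1}\cdots x_{n-1}^{p-1}]$ for any reduced polynomial $P$, so that $A_j(u)\bmod p$ is, up to sign, the top coefficient of the reduced polynomial $\binom{g_u}{\,j\,}$, whose degree is at most $j\,\deg f$. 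Consequently $A_j(u)\equiv 0 \pmod p$ whenever $j\,\deg f < n(p-1)$, and the leading $\pi$-coefficient that can survive is governed by $\deg f$ through $\binom{t}{j} \equiv t^j/j! \pmod{\text{lower order}}$, whose top part is built from the homogeneous degree-$d$ part $f_d$ of $f$.

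The heart of the argument is to exploit the \emph{exactness} $v(W_f(u)) = \tfrac{n(p-1)}{2}$, rather than a mere lower bound (the latter is automatic from Ax-type estimates and carries no information about the top of the ANF). I would argue by contradiction: assuming $\deg f = d \geq \tfrac{n(p-1)}{2}+2$, I would isolate the lowest-order surviving term of the $\pi$-adic expansion, whose mod-$p$ reduction is the top coefficient of $\binom{g_u}{\,j\,}$ with $j = \lceil n(p-1)/d\rceil$, express that coefficient through $f_d$, and show that for a suitable $u$ it is nonzero, forcing $v(W_f(u)) < \tfrac{n(p-1)}{2}$. Tracking these constraints degree by degree then shows that every ANF coefficient of degree exceeding $\tfrac{n(p-1)}{2}+1$ must vanish. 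For the weakly regular refinement I would use the sharper normal form $W_f(u) = \mu\zeta^{f^*(u)}p^{n/2}$: here $W_f(u)p^{-n/2} = \mu\,(1-\pi)^{f^*(u)}$ is an \emph{explicit} $\pi$-adic unit whose successive $\pi$-coefficients are the binomial functions $\binom{f^*(u)}{j}$, and matching these against the expansion above yields one additional order of cancellation, improving the bound to $\tfrac{n(p-1)}{2}$.

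I expect the main obstacle to be the precise leading-coefficient bookkeeping in the third step. Because the pre-reduction degree $j\,\deg f$ overshoots $n(p-1)$, reduction modulo the relations $x_i^p = x_i$ mixes the contribution of $f_d$ with lower-degree parts of $f$ and with the linear perturbation $\langle u,x\rangle$; isolating a genuinely nonzero top coefficient for some choice of $u$, rather than an accidental cancellation, is the delicate point, and it is also where the subtle integer (not merely residue) divisibilities of the sums $A_j(u)$ enter. This is exactly where the argument of Hou must be invoked or reconstructed, and where the gap between the general bound $\tfrac{n(p-1)}{2}+1$ and the weakly regular bound $\tfrac{n(p-1)}{2}$ originates.
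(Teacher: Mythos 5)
The first thing to note is that the paper does not prove this proposition at all: it is quoted from Hou, and the text immediately following it reads ``For a proof of these results, see Hou \cite{H} (and see also \cite{CM} for further details).'' So there is no in-paper argument to compare yours against; the only question is whether your proposal stands on its own as a proof. It does not. Your setup is sound and is indeed the standard opening of the arguments in this part of the literature: bentness plus the fact that complex conjugation fixes the unique prime $\pi=1-\zeta$ above $p$ forces $v(W_f(u))=\tfrac{n(p-1)}{2}$ exactly; the expansion $W_f(u)=\sum_{j=0}^{p-1}A_j(u)(-\pi)^j$ with $A_j(u)=\sum_x\binom{G_u(x)}{j}$ is correct; and the power-sum identity does convert $A_j(u)\bmod p$ into the coefficient of $x_0^{p-1}\cdots x_{n-1}^{p-1}$ in the reduction of $\binom{g_u}{j}$, giving $A_j(u)\equiv 0\pmod p$ whenever $j\deg f<n(p-1)$.

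But everything you have actually established is a one-sided valuation bound, and you say so yourself. The theorem lives entirely in the step you call ``the heart of the argument'': assuming $\deg f\geq\tfrac{n(p-1)}{2}+2$, you must produce a \emph{specific} $u$ for which $A_{j_0}(u)\not\equiv 0\pmod p$ at the critical index $j_0=\lceil n(p-1)/\deg f\rceil$, so that $v(W_f(u))=j_0<\tfrac{n(p-1)}{2}$. This non-vanishing is exactly where the difficulty sits: the reduction modulo $x_i^p-x_i$ overshoots degree $n(p-1)$ and mixes the top homogeneous part $f_d$ with lower-degree parts of $f$ and with the linear shift $\langle u,x\rangle$, so ``no accidental cancellation for some $u$'' is a genuine theorem, not a verification --- and you explicitly write that at this point ``the argument of Hou must be invoked or reconstructed.'' Deferring that step is deferring the entire result. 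Two further points are glossed over: (i) the valuation of $\sum_j A_j(u)(-\pi)^j$ is not automatically the smallest $j$ with $A_j\not\equiv 0\pmod p$; one must check that the terms with $p\mid A_j$ (valuation at least $p-1+j$) and the term $A_0=p^n$ cannot interlace with or cancel against the isolated term, which requires tracking exact powers of $p$ dividing the integers $A_j(u)$, not just their residues; and (ii) the weakly regular refinement to $\tfrac{n(p-1)}{2}$ inherits the same gap, since matching against $\mu(1-\pi)^{f^*(u)}p^{n/2}$ to ``gain one order of cancellation'' presupposes the leading-term identification that is missing. As written, your text is a reasonable plan for reconstructing Hou's proof, but it is not a proof.
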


For a proof of these results, see Hou \cite{H} (and see also
\cite{CM} for further details).

\subsection{Bent functions $GF(3)^2\to GF(3)$}
\label{sec:bent-gf3**2}

We focus on examples of even functions
$GF(3)^2\to GF(3)$ sending $0$ to $0$. There are exactly $3^4=81$ 
such functions.

\begin{example}
\label{example:bent1}
{\rm
Here is an example of a bent function $f$ of two variables over $GF(3)$.
This function $f$ is defined by the following table of values:

\vskip .15in
{\scriptsize{
\begin{tabular}{c|ccccccccc}
$GF(3)^2$ & (0, 0) & (1, 0) & (2, 0) & (0, 1) & (1, 1) & (2, 1) & (0,
2) & (1, 2) & (2, 2) \\ \hline
$f$ & 0  & 1  & 1  & 1  & 2  & 2  & 1  & 2  & 2 \\
\end{tabular}
}}
\vskip .15in

\begin{Verbatim}[fontsize=\scriptsize,fontfamily=courier,fontshape=tt,frame=single,label=\sage]

sage: V = GF(3)^2  
sage: Vlist = V.list()    
sage: Vlist
[(0, 0), (1, 0), (2, 0), (0, 1), (1, 1), (2, 1), (0, 2), (1, 2), (2, 2)]
sage: f00 = 0; f10 = 1; f01 = 1; f11 = 2; f12 = 2
sage: flist = [f00,f10,f10,f01,f11,f12,f01,f12,f11]
sage: f = lambda x: GF(3)(flist[Vlist.index(x)])
sage: [CC(hadamard_transform(f,a)).abs() for a in V] 
[3.00000000000000, 3.00000000000000, 3.00000000000000, 
3.00000000000000, 3.00000000000000, 3.00000000000000, 
3.00000000000000, 3.00000000000000, 3.00000000000000]
sage: pts = [CC(hadamard_transform(f, a)) for a in V]
sage: t = var('t')                                                                                     
sage: P1 = points([(x.real(), x.imag()) for x in pts], 
          pointsize=40, xmin=-12,xmax=12,ymin=-12,ymax=12)
sage: P2 = parametric_plot([(3)*cos(t),(3)*sin(t)], (t,0,2*pi), linestyle = "--")                      
sage: (P1+P2).show() 

\end{Verbatim}

\vskip .2in
\noindent
The plot of the values of the Hadamard transform (\ref{eqn:FT}) of $f$ is in Figure
\ref{fig:bent1-FT}. 

\begin{figure}[t!]
\begin{minipage}{\textwidth}
\begin{center}
\includegraphics[height=9cm,width=9cm]{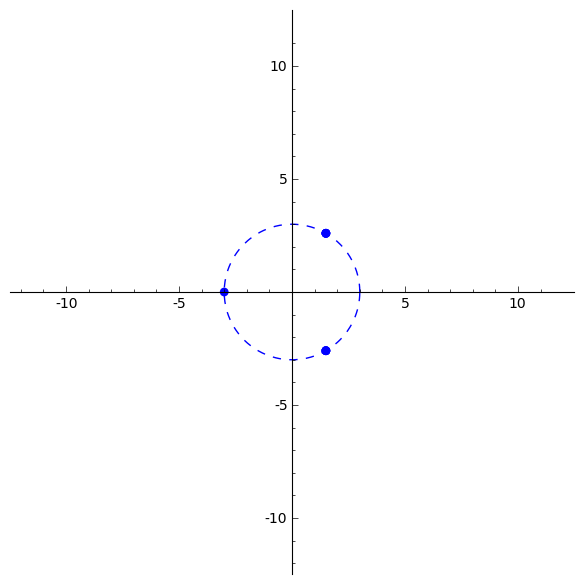}
\end{center}
\end{minipage}
\caption{The plot of the values of the Hadamard transform of 
$f$ in the complex plane of the even $GF(3)$-valued bent function of two
  variables from Example \ref{example:bent1}. 
(The vertices are ordered as in Example \ref{example:bent1}.)
}
\label{fig:bent1-FT}
\end{figure}

The set of such functions has some amusing combinatorial properties we shall 
discuss below.
}
\end{example}

There are exactly $18$ such bent functions.

\vskip .15in
{\scriptsize{
\begin{tabular}{c|ccccccccc}
$GF(3)^2$ & (0, 0) & (1, 0) & (2, 0) & (0, 1) & (1, 1) & (2, 1) & (0,
2) & (1, 2) & (2, 2) \\ \hline
$b_1$ & 0  & 1  & 1  & 1  & 2  & 2  & 1  & 2  & 2 \\
$b_2$ & 0  & 2  & 2  & 1  & 0  & 0  & 1  & 0  & 0 \\
$b_3$ & 0  & 1  & 1  & 2  & 0  & 0  & 2  & 0  & 0 \\
$b_4$ & 0  & 2  & 2  & 0  & 1  & 0  & 0  & 0  & 1 \\
$b_5$ & 0  & 0  & 0  & 2  & 1  & 0  & 2  & 0  & 1 \\
$b_6$ & 0  & 1  & 1  & 0  & 2  & 0  & 0  & 0  & 2 \\
$b_7$ & 0  & 0  & 0  & 1  & 2  & 0  & 1  & 0  & 2 \\
$b_8$ & 0  & 2  & 2  & 0  & 0  & 1  & 0  & 1  & 0 \\
$b_9$ & 0  & 0  & 0  & 2  & 0  & 1  & 2  & 1  & 0 \\
$b_{10}$ & 0  & 2  & 2  & 2  & 1  & 1  & 2  & 1  & 1 \\
$b_{11}$ & 0  & 0  & 0  & 0  & 2  & 1  & 0  & 1  & 2 \\
$b_{12}$ & 0  & 2  & 2  & 1  & 2  & 1  & 1  & 1  & 2 \\
$b_{13}$ & 0  & 1  & 1  & 2  & 2  & 1  & 2  & 1  & 2 \\
$b_{14}$ & 0  & 1  & 1  & 0  & 0  & 2  & 0  & 2  & 0 \\
$b_{15}$ & 0  & 0  & 0  & 1  & 0  & 2  & 1  & 2  & 0 \\
$b_{16}$ & 0  & 0  & 0  & 0  & 1  & 2  & 0  & 2  & 1 \\
$b_{17}$ & 0  & 2  & 2  & 1  & 1  & 2  & 1  & 2  & 1 \\
$b_{18}$ & 0  & 1  & 1  & 2  & 1  & 2  & 2  & 2  & 1 \\ \hline
\end{tabular}
}}

\vskip .1in
The {\it unweighted} Cayley graph of $b_2$
(as well as $b_3$, $b_4$, $b_5$, $b_6$, $b_7$, $b_8$, 
$b_9$, $b_{11}$, $b_{14}$, $b_{15}$ and $b_{16}$)
is a strongly regular graph having 
parameters $SRG(\nu, k, \lambda, \mu)$ where
$\nu =9$, $k = 4$, $\lambda = 1$, $\mu=2$.
We say that these bent functions are of 
{\it type} $(9,4,1,2)$. The other $6$ bent functions are of
{\it type} $(9,8,7,0)$.
Up to isomorphism, there is only one (unweighted) strongly
regular graph having parameters $SRG(9,4,1,2)$ 
\cite{Br}, \cite{Sp}. We shall see later that the edge-weighted
Cayley graphs arising from these $12$ bent functions of
type $(9,4,1,2)$ are also isomorphic\footnote{
We say edge-weighted graphs are {\it isomorphic} if there
is a bijection of the vertices which preserves the weight
of each edge.
} 
as weighted 
(strongly regular) graphs.
Likewise, these $6$ bent functions of
type $(9,8,7,0)$ are also isomorphic as weighted 
(strongly regular) graphs.

\begin{example}
\label{example:bentsGF3}
{\rm
Let $b_1,\dots, b_{18}$ denote the bent functions defined in
\S \ref{sec:bent-gf3**2}.
The following example shows that the dual of $b_1$ is $b_{10}$
and the dual of $b_2$ is $b_3$, but in one case we must 
pre-multiply by $-1$ and in the other case we don't.

\vskip .15in

\begin{Verbatim}[fontsize=\footnotesize,fontfamily=courier,fontshape=tt,frame=single,label=\sage]

sage: FF = GF(3)
sage: V = FF^2
sage: Vlist = V.list()
sage: flist = [0, 1, 1, 1, 2, 2, 1, 2, 2]
## this is b1
sage: f = lambda x: GF(3)(flist[Vlist.index(x)])
sage: [CC(hadamard_transform(f,a)).abs() for a in V]
[3.00000000000000, 3.00000000000000, 3.00000000000000, 
 3.00000000000000, 3.00000000000000, 3.00000000000000, 
 3.00000000000000, 3.00000000000000, 3.00000000000000]
sage: L = [CC(hadamard_transform(f,a)) for a in V]; L
[-3.00000000000000 + 1.33226762955019e-15*I, 
 1.50000000000000 + 2.59807621135332*I, 
 1.50000000000000 + 2.59807621135332*I, 
 1.50000000000000 + 2.59807621135332*I, 
 1.50000000000000 - 2.59807621135331*I, 
 1.50000000000000 - 2.59807621135331*I, 
 1.50000000000000 + 2.59807621135332*I, 
 1.50000000000000 - 2.59807621135331*I, 
 1.50000000000000 - 2.59807621135331*I]
sage: [crude_CC_log(-z/3, 3) for z in L]
[0, 2, 2, 2, 1, 1, 2, 1, 1]
## this is b10
\end{Verbatim}

\vskip .2in
\noindent
Note the pre-multiplication by $-1$. This 
bent function $f=b_1:GF(3)^2\to GF(3)$ is weakly regular,
and the weakly regular dual of $b_1$ is $b_{10}$.

\vskip .15in

\begin{Verbatim}[fontsize=\scriptsize,fontfamily=courier,fontshape=tt,frame=single,label=\sage]

sage: flist = [0, 2, 2, 1, 0, 0, 1, 0, 0]
## this is b2
sage: f = lambda x: GF(3)(flist[Vlist.index(x)])
sage: L = [CC(hadamard_transform(f,a)) for a in V]; L
[3.00000000000000 + 6.66133814775094e-16*I, 
 -1.50000000000000 + 2.59807621135332*I, 
 -1.50000000000000 + 2.59807621135332*I, 
 -1.50000000000000 - 2.59807621135331*I, 
 3.00000000000000 + 6.66133814775094e-16*I, 
 3.00000000000000 + 6.66133814775094e-16*I, 
 -1.50000000000000 - 2.59807621135331*I, 
 3.00000000000000 + 6.66133814775094e-16*I, 
 3.00000000000000 + 6.66133814775094e-16*I]
sage: [crude_CC_log(z/3, 3) for z in L]
[0, 1, 1, 2, 0, 0, 2, 0, 0]
## this is b3

\end{Verbatim}

\vskip .2in
\noindent
This bent function $f=b_2:GF(3)^2\to GF(3)$ is regular,
and the regular dual of $b_2$ is $b_{3}$.

We similar computations verify the following:
\begin{itemize}
\item
$b_1$ and $b_{10}$ are both weakly regular and 
$-1$-dual to each other,

\item
$b_2$ and $b_3$ are regular and dual to each other,
\item
$b_4$ and $b_9$ are regular and dual to each other,
\item
$b_5$ and $b_8$ are regular and dual to each other,
\item
$b_6$ and $b_{15}$ are regular and dual to each other,
\item
$b_7$ and $b_{14}$ are regular and dual to each other,
\item
$b_{11}$ and $b_{16}$ are regular and dual to each other,
\item
$b_{12}$, $b_{13}$, $b_{17}$ and $b_{18}$ are all weakly regular
and are each $-1$-self-dual.
\end{itemize}
}
\end{example}

\vskip .3in
Relationships:

\[
b_1=-b_{10}, \ \ 
b_2=-b_3,  \ \ 
b_4 = -b_6,  \ \ 
b_5 = -b_7,  \ \ 
b_8=-b_{14}, 
\]
\[
b_9=-b_{15},  \ \ 
b_{11}=-b_{16}, \ \ 
b_{12}=-b_{18}, \ \ 
b_{13}=-b_{17},
\]
\[
b_1 = b_7+b_{14} = b_6+b_{15}, \ \ 
b_{10} = b_{4}+b_{9} = b_{5}+b_{8}, \ \ 
b_{12} = b_{2}+b_{11} = b_{7}+b_{8},
\]
\[
b_{13} = b_{3}+b_{11} = b_{6}+b_{9}, \ \ 
b_{17} = b_{2}+b_{16} = b_{4}+b_{15,}\ \ 
b_{18} = b_{3}+b_{16} = b_{5}+b_{14}.
\]

Recall from Example \ref{example:bentsGF3}, the following
are regular

\[
b_2^* = b_3, \ \ 
b_{4}^* =b_{9},\ b_5^* = b_{8},\  b_{6}^* = b_{15}, \ \ 
b_{7}^* = b_{14}, b_{11}^* = b_{16},\ 
\]
whereas
\[
b_1^* = - b_{10}
\]
are weakly regular and $(-1)$-dual to each other (but not
regular), and the others are all $(-1)$-self-dual and weakly regular
(but not regular),

\[
b_{12}^* = - b_{12},\ 
b_{13}^* = - b_{13},\ 
b_{17}^* = - b_{17},\ 
b_{18}^* = - b_{18}.
\]

Supports:

\[
\{1,2,3,6\} = \supp(b_2)=\supp(b_3), \ \ 
\{1,2,4,8\} = \supp(b_4)=\supp(b_6),
\]
\[
\{1,2,5,7\} = \supp(b_8)=\supp(b_{14}), \ \ 
\{3,5,6,7\} = \supp(b_9)=\supp(b_{15}),
\]
\[
\{3,4,6,8\} = \supp(b_{5})=\supp(b_{7}),\ \
\{4,5,7,8\} = \supp(b_{11})=\supp(b_{16}),
\]
and
\[
\supp(b_{1})=\supp(b_{10})
= \supp(b_{12})=\supp(b_{13})
= \supp(b_{17})=\supp(b_{18})
\]
are all equal to $\{1,2,3,4,5,6,7,8\}$.
Note that 
\begin{itemize}
\item
All these functions are weight $4$ or weight $8$.
\item
If you pick any two support sets of weight $4$,
$S_1$ and $S_2$ say, then they satisfy either

\[
S_1\cap S_2=\emptyset\ \ \ {\rm or}\ \ \
|S_1\cap S_2|=2.
\]
\item
The $12$ which are regular, but not $\mu$-regular for some $\mu\not= 1$)
can all be obtained from $f(x_0,x_1)=x_0^2+x_0x_1$ by linear 
transformations of the coordinates, i.e.
$(x_0,x_1)\mapsto (ax_0+bx_1,cx_0+dx_1)$ where $ad-bc\not= 0$.
Each such isomorphism of $GF(3)^2$ induces an isomorphism of the associated 
edge-weighted Cayley graphs.
\item
Similarly, the $6$ which are weakly regular can all be obtained from 
$x_0^2+x_1^2$ by linear transformations of the coordinates.
\end{itemize}

In fact, if you consider the set

\[
S = \{ \emptyset \} \cup \{ {\rm supp}(f)\ |\ f:GF(3)^2\to GF(3), \ f(0)=0 \},
\]
then $S$ forms a group under the symmetric difference operator
$\Delta$. In fact, $S\cong GF(2)^3$.

\vskip .2in

\begin{question}
To what extent is it true that if $f_1$, $f_2$ are bent 
functions on $GF(p)^n$ with $f_1(0)=f_2(0)=0$, then

\[
{\rm supp}(f_1)\, \Delta\, {\rm supp}(f_3) = {\rm supp}(f_3)
\]
for some bent function $f_3$ satisfying $f_3(0)=0$?
\end{question}

More general version:

\begin{question} Over $GF(p)$, $p\not= 2$, does the set of supports

\[
\{\emptyset\} \cup \{ {\rm supp}(f)\ |\ f \mbox{ is bent}, f(0)=0, f \mbox{ is even} \}
\]
form a group (under $\Delta$)?
\end{question}

This does not seem to hold in the binary case\footnote{What is true in the
binary case is an oddly similar result: the vectors in the support of a bent function
form a Hadamard difference set in the additive group $GF(2)^n$.}.

The following was verified with direct (computer-aided) computations.

\begin{lemma}
\label{lemma:34}
Assume $p=3$, $n=2$.
\begin{itemize}
\item[(a)]
The edge-weighted Cayley graph of $b_i$ is strongly regular and not complete as a
simple (unweighted) graph if and only if $b_i$ is regular if and only if
$i\in \{2,3,4,5,6,7,8,9,11,14,15,16\}.$
\item[(b)]
The edge-weighted Cayley graph of $b_i$ is strongly regular and complete as a
simple (unweighted) graph if and only if $b_i$ is weakly regular if and only if
$i\notin \{2,3,4,5,6,7,8,9,11,14,15,16\}.$
\end{itemize}
\end{lemma}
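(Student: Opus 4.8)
The plan is to reduce both equivalences to a computation of the signature of $b_i$, leaning on the facts, already recorded in the excerpt, that all eighteen $b_i$ are bent, even, vanish only where forced, and are weakly regular. First I would dispose of the ``strongly regular'' clause entirely: the earlier proposition asserts that each $b_i$ gives rise to a weighted PDS, so the theorem identifying symmetric weighted PDSs with edge-weighted strongly regular graphs shows $\Gamma_{b_i}$ is edge-weighted strongly regular for every $i$. Hence that clause is automatic in both (a) and (b), and the genuine discriminator is completeness of the underlying unweighted graph. I would then note that the unweighted $\Gamma_{b_i}$ is complete iff every nonzero vector lies in $\supp(b_i)$, i.e.\ iff $|S_0|=1$ (equivalently $|\supp(b_i)|=8$), since $u,v$ are adjacent exactly when $b_i(u-v)\neq 0$.

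Next I would classify the admissible signatures. Because $b_i$ is even, each pair $\{x,-x\}$ of nonzero vectors lies in a single level set, forcing $|S_1|$ and $|S_2|$ to be even and $|S_0|$ to be odd. Writing $a=|S_0|$, $b=|S_1|$, $c=|S_2|$ with $a+b+c=9$ and using $|W_f(0)|^2=a^2+b^2+c^2-ab-bc-ca=\tfrac12\bigl[(a-b)^2+(b-c)^2+(c-a)^2\bigr]$, bentness ($|W_f(0)|^2=9$) gives $(a-b)^2+(b-c)^2+(c-a)^2=18$. Setting $x=a-b$, $y=b-c$ and eliminating $c-a=-(x+y)$ reduces this to $x^2+xy+y^2=9$ with $x$ odd and $y$ even, and a short finite check shows the only solutions are $(x,y)=(\pm 3,0)$, giving exactly the two signatures $(a,b,c)=(5,2,2)$ and $(1,4,4)$. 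Using $1+\zeta+\zeta^2=0$ these correspond respectively to $W_{b_i}(0)=3$ and $W_{b_i}(0)=-3$; in particular the complete case is precisely the signature $(1,4,4)$.

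Finally I would tie the signature to regularity. Since every $b_i$ is weakly regular, the lemma characterizing regularity among weakly regular bent functions says $b_i$ is regular iff $W_{b_i}(0)/3$ is a cube root of unity. Thus signature $(5,2,2)$, where $W_{b_i}(0)/3=1$, is the regular case and gives support size $4$, hence $\Gamma_{b_i}$ not complete; while signature $(1,4,4)$, where $W_{b_i}(0)/3=-1$ is \emph{not} a cube root of unity, is the weakly-regular-but-not-regular case and gives support size $8$, hence $\Gamma_{b_i}$ complete. Chaining these bijections yields $\Gamma_{b_i}$ not complete $\iff$ signature $(5,2,2)$ $\iff$ $b_i$ regular, and $\Gamma_{b_i}$ complete $\iff$ signature $(1,4,4)$ $\iff$ $b_i$ weakly regular but not regular. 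The identification of the regular $b_i$ with the index set $\{2,3,4,5,6,7,8,9,11,14,15,16\}$ is exactly what was tabulated in the earlier proposition and in Example \ref{example:bentsGF3}, so both (a) and (b) follow.

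The only genuinely non-formal step is the signature classification in the second paragraph; every other implication is bookkeeping against results proved earlier in the excerpt. I expect the delicate point to be making the evenness/parity argument airtight and confirming that $(5,2,2)$ and $(1,4,4)$ are the \emph{only} signatures compatible with bentness, since it is precisely this dichotomy that collapses the a priori large space of signatures and lets the completeness of $\Gamma_{b_i}$ serve as a clean proxy for (non)regularity.
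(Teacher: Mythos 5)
Your proposal is correct, but it takes a genuinely different route from the paper: the paper's entire proof of this lemma is the sentence ``The following was verified with direct (computer-aided) computations,'' i.e.\ a brute-force check of all eighteen functions, whereas you give a structural argument. Your key new ingredient is the signature classification: writing $(a,b,c)=(|S_0|,|S_1|,|S_2|)$, evenness forces $a$ odd and $b,c$ even, bentness forces $a^2+b^2+c^2-ab-bc-ca=|W_f(0)|^2=9$, and the parity-constrained Diophantine equation $x^2+xy+y^2=9$ leaves only the signatures $(5,2,2)$ and $(1,4,4)$, hence $W_f(0)=3$ or $W_f(0)=-3$; the lemma following Lemma \ref{lemma:weaklyregular} (for bent weakly regular $f$, regularity is equivalent to $W_f(0)/p^{n/2}$ being a $p$-th root of unity) then converts completeness of the unweighted graph (support $8$, i.e.\ signature $(1,4,4)$, i.e.\ $W_f(0)/3=-1$) exactly into failure of regularity. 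This explains \emph{why} the dichotomy holds rather than merely observing it. Two caveats are worth making explicit: (i) your disposal of the ``strongly regular'' clause imports the earlier, itself Sage-verified, proposition that every $b_i$ yields a weighted PDS, together with the theorem matching symmetric weighted PDSs to edge-weighted strongly regular graphs; likewise you import the facts that the eighteen tabulated functions are all the even bent ones with $f(0)=0$ and are all weakly regular. So your argument relocates, rather than removes, the computational content of the paper. (ii) Statement (b) is only coherent if ``weakly regular'' is read as ``weakly regular but not regular'' (as the paper's own split into the orbits $B_1$ and $B_2$ indicates, since regular implies weakly regular under the paper's definitions), and your chain of equivalences adopts exactly this reading, which is the right one. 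Net effect: the paper's computation is self-contained but opaque; your argument depends on earlier tabulated inputs but isolates the conceptual mechanism (complete $\iff$ $W_f(0)=-3$ $\iff$ not regular) and would adapt to similar small cases without a fresh exhaustive search.
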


\begin{example}
\label{example:25}
{\rm
This example is intended to illustrate the 
bent function $b_8$ listed in the table above,
and to provide more detail on Example \ref{example:17}.

Consider the finite field

\[
GF(9) = GF(3)[x]/(x^2+1) = \{0,1,2,x,x+1,x+2,2x,2x+1,2x+2\}.
\]
The set of non-zero quadratic residues is given by

\[
D = \{1,2,x,2x\}.
\]
Let $\Gamma$ be the graph whose vertices are $GF(9)$ and whose edges
$e=(a,b)$ are those pairs for which $a-b\in D$.

The graph looks like the Cayley graph for $b_8$ in Figure \ref{fig:example6}, 
except

\[
8\to 0, 0\to 2x+2, 1\to 2x+1, 2\to 2x, 
\]
\[
3\to x+2, 4\to x+1, 5\to x, 6\to 2,
7\to 1, 8\to 0.
\] 
This is a strongly regular graph with parameters 
$(9,4,1,2)$.
\vskip .1in
{\scriptsize{
\begin{tabular}{c|ccccccccc}
$v$       & 0       & 1       & 2       & 3       & 4       & 5       & 6       & 7 & 8 \\ \hline
$N(v,0)$  & 3,4,6,8 & 4,5,6,7 & 3,5,7,8 & 0,2,6,7 & 0,1,7,8 & 1,2,6,8 & 0,1,3,5 & 1,2,3,4 & 0,2,4,5 \\   
$N(v,1)$ & 5,7 & 3,8 & 4,6 & 1,8 & 2,6 & 0,7 & 2,4 & 0,5 & 1,3 \\
$N(v,2)$ & 1,2 & 0,2 & 0,1 & 4,5 & 3,5 & 3,4 & 7,8 & 6,8 & 6,7 \\ 
\end{tabular}
}}
\vskip .1in

\noindent
The axioms of an edge-weighted strongly regular graph can 
be directly verified using this table.

\begin{figure}[t!]
\begin{minipage}{\textwidth}
\begin{center}
\includegraphics[height=4cm,width=4cm]{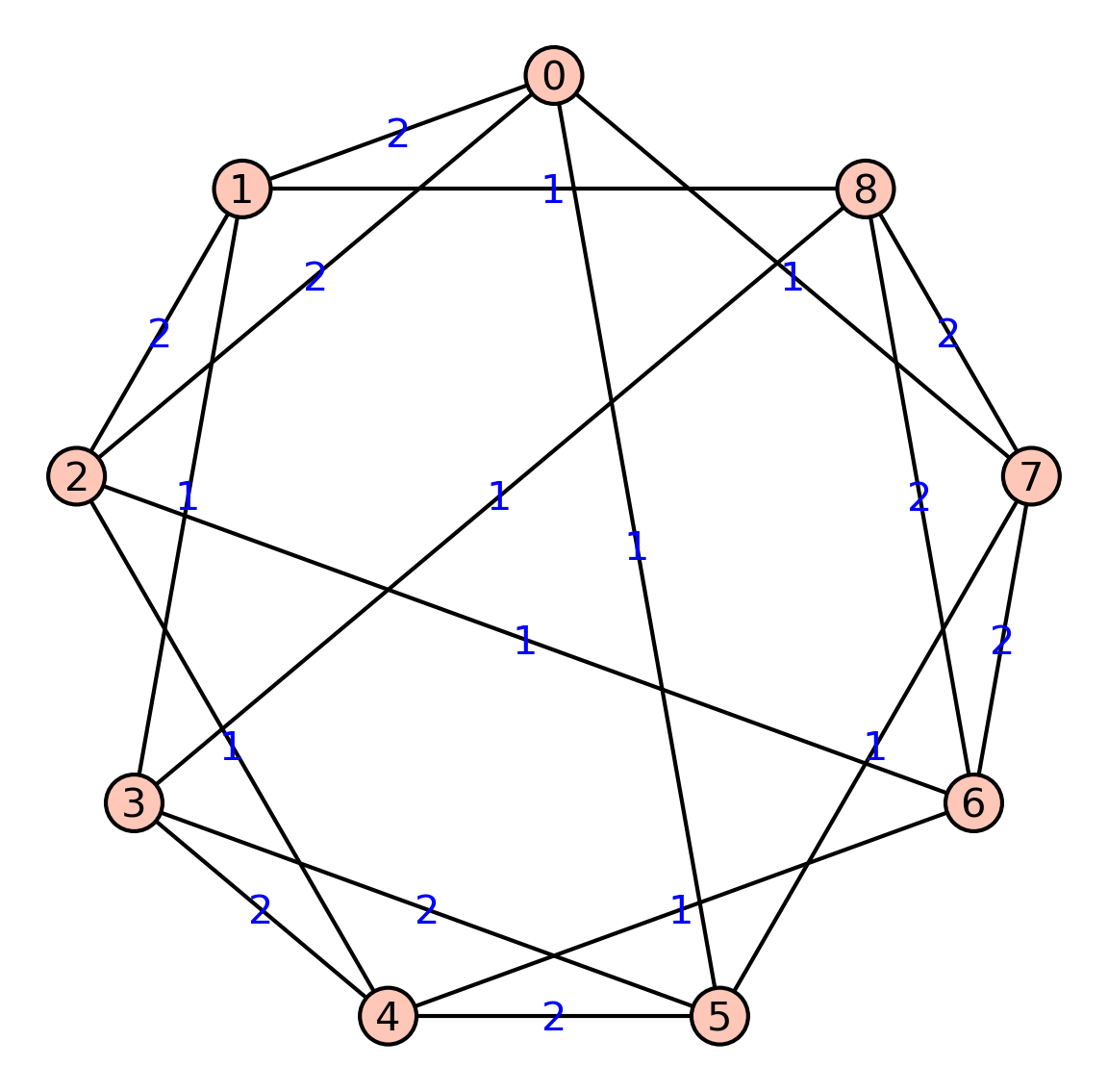} 
\end{center}
\end{minipage}
\caption{The Cayley graphs for $b_8$}
\label{fig:example6}
\end{figure}

}
\end{example}

\subsection{Bent functions $GF(3)^3\to GF(3)$}
\label{sec:bent-gf3**3}

Let $f:GF(3)^3\to GF(3)$ be an even bent function with $f(0)=0$,
let
\[
D_i = \{v\in GF(3)^3\ |\ f(v)=i\},\ \ \ i=1,2,
\]
let $D_0=\{0\}$ and $D_3=GF(3)^3-(D_0\cup D_1\cup D_2)$.

There are a total of $2340$ even bent functions on $GF(3)^3$.

We know that if $W_f(0)$ is rational then the level curves
$f^{-1}(i)$, $i\not= 0$, have the same cardinality\footnote{In fact, 
this is true any time $n$ is even (see \cite{CTZ}).}.
The following Sage computation shows that, in this case, $W_f(0)$ is not a
rational number for therepresentatives of $B_1,B_2$ displayed above.

\vskip .15in
{\footnotesize{
\begin{Verbatim}[fontsize=\scriptsize,fontfamily=courier,fontshape=tt,frame=single,label=\sage]

sage: PR.<x0,x1,x2> = PolynomialRing(FF, 3, "x0,x1,x2")
sage: f = x0^2 + x1^2 + x2^2
sage: V = GF(3)^3
sage: Vlist = V.list()
sage: flist = [f(x[0],x[1],x[2]) for x in Vlist]
sage: f = lambda x: FF(flist[Vlist.index(x)])
sage: hadamard_transform(f,V(0))
12*e^(4/3*I*pi) + 6*e^(2/3*I*pi) + 9
sage: CC(hadamard_transform(f,V(0)))
-3.99680288865056e-15 - 5.19615242270663*I
sage: f = x0*x2 + 2*x1^2 + 2*x0^2*x2^2
sage: flist = [f(x[0],x[1],x[2]) for x in Vlist]
sage: f = lambda x: FF(flist[Vlist.index(x)])
sage: hadamard_transform(f,V(0))
14*e^(4/3*I*pi) + 2*e^(2/3*I*pi) + 11
sage: CC(hadamard_transform(f,V(0)))
2.99999999999999 - 10.3923048454133*I
\end{Verbatim}
}}

Using Sage, we give some examples of an even bent function of
$3$ variables over $GF(3)$.

\vskip .15in
{\footnotesize{
\begin{Verbatim}[fontsize=\scriptsize,fontfamily=courier,fontshape=tt,frame=single,label=\sage]

sage: FF = GF(3)
sage: V = FF^3
sage: Vlist = V.list()
sage: Vlist
[(0, 0, 0), (1, 0, 0), (2, 0, 0), (0, 1, 0), (1, 1, 0), (2, 1, 0), 
(0, 2, 0), (1, 2, 0), (2, 2, 0), (0, 0, 1), (1, 0, 1), (2, 0, 1), 
(0, 1, 1), (1, 1, 1), (2, 1, 1), (0, 2, 1), (1, 2, 1), (2, 2, 1), 
(0, 0, 2), (1, 0, 2), (2, 0, 2), (0, 1, 2), (1, 1, 2), (2, 1, 2), 
(0, 2, 2), (1, 2, 2), (2, 2, 2)]
sage: flist = [0,2,2,1,1,1,1,1,1,2,0,1,1,2,0,1,0,0,2,1,0,1,0,0,1,0,2]
sage: f = lambda x: GF(3)(flist[Vlist.index(x)])   
sage: [f(a)- f(-a) for a in V]
[0, 0, 0, 0, 0, 0, 0, 0, 0, 0, 0, 0, 0, 0, 0, 0, 0, 0, 0, 0, 0, 0, 0, 0, 0, 0, 0]
sage: [CC(hadamard_transform(f,a)).abs() for a in V]
[5.19615242270663, 5.19615242270663, 5.19615242270664, 5.19615242270663,
 5.19615242270663, 5.19615242270663, 5.19615242270663, 5.19615242270663,
 5.19615242270663, 5.19615242270663, 5.19615242270663, 5.19615242270663,
 5.19615242270663, 5.19615242270664, 5.19615242270663, 5.19615242270663,
 5.19615242270663, 5.19615242270663, 5.19615242270664, 5.19615242270663,
 5.19615242270663, 5.19615242270663, 5.19615242270663, 5.19615242270663,
 5.19615242270663, 5.19615242270663, 5.19615242270664]
sage: supp_f = [Vlist.index(x) for x in V if f(x)<>0]; supp_f
[1, 2, 3, 4, 5, 6, 7, 8, 9, 11, 12, 13, 15, 18, 19, 21, 24, 26]
\end{Verbatim}
}}

Since $W_f(0)/3^{3/2}$ is a $6$-th root of unity, but not a cube root of unity, it
follows from Lemma \ref{lemma:weaklyregular} that $f$ is not weakly regular.

\vskip .15in
{\footnotesize{
\begin{Verbatim}[fontsize=\scriptsize,fontfamily=courier,fontshape=tt,frame=single,label=\sage]

sage: a = 2; b = 4; c = 4
sage: flist = [0,a,a,c,b,1,c,1,b,2,0,1,1,2,0,1,0,0,2,1,0,1,0,0,1,0,2]
sage: f = lambda x: GF(3)(flist[Vlist.index(x)])   
sage: [f(a)- f(-a) for a in V]
[0, 0, 0, 0, 0, 0, 0, 0, 0, 0, 0, 0, 0, 0, 0, 0, 0, 0, 0, 0, 0, 0, 0, 0, 0, 0, 0]
sage: [CC(hadamard_transform(f,a)).abs() for a in V]
[5.19615242270663, 5.19615242270663, 5.19615242270664, 5.19615242270663,
 5.19615242270663, 5.19615242270663, 5.19615242270663, 5.19615242270663,
 5.19615242270663, 5.19615242270663, 5.19615242270663, 5.19615242270663,
 5.19615242270663, 5.19615242270664, 5.19615242270663, 5.19615242270663,
 5.19615242270663, 5.19615242270663, 5.19615242270664, 5.19615242270663,
 5.19615242270663, 5.19615242270663, 5.19615242270663, 5.19615242270663,
 5.19615242270663, 5.19615242270663, 5.19615242270664]
sage: supp_f = [Vlist.index(x) for x in V if f(x)<>0]; supp_f
[1, 2, 3, 4, 5, 6, 7, 8, 9, 11, 12, 13, 15, 18, 19, 21, 24, 26]
\end{Verbatim}
}}

Here are some algebraic examples:

\vskip .15in
{\footnotesize{
\begin{Verbatim}[fontsize=\scriptsize,fontfamily=courier,fontshape=tt,frame=single,label=\sage]

sage: R.<x0,x1,x2> = PolynomialRing(FF,3,"x0,x1,x2")
sage: V = GF(3)^3
sage: ff = x1^2+x0*x2
sage: flist = [ff(x0=v[0],x1=v[1],x2=v[2]) for v in V]
sage: Vlist = V.list()
sage: f = lambda x: GF(3)(flist[Vlist.index(x)])
sage: [CC(hadamard_transform(f,a)).abs() for a in V]
[5.19615242270663, 5.19615242270663, 5.19615242270663,
5.19615242270663, 5.19615242270663, 5.19615242270663,
5.19615242270663, 5.19615242270663, 5.19615242270663,
5.19615242270663, 5.19615242270663, 5.19615242270663,
5.19615242270663, 5.19615242270663, 5.19615242270663,
5.19615242270663, 5.19615242270663, 5.19615242270663,
5.19615242270663, 5.19615242270663, 5.19615242270663,
5.19615242270663, 5.19615242270663, 5.19615242270663,
5.19615242270663, 5.19615242270663, 5.19615242270663]
sage:
sage: ff = x1^2+x0*x2+x1+2*x0
sage: flist = [ff(x0=v[0],x1=v[1],x2=v[2]) for v in V]
sage: f = lambda x: GF(3)(flist[Vlist.index(x)])
sage: [CC(hadamard_transform(f,a)).abs() for a in V]
[5.19615242270663, 5.19615242270663, 5.19615242270663,
5.19615242270663, 5.19615242270663, 5.19615242270663,
5.19615242270663, 5.19615242270663, 5.19615242270663,
5.19615242270663, 5.19615242270663, 5.19615242270663,
5.19615242270663, 5.19615242270663, 5.19615242270663,
5.19615242270663, 5.19615242270663, 5.19615242270663,
5.19615242270663, 5.19615242270663, 5.19615242270663,
5.19615242270663, 5.19615242270663, 5.19615242270663,
5.19615242270663, 5.19615242270663, 5.19615242270663]
\end{Verbatim}
}}

Note the first example is even but the second one is not.

\subsection{Bent functions $GF(5)^2\to GF(5)$}
\label{sec:bent-gf5**2}

Using Sage, we give examples of bent functions of
$2$ variables over $GF(5)$ and study their signatures (\ref{eqn:support-i}).

Do the ``level curves'' of a bent function $GF(5)^2\to GF(5)$
give rise to a PDS? An association scheme (see below for a definition)?

\begin{example}
{\rm
Let $G = GF(25) = GF(5)[x]/(x^2+2)$,

\[
D_1 = \{1, 4, x+2, 4x+3\},
D_2 = \{x+1, x+3, 4x+2, 4x+4\},
\]
\[
D_3 = \{2x+1, 2x+2, 3x+3, 3x+4\},
D_4 = \{2, 3, 2x+4, 3x+1\},
\]
and $D = D_1\cup  D_2 \cup D_3 \cup D_4$.
If $f(x_0,x_1) = x_0^2+x_0x_1$ then
each subset $D_i$ ($i=1,2,3,4$) is the image
of the level curve $f^{-1}(i)$
under the $GF(5)$-vector space isomorphism

\[
\begin{array}{ccc}
\phi\, :\, GF(5)^2 & \to & GF(25),\\
(a,b) & \longmapsto & bx+a \, ,\\
\end{array}
\]
$D_i=\phi(f^{-1}(i))$, $i=1,2,3,4$.
As in the previous example, we can compute the
$k_{i,j}$'s, $\mu_{i,j}$'s, and $\lambda_{i,j}^k$'s
(see Example \ref{example:x02+x0x1} for the details).
This $f$ is homogeneous, bent and regular (hence also weakly regular).

The weighted adjacency matrix $A$ of the 
edge-weighted Cayley graph associated to $f$,
$\Gamma_f$, is given below:

\[
\left(\begin{array}{rrrrrrrrrrrrrrrrrrrrrrrrr}
0 & 1 & 4 & 4 & 1 & 0 & 2 & 1 & 2 & 0 & 0 & 3 & 3 & 0 & 4 & 0 & 4 & 0 & 3 & 3 & 0 & 0 & 2 & 1 & 2 \\
1 & 0 & 1 & 4 & 4 & 0 & 0 & 2 & 1 & 2 & 4 & 0 & 3 & 3 & 0 & 3 & 0 & 4 & 0 & 3 & 2 & 0 & 0 & 2 & 1 \\
4 & 1 & 0 & 1 & 4 & 2 & 0 & 0 & 2 & 1 & 0 & 4 & 0 & 3 & 3 & 3 & 3 & 0 & 4 & 0 & 1 & 2 & 0 & 0 & 2 \\
4 & 4 & 1 & 0 & 1 & 1 & 2 & 0 & 0 & 2 & 3 & 0 & 4 & 0 & 3 & 0 & 3 & 3 & 0 & 4 & 2 & 1 & 2 & 0 & 0 \\
1 & 4 & 4 & 1 & 0 & 2 & 1 & 2 & 0 & 0 & 3 & 3 & 0 & 4 & 0 & 4 & 0 & 3 & 3 & 0 & 0 & 2 & 1 & 2 & 0 \\
0 & 0 & 2 & 1 & 2 & 0 & 1 & 4 & 4 & 1 & 0 & 2 & 1 & 2 & 0 & 0 & 3 & 3 & 0 & 4 & 0 & 4 & 0 & 3 & 3 \\
2 & 0 & 0 & 2 & 1 & 1 & 0 & 1 & 4 & 4 & 0 & 0 & 2 & 1 & 2 & 4 & 0 & 3 & 3 & 0 & 3 & 0 & 4 & 0 & 3 \\
1 & 2 & 0 & 0 & 2 & 4 & 1 & 0 & 1 & 4 & 2 & 0 & 0 & 2 & 1 & 0 & 4 & 0 & 3 & 3 & 3 & 3 & 0 & 4 & 0 \\
2 & 1 & 2 & 0 & 0 & 4 & 4 & 1 & 0 & 1 & 1 & 2 & 0 & 0 & 2 & 3 & 0 & 4 & 0 & 3 & 0 & 3 & 3 & 0 & 4 \\
0 & 2 & 1 & 2 & 0 & 1 & 4 & 4 & 1 & 0 & 2 & 1 & 2 & 0 & 0 & 3 & 3 & 0 & 4 & 0 & 4 & 0 & 3 & 3 & 0 \\
0 & 4 & 0 & 3 & 3 & 0 & 0 & 2 & 1 & 2 & 0 & 1 & 4 & 4 & 1 & 0 & 2 & 1 & 2 & 0 & 0 & 3 & 3 & 0 & 4 \\
3 & 0 & 4 & 0 & 3 & 2 & 0 & 0 & 2 & 1 & 1 & 0 & 1 & 4 & 4 & 0 & 0 & 2 & 1 & 2 & 4 & 0 & 3 & 3 & 0 \\
3 & 3 & 0 & 4 & 0 & 1 & 2 & 0 & 0 & 2 & 4 & 1 & 0 & 1 & 4 & 2 & 0 & 0 & 2 & 1 & 0 & 4 & 0 & 3 & 3 \\
0 & 3 & 3 & 0 & 4 & 2 & 1 & 2 & 0 & 0 & 4 & 4 & 1 & 0 & 1 & 1 & 2 & 0 & 0 & 2 & 3 & 0 & 4 & 0 & 3 \\
4 & 0 & 3 & 3 & 0 & 0 & 2 & 1 & 2 & 0 & 1 & 4 & 4 & 1 & 0 & 2 & 1 & 2 & 0 & 0 & 3 & 3 & 0 & 4 & 0 \\
0 & 3 & 3 & 0 & 4 & 0 & 4 & 0 & 3 & 3 & 0 & 0 & 2 & 1 & 2 & 0 & 1 & 4 & 4 & 1 & 0 & 2 & 1 & 2 & 0 \\
4 & 0 & 3 & 3 & 0 & 3 & 0 & 4 & 0 & 3 & 2 & 0 & 0 & 2 & 1 & 1 & 0 & 1 & 4 & 4 & 0 & 0 & 2 & 1 & 2 \\
0 & 4 & 0 & 3 & 3 & 3 & 3 & 0 & 4 & 0 & 1 & 2 & 0 & 0 & 2 & 4 & 1 & 0 & 1 & 4 & 2 & 0 & 0 & 2 & 1 \\
3 & 0 & 4 & 0 & 3 & 0 & 3 & 3 & 0 & 4 & 2 & 1 & 2 & 0 & 0 & 4 & 4 & 1 & 0 & 1 & 1 & 2 & 0 & 0 & 2 \\
3 & 3 & 0 & 4 & 0 & 4 & 0 & 3 & 3 & 0 & 0 & 2 & 1 & 2 & 0 & 1 & 4 & 4 & 1 & 0 & 2 & 1 & 2 & 0 & 0 \\
0 & 2 & 1 & 2 & 0 & 0 & 3 & 3 & 0 & 4 & 0 & 4 & 0 & 3 & 3 & 0 & 0 & 2 & 1 & 2 & 0 & 1 & 4 & 4 & 1 \\
0 & 0 & 2 & 1 & 2 & 4 & 0 & 3 & 3 & 0 & 3 & 0 & 4 & 0 & 3 & 2 & 0 & 0 & 2 & 1 & 1 & 0 & 1 & 4 & 4 \\
2 & 0 & 0 & 2 & 1 & 0 & 4 & 0 & 3 & 3 & 3 & 3 & 0 & 4 & 0 & 1 & 2 & 0 & 0 & 2 & 4 & 1 & 0 & 1 & 4 \\
1 & 2 & 0 & 0 & 2 & 3 & 0 & 4 & 0 & 3 & 0 & 3 & 3 & 0 & 4 & 2 & 1 & 2 & 0 & 0 & 4 & 4 & 1 & 0 & 1 \\
2 & 1 & 2 & 0 & 0 & 3 & 3 & 0 & 4 & 0 & 4 & 0 & 3 & 3 & 0 & 0 & 2 & 1 & 2 & 0 & 1 & 4 & 4 & 1 & 0
\end{array}\right)
\]

This is verified using the following Sage commands:

\vskip .15in
{\footnotesize{
\begin{Verbatim}[fontsize=\scriptsize,fontfamily=courier,fontshape=tt,frame=single,label=\sage]

sage: attach "/home/wdj/sagefiles/hadamard_transform.sage"
sage: FF = GF(5)
sage: V = FF^2
sage: R.<x0,x1> = PolynomialRing(FF,2,"x0,x1")
sage: ff = x0^2+x0*x1
sage: flist = [ff(x0=v[0],x1=v[1]) for v in V]
sage: Vlist = V.list()
sage: f = lambda x: FF(flist[Vlist.index(x)])
sage: Gamma = boolean_cayley_graph(f, V)
sage: A = Gamma.weighted_adjacency_matrix(); A
25 x 25 sparse matrix over Finite Field of size 5 

\end{Verbatim}
}}

}
\end{example}

First, we consider the bent regular function $f(x_0,x_1)=x_0^2+x_0x_1$.

\vskip .15in
{\footnotesize{
\begin{Verbatim}[fontsize=\scriptsize,fontfamily=courier,fontshape=tt,frame=single,label=\sage]

sage: FF = GF(5)
sage: V = FF^2
sage: R.<x0,x1> = PolynomialRing(FF,2,"x0,x1")
sage: ff = x0^2+x0*x1
sage: flist = [ff(x0=v[0],x1=v[1]) for v in V]
sage: Vlist = V.list()
sage: f = lambda x: FF(flist[Vlist.index(x)])
sage: [CC(hadamard_transform(f,a)).abs() for a in V]
[5.00000000000000, 5.00000000000000, 5.00000000000000,
5.00000000000000, 5.00000000000000, 5.00000000000000,
5.00000000000000, 5.00000000000000, 5.00000000000000,
5.00000000000000, 5.00000000000000, 5.00000000000000,
5.00000000000000, 5.00000000000000, 5.00000000000000,
5.00000000000000, 5.00000000000000, 5.00000000000000,
5.00000000000000, 5.00000000000000, 5.00000000000000,
5.00000000000000, 5.00000000000000, 5.00000000000000,
5.00000000000000]
sage: Gamma = boolean_cayley_graph(f, V)
sage: Gamma.connected_components_number()
1
sage: Gamma.spectrum()
[16, 1, 1, 1, 1, 1, 1, 1, 1, 1, 1, 1, 1, 1, 1, 1, 1, -4, -4, -4, -4,
-4, -4, -4, -4]

\end{Verbatim}
}}

Next, we consider $f(x_0,x_1)=x_0^2+x_1^2$, having signature
$[9, 4, 4, 4, 4]$. This is also regular bent.

\vskip .15in
{\footnotesize{
\begin{Verbatim}[fontsize=\scriptsize,fontfamily=courier,fontshape=tt,frame=single,label=\sage]

sage: ff = x0^2+x1^2                                
sage: flist = [FF(x) for x in flist]
sage: f = lambda x: FF(flist[Vlist.index(x)])
sage: ff = x0^2+x1^2                         
sage: flist = [ff(x0=v[0],x1=v[1]) for v in V]
sage: f = lambda x: FF(flist[Vlist.index(x)])
sage: flist = [FF(x) for x in flist]          
sage: [CC(hadamard_transform(f,a)).abs() for a in V]
[5.00000000000000, 5.00000000000000, 5.00000000000000, 
 5.00000000000000, 5.00000000000000, 5.00000000000000,
 5.00000000000000, 5.00000000000000, 5.00000000000000,
 5.00000000000000, 5.00000000000000, 5.00000000000000,
 5.00000000000000, 5.00000000000000, 5.00000000000000,
 5.00000000000000, 5.00000000000000, 5.00000000000000,
 5.00000000000000, 5.00000000000000, 5.00000000000000,
 5.00000000000000, 5.00000000000000, 5.00000000000000,
 5.00000000000000]
sage: flist                                   
[0, 1, 4, 4, 1, 1, 2, 0, 0, 2, 4, 0, 3, 3, 0, 4, 0, 3, 3, 0, 1, 2, 0, 0, 2]
sage: fcount = [flist.count(x) for x in FF]; fcount
[9, 4, 4, 4, 4]

\end{Verbatim}
}}

\noindent
Not that this is even and the signature agrees with 
Lemma \ref{lemma:sig}.

Finally, we consider $f(x_0,x_1)=x_0^2+2x_1^2+x_0$, having signature
$[4, 9, 4, 4, 4]$. This is also regular bent.

\vskip .15in
{\footnotesize{
\begin{Verbatim}[fontsize=\scriptsize,fontfamily=courier,fontshape=tt,frame=single,label=\sage]

sage: ff = x0^2-x1^2+x0
sage: flist = [ff(x0=v[0],x1=v[1]) for v in V]
sage: f = lambda x: FF(flist[Vlist.index(x)])
sage: [CC(hadamard_transform(f,a)).abs() for a in V]
[5.00000000000000, 5.00000000000000, 5.00000000000000, 
 5.00000000000000, 5.00000000000000, 5.00000000000000, 
 5.00000000000000, 5.00000000000000, 5.00000000000000, 
 5.00000000000000, 5.00000000000000, 5.00000000000000, 
 5.00000000000000, 5.00000000000000, 5.00000000000000, 
 5.00000000000000, 5.00000000000000, 5.00000000000000, 
 5.00000000000000, 5.00000000000000, 5.00000000000000, 
 5.00000000000000, 5.00000000000000, 5.00000000000000, 
 5.00000000000000]
sage: flist = [FF(x) for x in flist]; flist   
[0, 2, 1, 2, 0, 4, 1, 0, 1, 4, 1, 3, 2, 3, 1, 1, 3, 2, 3, 1, 4, 1, 0, 1, 4]
sage: fcount = [flist.count(x) for x in FF]; fcount
[4, 9, 4, 4, 4]

\end{Verbatim}
}}

\begin{example}
{\rm
Consider the bent function
\[
f_4(x_0,x_1)=-x_1^4-2x_1^2+x_0x_1.
\]
This function represents a $GL(2,GF(5))$ orbit of size $120$.
The level curves of this function do not give rise to a weighted PDS.
By the way, if we try a computation of all the $p_{ij}^k$'s as in the
above example, we do not get integers.

Similarly, the  level curves of $f_7$, $f_8$, $f_{10}$, and $f_{11}$ do not
give rise to a weighted PDS.

}
\end{example}

\begin{example}
{\rm
The example of $f_5$ above can be used to construct an edge-weighted 
strongly regular Cayley graph, hence also a weighted PDS attached to its
level curves.
The examples of $f_6$ and $f_9$ above have an isomorphic weighted 
PDS attached to their (respective) level curves.

}
\end{example}

\vskip .2in
\noindent
{\it Acknowledgements}:
We are grateful to our colleague T. S. Michael for many
stimulating conversations and suggestions on this paper.

\section{Appendix: A new search algorithm for bent functions}

The following algorithm and code is due to the first author
C. Celerier.

\begin{Verbatim}[fontsize=\tiny,fontfamily=courier,fontshape=tt,frame=single,label=Python]

from collections import defaultdict
from copy import deepcopy
from random import shuffle
from sage.crypto.boolean_function import BooleanFunction

class NoBentFunction(Exception):
    pass

class BentFinder(object):
    def __init__(self, n):
        self.V = GF(2)**n
        self.n = n

    def searchForBent(self):
        self.walshTrace = []
        A = defaultdict(int)
        W = defaultdict(int)
        B = list(self.V.list())
        shuffle(B)
        R = self.__searchForBent(A,B,W,0)
        self.walshTrace.reverse()
        return BooleanFunction([R[tuple(x)] for x in self.V]), self.walshTrace

    def __searchForBent(self, A, B, W, wgt):
        n = self.n
        if len(A) == 2**n:
            return A
        if wgt > 2**(n-1)-2**(n/2-1):
            raise NoBentFunction

        A,B,W,wgt = self.__deepcopy(A,B,W,wgt)
        v = B.pop()
        cf = self.__coinFlip()
        values = (cf, (1+cf) % 2)
        for a in values:
            try:
                A[tuple(v)] = a
                update = self.__getUpdate(v, a)
                W_a = self.__applyUpdate(W, update, 2**n-len(A))
                R = self.__searchForBent(A,B,W_a,wgt+a)
                self.walshTrace.append((v, A[tuple(v)], [W_a[x] for x in W_a]))
                return R
            except NoBentFunction:
                pass
        raise NoBentFunction

    def __applyUpdate(self, W, update, leftToFill):
        n = self.n
        W = deepcopy(W)
        for u in self.V:
            W[tuple(u)] += update[tuple(u)]
            Wmin = W[tuple(u)] - leftToFill
            Wmax = W[tuple(u)] + leftToFill
            if not ((Wmin <= 2**(n/2) and 2**(n/2) <= Wmax) or (Wmin <= -2**(n/2) and -2**(n/2) <= Wmax)):
                raise NoBentFunction
        return W

    def __getUpdate(self,v,a):
        update = {}
        for u in self.V:
            update[tuple(u)] = Integer(-1)**(a+u.dot_product(v))
        return update

    def __deepcopy(self, *args):
        R = []
        for a in args:
            R.append(deepcopy(a))
        return R

    def __coinFlip(self):
        return 1 if random() > .5 else 0

\end{Verbatim}

An example:

\begin{Verbatim}[fontsize=\scriptsize,fontfamily=courier,fontshape=tt,frame=single,label=\sage]

sage: %attach bentFunctions.sage
sage: B=BentFinder(4)
sage: B.searchForBent()
(Boolean function with 4 variables,
 [((0, 0, 0, 1), 1, [1, 1, 1, 1, 1, 1, 1, 1, 1, 1, 1, 1, 1, 1, 1, 1]),
  ((0, 1, 0, 0), 0, [2, 2, 0, 0, 0, 0, 2, 2, 2, 0, 2, 0, 0, 2, 0, 2]),
  ((1, 1, 0, 1), 1, [1, 3, 1, 1, -1, -1, 3, 3, 3, 1, 1, 1, -1, 1, -1, 1]),
  ((0, 1, 1, 1), 0, [2, 2, 2, 2, 0, -2, 4, 4, 2, 0, 2, 0, 0, 0, -2, 0]),
  ((0, 0, 1, 0), 1, [3, 3, 1, 3, 1, -3, 3, 5, 1, -1, 1, 1, -1, 1, -1, -1]),
  ((0, 0, 1, 1), 0, [2, 4, 2, 2, 0, -4, 2, 6, 0, 0, 2, 2, -2, 0, 0, 0]),
  ((0, 1, 0, 1), 0, [1, 5, -1, 3, 1, -3, 1, 7, -1, -1, 3, 1, -1, 1, -1, 1]),
  ((0, 1, 1, 0), 0, [2, 4, 4, 2, 2, -4, 0, 6, -2, 0, 4, 0, -2, 0, -2, 2]),
  ((1, 1, 0, 0), 1, [1, 3, -1, 5, 1, -5, 1, 5, -1, -1, 5, 3, -3, 1, -1, 3]),
  ((1, 0, 1, 1), 0, [0, 4, 6, 2, 2, -6, 0, 4, 0, -2, 4, 2, -2, 0, -2, 4]),
  ((1, 0, 0, 0), 1, [1, 3, -1, 5, 1, -7, -1, 5, 1, -3, 3, 3, -1, 3, -1, 5]),
  ((1, 1, 1, 0), 0, [2, 4, 6, 4, 2, -6, -2, 4, 2, -4, 4, 0, -2, -2, 0, 4]),
  ((1, 0, 0, 1), 0, [3, 5, -3, 5, 1, -5, -1, 3, 1, -5, 3, 3, -3, 5, -1, 5]),
  ((1, 1, 1, 1), 0, [4, 6, 4, 4, 4, -6, -2, 2, 2, -4, 4, 2, -2, -4, -2, 4]),
  ((1, 0, 1, 0), 1, [3, 5, -3, 3, 3, -5, -3, 3, 3, -5, 5, 5, -3, 5, -3, 3]),
  ((1, 0, 1, 0), 1, [4, 4, 4, 4, 4, -4, -4, 4, 4, -4, 4, 4, -4, -4, -4, 4])])

\end{Verbatim}

\printindex

\end{document}